\newcommand{\R}{\mathbb{R}}
\newcommand{\C}{\mathbb{C}}
\newcommand{\im}{\operatorname{im}}
\newcommand{\Q}{\mathbb{Q}}
\newcommand{\Z}{\mathbb{Z}}
\newcommand{\inv}{^{-1}}
\newcommand{\tautwo}{\tau^{(2)}}
\newcommand{\dimN}[1]{\operatorname{dim}_{\mathcal N#1}}
\newcommand{\whdet}{\operatorname{det}_{\rm w}}
\newcommand{\redet}{\operatorname{det}_{\rm r}}
\newcommand{\poly}{\mathcal P_\Z}
\newcommand{\whpoly}{\mathcal P_\Z^{\rm Wh}}
\newcommand{\mcalD}[1]{\mathcal{D}_{#1}}
\newcommand{\rank}{\operatorname{rank}}
\newcommand{\bb}{\backslash\!\backslash}
\newcommand{\ra}{\rightarrow}
\newtheorem{thm}{Theorem}[section]
\newtheorem{theorem}[thm]{Theorem}
\newtheorem{proposition}[thm]{Proposition}
\newtheorem*{claim*}{Claim}
\newtheorem{lemma}[thm]{Lemma}
\newtheorem{conjecture}[thm]{Conjecture}
\newtheorem{question}[thm]{Question}
\theoremstyle{definition}
\newtheorem{definition}[thm]{Definition}
\newtheorem{example}[thm]{Example}
\newtheorem{remark}[thm]{Remark}
\title{Universal $L^2$-torsion and sutured decomposition for 3-manifolds}
\author{Jianru Duan}
\address{Beijing International Center for Mathematical Research, Peking University, No. 5 Yiheyuan Road,
Haidian District, Beijing 100871, China}
\email{duanjr@stu.pku.edu.cn}
\begin{document}
\bibliographystyle{amsalpha}

\begin{abstract}
     
     Given an admissible 3-manifold $M$ and a cohomology class $\phi\in H^1(M;\R)$, we prove that the universal $L^2$-torsion of $M$ detects the fiberedness of $\phi$, except when $M$ is a closed graph manifold that admits no non-positively curved metric. We further extend this invariant to sutured 3-manifolds and derive a decomposition formula for taut sutured decompositions. Moreover, we show that a taut sutured manifold is a product if and only if its universal $L^2$-torsion is trivial. Our methods are based on a detailed study of the leading term map over Linnell’s skew field. As an application, we apply the theory to homomorphisms between finitely generated free groups, which enables explicit computations of the invariant for sutured handlebodies.
\end{abstract}
\maketitle

\section{Introduction}

Let $M$ be a compact orientable 3-manifold. A homomorphism $\phi\colon \pi_1(M)\ra\Z$ is called \textit{fibered} if it is induced by a fibration of $M$ over $S^1$. In his seminal work, Thurston \cite{thurston1986norm} introduced a semi-norm on $H^1(M;\R)$, now known as the \textit{Thurston norm}, and showed that its unit ball $B_x(M)$ is a finite-sided polyhedron. Moreover, he showed that the fibered classes in $H^1(M;\R)$ correspond exactly to certain open cones over the top-dimensional faces of $B_x(M)$. Marked by the success of Gabai's sutured manifold theory \cite{Gabai1983Foliations,gabai1987foliationsII} and the confirmation of the Virtual Fibering Conjecture \cite{agol2013virtual}, it has become a central theme in three-dimensional topology to determine the Thurston norm and the fibered structure of a 3-manifold. In this paper we focus on the class of admissible 3-manifolds, defined as follows: 
\begin{definition}[Admissible 3-manifold]
    A 3-manifold is called \emph{admissible} if it is compact, connected, orientable, and irreducible, its boundary is either empty or a collection of tori, and its fundamental group is infinite.  
\end{definition}

The universal $L^2$-torsion introduced by Friedl and L\"uck \cite{friedl2017universal} is defined for a finite CW-complex $X$ with vanishing $L^2$-Betti numbers. In the case when $X$ is an admissible 3-manifold, this invariant $\tautwo_u(X)$ takes values in the weak Whitehead group $\operatorname{Wh}^w(\pi_1(X))$ associated to the fundamental group of $X$. It has been shown that the universal $L^2$-torsion completely determines the Thurston norm of such manifolds \cite{friedl2017universal}. This leads naturally to the following questions: 
\begin{itemize}
    \item Does the universal $L^2$-torsion also characterize the fibered structure of an admissible 3-manifold? If so, in what manner?
    \item Can this invariant be extended to fit into Gabai's sutured manifold theory? If so, how does it change under sutured manifold decompositions? 
\end{itemize}

This paper is devoted to answering these two questions.

To investigate the first question, we are motivated by the role of the leading coefficient of the Alexander polynomial in detecting fiberedness. For any cohomology class $\phi\in H^1(M;\R)$ of an admissible 3-manifold $M$, we define a natural ``leading term map" 
\[
    L_\phi\colon \operatorname{Wh}^w(\pi_1(M))\ra \operatorname{Wh}^w(\pi_1(M))
\]
on the weak Whitehead group of $\pi_1(M)$. For an admissible 3-manifold $M$ that is not a closed graph manifold without a non-positively curved (NPC) metric, we show that a class $\phi\in H^1(M;\R)\setminus\{0\}$ is fibered if and only if the universal $L^2$-torsion of $M$ has trivial $\phi$-leading term. 
\begin{theorem}\label{Main Theorem fibered iff face map zero}
    Suppose $M$ is an admissible 3-manifold that is not a closed graph manifold without an NPC metric. For any nonzero cohomology class $\phi\in H^1(M;\R)$, the class $\phi$ is fibered if and only if $L_\phi\tautwo_u(M)=1\in \operatorname{Wh}^w(\pi_1(M))$.
\end{theorem}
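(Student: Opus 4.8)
The plan is to move between the two conditions using Gabai's sutured manifold theory, via the two results described in the abstract: the behaviour of the universal $L^2$-torsion under a taut sutured decomposition, and the fact that a taut sutured manifold is a product if and only if its universal $L^2$-torsion is trivial.

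\emph{Reduction to integral classes.} Both ``$\phi$ is fibered'' and ``$L_\phi\tautwo_u(M)=1$'' are invariant under positive rescaling of $\phi$, and in fact depend only on which cone of the fan dual to the Newton polytope of $\tautwo_u(M)$ --- equivalently, by \cite{friedl2017universal}, which face of the Thurston norm ball $B_x(M)$ --- contains $\phi$ in its relative interior: for fiberedness this is Thurston's description of the fibered faces \cite{thurston1986norm}, while for the leading term it is built into the definition of $L_\phi$, which only records the face of the Newton polytope selected by the functional $\phi$. Since $B_x(M)$ is rational, each such cone contains a primitive integral class in its relative interior, so it suffices to treat a fixed primitive integral $\phi$. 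Fix a Thurston-norm-minimizing surface $S$ dual to $\phi$; as $M$ is irreducible we may take $S$ connected, incompressible, and without sphere or disc components, and $S$ is non-separating since $\phi$ is primitive. Decomposing $M$ (with its toral sutured structure) along $S$ yields, by Gabai's theory, a taut sutured manifold $(M_S,\gamma)$ whose $R_\pm(\gamma)$ are the two copies of $S$, obtained from $M$ by a taut sutured decomposition.

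\emph{The chain of equivalences.} First, $\phi$ is fibered if and only if $S$ is a fiber of a fibration $M\ra S^1$: any norm-minimizing surface dual to a fibered primitive class is a fiber (Thurston), and conversely a fiber is norm-minimizing; and this holds if and only if $(M_S,\gamma)\cong S\times[0,1]$ is a product sutured manifold. By the product-detection theorem, $(M_S,\gamma)$ is a product if and only if $\tautwo_u(M_S,\gamma)=1$. On the other hand, the sutured decomposition formula applied to the taut decomposition along $S$, together with the definition of $L_\phi$, identifies $L_\phi\tautwo_u(M)$ with the image of $\tautwo_u(M_S,\gamma)$ under the homomorphism $\operatorname{Wh}^w(\pi_1M_S)\ra\operatorname{Wh}^w(\pi_1M)$ induced by the inclusion; since $S$ is incompressible in the irreducible manifold $M$ this inclusion is $\pi_1$-injective, $\pi_1M_S\le\pi_1M$ again lies in Linnell's class with a compatible embedding of skew fields, and the induced map on weak Whitehead groups detects triviality. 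Concatenating, $\phi$ is fibered $\iff\tautwo_u(M_S,\gamma)=1\iff L_\phi\tautwo_u(M)=1$. As an independent check on the forward direction, if $M$ fibers with fiber $F$ and monodromy $h$ then $C_*(\widetilde M)$ is chain homotopy equivalent to the mapping cone of $\mathrm{id}-t\,h_*$ on $C_*(\widetilde F)\otimes_{\Z\pi_1F}\Z\pi_1M$, so $\tautwo_u(M)$ is represented by $\mathrm{id}-t\,h_*$, whose $\phi$-leading term is a trivial unit (either $\mathrm{id}$ or $h_*$, which is a simple homotopy equivalence), giving $L_\phi\tautwo_u(M)=1$.

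\emph{Main obstacle.} The crux is the precise form of the sutured decomposition formula: one must show that along a \emph{taut} decomposition the ``vertical'' and annular boundary contributions to $\tautwo_u$ cancel or become trivial after applying $L_\phi$, so that the $\phi$-leading term of $\tautwo_u(M)$ is exactly the image of $\tautwo_u(M_S,\gamma)$. Tautness is used essentially here, and the proof relies on a detailed study of the leading term map over Linnell's skew field, comparing the $\phi$-graded pieces of the skew field of $\pi_1M$ with the division closure of $\Z[\ker\phi]$; the triviality-detection of $\operatorname{Wh}^w(\pi_1M_S)\ra\operatorname{Wh}^w(\pi_1M)$ also requires care, as do the degenerate cases in which $M_S$ has a component with finite fundamental group. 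Finally, closed graph manifolds without an NPC metric must be excluded because for them this machinery degenerates --- the sutured hierarchy and the leading-term analysis no longer see the fibered-face structure and the equivalence genuinely fails --- this being the same exceptional family that appears in related fiberedness-detection results; it could be treated separately using the explicit Seifert and graph structure.
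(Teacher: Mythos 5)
Your argument has two genuine gaps, and together they make the ``if'' direction circular. First, you invoke the product-detection theorem (that a taut sutured manifold $(M_S,\gamma)$ with $R_\pm\neq\emptyset$ is a product iff $\tautwo_u(M_S,S_+)=1$) as an input. In the paper that theorem is \emph{deduced from} Theorem \ref{Main Theorem fibered iff face map zero}: one doubles the sutured manifold with a suitable monodromy so that the double is not a closed graph manifold, applies the decomposition formula, and then applies the fiberedness criterion to the double. So you cannot use it here without supplying an independent proof. Second, even granting it, your chain of equivalences needs the map $j_*\colon\operatorname{Wh}^w(\pi_1M_S)\ra\operatorname{Wh}^w(\pi_1M)$ to ``detect triviality'', i.e.\ to be injective on the relevant element. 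This is asserted but never justified, and it is not a formal consequence of $\pi_1$-injectivity of $S$: the embedding $\mcalD{\pi_1M_S}\hookrightarrow\mcalD{\pi_1M}$ induces a map on $K_1$-groups that can kill classes, since the commutator subgroup of $\mcalD{\pi_1M}^\times$ is larger and one also quotients by a larger group of trivial units. The paper neither proves nor uses such an injectivity statement.

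The paper's route for the hard (``if'') direction is different and avoids both problems: assuming $L_\phi\tautwo_u(M)=1$, it passes to a finite regular cover $\overline M$ in which the pullback $\bar\phi$ is quasi-fibered (Agol's virtual fibering criterion --- this is exactly where the hypothesis that $M$ is not a closed graph manifold without an NPC metric enters, a hypothesis your argument never actually uses, which is itself a warning sign), uses the compatibility of the leading term map with the restriction map (Theorem \ref{Theorem leading term map commutes with restrictions}) to get $L_{\bar\phi}\tautwo_u(\overline M)=1$, and then applies the polytope map together with $[B_x^*(\overline M)]=2\,\mathbb P(\tautwo_u(\overline M))$ to conclude that $\bar\phi$ lies in a top-dimensional Thurston cone, hence in the fibered cone whose closure contains it. Your decomposition idea and the mapping-torus computation are fine in spirit for the ``only if'' direction, which the paper indeed proves by decomposing along a fiber and applying Theorem \ref{Main Theorem taut decomposition and face map}; but as a proof of the full equivalence the proposal does not go through.
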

As a consequence, Theorem \ref{Main Theorem fibered iff face map zero} provides a clear description of the \textit{marking} on the $L^2$-torsion polytope $\mathcal P(M)$ introduced in \cite{Kielak2020BNSInvariants} which determines the fibered structure of $M$.
 
The solution to Theorem \ref{Main Theorem fibered iff face map zero} in fact relies on the study of the other main question, namely, extending the theory of universal $L^2$-torsion to sutured 3-manifolds, an object invented by Gabai \cite{Gabai1983Foliations,gabai1987foliationsII} to describe 3-manifolds via cut-and-paste constructions. A \textit{sutured manifold} $(N,R_+,R_-,\gamma)$ is a compact oriented 3-manifold whose boundary is partitioned into two oriented subsurfaces $R_+$ and $R_-$, meeting along their common boundary $\gamma$. A key result of Herrmann \cite{herrmann2023sutured} shows that a sutured manifold $(N,R_+,R_-,\gamma)$ is \textit{taut} if and only if the pair $(N,R_+)$ has trivial $L^2$-Betti numbers (see Theorem \ref{Theorem Taut iff L2acyclic}). This motivates the definition of the universal $L^2$-torsion $\tautwo_u(N, R_+)$ for a taut sutured manifold, which takes values in $\operatorname{Wh}^w(\pi_1(N))$.

A sutured manifold can be decomposed along a nicely embedded surface $S$ and the resulting manifold is again a sutured manifold, we write  $$(N,R_+,R_-,\gamma)\stackrel{S}\rightsquigarrow (N',R_+',R_-',\gamma')$$ for such a sutured decomposition. A sutured decomposition is called taut if the resulting sutured manifold is taut.

The second main result of this paper describes how the universal $L^2$-torsion behaves under taut sutured decompositions. Specifically, its change is described by the leading term map introduced earlier.

\begin{theorem}\label{Main Theorem taut decomposition and face map}
    Let $(N,R_+,R_-,\gamma)\stackrel{\Sigma}\leadsto (N',R_+',R_-',\gamma')$ be a taut sutured decomposition and let $\phi\in H^1(N;\Z)$ be the Poincar\'e dual of the decomposition surface $\Sigma$, then
    \[
        j_*\tautwo_u(N',R_+')=L_\phi\tautwo_u(N,R_+)
    \]
    where $j_*\colon \operatorname{Wh}^w(\pi_1(N'))\ra \operatorname{Wh}^w(\pi_1(N))$ is induced by the inclusion $j\colon N'\hookrightarrow N$.
\end{theorem}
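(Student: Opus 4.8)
The plan is to produce a \emph{universal $L^2$-Alexander presentation} of $\tautwo_u(N,R_+)$ adapted to $\phi$, and then to recognize the coefficient of that presentation singled out by $L_\phi$ as the chain complex computing $\tautwo_u(N',R_+')$, inflated along $j_*$.

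First I would reduce to $\phi$ primitive (the general case follows by decomposing $\Sigma$ component by component and using that $L_{m\psi}$ depends only on the primitive class underlying $\phi$, together with naturality of $\tautwo_u$), and fix the topology. Since $[\Sigma]=\mathrm{PD}(\phi)\neq 0$, the surface $\Sigma$ is non-separating, $N'$ is the result of cutting $N$ along $\Sigma$, and van Kampen presents $G:=\pi_1(N)$ as the HNN extension of $G':=\pi_1(N')$ over $A:=\operatorname{im}(\pi_1\Sigma\to G')$ with stable letter $t$ satisfying $\phi(t)=1$ and $\phi|_{G'}=0$; in particular $G'\hookrightarrow G$ and $G'\subseteq\ker\phi$. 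Recall Gabai's conventions: writing $\Sigma_+,\Sigma_-$ for the two copies of $\Sigma$ in $\partial N'$, labelled by the $\phi$-co-orientation, one has $R_\pm'=(R_\pm\cap N')\cup\Sigma_\pm$. Tautness of the decomposition means $(N',R_+')$ is taut, and $(N,R_+)$ is taut as well (as $\tautwo_u(N,R_+)$ is defined); by Herrmann's criterion (Theorem~\ref{Theorem Taut iff L2acyclic}) the $\Z G'$-complex $D'_*:=C_*(\widetilde{N'},\widetilde{R_+'})$ is $\mathcal D(G')$-acyclic and the $\Z G$-complex $D_*:=C_*(\widetilde N,\widetilde{R_+})$ is $\mathcal D(G)$-acyclic.

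Next, lift a CW structure on $N$ having $\Sigma$ as a subcomplex to the universal cover $\widetilde N$. The lifts of $N'$ and of $\Sigma$ are indexed by $G/G'$ and $G/A$, with incidence pattern the Bass--Serre tree of $G=G'\ast_A$, so the associated Mayer--Vietoris sequence is a short exact sequence of based finite free $\Z G$-chain complexes
\[ 0 \longrightarrow \Z G\otimes_{\Z A}D^{\Sigma}_* \xrightarrow{\;f\;} \Z G\otimes_{\Z G'}D^{N'}_* \longrightarrow D_* \longrightarrow 0, \]
where $D^{N'}_*=C_*(\widetilde{N'},\widetilde{R_+\cap N'})$ and $D^{\Sigma}_*=C_*(\widetilde\Sigma,\widetilde{\partial\Sigma\cap R_+})$. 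Because $\phi$ vanishes on $G'$ and on $A$, the boundary map $f$ is \emph{affine-linear in $t$}: after normalizing by a unit monomial, $f=\iota^{0}_{+}-t\,\iota^{0}_{-}$ (up to swapping, according to orientation conventions), where $\iota^{0}_{\pm}\colon D^{\Sigma}_*\to D^{N'}_*$ are the chain maps induced by including $\Sigma$ into $N'$ as $\Sigma_{\pm}$. Thus $D_*\simeq\operatorname{Cone}(f)$; since $\operatorname{Cone}(f)$ is $\mathcal D(G)$-acyclic, $f$ is a weak isomorphism and $\tautwo_u(N,R_+)=\tau(f)$. This is the promised presentation. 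Moreover $\operatorname{Cone}(\iota^{0}_{+})=C_*(\widetilde{N'},\widetilde{(R_+\cap N')\cup\Sigma_+})=C_*(\widetilde{N'},\widetilde{R_+'})=D'_*$, so $\iota^{0}_{+}$ is itself a weak isomorphism --- and this is precisely where tautness of the decomposition enters.

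It then remains to apply $L_\phi$. By the construction of the leading term map, applied to this Alexander presentation, $L_\phi\tau(f)$ is the torsion of the coefficient of $f$ whose mapping cone is $D'_*$ (the attached unit monomial dying in $\operatorname{Wh}^w(G)$), hence
\[ L_\phi\tautwo_u(N,R_+)=\tau\big(\Z G\otimes_{\Z G'}D'_*\big)=j_*\tautwo_u(N',R_+'), \]
the last step by naturality of universal $L^2$-torsion under the ring homomorphism $\Z G'\to\Z G$. The main obstacle is the convention-matching in the middle step: one must verify that $f$ really is affine-linear in $t$ with coefficients the two copies $\iota^{0}_{\pm}$, and, above all, that the coefficient extracted by the defining convention of $L_\phi$ (leading versus trailing $t$-degree, read against $\phi=\mathrm{PD}(\Sigma)$) is the one, $\iota^{0}_{+}$, whose cone is $D_*(N',R_+')$ rather than the coefficient tied to $R_-'$. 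This forces one to track together Gabai's labelling of $\Sigma_\pm$, the relative boundaries $\partial\Sigma\cap R_\pm$ (so that the mapping cones compute the intended relative complexes), and the placement of the stable letter $t$ dictated by the Bass--Serre structure. Modulo this bookkeeping the torsion computation is routine, being only the behaviour of universal $L^2$-torsion under mapping cones and inflation. As a consistency check, for a fibered $\phi$ with fiber $\Sigma=F$ one has $N'=F\times[0,1]$, a product sutured manifold with $\tautwo_u(N',R_+')=1$, while $f\simeq\operatorname{id}-t\,\widetilde h$ for a lift $\widetilde h$ of the monodromy, and the identity reduces to $L_\phi\tautwo_u(N,R_+)=1$ --- recovering the ``only if'' direction of Theorem~\ref{Main Theorem fibered iff face map zero}.
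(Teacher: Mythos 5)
Your core computation is essentially the paper's argument in a different guise: the paper decomposes $C_*(\widehat N,\widehat{R_+})$ as $C_*(\widehat{N\bb S},\widehat{S_+\cup R_+})\oplus C_*(\widehat{S\times I},\widehat{S_-\cup R_+})$ with a carefully chosen lift of cells, so that the boundary operator is block upper-triangular after applying $L_\phi$; your Mayer--Vietoris cone $\operatorname{Cone}(\iota^0_+-t\,\iota^0_-)$ is the same presentation packaged via Bass--Serre theory, and the convention check you flag does come out right (minimal $\phi$-degree picks out $\iota^0_+$, whose cone computes $(N',R_+')$). One caveat: the step ``$L_\phi\tau(f)$ is the torsion of the leading coefficient of $f$'' is not ``by the construction of the leading term map''; it is precisely the content of Theorem~\ref{Theorem Leading term of matrices} and Lemma~\ref{Lemma Leading Coefficient of Chain Complexes} (that the Dieudonn\'e determinant of $P+Q$ has leading term $\det P$ when $P$ is invertible and $Q$ has strictly larger $\phi$-degrees). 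That is a genuine induction on matrix size, not a formal consequence of the definition of $L_\phi$; you need to invoke it explicitly.

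The genuine gap is the reduction at the start. A sutured decomposition surface $\Sigma$ is in general disconnected, may contain separating components, may be null-homologous (so $\phi=0$, a case the theorem still covers), and even when every component is non-separating the complement $N\bb\Sigma$ may be disconnected --- which destroys the single-vertex HNN structure your Mayer--Vietoris sequence relies on, and more importantly makes it impossible to arrange that all of $\iota^0_+$ sits in $\phi$-degree zero (paths from the basepoint to far components must cross $\Sigma$). Your proposed fix --- ``decompose $\Sigma$ component by component and use that $L_{m\psi}$ depends only on the underlying primitive class'' --- does not work as stated: iterated decompositions change the ambient manifold and hence the class against which $L$ is taken, the leading term maps for the pieces do not visibly compose to $L_\phi$, and primitivity of $\phi$ gives no control on the connectivity of $\Sigma$ or of $N\bb\Sigma$. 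This is exactly what Lemma~\ref{Lemma L2 torsion invariant under separating decomposition} and Proposition~\ref{Proposition Turaev method for multisurfaces} (Turaev's re-weighting algorithm) are for: one replaces $\Sigma$ by a weighted surface in the same homology class whose reduction has connected complement, without changing $j_*\tautwo_u(N',R_+')$, by an induction on $\#\pi_0(N\bb S)$ that repeatedly peels off separating subcollections. Without some version of that argument your proof only covers the special case where $N\bb\Sigma$ happens to be connected.
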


Furthermore, we show that the universal $L^2$-torsion serves as the only obstruction to the product structure on sutured manifolds. Indeed, a sutured manifold $(N,R_+,R_-,\gamma)$ may be viewed as a cobordism between two compact surfaces $R_\pm$. When $N$ is taut, the inclusion map $R_+\hookrightarrow N$ induces isomorphism on $L^2$-homology, which can be considered as an ``$L^2$-homology cobordism". We prove that this cobordism is a trivial product if and only if its universal $L^2$-torsion is trivial.

\begin{theorem}\label{Main Theorem universal L2 torsion detect product sutured manifold}
    Let $(N,R_+,R_-,\gamma)$ be a taut sutured manifold such that both $R_+$ and $R_-$ are non-empty. Then $N$ is isomorphic to the product sutured manifold $R_+\times [0,1]$ if and only if $\tautwo_u(N,R_+)=1\in \operatorname{Wh}^w(\pi_1(N))$.
\end{theorem}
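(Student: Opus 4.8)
The plan is to establish the two implications separately: the forward direction by a direct computation with the defining chain complex, and the converse by induction along a sutured manifold hierarchy, using Theorem~\ref{Main Theorem taut decomposition and face map} as the driving tool.

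For the ``only if'' direction, suppose $N\cong R_+\times[0,1]$ with $R_+=R_+\times\{1\}$, $R_-=R_+\times\{0\}$ (orientation reversed) and $\gamma=\partial R_+\times[0,1]$. Give $N$ the product CW structure over a cell structure on $R_+$, so that $R_+\sqcup R_-$ is a subcomplex. Then the relative cellular $L^2$-chain complex $C_*^{(2)}(\widetilde N,\widetilde{R_+})$ is, up to sign, the algebraic mapping cone of the identity map on $C_*^{(2)}(\widetilde{R_+})$: it is generated by the cells $\sigma\times\{0\}$ and the cells $\sigma\times(0,1)$, whose boundary is $\sigma\times\{0\}$ plus lower-strata terms. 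The universal $L^2$-torsion of the mapping cone of an isomorphism is the class of that isomorphism, here $1$, so $\tautwo_u(N,R_+)=1$, exactly in parallel with the vanishing of the Whitehead torsion of $(W\times I,\,W\times\{1\})$.

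For the ``if'' direction, suppose $\tautwo_u(N,R_+)=1$ and induct on a suitable complexity $c(N)$ in the spirit of Scharlemann's and Gabai's work (roughly $-\chi(R_+)$ together with a count of non-product components; note $\chi(R_+)=\chi(R_-)$ since $L^2$-acyclicity of $(N,R_\pm)$ forces $\chi(R_\pm)=\chi(N)$). In the base case $c(N)$ is minimal, so $R_+$ and $R_-$ are unions of disks; then $\partial N$ is a $2$-sphere, irreducibility gives $N=B^3$ with a single suture, i.e.\ $N=D^2\times[0,1]$, a product. For the inductive step assume $N$ is not a product. By Gabai's sutured manifold theory, a taut sutured manifold that is not a product admits a taut sutured decomposition $(N,R_+,R_-,\gamma)\stackrel{\Sigma}{\leadsto}(N',R_+',R_-',\gamma')$ along a well-groomed, in particular $\pi_1$-injective, surface $\Sigma$ with $c(N')<c(N)$. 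Writing $\phi=\mathrm{PD}[\Sigma]\in H^1(N;\Z)$, Theorem~\ref{Main Theorem taut decomposition and face map} gives
\[
    j_*\tautwo_u(N',R_+')=L_\phi\tautwo_u(N,R_+)=L_\phi(1)=1,
\]
using that the leading term map fixes the trivial element. Since $j\colon N'\hookrightarrow N$ is $\pi_1$-injective, I would either invoke injectivity of $j_*$ on the weak Whitehead groups arising in the hierarchy or analyze its kernel directly, to conclude $\tautwo_u(N',R_+')=1$; the inductive hypothesis then makes $(N',R_+',R_-',\gamma')$ a product.

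The remaining, and I expect hardest, step is to deduce that $N$ itself is a product from the fact that cutting it along $\Sigma$ produces one; this is false for an arbitrary decomposing surface, so the real content is the choice of $\Sigma$ in the inductive step. Here one follows Juh\'asz's argument for the analogous characterization via sutured Floer homology: arrange $\Sigma$ so that its complement being a product forces the self-gluing of $N'$ along the two copies $\Sigma_\pm$ that recreates $N$ to be a parallelity gluing, equivalently so that a taut sutured manifold whose entire hierarchy consists of such trivial decompositions must be a product. Making this surface choice precise, verifying that it can be carried out while simultaneously keeping the decomposition taut and strictly reducing $c$, and controlling the kernel of $j_*$ on the weak Whitehead groups along the hierarchy are the technical heart of the proof; everything else is bookkeeping with Theorem~\ref{Main Theorem taut decomposition and face map}.
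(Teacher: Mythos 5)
Your ``only if'' direction is fine and matches the paper: for a product the pair $(N,R_+)$ is handled by the simple-homotopy invariance of $\tautwo_u$ (Proposition \ref{Proposition computation example}(1)), so that part needs no further work.

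The converse, however, has two genuine gaps that your own write-up flags but does not close, and the paper's proof avoids both by running the argument in the opposite direction. First, your induction needs $\tautwo_u(N',R_+')=1$ from $j_*\tautwo_u(N',R_+')=1$, i.e.\ injectivity of $j_*\colon\operatorname{Wh}^w(\pi_1(N'))\to\operatorname{Wh}^w(\pi_1(N))$ along the hierarchy. Nothing in the paper (or in general) gives this: $\pi_1$-injectivity of $j$ does not imply injectivity of the induced map on weak Whitehead groups, and no structure of these groups is established that would let you ``analyze the kernel directly.'' Second, the step from ``$N'=N\bb\Sigma$ is a product'' back to ``$N$ is a product'' is exactly the content you defer; for a general decomposing surface it is false, and specifying a surface for which it holds while simultaneously keeping the decomposition taut and strictly decreasing your complexity is not routine bookkeeping — it is the theorem. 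As written, the proposal is therefore incomplete.

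The paper's argument goes the other way and never decomposes $N$ at all. After disposing of the case where every component of $R_\pm$ is a torus (there $N$ is admissible with $\tautwo_u(N)=1$, forcing $N=T^2\times I$ via Theorem \ref{Main Theorem fibered iff face map zero} and the Thurston-norm identity), it \emph{doubles} $N$ along $R_\pm$ with a monodromy $f$ chosen (Lemma \ref{Lemma double is not closed graph}) so that $DN_f$ is not a closed graph manifold. Decomposing $DN_f$ along $\Sigma=R_+\cup R_-$ recovers two copies of $N$, and Theorem \ref{Main Theorem taut decomposition and face map} gives $L_\phi\tautwo_u(DN_f)=(j_1)_*\tautwo_u(N,R_+)\cdot(j_2)_*\tautwo_u(N,\overline R_+)=1$, using the duality $\tautwo_u(N,R_+)=(\tautwo_u(N,R_-))^*$ (Proposition \ref{Proposition Rpm dual formula}); here the decomposition formula is only used in the direction where triviality upstairs is the \emph{output}, so no injectivity of pushforwards is needed. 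The fiberedness criterion (Theorem \ref{Main Theorem fibered iff face map zero}) then makes $\phi$ fibered, so $R_+\cup R_-$ is a fiber surface and $N$ is a product. If you want to salvage a hierarchy-style proof you would have to supply the two missing ingredients above; the doubling trick is the paper's way of sidestepping them.
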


We further study the computation of the universal $L^2$-torsions. It is natural to define the universal $L^2$-torsion of a continuous map $f:X\ra Y$, generalizing that of a space pair $(Y,X)$ (see Section \ref{Section universal L2 torsion}). Motivated by many sutured 3-manifold examples $(N,R_+)$ where $N$ is a 3-dimensional handlebody and $R_+\subset \partial N$ is a connected subsurface with boundary, we are particularly interested in $f\colon X\ra Y$ where $X,Y$ are finite classifying spaces of finitely generated free groups. In this setting, the universal $L^2$-torsion of $f$ is completely determined by the induced homomorphism $\varphi\colon \pi_1(X)\ra \pi_1(Y)$ on fundamental groups, and we obtain the following explicit formula (see Theorem \ref{Theorem Properties of universal torsion for free group homomorphism}):

\begin{theorem}\label{Main Theorem formula for universal L2 torsion of free groups}
    Let $f\colon X\ra Y$ be a continuous map where $X$ and $Y$ are finite connected classifying spaces of free groups. Let $\varphi\colon \pi_1(X) \ra \pi_1(Y)$ be the induced homomorphism on fundamental groups. Then: 
    \[
        \tautwo_u(f)=[J_\varphi]\in \operatorname{Wh}^w(\pi_1(Y))\sqcup \{0\}
    \]
    where $J_\varphi$ is the Fox Jacobian matrix of $\varphi$ over $\Z[\pi_1(Y)]$, whose entries are the Fox derivatives
    \[
        (J_\varphi)_{ij}=\frac{\partial \varphi(x_i)}{\partial y_j}\in \Z [\pi_1(Y)],\quad 1\leqslant i\leqslant n,\ 1\leqslant j\leqslant m
    \]
     with respect to any chosen bases $\pi_1(X)=\langle x_1,\ldots,x_n\rangle$, $\pi_1(Y)=\langle y_1,\ldots,y_m\rangle$. 
\end{theorem}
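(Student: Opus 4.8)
The plan is to compute $\tautwo_u(f)$ directly from a cellular model, where the Fox Jacobian enters through the classical description of Fox derivatives as lifts of loops to the universal cover. First I would reduce to the minimal models. Since $\pi_1(X)=F_n$ and $\pi_1(Y)=F_m$ are free we have $\operatorname{Wh}(F_n)=\operatorname{Wh}(F_m)=0$, so every finite connected classifying space of a free group is simple homotopy equivalent to a wedge of circles and every homotopy equivalence between such spaces is simple. By the homotopy and simple-homotopy invariance of the universal $L^2$-torsion of a map established in Section~\ref{Section universal L2 torsion}, we may assume $X=\bigvee_{i=1}^n S^1$ and $Y=\bigvee_{j=1}^m S^1$ with their standard one-vertex cell structures, and that $f$ is cellular, carrying the $i$-th circle of $X$ onto the loop spelling $\varphi(x_i)\in F_m$ in the letters $y_1,\dots,y_m$.

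Next I would write down the based $\Z[F_m]$-chain complex whose universal $L^2$-torsion is $\tautwo_u(f)$: the relative chain complex of the pair $(\operatorname{cyl}(f),X)$ with $\Z[\pi_1(Y)]$-coefficients, which is the algebraic mapping cone of the chain map $f_*\colon \Z[F_m]\otimes_{\Z[F_n]}C_*(\widetilde X)\to C_*(\widetilde Y)$. In the evident bases this reads
\[
0\longrightarrow \Z[F_m]^n \xrightarrow{\ \partial_2\ } \Z[F_m]\oplus\Z[F_m]^m \xrightarrow{\ \partial_1\ } \Z[F_m]\longrightarrow 0
\]
in degrees $2,1,0$, with $\partial_1(u)=v$, $\partial_1(e_j)=(y_j-1)v$ and $\partial_2(x_i)=(1-\varphi(x_i))\,u+\sum_{j}(J_\varphi)_{ij}\,e_j$, where $u,v$ are the $0$-cells of $X,Y$ and the $e_j$ the $1$-cells of $Y$. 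The one nonformal input is the identification of the $C_1(\widetilde X)\to C_1(\widetilde Y)$ component of $f_*$ with $J_\varphi$: this is precisely the statement that the lift to $\widetilde Y$ of the loop spelling a word $w\in F_m$, read as a cellular $1$-chain, equals $\sum_j \frac{\partial w}{\partial y_j}\,\widetilde e_j$.

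Then I would simplify. Because $\partial_1(u)=v$ has unit coefficient, the triangular (hence trivial) change of basis $e_j\mapsto \widetilde e_j:=e_j-(y_j-1)u$ in degree $1$ splits off the elementary acyclic summand $0\to\Z[F_m]\langle u\rangle\xrightarrow{\ \mathrm{id}\ }\Z[F_m]\langle v\rangle\to0$; using the fundamental identity of Fox calculus $\sum_j \frac{\partial\varphi(x_i)}{\partial y_j}(y_j-1)=\varphi(x_i)-1$, the $u$-component of $\partial_2(x_i)$ becomes $(1-\varphi(x_i))+(\varphi(x_i)-1)=0$, so the complementary summand is
\[
D_*\colon\qquad 0\longrightarrow \Z[F_m]^n\xrightarrow{\ J_\varphi\ } \Z[F_m]^m\longrightarrow 0
\]
in degrees $2,1$. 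Since elementary acyclic complexes have trivial universal $L^2$-torsion and torsion is additive over direct sums, $\tautwo_u(f)=\tautwo_u(D_*)$. If $D_*$ is $L^2$-acyclic, comparing $\mathcal N(F_m)$-dimensions of $\ker$ and $\operatorname{coker}$ forces $n=m$, so $J_\varphi$ is a weak isomorphism over $\mathcal N(F_m)$ and, by the definition of the universal $L^2$-torsion of a two-term acyclic based complex, $\tautwo_u(D_*)=[J_\varphi]\in\operatorname{Wh}^w(F_m)$; if $D_*$ is not $L^2$-acyclic then neither is $f$, so $\tautwo_u(f)$ is the extra point $0$ and $[J_\varphi]$ is likewise interpreted as $0$. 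In either case the asserted equality holds.

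The main obstacle I anticipate is not the central Fox-calculus computation, which is routine, but the bookkeeping around it: unwinding the definition of the universal $L^2$-torsion of a map via mapping cylinders, verifying the invariance statement that licenses the reduction to wedges of circles through $\operatorname{Wh}(F)=0$, handling the $L^2$-acyclicity dichotomy cleanly, and—most delicately—pinning down the sign and degree conventions so that $D_*$ contributes exactly $[J_\varphi]$ rather than its inverse or a sign-twisted class.
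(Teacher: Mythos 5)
Your proof is correct and follows essentially the same route as the paper's: reduce to wedges of circles via $\operatorname{Wh}(F)=0$ and simple-homotopy invariance, model the mapping cylinder cellularly, and identify the relevant boundary map with the Fox Jacobian via the standard lift-of-a-word computation. The only difference is cosmetic: the paper replaces the mapping cylinder by the simple-homotopy-equivalent complex $X\vee Y$ with $n$ two-cells attached along $\Phi(e_i)e_i^{-1}$, so that the relative chain complex is already the two-term complex given by $J_\varphi$ in degrees $2$ and $1$, whereas you keep the full algebraic mapping cone and split off the elementary acyclic summand by a triangular basis change using the fundamental Fox identity $\sum_j \frac{\partial w}{\partial y_j}(y_j-1)=w-1$ --- both reductions land on the same degree-$(2,1)$ complex, hence the same (positive) exponent on $[J_\varphi]$.
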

We apply this formula to  explicitly compute the universal $L^2$-torsion of certain sutured handlebodies. A key example is the sutured manifold $S^3\bb \Sigma$ obtained by decomposing the $n$-chain link complement along a minimal-genus Seifert surface $\Sigma$ (see Figure \ref{fig:4-chain link in Intro} and Example \ref{Example n-chain link}). We show that
\[
    \tautwo_u(S^3\bb \Sigma,\Sigma_+)=[1+y_1+\cdots+y_{n-1}]\in \operatorname{Wh}^w(\pi_1(S^3\bb \Sigma))
\]
where $\{y_1,\ldots,y_{n-1}\}$ is a free generating set of $\pi_1(S^3\bb \Sigma)$. Combining this with the calculations of \cite{ben2022fuglede}, we obtain the following formula for the classical $L^2$-torsion: 
\[
    \tautwo(S^3\bb \Sigma,\Sigma_+)=\frac{(n-1)^{\frac{n-1}{2}}}{n^{\frac{n-2}{2}}}\sim \sqrt{n/e},\quad \text{as $n\ra +\infty$}.
\]
Here, the left-hand side denotes the classical $L^2$-torsion of the pair $(S^3\bb \Sigma,\Sigma_+)$, obtained from the universal $L^2$-torsion via the Fuglede--Kadison determinant. Together with results from \cite{duan2025Guts}, this computation yields an infinite family of hyperbolic manifolds for which the leading coefficient of the $L^2$-Alexander torsion associated to a nonzero class is greater than 1. This confirms the final remaining case of \cite[Conjecture 1.7]{ben2022leading}.
\begin{figure}[htbp]
    \centering
    
\def\svgwidth{.4\columnwidth}
\begingroup%
  \makeatletter%
  \providecommand\color[2][]{%
    \errmessage{(Inkscape) Color is used for the text in Inkscape, but the package 'color.sty' is not loaded}%
    \renewcommand\color[2][]{}%
  }%
  \providecommand\transparent[1]{%
    \errmessage{(Inkscape) Transparency is used (non-zero) for the text in Inkscape, but the package 'transparent.sty' is not loaded}%
    \renewcommand\transparent[1]{}%
  }%
  \providecommand\rotatebox[2]{#2}%
  \newcommand*\fsize{\dimexpr\f@size pt\relax}%
  \newcommand*\lineheight[1]{\fontsize{\fsize}{#1\fsize}\selectfont}%
  \ifx\svgwidth\undefined%
    \setlength{\unitlength}{67.6412273bp}%
    \ifx\svgscale\undefined%
      \relax%
    \else%
      \setlength{\unitlength}{\unitlength * \real{\svgscale}}%
    \fi%
  \else%
    \setlength{\unitlength}{\svgwidth}%
  \fi%
  \global\let\svgwidth\undefined%
  \global\let\svgscale\undefined%
  \makeatother%
  \begin{picture}(1,0.75200198)%
    \lineheight{1}%
    \setlength\tabcolsep{0pt}%
    \put(0,0){\includegraphics[width=\unitlength,page=1]{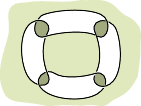}}%
    \put(0.08794095,0.12461284){\color[rgb]{0,0,0}\makebox(0,0)[lt]{\lineheight{1.25}\smash{\begin{tabular}[t]{l}$\Sigma$\end{tabular}}}}%
  \end{picture}%
\endgroup%

    \caption{The $n$-chain link and the minimal-genus Seifert surface $\Sigma$, where $n=4$.}
    \label{fig:4-chain link in Intro}
\end{figure}

\subsection{Motivations}
\subsubsection{Why universal $L^2$-torsion} 
The torsion of an exact chain complex is an invariant generalizing the notion of ``determinant" in linear algebra.
To apply this invariant to a finite CW-complex $X$, one wish to extract an exact complex from the cellular chain complex of the universal cover $\widehat X$.  One way to do this is to extend the scalars from the group ring $\Z [\pi_1(X)]$ to a \emph{commutative} field. For 3-manifolds, this yields the Reidemeister--Franz torsion and the (multi-variable) Alexander polynomials \cite{milnor1962duality,McMullen2002Alexander,friedl2011survey}, via scalar extensions to $\C$ and to the field of rational functions $\Q(H_1(X)/{\rm Tor})$, respectively. However, this extension of scalars loses non-commutative information of the fundamental group.

Whitehead torsion, introduced by J.H.C Whitehead, avoids this issue by retaining the full group ring structure. It takes values in the Whitehead group $\operatorname{Wh}(\pi_1(X))$ consisting of invertible matrices over $\Z[\pi_1(X)]$ modulo elementary relations \cite{milnor1966whitehead,cohen1973course}. However, the Whitehead torsion is only defined for pairs $(X,Y)$ where $X$ deformation retracts to $Y$. Also, it is too restrictive for a matrix to be invertible over $\Z [\pi_1(X)]$. Indeed, it is a well-known conjecture that the Whitehead group of any torsion-free group should vanish. 

The universal $L^2$-torsion (see Section \ref{Section universal L2 torsion} for definitions) emerges as a powerful and widely applicable torsion invariant. 
Here are some of its advantages:
\begin{enumerate}[1.]
    \item The universal $L^2$-torsion is defined via the faithful regular representation of the fundamental group into its $\ell^2$-space and captures its deep noncommutative information.
    \item It applies to a broad class of spaces with vanishing $L^2$-Betti numbers, including mapping tori \cite{luck1994l2}, spaces with infinite amenable fundamental groups \cite{cheeger1986l2}, admissible 3-manifolds \cite{lott19952}, and all odd-dimensional closed hyperbolic manifolds \cite{Hess1998} (see \cite{lueck2002l2} for further examples).
    \item The weak Whitehead group $\operatorname{Wh}^w(G)$ 
    is often highly nontrivial. It supports interesting homomorphisms such as the Fuglede--Kadison determinant and the polytope maps \cite{friedl2017universal}.
\end{enumerate}

Recently, the universal $L^2$-torsion has been used to define group-theoretic Thurston norms for wider classes of groups \cite{friedl2017thurston,funke2018alexander,henneke2020agrarian,kielak2024agrarian,kudlinska2024thurston}. The BNS-invariant, which generalizes the notion of fiberedness from 3-manifold groups to arbitrary finitely generated groups, is also deeply connected to the universal $L^2$-torsion, as evidenced by recent work on two-generator one-relator groups \cite{friedl2017thurston,friedl2020two,henneke2020agrarian}, free-by-cyclic groups \cite{funke2018alexander} and more general agrarian groups  \cite{Kielak2020BNSInvariants}. These studies focus on the $L^2$-torsion polytope derived from the universal $L^2$-torsion and investigate the existence of a \emph{marking} on the polytope that determines the BNS-invariant.

A key novelty of our Theorem \ref{Main Theorem fibered iff face map zero} is to provide a clear and direct description of the fibered structure for 3-manifolds in terms of the universal $L^2$-torsion itself. We expect that our technique can be modified to study more groups as mentioned above.

\subsubsection{Fiberedness and the leading term of torsions}
It is well-known that if a class $\phi$ is fibered, then its Alexander polynomial and twisted Alexander polynomial have trivial leading coefficients \cite{McMullen2002Alexander,goda2005reidemeister,FriedlKim2006Thurston}. While the converse does not hold in general, Friedl and Vidussi \cite{friedl2011twisted} showed that the collection of all twisted Alexander polynomials associated to $\phi$ does determine whether $\phi$ is fibered. This suggests that the leading term of torsion invariants in fact carry enough information for the detection of fiberedness. 

Our main Theorem \ref{Main Theorem fibered iff face map zero} is largely motivated by the study of $L^2$-Alexander torsions. For a cohomology class $\phi\in H^1(M;\R)$, the $L^2$-Alexander torsion of $\phi$ was introduced by Dubois, Friedl and L\"uck \cite{Dubois2016L2AlexanderTorsion} as an $L^2$-analogue of the classical Alexander polynomial. Liu \cite{liu2017degree} established the existence of the leading coefficient $C(M,\phi)\in[1,+\infty)$ of the $L^2$-Alexander torsion, and it is known that $C(M,\phi)=1$ whenever $\phi$ is a fibered class. Conversely, it is not difficult to find examples that $C(M,\phi)=1$ while $\phi$ is non-fibered. For example, take $M$ to be any two-bridge knot complement and $\phi$ the canonical class; the leading coefficient $C(M,\phi)$ can be interpreted as the classical $L^2$-torsion of the guts $\Gamma(\phi)$ \cite{duan2025Guts}, which is a disjoint union of solid tori relative to an annulus on its boundary \cite{agol2022guts}. It is not hard to see (using Theorem \ref{Main Theorem formula for universal L2 torsion of free groups}) that each component has universal $L^2$-torsion of the form $$\Big[\frac{t^n-1}{t-1}\Big]\in\operatorname{Wh}^w(\Z)\cong \Q(t^{\pm})^\times/\{\pm t^n\mid n\in \Z\},$$ while their classical $L^2$-torsions are trivial since they are mapped to $1$ by the Fuglede--Kadison determinant
\[
    \operatorname{det}_{\mathcal N\pi_1(M)}\colon \operatorname{Wh}^w(\Z)\ra \R_+.
\] This observation suggests the need for a more refined invariant that is defined algebraically and does not rely on the Fuglede--Kadison determinant.

Accordingly, we consider the universal $L^2$-torsion and introduce the leading term map $$L_\phi\colon \operatorname{Wh}^w(\pi_1(M))\ra \operatorname{Wh}^w(\pi_1(M))$$ which serves as an algebraic analogue of “taking leading coefficient” within the weak Whitehead group. We refer to Section \ref{Section Leading term map restriction map and polytope map} for further details.

\subsubsection{Analogy with sutured Floer homology}
The Heegaard Floer homology, defined by Ozsváth and Szabó \cite{ozsvath2004holomorphic} for closed 3-manifolds, was generalized by Juhász \cite{juhasz2006holomorphic} to sutured Floer homology ($SFH$), an invariant for balanced sutured manifolds. In \cite{juhasz2008floer} the fundamental properties of $SFH$ are proved:
\begin{enumerate}[\rm \quad (1)]
    \item If $(M,\gamma)$ is a taut balanced sutured manifold, then $SFH(M,\gamma)$ is non-trivial. 
    \item If $(M,\gamma)\rightsquigarrow (M',\gamma')$ is a sutured decomposition of balanced sutured manifolds, then $SFH(M',\gamma')$ is a direct summand of $SFH(M,\gamma)$.
    \item A taut balanced sutured manifold $(M,\gamma)$ is a product sutured manifold if and only if $SFH(M,\gamma)=\Z$.
\end{enumerate}
We observe that analogous properties hold in the theory of universal $L^2$-torsions. Specifically, property (1) corresponds to Herrmann's result (Theorem \ref{Theorem Taut iff L2acyclic}), while (2) and (3) are reflected in Theorems \ref{Main Theorem taut decomposition and face map}--\ref{Main Theorem universal L2 torsion detect product sutured manifold}, respectively. \begin{question}
Is there a deeper connection between sutured Floer homology and the universal $L^2$-torsion of a taut sutured manifold?
\end{question}
This question is particularly interesting because $SFH$ is defined in terms of holomorphic curves and is not directly related to the fundamental group, whereas the universal $L^2$-torsion is algebraic and computable from a presentation of the fundamental group.

Furthermore, Juhász \cite{juhasz2010sutured} showed that the rank of $SFH$ serves as a complexity measure for sutured manifolds that decreases under non-trivial decompositions. This motivates a parallel question in the $L^2$-context:

\begin{question}
Can one define a natural complexity function on the weak Whitehead group that decreases under the leading term map $L_\phi$?
\end{question}

\subsection{Proof ingredients} The remaining part of the paper is divided into five sections. We briefly discuss the contents of each.
\subsubsection{Algebraic preliminaries}
Section \ref{Section Algebraic preliminaries} reviews the basic notions of $L^2$-theory, focusing on three topics that play a central role in this paper: the \emph{Hilbert modules}, the \emph{Atiyah Conjecture} and \emph{Linnell's skew field}, and \emph{$K_1$-groups} together with the \emph{Dieudonn\'e determinants}.

\subsubsection{Universal $L^2$-torsion}
In Section \ref{Section universal L2 torsion}, we define the universal $L^2$-torsion and discuss its computation. Let $X$ be a finite CW-complex whose fundamental group $G$ is torsion-free and satisfies the Atiyah Conjecture, the universal $L^2$-torsion $\tautwo_u(X)$ is defined as the torsion of the chain complex $\mcalD G\otimes _{\Z G} C_*(\widehat X)$ viewed as a complex of modules over Linnell's skew field $\mcalD G$, and takes values in the Whitehead group $\operatorname{Wh}(\mcalD G)$. Slightly different from the original definition in \cite{friedl2017universal}, where the universal $L^2$-torsion lives in the weak Whitehead group $\operatorname{Wh}^w(G)$, our definition is more restrictive but is better suited to algebraic manipulation. Importantly, the two definitions coincide when $X$ is a 3-manifold, or more generally when $G$ belongs to Linnell’s class $\mathcal C$; see Section \ref{Remark of definition of universal L2 torsion} for a detailed discussion.

We further extend the definition of universal $L^2$-torsion to CW-pairs and to continuous mappings between finite CW-complexes via mapping cylinders. Several basic properties are established. This generalization brings greater flexibility to the application of the invariant.

Finally, we generalize Turaev's \emph{matrix chain method} \cite{Turaev2001IntroductionToComb} for the direct computation of the universal $L^2$-torsion of a chain complex. Informally, a chain complex is acyclic if and only if we can extract a chain of invertible submatrices from the matrices representing the boundary operators. The torsion is then given by the alternating product of their determinants.

\subsubsection{Leading term map, restriction map and polytope map}
The content of Section \ref{Section Leading term map restriction map and polytope map} is mostly technical, focusing on the study of three important homomorphisms related to Linnell's skew field $\mcalD G$.

Given any real character $\phi\colon G\ra \R$ and a nonzero element $a=\sum_{g\in G}n_g\cdot g\in\Z G$, the $\phi$-leading term of $a$ is defined to be the sum of nonzero terms $n_g\cdot g$ for which $\phi(g)$ is minimal. Using the crossed product structure of the skew field $\mcalD G$, this extends to the \emph{leading term map} $$L_\phi\colon \mcalD G^\times\ra \mcalD G^\times.$$
This is a homomorphism that induces homomorphisms on the $K_1$-group and the Whitehead group of $\mcalD G$. A key result (Theorem \ref{Theorem Leading term of matrices}) concerning the leading term map says that, roughly speaking, if $A,B$ are square matrices over $\mcalD G$ such that the $\phi$-degree of entries of $B$ are strictly greater than those of $A$, then the $\phi$-leading term of the determinant $\det(A+B)$ is independent of $B$. This intuitive result is surprisingly useful throughout our paper.

The \emph{restriction map} relates the weak $K_1$-group of $G$ to that of its finite-index normal subgroup $L<G$:
\[
    \operatorname{res}^G_L\colon K_1(\mcalD G)\ra K_1(\mcalD L).
\]
This is defined by interpreting an element $z\in K_1(\mcalD G)$ as an operator $r_z\colon \ell^2(L)^{[G\colon L]}\ra \ell^2(L)^{[G\colon L]}$ and then taking the Dieudonn\'e determinant. We prove that the leading term map commutes with the restriction map (see Theorem \ref{Theorem leading term map commutes with restrictions}).

Every element $z\in \mcalD G^\times$ may be viewed as being ``supported" on a certain Newton polytope in the real vector space $H_1(G;\R)$. The \emph{polytope map}
\[
    \mathbb P\colon \mcalD G^\times \ra \poly(H)
\]
formalizes this intuition, where $H$ is the free abelianization of $G$ and 
$\poly(H)$ is the Grothendieck group of the integral polytopes in $\R\otimes_\Z H$ under the Minkowski sum. It is known that $\poly(H)$ is a free abelian group possessing an explicit basis \cite{funke2021integral}. The polytope map is particularly useful for detecting nontrivial elements in $\mcalD G^\times/[\mcalD G^\times,\mcalD G^\times]=K_1(\mcalD G)$.

At the end of Section \ref{Section leading term map} we prove the ``if" direction of Theorem \ref{Main Theorem fibered iff face map zero}, namely Theorem \ref{Theorem if part}. The idea is as follows: if a class $\phi\in H^1(M;\R)\setminus\{0\}$ satisfies $L_\phi\tautwo_u(M)=1$, then this condition is preserved under any finite regular covering of $M$ since the restriction map commutes with the leading term map. By applying the polytope map, it follows that $\phi$ is lifted to a top-dimensional Thurston cone in any such coverings. When $M$ is not a closed graph manifold without an NPC metric, Agol's Virtual Fibering Theorem (see \cite{Aschenbrenner20153ManifoldGroups} and references therein) asserts that $\phi$ can be lifted to a \textit{quasi-fibered} class in some regular finite coverings. A quasi-fibered class in a top-dimensional Thurston cone must be a fibered class, hence $\phi$ can be lifted to a fibered class and is itself fibered.

\subsubsection{Universal $L^2$-torsion for taut sutured manifolds}
In Section \ref{Section taut sutured manifolds}, we investigate the universal $L^2$-torsion for \textit{sutured 3-manifolds}. In order to prove Theorem \ref{Main Theorem taut decomposition and face map}, namely the decomposition formula for universal $L^2$-torsion under taut sutured decompositions, we first adopt the idea of \textit{Turaev's algorithm} \cite{Turaev2002Homological,ben2022leading} to reduce to the case of non-separating decomposition surfaces. Then we choose an explicit CW-structure under which the chain complex of the decomposed manifold is exactly the ``leading term" of the chain complex of the original manifold. Taking their universal $L^2$-torsions results in the required formula. This idea is rigorously formulated in Theorem \ref{Theorem main theorem decomposition formula}.

Once the decomposition formula is established, the ``only if" direction of Theorem \ref{Main Theorem fibered iff face map zero} follows easily, see Theorem \ref{Theorem only if part}.

A refined doubling trick (Lemma \ref{Lemma double is not closed graph}) enables us to convert a taut sutured manifold into an admissible 3-manifold which is not a closed graph manifold. Combining this trick with the fiberedness criterion for admissible 3-manifolds (Theorem \ref{Main Theorem fibered iff face map zero}) and the decomposition formula (Theorem \ref{Main Theorem taut decomposition and face map}), we show that the universal $L^2$-torsion detects product sutured manifolds (Theorem \ref{Main Theorem universal L2 torsion detect product sutured manifold}).

\subsubsection{Applications of the universal $L^2$-torsion}
Section \ref{Section Applications} is devoted to group-theoretic applications and some concrete computations of the universal $L^2$-torsion. We begin by defining this invariant for homomorphisms between finitely generated free groups. Conjecturally this is the only obstruction for a homomorphism to be an isomorphism (Conjecture \ref{Conjecture free goup isomorphism}).

An explicit computational formula is established in Theorem \ref{Theorem Properties of universal torsion for free group homomorphism}, which in particular proves Theorem \ref{Main Theorem formula for universal L2 torsion of free groups}. 
We apply this formula to sutured manifolds modeled on 3-dimensional handlebodies, leading to criteria for detecting whether such a sutured manifold is taut or a product (Proposition \ref{Proposition universal L2torsion of handlebody}). Explicit computations are carried out for the family of sutured manifolds obtained from cutting up the $n$-chain link complement along a Seifert surface (Example \ref{Example n-chain link}).

\subsection{Acknowledgments} The author is deeply grateful to his advisor Yi Liu for his guidance and encouragement. He would also like to thank Dawid Kielak, Marco Linton, Qiuyu Ren, Bin Sun and Xiaolei Wu for stimulating conversations. Special thanks go to Ian Agol for his hospitality during the author's visit to Berkeley, where part of this work was carried out.

\section{Algebraic preliminaries}\label{Section Algebraic preliminaries}
Throughout this paper we adopt the following conventions: all groups are discrete, rings are unital but not necessarily commutative, and modules are left-modules. Fields with noncommutative multiplication are termed skew fields.

\subsection{Hilbert modules}
Let $G$ be a group. Consider the following Hilbert space 
\[
    \ell^2(G)=\Big\{\sum_{g\in G}c_g\cdot g\ \Big|\ c_g\in\C,\  \sum_{g\in G}|c_g|^2<\infty\Big\}
\]
with inner product
\[
    \Big\langle\sum_{g\in G}c_g\cdot g,\sum_{g\in G}d_g\cdot g\Big\rangle = \sum_{g\in G} c_g\overline{d_g}.
\]
This Hilbert space admits natural left and right isometric $G$-actions by multiplication. The \emph{group von Neumann algebra} $\mathcal NG$ is defined as the $\C$-algebra of all bounded linear operators of $\ell^2(G)$ that commutes with the left $G$-action.
Every $G$-invariant closed subspace $V$ of $\ell^2(G)^n$ is called a \emph{Hilbert $\mathcal NG$-module} and can be assigned the \emph{von-Neumann dimension} $\dimN{G}V$ which is a real number in $[0,n]$.

Let $\mathcal UG$ be the set of all densely-defined, closed operators (possibly unbounded) on $\ell^2(G)$ that commute with the left $G$-action. The composition and addition of two operators in $\mathcal UG$ are well-defined \cite[Section 8.1]{lueck2002l2}, making $\mathcal UG$ a $\C$-algebra and is called the \emph{algebra of operators affiliated to $\mathcal NG$}. In particular, there are natural inclusions
\[
    \Z G\subset \mathcal NG\subset  \mathcal UG
\]
where the integral group ring $\Z G$ embeds into $\mathcal NG$ by the right regular representation on $\ell^2(G)$. Moreover, any $m\times n$ matrix $A$ over $\Z G$ can be viewed as a $G$-invariant bounded operator $r_A\colon \ell^2(G)^m\ra \ell^2(G)^n$ defined by right multiplication.

\begin{definition}[Weak isomorphism]\label{Definition of weak isomorphisms}
    A $G$-invariant bounded operator $f\colon \ell^2(G)^m\ra \ell^2(G)^n$ is called a \emph{weak isomorphism} if $f$ is injective with dense image. Note that if $f$ is a weak isomorphism then $m$ and $n$ must be equal. A square matrix $A$ over $\Z G$ is called a \emph{weak isomorphism} if the $G$-invariant bounded operator $r_A\colon \ell^2(G)^m\ra \ell^2(G)^n$ is a weak isomorphism.
\end{definition}

\subsection{Atiyah Conjecture and Linnell's skew field}
\begin{definition}[Atiyah Conjecture]
    A torsion-free group $G$ is said to satisfy the \emph{Atiyah Conjecture} if for any matrix $A$ over $\Z G$ the von Neumann dimension of $\ker( r_A)$ is an integer.
\end{definition}
The Atiyah Conjecture has been verified for a large class of groups (c.f. \cite[Theorem 4.2]{Kielak2020BNSInvariants}). We mark the following important class of groups given by Linnell, which is large enough to include all 3-manifold groups.
\begin{theorem}[\cite{linnell1993division}]\label{Theorem Linnell's class C}
    Let $\mathcal C$ be the smallest class of groups which contains all free groups and is closed under directed unions and extensions by elementary amenable groups. Then any torsion-free group in $\mathcal C$ satisfies the Atiyah Conjecture.
\end{theorem}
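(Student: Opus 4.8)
This is Linnell's theorem; the plan is to follow his original strategy, which is to reformulate the Atiyah Conjecture as a ring-theoretic property of a localization of $\C G$ inside $\mathcal{U}G$ and then propagate it along the closure operations defining $\mathcal C$. For the reformulation, let $\mathcal{R}(G)$ be the rational closure (equivalently here, the division closure) of $\C G$ in $\mathcal{U}G$. Since $\mathcal{U}G$ is von Neumann regular and self-injective, $\mathcal{R}(G)$ is von Neumann regular, and the von Neumann dimension restricts to a faithful rank function on its finitely generated projective modules. One checks that the Atiyah Conjecture for torsion-free $G$ is equivalent to $\mathcal{R}(G)$ being a skew field, and more generally (allowing torsion, which is unavoidable inside $\mathcal C$) to a bound on the image of $K_0(\C G)$ in $K_0(\mathcal{U}G) = \R$ that forces $\mathcal{R}(G)$ to be semisimple Artinian with a controlled number of simple factors. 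Thus the theorem reduces to propagating this structural-plus-quantitative statement through $\mathcal C$.

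The base case is $G = F$ free. Here $\C F$ is a free ideal ring, hence admits a universal skew field of fractions; alternatively, $F$ is bi-orderable, so $\C F$ embeds into a Malcev--Neumann skew field of twisted formal power series. Either way, using $\C F \hookrightarrow \mathcal{U}F$ and von Neumann regularity of $\mathcal{U}F$, one identifies $\mathcal{R}(F)$ with that skew field and verifies the $K_0$-statement by a direct von Neumann dimension computation. Closure under directed unions is routine: for $G = \varinjlim G_i$ one has $\C G = \varinjlim \C G_i$ and $\mathcal{R}(G) = \varinjlim \mathcal{R}(G_i)$ after the evident base change, and a directed colimit of skew fields is a skew field.

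The heart of the matter is closure under extension by an elementary amenable group: given $1 \to H \to G \to A \to 1$ with $H \in \mathcal C$ satisfying the property and $A$ elementary amenable, write $\C G$ as a crossed product, replace its coefficient ring $\C H$ by $\mathcal{D} := \mathcal{R}(H)$ (a skew field when $G$ is torsion-free, semisimple Artinian in general), and study the crossed product $\mathcal{D} * A$ sitting inside $\mathcal{U}G$, whose rational closure there is $\mathcal{R}(G)$. One then argues by transfinite induction along the natural hierarchy exhibiting $A$ as built from finite and finitely generated abelian groups by extensions and increasing unions: for finitely generated torsion-free abelian $A$, the ring $\mathcal{D} * A$ is an Ore domain and so has a skew field of fractions, to be identified with the division closure in $\mathcal{U}G$; for finite $A$, $\mathcal{D} * A$ is semisimple Artinian, and torsion-freeness of $G$ together with the $K_0$-bound forces its rational closure in $\mathcal{U}G$ to collapse to a skew field; and the ascent up the hierarchy is powered by Moody's induction theorem, which controls $G_0$ --- and hence the number of simple factors of the rational closure --- of a crossed product by a polycyclic-by-finite group in terms of its coefficient ring. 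I expect this transfinite induction to be the main obstacle: it requires simultaneously maintaining the skew-field (or semisimple Artinian) conclusion, carrying along the quantitative bounds on $G_0$ and $K_0$ needed to invoke Moody's theorem at each stage, and controlling the compatibility between the abstract fields of fractions produced along the way and the concrete division closures living inside $\mathcal{U}G$.
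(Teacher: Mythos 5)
The paper does not prove this statement; it is quoted verbatim from Linnell's 1993 paper, so the only meaningful comparison is with Linnell's own argument. Your outline of the architecture --- reformulating the (strong) Atiyah Conjecture as the division/rational closure in $\mathcal UG$ being a skew field (or semisimple Artinian with a bound coming from the image of $K_0$), handling directed unions by passing to colimits of division closures, and handling extensions by elementary amenable groups via crossed products, Ore localization for the torsion-free abelian layers, semisimplicity for the finite layers, and Moody's induction theorem to control $G_0$ up the transfinite hierarchy --- is an accurate description of Linnell's strategy, and you correctly identify the crossed-product induction as delicate.

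There is, however, a genuine gap in your base case. The existence of an abstract skew field of fractions for $\C F$ (via Cohn's theory of firs, or via bi-orderability and Malcev--Neumann series) tells you nothing about how elements of $\C F$ act on $\ell^2(F)$; in particular it does not imply that a nonzero matrix over $\C F$ which is full (or invertible in the abstract field of fractions) is injective with dense image as an operator on $\ell^2(F)^n$, which is exactly what is needed for the division closure $\mathcal R(F)\subset\mathcal UF$ to be a skew field. Already the statement that a nonzero element of $\C F_2$ has trivial kernel on $\ell^2(F_2)$ is a hard analytic fact with no ``direct von Neumann dimension computation.'' Linnell's actual proof of the free-group case couples the fir/universal-localization machinery with an analytic input: the $1$-summable Fredholm module coming from the action of $F$ on its Cayley tree (in the spirit of Pimsner--Voiculescu and Connes), which shows that kernels of matrices over $\C F$ acting on $\ell^2(F)^n$ have integral von Neumann dimension. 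Without supplying that ingredient (or an equivalent, e.g.\ a proof that the embedding $\C F\hookrightarrow\mathcal UF$ is Hughes-free), the identification of $\mathcal R(F)$ with the universal field of fractions, and hence the entire induction, does not get off the ground. The rest of your sketch I would accept as a faithful, if compressed, account of Linnell's argument.
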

\begin{theorem}[{\cite[Theorem 1.4]{kielak2024group}}]\label{Theorem 3-manifold group in class C}
    The fundamental group of any connected 3-manifold lies in $\mathcal C$.
\end{theorem}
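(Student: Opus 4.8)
The plan is to reduce the statement, via the Scott compact core theorem and Perelman's geometrization, to a handful of closure properties of Linnell's class $\mathcal C$, the most substantial of which concerns amalgamated products and HNN extensions over $\mathbb{Z}^2$.

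\emph{Reduction to the compact case and two elementary closures.} Replacing $M$ by its orientation double cover if necessary, I may assume $M$ is orientable; and since $\pi_1(M)$ is countable it is the directed union of its finitely generated subgroups, so by closure of $\mathcal C$ under directed unions it suffices to treat a finitely generated $H\leqslant\pi_1(M)$. Such an $H$ is $\pi_1$ of a covering of $M$ with finitely generated fundamental group, hence $H\cong\pi_1(N)$ for a compact orientable $3$-manifold $N$ by the Scott compact core theorem. Along the way I would record, by transfinite induction along the construction of $\mathcal C$, that $\mathcal C$ is closed under passing to subgroups, and therefore that a group possessing a finite-index subgroup in $\mathcal C$ itself lies in $\mathcal C$ (replace the subgroup by its normal core); both facts are used throughout to move between a manifold and its finite covers.

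\emph{Reduction to geometric pieces.} For compact $N$ I would first pass to the case where $N$ is irreducible, aspherical, with empty or toroidal boundary: a prime decomposition writes $\pi_1(N)$ as a finite free product of prime-summand groups, absorbing a $1$-handle costs only a free factor $\mathbb{Z}$, and a boundary component of genus $\geqslant1$ can be removed by first making the boundary incompressible and then doubling along that component (the $\pi_1$ of an incompressible half injects, so the subgroup closure applies). This requires establishing that $\mathcal C$ is closed under finite free products, which I would prove by a transfinite induction mirroring the Kurosh subgroup theorem — realizing $A\ast B$ with $A\in\mathcal C_\alpha$ as an extension of a quotient $Q$ by a free product of lower-rank pieces. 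By geometrization, the resulting $N$ either carries one of the eight geometries or has a non-trivial JSJ decomposition into Seifert fibered and finite-volume hyperbolic pieces.

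\emph{The geometric pieces lie in $\mathcal C$.} Pieces modelled on $\mathbb{E}^3$, $\mathrm{Nil}$, $\mathrm{Sol}$, $S^3$ or $S^2\times\mathbb{R}$ have virtually polycyclic, hence elementary amenable, fundamental group and so lie in $\mathcal C$ at once. A finite-volume hyperbolic piece virtually fibers by Agol's theorem, hence is virtually $(\text{surface group})$-by-$\mathbb{Z}$; surface groups lie in $\mathcal C$ (those with boundary are free, and a closed surface group is $F$-by-$\mathbb{Z}$ for a free group $F$, via an infinite cyclic cover), so such a piece is virtually in $\mathcal C$, hence in $\mathcal C$. An $\mathbb{H}^2\times\mathbb{R}$ piece is virtually $\Sigma\times S^1$, so again $(\text{surface group})$-by-$\mathbb{Z}$ up to finite index. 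For an $\widetilde{\mathrm{SL}}_2$ piece — which is not virtually fibered — I would pass to a finite cover that is a non-trivial circle bundle over a closed hyperbolic surface $\Sigma$; a primitive $\phi\colon\pi_1\to\mathbb{Z}$ kills the torsion fibre class, hence factors through $\pi_1(\Sigma)$, and the corresponding infinite cyclic cover is the pulled-back bundle over the infinite cyclic cover of $\Sigma$, a non-compact surface homotopy equivalent to a graph; thus $\ker\phi\cong F_\infty\times\mathbb{Z}\in\mathcal C$ and the piece group is $(F_\infty\times\mathbb{Z})$-by-$\mathbb{Z}\in\mathcal C$.

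\emph{Assembly along the JSJ, and the main obstacle.} If $N$ is not a closed graph manifold without an NPC metric, Agol's Virtual Fibering Theorem supplies a finite cover that fibers over $S^1$, so $\pi_1(N)$ is virtually $(\text{surface group})$-by-$\mathbb{Z}\in\mathcal C$ and we are done. The remaining case — $N$ a closed graph manifold admitting no NPC metric — is the crux: here $\pi_1(N)$ is the fundamental group of a finite graph of groups whose vertex groups are Seifert-piece groups (each virtually $F_\ast\times\mathbb{Z}$, hence in $\mathcal C$ by the previous step) and whose edge groups are copies of $\mathbb{Z}^2$. What is needed is the closure of $\mathcal C$ under fundamental groups of finite graphs of groups with vertex groups in $\mathcal C$ and elementary amenable (here $\mathbb{Z}$ or $\mathbb{Z}^2$) edge groups, and \textbf{this closure property is the main obstacle}. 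I would attack it by Bass--Serre theory in the spirit of the free-product step: after a finite cover one may assume $b_1\geqslant1$, and the kernel of a surjection to $\mathbb{Z}$ is the fundamental group of the associated infinite cyclic cover, which is a directed union of fundamental groups of finite graphs of groups whose vertex groups are again subgroups of the Seifert-piece groups (hence in $\mathcal C$) and whose edge groups are $\mathbb{Z}$ or $\mathbb{Z}^2$ — thereby reducing to \emph{finite} amalgams and HNN extensions over $\mathbb{Z}$ and $\mathbb{Z}^2$, which one would handle by induction on the construction rank of $\mathcal C$, using that a subgroup of an amalgam or HNN extension is itself a graph of groups with vertex groups intersections of conjugates of the factors. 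Controlling how the edge groups propagate through the extension-by-elementary-amenable steps is where the genuine work lies; absent a self-contained argument one invokes the relevant closure statement as established in the literature.
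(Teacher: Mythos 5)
The paper does not prove this statement at all: it is imported verbatim as \cite[Theorem 1.4]{kielak2024group}, so there is no internal proof to compare yours against. Your outline is a sensible geometrization-based strategy, and several branches are correct and essentially complete (subgroup and finite-index closure of $\mathcal C$, the Scott core reduction, the Seifert, $\widetilde{\operatorname{SL}}_2$, and virtually fibered cases). But as a proof it has two genuine gaps, located exactly where the content of the cited theorem lies.

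First, closure of $\mathcal C$ under finite free products (needed already for the prime decomposition) is not a defining closure property of $\mathcal C$, and your one-sentence sketch does not work as stated: killing $B$ and projecting $A$ onto its elementary amenable quotient $Q$ exhibits $A\ast B$ as an extension of $Q$ by an infinite free product of conjugates of $A_0$ and of $B$ (plus a free group), so the factor $B$ of undiminished construction rank reappears inside the kernel and the induction does not visibly terminate; a carefully organized transfinite double induction is required and is not supplied. Second, and more seriously, assembling the JSJ pieces --- in particular the case of a closed graph manifold admitting no NPC metric --- needs $\mathcal C$ to be closed under fundamental groups of finite graphs of groups with vertex groups in $\mathcal C$ and edge groups $\Z$ or $\Z^2$. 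This is not an available closure property you can cite, and your proposed reduction (pass to a finite cover with $b_1\geqslant 1$, take the infinite cyclic cover, exhaust by compact pieces) returns you to finite graphs of groups of exactly the same type, since every edge torus on which the chosen class vanishes survives with $\Z^2$ edge group in the cover. Your closing sentence defers this step to ``the literature,'' but the relevant literature is precisely the theorem being proved, so the argument is circular at its crux. In the context of this paper the correct move is the one the author makes: quote the result.
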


An alternative algebraic characterization of the Atiyah Conjecture is given by the division closure of $\Z G$ in $\mathcal UG$.

\begin{definition}[Division closure]
    Let $R$ be a subring of a ring $S$. The \emph{division closure} of $R$ in $S$ is the smallest subring of $S$ containing $R$ that is closed under taking inverses in $S$; that is, any element of this subring which is invertible in $S$ has its inverse also in the subring. For a group $G$, we denote by $\mcalD G$ the division closure of $\Z G$ in $\mathcal UG$.
\end{definition}
\begin{theorem}[\cite{linnell1993division}]
    A torsion-free group $G$ satisfies the Atiyah Conjecture if and only if $\mcalD G$ is a skew field.
\end{theorem}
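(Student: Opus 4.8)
The plan is to derive both implications from two pieces of structure recalled in the preliminaries: the $\mathcal UG$-valued rank function, and the (standard) fact, which I would cite, that $\mathcal UG$ is von Neumann regular. Write $\operatorname{rk}$ for the rank function sending an $m\times n$ matrix $A$ over $\mathcal UG$ to $\operatorname{rk}(A):=\dim_{\mathcal UG}\im\bigl(A\colon\mathcal UG^m\to\mathcal UG^n\bigr)$. I would first record its elementary properties: $0\le\operatorname{rk}(A)\le\min(m,n)$; $\operatorname{rk}(A)>0$ as soon as $A\ne 0$ (faithfulness of the trace on $\mathcal NG$); $\operatorname{rk}$ is unchanged under multiplying $A$ on either side by an invertible matrix over $\mathcal UG$; $\operatorname{rk}\bigl(\operatorname{diag}(A,I_k)\bigr)=\operatorname{rk}(A)+k$; and the bridge to the Atiyah Conjecture, namely the identity $\dimN{G}\ker(r_A)=m-\operatorname{rk}(A)$ for a matrix $A$ over $\Z G$, valid for every group $G$ (see \cite{lueck2002l2}). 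Thus the Atiyah Conjecture for $G$ is exactly the statement that $\operatorname{rk}(A)\in\Z$ for every matrix $A$ over $\Z G$. I would also invoke the standard fact that, because $\mathcal UG$ is von Neumann regular, the division closure $\mcalD G$ agrees with the \emph{rational closure} of $\Z G$ in $\mathcal UG$ (going back to work of Cohn; cf.\ \cite{linnell1993division}).

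For the implication ``$\mcalD G$ a skew field $\Rightarrow$ Atiyah Conjecture'', the argument is short. Since $\mcalD G$ is a skew field, every module over it is free, so $\mathcal UG$ is flat as a $\mcalD G$-module; hence base change along $\Z G\hookrightarrow\mcalD G\hookrightarrow\mathcal UG$ commutes with kernels and images, and $\operatorname{rk}(A)$ equals the ordinary rank of $A$ as a matrix over the skew field $\mcalD G$, which is a non-negative integer. Therefore $\dimN{G}\ker(r_A)=m-\operatorname{rk}(A)\in\Z$ for every matrix $A$ over $\Z G$.

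For the converse ``Atiyah Conjecture $\Rightarrow$ $\mcalD G$ a skew field'', I would proceed in two steps. Step 1 bootstraps integrality of ranks from $\Z G$ to $\mcalD G$: given any matrix $B$ over $\mcalD G$, its description via the rational closure shows that $B$ is \emph{stably associated} over $\mathcal UG$ to some matrix $A$ over $\Z G$, i.e.\ $\operatorname{diag}(B,I_k)=P\,\operatorname{diag}(A,I_l)\,Q$ for invertible matrices $P,Q$ over $\mathcal UG$; applying $\operatorname{rk}$ and its invariance properties then gives $\operatorname{rk}(B)=\operatorname{rk}(A)+l-k\in\Z$. Step 2 concludes: let $0\ne x\in\mcalD G$. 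By Step 1, $\operatorname{rk}(x)\in\Z$, while $0<\operatorname{rk}(x)\le 1$ since $x\ne 0$; hence $\operatorname{rk}(x)=1$. As $\mathcal UG$ is von Neumann regular, $x\mathcal UG$ is a direct summand of $\mathcal UG$; having full $\mathcal UG$-dimension $1$, its complement has dimension $0$ and so vanishes. Thus $x\mathcal UG=\mathcal UG$, so $x$ is right invertible, and a short argument with the rank function (or the stable finiteness of $\mathcal UG$) upgrades this to a two-sided inverse in $\mathcal UG$. Since $\mcalD G$ is closed under inverses taken in $\mathcal UG$, we get $x^{-1}\in\mcalD G$; as $x$ was arbitrary nonzero and $1\ne 0$ in $\mcalD G$, the ring $\mcalD G$ is a skew field.

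The main obstacle is the input to Step 1 of the converse: identifying $\mcalD G$ with the rational closure of $\Z G$ in $\mathcal UG$ and showing that every matrix over it is stably associated to one over $\Z G$. This is noncommutative ring theory in the spirit of Cohn, and I would either cite it directly or establish it by induction on the number of inversion steps needed to write the entries of $B$ in terms of $\Z G$, converting each inversion of an element $a$ into an elementary block operation on a matrix recording $a$ together with the already-treated lower-complexity data. Everything else --- the two implications, the passage through $\operatorname{rk}$, and the use of von Neumann regularity --- is routine once that structural statement is in hand.
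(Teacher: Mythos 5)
The paper does not prove this statement at all --- it is quoted verbatim from \cite{linnell1993division} --- so there is no in-paper argument to compare against. Your outline is the standard proof of this equivalence (essentially the one in Linnell's paper and in L\"uck's book, Lemma~10.39, via Reich's thesis), and I find it correct modulo the structural inputs you explicitly flag: that $\mathcal UG$ is von Neumann regular with a faithful $\mathcal UG$-dimension, and Cohn's Cramer's-rule mechanism showing every matrix over the rational closure is stably associated over $\mathcal UG$ to a matrix over $\Z G$. Two small refinements: for Step~1 you do not need the full equality of $\mcalD G$ with the rational closure --- the inclusion of the division closure into the rational closure holds automatically (the rational closure is division closed), and that inclusion is all the stable-association argument requires; and in Step~2 the upgrade from right invertibility to two-sided invertibility should be pinned to the direct finiteness of $\mathcal UG$ (a consequence of the faithful dimension function), after which division closedness of $\mcalD G$ places $x^{-1}$ back in $\mcalD G$ as you say. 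The first implication, via flatness of $\mathcal UG$ over the skew field $\mcalD G$ and the identity $\dimN{G}\ker(r_A)=m-\operatorname{rk}(A)$, is fine as written.
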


Therefore, for a torsion-free group $G$ satisfying the Atiyah Conjecture, the ring $\mcalD G$ is a skew field called the \emph{Linnell's skew field}. The following Proposition \ref{Proposition subgroup satisfies Atiyah conjecture} summarizes useful functorial properties of this construction.

\begin{proposition}[{\cite[Proposition 4.6]{Kielak2020BNSInvariants}}]\label{Proposition subgroup satisfies Atiyah conjecture}
    Let $G$ be a torsion-free group satisfying the Atiyah Conjecture. Then the following hold.
    \begin{enumerate}[\quad\rm (1)]
        \item Every automorphism of the group $G$ extends to an automorphism of $\mcalD G$.
        \item If $K$ is a subgroup of $G$, then $K$ also satisfies the Atiyah Conjecture. Moreover, the natural embedding $\Z K\hookrightarrow \Z G$ extends to an embedding $\mcalD K\hookrightarrow \mcalD G$.
    \end{enumerate}
\end{proposition}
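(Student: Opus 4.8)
The plan is to prove both statements inside the affiliated algebra $\mathcal UG$ and then transfer the conclusions to $\mcalD G$ using its intrinsic description as the smallest subring of $\mathcal UG$ that contains $\Z G$ and is closed under taking inverses in $\mathcal UG$. The key point is that this description is preserved by any ring automorphism of $\mathcal UG$ fixing $\Z G$ setwise, and that it interacts well with subrings of the form $\mathcal UK$.

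For part (1), an automorphism $\alpha$ of $G$ induces the unitary $U_\alpha\colon \ell^2(G)\ra\ell^2(G)$ permuting the standard basis by $g\mapsto\alpha(g)$. A short computation gives $U_\alpha\lambda(g)U_\alpha^{-1}=\lambda(\alpha(g))$ and $U_\alpha\rho(g)U_\alpha^{-1}=\rho(\alpha(g))$ for the left and right regular representations, so conjugation $\operatorname{Ad}(U_\alpha)$ preserves the commutant $\mathcal NG=\lambda(G)'$, preserves $\mathcal UG$, and restricts on the copy of $\Z G$ to the ring automorphism induced by $\alpha$. Being a ring automorphism of $\mathcal UG$ that carries $\Z G$ onto $\Z G$, it carries division-closed subrings to division-closed subrings, hence sends the division closure $\mcalD G$ onto itself; its restriction is the desired automorphism.

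For part (2), decompose $G=\bigsqcup_i g_iK$ into left cosets, so that $\ell^2(G)=\bigoplus_i\ell^2(g_iK)$ with each summand identified with $\ell^2(K)$ via $g_ik\leftrightarrow k$. Right multiplication by $\Z K$ preserves each summand and acts there by right multiplication on $\ell^2(K)$, so the ``diagonal'' assignment extends first $\mathcal NK\hookrightarrow\mathcal NG$, and then, since it sends nonzerodivisors to nonzerodivisors, $\mathcal UK\hookrightarrow\mathcal UG$, compatibly with $\Z K\hookrightarrow\Z G$; the extended operators lie in $\mathcal NG$, resp.\ $\mathcal UG$, because right multiplications automatically commute with the left $G$-action. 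Next, for a matrix $A$ over $\Z K$ the same decomposition identifies $\ker(r_A\colon\ell^2(G)^n\ra\ell^2(G)^n)$ with the induced Hilbert module $\operatorname{Ind}_K^G\ker(r_A\colon\ell^2(K)^n\ra\ell^2(K)^n)$, and the invariance of von Neumann dimension under induction \cite{lueck2002l2} gives $\dimN{G}\ker(r_A\text{ over }\Z G)=\dimN{K}\ker(r_A\text{ over }\Z K)$. The left-hand side is an integer since $G$ satisfies the Atiyah conjecture, hence so is the right-hand side; thus $K$ satisfies the Atiyah conjecture and $\mcalD K$ is a skew field. Finally, the image of $\mcalD K$ in $\mathcal UG$ is division-closed there (each nonzero element of the skew field $\mcalD K$ is invertible, with inverse again in $\mcalD K$) and contains $\Z K$; conversely any division-closed subring of $\mathcal UG$ containing $\Z K$ meets $\mathcal UK$ in a division-closed subring containing $\Z K$, hence contains $\mcalD K$. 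So the image of $\mcalD K$ is exactly the division closure of $\Z K$ in $\mathcal UG$, which lies inside $\mcalD G$ because $\mcalD G$ is itself division-closed in $\mathcal UG$ and contains $\Z K\subset\Z G$. This produces the embedding $\mcalD K\hookrightarrow\mcalD G$.

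I expect the main obstacle to be the dimension bookkeeping when $[G\colon K]$ is infinite: one cannot reduce to the finite-index case by simply scaling dimensions by $[G\colon K]$, and must instead phrase the coset decomposition as an induced Hilbert $\mathcal NG$-module and invoke the (nontrivial) invariance of von Neumann dimension under induction. A secondary technical point is checking that the diagonal embedding $\mathcal NK\to\mathcal NG$ genuinely extends to the unbounded affiliated operators of $\mathcal UK$, which is cleanest to handle by passing through the Ore localization of $\mathcal NG$ at its nonzerodivisors rather than manipulating closed unbounded operators directly.
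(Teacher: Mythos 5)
Your proof is correct. The paper does not prove this proposition itself but cites \cite[Proposition 4.6]{Kielak2020BNSInvariants}, and your argument is essentially the standard one found there and in L\"uck's book: conjugation by the permutation unitary $U_\alpha$ for part (1), and for part (2) the coset decomposition $\ell^2(G)=\bigoplus_i\ell^2(g_iK)$ realizing $\ker r_A$ over $G$ as the induced module of $\ker r_A$ over $K$, together with the invariance of von Neumann dimension under induction and the characterization of $\mcalD K$ as the smallest division-closed subring containing $\Z K$.
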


\subsection{The $K_1$-group and Dieudonn\'e determinant}
Let $\mathcal F$ be a skew field. For any positive integer $n$ let $\operatorname{GL}(n,\mathcal F)$ be the group of invertible $(n\times n)$-matrices over $\mathcal F$. By identifying each $M\in \operatorname{GL}(n,\mathcal F)$ with the block matrix
\[
    \begin{pmatrix}
        M & 0\\ 0 & 1
    \end{pmatrix}\in \operatorname{GL}(n+1,\mathcal F),
\]
we obtain a natural chain of inclusions
\[
    \operatorname{GL}(1,\mathcal F)\subset \operatorname{GL}(2,\mathcal F)\subset\cdots.
\]
The direct union $\operatorname{GL}(\mathcal F)=\bigcup_{n\geqslant1} \operatorname{GL}(n,R)$ is called the \emph{infinite general linear group} over $\mathcal F$. A classical result of Whitehead (see e.g. \cite{milnor1966whitehead}) states that the commutator subgroup $[\operatorname{GL}(\mathcal F),\operatorname{GL}(\mathcal F)]$ coincides with the subgroup generated by all elementary matrices in $\operatorname{GL}(\mathcal F)$.

\begin{definition}[\cite{dieudonne1943determinants,rosenberg1995algebraic}]
    The \emph{Dieudonn\'e determinant} is the unique map $$\det\colon \operatorname{GL}(\mathcal F)\ra \mathcal F^\times/[\mathcal F^\times,\mathcal F^\times]$$ satisfying the following properties (a)--(c):
\begin{enumerate}[\quad (a)]
    \item The determinant is invariant under left elementary row operations. That is, if $A\in \operatorname{GL}(\mathcal F)$ and $A'$ is obtained from $A$ by adding a left multiple of a row to another row, then $\det A=\det A'$.
    \item If $A\in \operatorname{GL}(\mathcal F)$, and $A'$ is obtained from $A$ by left multiplying one of the rows by $a\in\mathcal F^\times$, then $\det A=[a]\cdot\det A'$ where $[a]$ is the image of $a$ in $\mathcal F^\times/[\mathcal F^\times,\mathcal F^\times]$.
    \item The determinant of the identity matrix is $[1]$.
\end{enumerate}
The determinant also has the following additional properties (d)--(e).
\begin{enumerate}[\quad (a)]
\addtocounter{enumi}{3}
    \item If $A,B\in \operatorname{GL}(n,\mathcal F)$, then $\det(AB)=\det A\cdot \det B$.
    \item If $A\in \operatorname{GL}(\mathcal F)$ and $A'$ is obtained from $A$ by interchanging two of its rows, then $\det A'=[-1]\cdot\det A$.
    \item Suppose $\mathcal F$ is equipped with an involution. Then the determinant is compatible with the involution transpose, that is $\det(A^*)=(\det A)^*$.
\end{enumerate}
\end{definition}
\begin{remark}
    The Dieudonn\'e determinant is not invariant under the usual matrix transpose. For example, consider Linnell's skew field $\mcalD F$ of the free group $F=\langle x,y \rangle$. Then the matrix 
    \[
        A=\begin{pmatrix}
            x & 1\\
            xy & y
        \end{pmatrix}
    \]
    has Dieudonn\'e determinant $\det(A)=[yx-xy]$, while $A^T=\begin{pmatrix}
            x & xy\\
            1 & y
        \end{pmatrix}$ is not even invertible.
\end{remark}

The Dieudonn\'e determinant factors through the abelianization of $\operatorname{GL}(\mathcal F)$, and the induced homomorphism is in fact an isomorphism.
\begin{lemma}[{\cite[Corollary 2.2.6]{rosenberg1995algebraic}}]\label{Lemma K1 group equals abelianization}
    For any skew field $\mathcal F$, the Dieudonn\'e determinant induces a group isomorphism
\[
    \operatorname{GL}(\mathcal F)/[\operatorname{GL}(\mathcal F),\operatorname{GL}(\mathcal F)]\xrightarrow{\cong} \mathcal F^\times/[\mathcal F^\times,\mathcal F^\times].
\]
The inverse map sends the class of $a\in \mathcal F^\times$ to the class of the $(1\times 1)$-matrix $(a)$.
\end{lemma}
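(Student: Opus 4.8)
The plan is to show directly that the Dieudonné determinant $\det\colon \operatorname{GL}(\mathcal F)\to \mathcal F^\times/[\mathcal F^\times,\mathcal F^\times]$ is surjective and has kernel exactly equal to $[\operatorname{GL}(\mathcal F),\operatorname{GL}(\mathcal F)]$; since the target is abelian, property (d) already forces the determinant to factor through the abelianization, so the only real content is the kernel computation. Surjectivity is immediate: for $a\in\mathcal F^\times$ the $(1\times 1)$-matrix $(a)$ has determinant $[a]$ by properties (b) and (c), and this also identifies the claimed inverse map, so it remains to see that map is well-defined on $K_1$. For that one must check $\det$ kills commutators, which is automatic, and that $(a)$ and $(b)$ map to the same class in $\operatorname{GL}(\mathcal F)^{\mathrm{ab}}$ whenever $a$ and $b$ differ by a multiplicative commutator in $\mathcal F^\times$ — this is the standard Whitehead lemma computation showing $\operatorname{diag}(aba^{-1}b^{-1},1,\dots)$ lies in $[\operatorname{GL}(\mathcal F),\operatorname{GL}(\mathcal F)]$.

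The heart of the argument is: if $A\in\operatorname{GL}(n,\mathcal F)$ satisfies $\det A = [1]$, then $A\in[\operatorname{GL}(\mathcal F),\operatorname{GL}(\mathcal F)]$, which by Whitehead's cited result is the subgroup $E(\mathcal F)$ generated by elementary matrices. First I would use Gaussian elimination over the skew field — exactly the operations allowed in properties (a) and (b) — to reduce $A$ modulo $E(n,\mathcal F)$ to a diagonal matrix $\operatorname{diag}(d_1,\dots,d_n)$. Left elementary operations multiply on the left by elementary matrices, which lie in $E(\mathcal F)\subset[\operatorname{GL}(\mathcal F),\operatorname{GL}(\mathcal F)]$, so $A$ and $\operatorname{diag}(d_1,\dots,d_n)$ represent the same class in the abelianization. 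Next, the Whitehead lemma identity
\[
    \begin{pmatrix} d & 0 \\ 0 & d^{-1}\end{pmatrix} \in E(2,\mathcal F)
\]
together with a telescoping argument collapses $\operatorname{diag}(d_1,\dots,d_n)$, modulo $E(\mathcal F)$, to the single $(1\times 1)$ block $(d_1 d_2\cdots d_n)$ placed in a large identity matrix. Tracking property (b) through the elimination shows $[d_1 d_2\cdots d_n] = \det A = [1]$ in $\mathcal F^\times/[\mathcal F^\times,\mathcal F^\times]$, i.e. $d_1\cdots d_n$ is a product of multiplicative commutators; applying the Whitehead lemma once more to each such commutator (this time exploiting that a scalar commutator block sits in $E(\mathcal F)$) shows $(d_1\cdots d_n)\oplus 1_{N}\in E(\mathcal F)$ for $N$ large. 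Hence $A\in E(\mathcal F)=[\operatorname{GL}(\mathcal F),\operatorname{GL}(\mathcal F)]$, giving $\ker\det \subseteq [\operatorname{GL}(\mathcal F),\operatorname{GL}(\mathcal F)]$; the reverse inclusion is property (d), so the induced map on abelianizations is injective, completing the proof.

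The main obstacle is bookkeeping rather than ideas: one must be careful that over a skew field only \emph{one-sided} (here, left) elementary operations are available, so the reduction to diagonal form has to be done in a fixed handedness throughout, and the determinant properties (a), (b) are stated only for left operations — using a right operation would spoil the tracking of $\det$. A secondary subtlety is that passing from "$d_1\cdots d_n$ is trivial in $\mathcal F^\times/[\mathcal F^\times,\mathcal F^\times]$" to "the corresponding matrix is a product of elementary matrices" requires the Whitehead lemma in the form $[a,b]\oplus 1 \in E(\mathcal F)$ for $a,b\in\mathcal F^\times$, which is exactly where the stabilization (enlarging the matrix size) is essential and why the statement is phrased for $\operatorname{GL}(\mathcal F)$ rather than a fixed $\operatorname{GL}(n,\mathcal F)$.
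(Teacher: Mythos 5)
The paper does not prove this lemma itself --- it is quoted directly from \cite[Corollary 2.2.6]{rosenberg1995algebraic} --- and your argument is exactly the standard proof given there: surjectivity from the $(1\times1)$ case, reduction of an invertible matrix to diagonal form by left elementary row operations, the Whitehead-lemma identity $\operatorname{diag}(d,d^{-1})\in E(2,\mathcal F)$ to telescope the diagonal entries into a single block, and a final application of the same identity to show that a $1\times1$ block which is a product of multiplicative commutators lies in $E(\mathcal F)$. The argument is correct, including the two subtleties you flag (consistent handedness of the row operations, and the need for stabilization/the Whitehead lemma in the last step), so nothing further is needed.
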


\begin{definition}[$K_1(\mathcal F)$ and $\widetilde K_1(\mathcal F)$]
     The \emph{$K_1$-group} of $\mathcal F$ is defined as $$K_1(\mathcal F):=\mathcal F^\times/[\mathcal F^\times,\mathcal F^\times].$$
The \emph{reduced $K_1$-group} $\widetilde K_1(\mathcal F)$ is the quotient of $K_1(\mathcal F)$ by the subgroup $\{[\pm1]\}$.
\end{definition}

We now specialize these definitions to  Linnell's skew field $\mcalD G$, where $G$ is a torsion-free group satisfying the Atiyah Conjecture. In this context, we introduce the following Whitehead group: 
\begin{definition}[$\operatorname{Wh}(\mcalD G)$]
    The \emph{Whitehead group} of $\mcalD G$ is defined as the quotient of $K_1(\mcalD G)$ by the subgroup generated by the classes $[\pm g]$ for $g\in G$: 
    \[
        \operatorname{Wh}(\mcalD G):=K_1(\mcalD G)/\langle[\pm g]\mid g\in G\rangle.
    \]
\end{definition}

\begin{definition}\label{Definition det, redet, whdet}
    In the following we will frequently consider determinants and their images in the quotient groups $\widetilde K_1(\mcalD G)$ and $\operatorname{Wh}(\mcalD G)$. We therefore define the homomorphisms
    \[
        \redet\colon \operatorname{GL}(\mcalD G)\ra \widetilde K_1(\mcalD G),\quad \whdet\colon \operatorname{GL}(\mcalD G)\ra \operatorname{Wh}(\mcalD G)
    \]
    to be the compositions of the Dieudonn\'e determinant with the respective quotient maps, so that the following diagram commutes.
    \[\begin{tikzcd}&               & \operatorname{GL}(\mcalD G) \arrow[ld, "\det"'] \arrow[d, "\redet"] \arrow[rd, "\whdet"] &                   \\
\mcalD G^\times/[\mcalD G^\times,\mcalD G^\times] \arrow[r, equals, shorten <=3pt, shorten >=3pt] & K_1(\mcalD G) \arrow[r]   & \widetilde K_1(\mcalD G)   \arrow[r]                                    & \operatorname{Wh}(\mcalD G)  
\end{tikzcd}\]
\end{definition}

\section{Universal $L^2$-torsion}\label{Section universal L2 torsion}
Let $G$ be a torsion-free group satisfying the Atiyah conjecture and let $\mcalD G$ be the Linnell's skew field. The universal $L^2$-torsion defined in this section takes values in $\widetilde K_1(\mcalD G)$ for $\Z G$-chain complexes, and in $\operatorname{Wh}(\mcalD G)$ for finite CW-complexes with fundamental group $G$. Although our definition is more restrictive than the original one given in \cite{friedl2017universal}, the two definitions coincide when $G$ is a 3-manifold group.
A detailed discussion of the relationship between the two is provided in Section \ref{Remark of definition of universal L2 torsion}.

\subsection{Universal $L^2$-torsion of chain complexes}
    A chain complex $C_*$ is called a \emph{finite based free $\Z G$-chain complex} if there exists $n\geqslant 0$ such that 
    \[
        C_*=(0\ra C_n\ra\cdots\ra C_1\ra C_0\ra 0),
    \]
    where each $C_k$ is a finitely generated free (left) $\Z G$-module equipped with a preferred unordered free $\Z G$-basis, and the boundary operators are $\Z G$-linear maps.
    \begin{definition}
        A finite based free $\Z G$-chain complex $C_*$ is said to be \emph{$L^2$-acyclic} if the chain complex $\ell^2(G)\otimes_{\Z G} C_*$ is weakly exact, i.e. the image $\im \partial^{(2)}_{k+1}$ is dense in $\ker\partial^{(2)}_k$ for all $k$, where $\partial^{(2)}_k\colon \ell^2(G)\otimes_{\Z G} C_k\ra \ell^2(G)\otimes_{\Z G} C_{k-1}$ is the boundary operator.
    \end{definition}
     We will not work with those analytic-flavored definitions but prefer the more algebraic-flavored ones given by the following Lemma \ref{Lemma algebraic L2 acyclic}.
    \begin{lemma}[{\cite[Lemma 2.21]{friedl2017universal}}]\label{Lemma algebraic L2 acyclic}
        A square matrix over $\Z G$ is a weak isomorphism if and only if it is invertible over $\mcalD G$. A finite based free $\Z G$-chain complex $C_*$ is $L^2$-acyclic if and only if the chain complex $\mcalD G\otimes_{\Z G}C_*$ is exact as a chain complex of left $\mcalD G$-modules.
    \end{lemma}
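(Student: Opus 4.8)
The plan is to prove both assertions by inserting the ring $\mathcal UG$ of operators affiliated to $\mathcal NG$ between $\Z G$ and $\mcalD G$, exploiting the inclusions $\Z G\subseteq\mcalD G\subseteq\mathcal UG$ together with two inputs: that $\mcalD G$ is a skew field (which is exactly the Atiyah Conjecture hypothesis in force in this section) and Lück's $L^2$-dimension theory, which translates analytic data over $\ell^2(G)$ into homological data over $\mathcal UG$.

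For the first assertion I would establish the chain of equivalences
\[
    A\text{ is a weak isomorphism}\iff A\in\operatorname{GL}(m,\mathcal UG)\iff A\in\operatorname{GL}(m,\mcalD G).
\]
The forward implication of the first equivalence is a polar-decomposition argument: writing $r_A=U|r_A|$, injectivity of $r_A$ makes $U$ an isometry and density of $\im r_A$ makes it onto, so $U$ is unitary, while $|r_A|^{-1}$ is a densely defined positive self-adjoint operator affiliated to $M_m(\mathcal NG)$; hence $A^{-1}=|r_A|^{-1}U^*$ lies in $M_m(\mathcal UG)$. For the reverse implication I would apply, to the two-term complex $D_*=[\Z G^m\xrightarrow{A}\Z G^m]$, the general identity $b_k^{(2)}(D_*)=\dim_{\mathcal UG}H_k(\mathcal UG\otimes_{\Z G}D_*)$: if $A$ is invertible over $\mathcal UG$ then $H_k(\mathcal UG\otimes_{\Z G}D_*)$ vanishes, hence both $L^2$-Betti numbers vanish, and since the von Neumann dimension is faithful on Hilbert modules this forces $\ker r_A=0$ and $\overline{\im r_A}=\ell^2(G)^m$. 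The second equivalence is purely formal: one direction is immediate from $\mcalD G\subseteq\mathcal UG$; for the other, since $\mcalD G$ is a skew field a matrix $A\in M_m(\mcalD G)$ that is not invertible admits a nonzero row vector $v\in(\mcalD G)^m$ with $vA=0$, and this relation persists in $(\mathcal UG)^m$, contradicting invertibility of $A$ over $\mathcal UG$.

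For the second assertion I would argue along the same lines. By Lück's dimension theory $b_k^{(2)}(C_*)=\dim_{\mathcal UG}H_k(\mathcal UG\otimes_{\Z G}C_*)$ for each $k$, and this dimension is faithful on finitely generated $\mathcal UG$-modules; therefore $C_*$ is $L^2$-acyclic (equivalently, all $b_k^{(2)}(C_*)$ vanish) if and only if $\mathcal UG\otimes_{\Z G}C_*$ is exact. To transfer exactness between $\mathcal UG$ and $\mcalD G$, note that $\mcalD G$ is a skew field, so the nonzero module $\mathcal UG$ is faithfully flat over it, and $\mathcal UG\otimes_{\Z G}C_*\cong\mathcal UG\otimes_{\mcalD G}(\mcalD G\otimes_{\Z G}C_*)$ by transitivity of tensor products; faithful flatness then makes exactness of the former equivalent to exactness of the latter. (Alternatively, granting the matrix statement one can induct on the length of $C_*$, peeling off the top differential -- this is the mechanism behind the matrix-chain method discussed later -- but the dimension argument is uniform and avoids bookkeeping.)

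The main obstacle is the single substantive $L^2$-theoretic input used above, namely the identity $b_k^{(2)}(C_*)=\dim_{\mathcal UG}H_k(\mathcal UG\otimes_{\Z G}C_*)$ and the faithfulness of the von Neumann dimension over $\mathcal UG$; these rest on the flatness of $\mathcal UG$ over $\mathcal NG$ (via the description of $\mathcal UG$ as an Ore localization at non-zero-divisors) and on the general dimension-theoretic machinery of $L^2$-invariants, which I would quote from \cite{lueck2002l2} rather than reprove. Everything else -- polar decomposition, elementary linear algebra over a skew field, faithful flatness over a skew field, and transitivity of tensor products -- is formal.
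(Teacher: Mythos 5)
The paper offers no proof of this lemma---it is quoted verbatim from Friedl--L\"uck---so there is nothing internal to compare against. Your argument is correct and is essentially the standard one underlying the cited result: pass through $\mathcal UG$ via polar decomposition in one direction and L\"uck's identification $b_k^{(2)}(C_*)=\dim_{\mathcal UG}H_k(\mathcal UG\otimes_{\Z G}C_*)$ plus faithfulness of the dimension in the other, then transfer between $\mathcal UG$ and $\mcalD G$ by linear algebra over a skew field and faithful flatness. The only point worth tightening is the appeal to faithfulness of $\dim_{\mathcal UG}$ ``on finitely generated modules'': what one actually uses is faithfulness on finitely generated \emph{projective} modules, which suffices here because $\mathcal UG$ is von Neumann regular and the homology modules in question are finitely presented, hence projective.
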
    
    It is a classical topic to define the torsion of an exact chain complex of free modules, see for example \cite{milnor1966whitehead,cohen1973course,Turaev2001IntroductionToComb}. Our situation is particularly nice since $\mcalD G$ is a skew field and any module over a skew field is free. We follow the definitions given in \cite[Section 3]{Turaev2001IntroductionToComb}.
    \begin{definition}
        Suppose $V$ is a finitely generated $\mcalD G$-module with $\dim V=k$. For any two (unordered) bases $b=\{b_1,\ldots,b_k\}$ and $c=\{c_1,\ldots,c_k\}$, we have 
        \[
            b_i=\sum_{j=1}^k a_{ij}c_j,\quad i=1,\ldots,k,
        \]
        where the transition matrix $(a_{ij})_{i,j=1,\ldots,k}$ is a non-degenerate $(k\times k)$-matrix over $\mcalD G$. Define $[b/c]$ to be the determinant $\redet(a_{ij})\in\widetilde K_1(\mcalD G)$. This definition is independent of the ordering of the elements in $b$ and $c$.
    \end{definition}
    \begin{definition}[Universal $L^2$-torsion of chain complexes]\label{Definition of universal L2 torsion of chain complexes}
        Let $C_*$ be a finite based $\Z G$-chain complex of length $n$ that is $L^2$-acyclic. For each $i$, let $c_i$ be the preferred basis of the free $\mcalD G$-module $\mcalD G\otimes _{\Z G}C_i$ and let $\partial_i$ be the boundary homomorphism. Choose a basis $b_i$ for the free $\mcalD G$-module $B_i:=\im \partial _i$. Then $b_i$ and $b_{i-1}$ combines to form a basis $b_ib_{i-1}$ of $\mcalD G\otimes _{\Z G}C_i$. The \emph{universal $L^2$-torsion of $C_*$} is defined as
    \[
        \tautwo_u(C_*):=\prod_{i=0}^n[b_ib_{i-1}/c_i]^{(-1)^{i+1}}\in \widetilde K_1(\mcalD G).
    \]
    This value is independent of the choice of the bases $b_i$.
    \end{definition}

    An exact sequence $0\ra M_0\stackrel{i}\ra M_1\stackrel{p}\ra M_2\ra 0$ of based free $\Z G$-modules is called \emph{based exact}, if $i(b_0)\subset b_1$ and $p$ maps $b_1\setminus i(b_0)$ bijectively to $b_2$, where $b_i$ is the preferred basis of $M_i$ for $i=0,1,2$. Similarly, an exact sequence $0\ra C_*\ra D_*\ra E_*\ra 0$ of finite based free $\Z G$-chain complex is called \emph{based exact} if in each degree $k$ the sequence
    \[
        0\ra C_k\ra D_k\ra E_k\ra 0
    \]
    is based exact. The following basic property can be found in \cite[Theorem 3.4]{Turaev2001IntroductionToComb}.

    \begin{proposition}\label{Proposition Axiom of torsion of chain complexes}
        The universal $L^2$-torsion satisfies the following properties.
        \begin{enumerate}[\quad\rm (1)]
            \item For any $L^2$-acyclic finite based free $\Z G$-chain complex
            \[
                C_*=(0\ra C_1\stackrel{A}\ra C_0\ra 0)
            \]
            where $A$ is a square matrix over $\Z G$, we have $\tautwo_u(C_*)=\redet (A)\inv\in \widetilde K_1(\mcalD G)$.
            \item Suppose $0\ra C'_*\ra C_*\ra C''_*\ra 0$ is a based exact sequence of finite based free $\Z G$-chain complexes. If $C_*'$ and $C_*''$ are $L^2$-acyclic then so is $C_*$, and $$\tautwo_u(C_*)=\tautwo_u(C'_*)\cdot\tautwo_u(C''_*)\in \widetilde K_1(\mcalD G).$$
        \end{enumerate}
    \end{proposition}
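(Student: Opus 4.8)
The plan is to adapt Turaev's treatment of the axioms for combinatorial torsion \cite[Theorem 3.4]{Turaev2001IntroductionToComb} to the skew field $\mcalD G$. The only features of the classical case that are used there are that every module over a skew field is free and that the Dieudonn\'e determinant $\redet$ is multiplicative (property (d)) and invariant under left elementary row operations (property (a)); both survive the passage from a commutative field to $\mcalD G$, so no new idea is needed — the work is the bookkeeping forced by noncommutativity. I now sketch the two parts.

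For part (1) I would unwind Definition \ref{Definition of universal L2 torsion of chain complexes} directly. By Lemma \ref{Lemma algebraic L2 acyclic}, $L^2$-acyclicity makes $A$ invertible over $\mcalD G$, so the only nonzero boundary module of $\mcalD G\otimes_{\Z G}C_*$ is $\im(\partial_1)=\mcalD G\otimes_{\Z G}C_0$, for which I choose the basis $c_0$. The definition then equips $\mcalD G\otimes_{\Z G}C_1$ with the new basis given by the $\partial_1$-preimage $A\inv(c_0)$, whose transition matrix to the preferred basis $c_1$ is $A\inv$, and leaves $\mcalD G\otimes_{\Z G}C_0$ with the preferred basis $c_0$, transition matrix the identity. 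Feeding this into the alternating product $\prod_{i=0}^{1}[b_ib_{i-1}/c_i]^{(-1)^{i+1}}$ gives $\redet(A)\inv$, up to a routine check matching the row/column convention of $\redet$ with that of the operator $r_A$.

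For part (2), first observe that $C_*$ is $L^2$-acyclic: a based exact sequence splits in each degree, so $0\to\mcalD G\otimes C'_*\to\mcalD G\otimes C_*\to\mcalD G\otimes C''_*\to 0$ is again a short exact sequence of $\mcalD G$-chain complexes, and the associated long exact homology sequence forces $H_*(\mcalD G\otimes C_*)=0$ once the two outer complexes are acyclic. For the product formula I would make compatible basis choices: pick bases $b'_i,b''_i$ of the boundary modules of $\mcalD G\otimes C'_*$ and $\mcalD G\otimes C''_*$, lift the $b''_i$ into the boundary modules of $\mcalD G\otimes C_*$ — legitimate because $0\to B'_i\to B_i\to B''_i\to 0$ is exact, a one-line diagram chase using acyclicity of $C''_*$ — and set $b_i=b'_i\cup\widetilde{b''_i}$. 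With these choices each transition matrix $[b_ib_{i-1}/c_i]$ is block upper triangular with diagonal blocks $[b'_ib'_{i-1}/c'_i]$ and $[b''_ib''_{i-1}/c''_i]$; invariance of $\redet$ under left elementary row operations turns it into a block-diagonal matrix and multiplicativity of $\redet$ then yields $[b_ib_{i-1}/c_i]=[b'_ib'_{i-1}/c'_i]\cdot[b''_ib''_{i-1}/c''_i]$. Taking the alternating product over $i$ gives $\tautwo_u(C_*)=\tautwo_u(C'_*)\cdot\tautwo_u(C''_*)$.

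The main obstacle is purely one of noncommutative bookkeeping rather than of ideas. Because $\redet$ is not invariant under ordinary transpose (cf.\ the remark after Lemma \ref{Lemma K1 group equals abelianization}), the block-triangular reduction in part (2) must be realized through an explicit sequence of \emph{left} elementary row operations, and the order of factors in $K_1(\mcalD G)$ must be tracked carefully. One must also verify — exactly as in Turaev's argument, using only properties (a) and (d) of $\redet$ — that the torsion and the auxiliary lifts are independent of the chosen bases $b_i$, and check that the diagram chase for exactness of $0\to B'_i\to B_i\to B''_i\to 0$ genuinely uses that $C''_*$ is acyclic: for a general short exact sequence of complexes one instead obtains a long exact sequence of boundary and homology modules, which is why the hypothesis that $C'_*$ and $C''_*$ are $L^2$-acyclic cannot be dropped.
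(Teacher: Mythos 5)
Your proposal is correct and is essentially the argument the paper relies on: the paper gives no proof of its own but cites \cite[Theorem 3.4]{Turaev2001IntroductionToComb}, and your sketch is precisely Turaev's proof (unwinding the definition for a two-term complex; compatible lifts of boundary bases and a block-triangular transition matrix for the multiplicativity), transplanted to $\mcalD G$ using only properties (a), (b), (d) of the Dieudonn\'e determinant. Your attention to the left-row-operation and factor-ordering issues, and to where acyclicity of $C''_*$ enters the exactness of $0\to B'_i\to B_i\to B''_i\to 0$, addresses exactly the points where the noncommutative setting requires care.
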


    \begin{definition}[Dual $\Z G$-modules]
        Recall that $\Z G$ is a ring with an involution $x\mapsto x^*$ which sends $\sum n_g\cdot g$ to $\sum n_g\cdot g\inv$. For any left $\Z G$-module $A$, its \emph{dual module} $A^*$ is defined to be $\operatorname{Hom}_{\Z G}(A,\Z G)$, considered as a left $\Z G$-module as follows:  for each $x\in \Z G$ and $f\in A^*$, the map $xf\colon A\ra \Z G$ is given by $(xf)(y)=f(y)\cdot x^*,\ \forall y\in A$.

If $A$ is a free $\Z G$-module with basis $a_i$, then $A^*$ is also free and admits a dual basis $a_i^*$. Moreover, if $f\colon A\ra B$ is a $\Z G$-linear map between based free $\Z G$-modules represented by a matrix $P$ with respect to the given bases, then the dual map $f^*\colon B^*\ra A^*$ is represented by the matrix $P^*$, the involution transpose of $P$.
    \end{definition}

The involution on $\Z G$ is compatible with taking adjoint in $\mathcal UG$. Since $\mcalD G$ is the division closure of $\Z G$ in $\mathcal{U}G$, and the set $\mcalD G\cap (\mcalD G )^*$ forms an inversion-closed subring containing $\mathbb{Z} G$, it follows that Linnell's skew field $\mcalD G$ is itself closed under taking adjoint. Therefore $\mcalD G$ admits a canonical involution extending that of $\Z G$. This involution extends to the $K_1$-group and the Whitehead group of $\mcalD G$. The following Proposition \ref{Proposition torsion of dual chain complex} is a classic property of torsion invariants and can be proved as in \cite{milnor1962duality}.

\begin{proposition}\label{Proposition torsion of dual chain complex}
    If $C_*=(0\ra C_n\ra\cdots\ra C_0\ra 0)$ is a finite based free $\Z G$-chain complex 
    which is $L^2$-acyclic. Then the dual chain complex
    $C^{*}$ is $L^2$-acyclic and 
    \[
        (\tautwo_u(C^*))^*=\tautwo_u(C_*)^{(-1)^{n+1}}\in \widetilde K_1(\mcalD G).
    \]
\end{proposition}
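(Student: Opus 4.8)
The plan is to run Milnor's classical duality argument for Reidemeister torsion over the skew field $\mcalD G$, using that the canonical involution on $\mcalD G$ extends the one on $\Z G$ and descends to $\widetilde K_1(\mcalD G)$ compatibly with the Dieudonn\'e determinant.

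\emph{Acyclicity of $C^*$.} Each $C_k$ is finitely generated free over $\Z G$, so base change commutes with $\Z G$-dualization: the canonical map
\[
    \mcalD G\otimes_{\Z G}\operatorname{Hom}_{\Z G}(C_k,\Z G)\ra \operatorname{Hom}_{\mcalD G}\bigl(\mcalD G\otimes_{\Z G}C_k,\ \mcalD G\bigr)
\]
is an isomorphism of left $\mcalD G$-modules, where the target is made a left module via the canonical involution, exactly as in the definition of dual $\Z G$-modules. Thus, after the reindexing $i\mapsto n-i$ that turns the cochain complex $C^*$ into a chain complex, $\mcalD G\otimes_{\Z G}C^*$ is identified with the $\mcalD G$-linear dual of the finite exact complex $\bar C_*:=\mcalD G\otimes_{\Z G}C_*$. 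Since $\mcalD G$ is a skew field, every module is free and $\operatorname{Hom}_{\mcalD G}(-,\mcalD G)$ is exact, so this dual complex is exact; by Lemma \ref{Lemma algebraic L2 acyclic} this says precisely that $C^*$ is $L^2$-acyclic. This reindexing is the source of the exponent $(-1)^{n+1}$ below.

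\emph{The torsion formula.} Work over $\mcalD G$, writing $\bar\partial_i$ for the boundary maps of $\bar C_*$, $c_i$ for the preferred basis of $\bar C_i$, and $A_i$ for the transition matrices furnished by Definition \ref{Definition of universal L2 torsion of chain complexes} from a choice of bases of the image modules $\im\bar\partial_i$, so that $\tautwo_u(C_*)=\prod_i\redet(A_i)^{(-1)^{i+1}}$. Dualizing the short exact sequences $0\to\im\bar\partial_{k+1}\to\bar C_k\xrightarrow{\bar\partial_k}\im\bar\partial_k\to 0$ identifies the image modules of $\bar D_*:=\mcalD G\otimes_{\Z G}C^*$ with the duals $(\im\bar\partial_k)^*$, and the corresponding dual bases serve as the adapted bases for the torsion computation of $\bar D_*$. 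Computing the transition matrix of $\bar D_j=\bar C_{n-j}^*$ to its preferred basis $c_{n-j}^*$ — using that dualizing a change of basis both inverts and involution-transposes it — shows this matrix equals $(A_{n-j}^{-1})^*$ up to left multiplication by a block-swap permutation matrix. Since $\redet$ kills $[\pm 1]$ and $\redet(P^*)=\redet(P)^*$, the degree-$j$ contribution to $\tautwo_u(\bar D_*)$ is $\bigl(\redet(A_{n-j})^*\bigr)^{-1}$ in $\widetilde K_1(\mcalD G)$, hence $\tautwo_u(C^*)=\prod_j\bigl(\redet(A_{n-j})^*\bigr)^{-(-1)^{j+1}}$. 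Substituting $i=n-j$ and using $-(-1)^{n-i+1}=(-1)^{n+1}(-1)^{i+1}$, this rearranges to $\bigl(\prod_i(\redet(A_i)^*)^{(-1)^{i+1}}\bigr)^{(-1)^{n+1}}=\bigl(\tautwo_u(C_*)^*\bigr)^{(-1)^{n+1}}$. Applying the involution, which is an involutive automorphism of $\widetilde K_1(\mcalD G)$, gives $(\tautwo_u(C^*))^*=\tautwo_u(C_*)^{(-1)^{n+1}}$.

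\emph{Main obstacle.} The only real content is the bookkeeping in the second step: matching the adapted bases of $\bar D_*$ with the dual bases coming from $\bar C_*$, and checking that the transition matrix is $(A_{n-j}^{-1})^*$ rather than $A_{n-j}^*$ — it is precisely this inversion that flips the exponent to the correct $(-1)^{n+1}$. It is essential to work in $\widetilde K_1(\mcalD G)$, not $K_1(\mcalD G)$, since all the reordering corrections lie in $\{[\pm 1]\}$ and thereby vanish. Alternatively, one can bypass transition matrices with the ``$\bar\partial+\gamma$'' model of torsion associated to a chain contraction $\gamma$ of $\bar C_*$: then $\gamma^*$ contracts $\bar D_*$, the operator $(\bar\partial+\gamma)^*$ is the corresponding isomorphism up to inversion, and $\redet(P^*)=\redet(P)^*$ together with the parity of $n$ again yields $(-1)^{n+1}$.
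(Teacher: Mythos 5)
Your proof is correct and is exactly the argument the paper has in mind: the paper does not write out a proof but simply cites Milnor's duality theorem for torsion, and your write-up is that classical argument carried out over $\mcalD G$, with the dual-basis/involution-transpose bookkeeping and the sign analysis done correctly (the passage from $A$ to $(A^{-1})^*$ and the reindexing $i\mapsto n-i$ together produce the exponent $(-1)^{n+1}$, and working in $\widetilde K_1(\mcalD G)$ correctly absorbs the permutation corrections). No gaps.
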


\subsubsection{Comparing Friedl--L\"uck's definition}\label{Remark of definition of universal L2 torsion} Our Definition \ref{Definition of universal L2 torsion of chain complexes} of the universal $L^2$-torsion slightly differs from that of \cite{friedl2017universal}. We first recall the definition of the weak $K_1$-groups introduced by \cite{friedl2017universal}, which is an abelian group analogous to the classical $K_1$-group, but rather than requiring the matrices over $\Z G$ to be invertible, we only require that they are weak isomorphisms (recall Definition \ref{Definition of weak isomorphisms}). 

\begin{definition}[$K_1^w(\Z G),$ $\widetilde K_1^w(\Z G)$ and $\operatorname{Wh}^w(G)$]
    Suppose $G$ is any group. The \emph{weak $K_1$-group} $K_1^w(\Z G)$ is defined in terms of generators and relations as follows. Generators $[A]$ are given by square matrices $A$ over $\Z G$ such that $A$ is a weak isomorphism. There are two sets of relations:
    \begin{enumerate}[\rm\quad (i)]
        \item If $A,B$ are square matrices over $\Z G$ of the same size such that $A,B$ are weak isomorphisms, then
        \[
            [AB]=[A]\cdot [B].
        \]
        \item If $A,B$ are square matrices over $\Z G$ of size $n$ and $m$, respectively. Suppose $A,B$ are weak isomorphisms and let $C$ be any matrix over $\Z G$ of size $n\times m$. Then
        \[
            \bigg[\begin{pmatrix}
                A & C\\
                0 & B
            \end{pmatrix}\bigg]=[A]\cdot [B]=[B]\cdot [A].
        \]
    \end{enumerate}
    The \emph{reduced weak $K_1$-group} $\widetilde K_1^w(\Z G)$ is defined to be the quotient of $K_1^w(\Z G)$ by the subgroup $\{[1],[-1]\}$. The \emph{weak Whitehead group $\operatorname{Wh}^w(G)$} is defined to be the quotient of $K_1^w(\Z G)$ by the subgroup $\{[\pm g]\mid g\in G\}$.
\end{definition}

\begin{proposition}\label{Proposition Natural homo from weak K1 to K1}
    Suppose $G$ is a torsion-free group which satisfies the Atiyah Conjecture. Then the following holds:
    \begin{enumerate}[\rm\quad (1)]
        \item There are natural homomorphisms $$i_G\colon K_1^w(\Z G)\ra K_1(\mcalD G),\quad \tilde i_G\colon \widetilde K_1^w(\Z G)\ra \widetilde K_1(\mcalD G),\quad i_G^w\colon \operatorname{Wh}^w(G)\ra \operatorname{Wh}(\mcalD G).$$
        \item If $G$ falls into the Linnell's class $\mathcal C$ (recall Theorem \ref{Theorem Linnell's class C}), then the three homomorphisms above are all isomorphisms.
    \end{enumerate}
\end{proposition}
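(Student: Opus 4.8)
The plan is to define $i_G$ on the generators of $K_1^w(\Z G)$ and verify the two families of relations. By Lemma~\ref{Lemma algebraic L2 acyclic} a square matrix $A$ over $\Z G$ is a weak isomorphism exactly when it becomes invertible over the skew field $\mcalD G$, so I would send the generator $[A]$ of $K_1^w(\Z G)$ to the class of the invertible matrix $A$ in $K_1(\mcalD G)$, equivalently (Lemma~\ref{Lemma K1 group equals abelianization}) to its Dieudonn\'e determinant $\det A\in\mcalD G^\times/[\mcalD G^\times,\mcalD G^\times]$. Relation~(i) is immediate from multiplicativity in the group $K_1(\mcalD G)$. For relation~(ii) I would use the factorization
\[
    \begin{pmatrix} A & C\\ 0 & B\end{pmatrix}=\begin{pmatrix} A & 0\\ 0 & B\end{pmatrix}\begin{pmatrix} I & A^{-1}C\\ 0 & I\end{pmatrix},
\]
noting that the second factor is a product of elementary matrices and so has trivial determinant, while $K_1(\mcalD G)$ being abelian gives $[A][B]=[B][A]$. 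Hence $i_G$ is a well-defined homomorphism; since it carries $[\pm 1]$ to $[\pm 1]$ and $[\pm g]$ to $[\pm g]$ for $g\in G$, it descends along the quotient maps to yield $\tilde i_G$ on $\widetilde K_1$ and $i_G^w$ on $\operatorname{Wh}$.

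\textbf{Part (2): the class $\mathcal C$.} Assume $G\in\mathcal C$. The key input I would quote is that for such $G$ the division closure $\mcalD G$ is the universal (Cohn) localization $\Z G\Sigma^{-1}$ of $\Z G$ at the set $\Sigma$ of all square weak-isomorphism matrices over $\Z G$; this is part of the circle of ideas behind Linnell's Theorem~\ref{Theorem Linnell's class C}. Granting this, surjectivity of $i_G$ would follow from Cohn's Cramer-rule description of elements of a universal localization: $K_1(\mcalD G)$ is generated by the classes of units $u\in\mcalD G^\times$, each such $u$ can be written as $\bar c\,\bar A^{-1}\bar b$ with $A\in\Sigma$ and $b,c$ over $\Z G$, and a Schur-complement manipulation then expresses $[u]$ as $i_G([M])\cdot i_G([A])^{-1}$ for a suitable bordering $M\in\Sigma$ of $A$. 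For injectivity I would appeal to Malcolmson's normal-form theorem (equivalently, the Neeman--Ranicki analysis of $K_1$ of a universal localization), which gives a complete list of moves relating two Cramer representations of the same element; the point to check is that the relations these moves impose on $K_1$ are precisely (i) and (ii)---so that a weak isomorphism $A$ over $\Z G$ with $\det A\in[\mcalD G^\times,\mcalD G^\times]$ already has $[A]=1$ in $K_1^w(\Z G)$. The assertions for $\widetilde K_1^w$ and $\operatorname{Wh}^w$ then drop out by passing to the quotients by $\{[\pm1]\}$ and by $\langle[\pm g]\mid g\in G\rangle$, which match up on the two sides.

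\textbf{Expected main obstacle.} All the genuine content, and the only use of $G\in\mathcal C$, sits in Part~(2): establishing $\mcalD G\cong\Z G\Sigma^{-1}$ and then checking that the generators-and-relations presentation of $K_1^w(\Z G)$ is exactly the one produced by the normal-form calculus for this localization, i.e.\ that no extra relations are forced in $K_1$. Part~(1) and the reductions to quotients are formal.
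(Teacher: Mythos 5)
Your construction in Part (1) is exactly the paper's: by Lemma \ref{Lemma algebraic L2 acyclic} a weak isomorphism over $\Z G$ is an invertible matrix over $\mcalD G$, one sends $[A]$ to its class in $K_1(\mcalD G)$, and the relations (i)--(ii) are checked as you do. For Part (2) the paper simply cites \cite{luck2017localization}, and your sketch via universal localization, Cohn's Cramer rule, and the Malcolmson/Neeman--Ranicki normal-form analysis is an outline of precisely the argument carried out in that reference, so the two approaches coincide.
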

\begin{proof}
    By Lemma \ref{Lemma algebraic L2 acyclic}, a square matrix $A$ over $\Z G$ is a weak isomorphism if and only if it is invertible over $\mcalD G$. The homomorphisms in (1) are given by viewing a weak isomorphism $A$ over $\Z G$ as an invertible matrix over $\mcalD G$. (2) is the content of \cite{luck2017localization}.
\end{proof}

        For any group $G$ and any $L^2$-acyclic finite based $\Z G$-chain complex $C_*$, the universal $L^2$-torsion $\rho^{(2)}_u(C_*)$ defined in \cite{friedl2017universal} takes values in $\widetilde K_1^w(\Z G)$. Proposition \ref{Proposition Axiom of torsion of chain complexes} holds true for $\rho^{(2)}_u$ and moreover it characterizes the universal property of $\rho^{(2)}_u$ (in fact, in \cite{friedl2017universal} the torsion $\rho_u^{(2)}(0\ra C_1\stackrel{A}\ra C_0\ra 0)$ is defined to be $[A]\in\widetilde K_1^w(\Z G)$, rather than the inverse $[A\inv]$). Therefore when $G$ is torsion-free and satisfies the Atiyah Conjecture, the universal $L^2$-torsion $\tautwo_u(C_*)$ defined in the present paper is the image of $(-\rho^{(2)}_u(C_*))$ under the natural map
        \[
            \tilde i_G\colon \widetilde K_1^w(\Z G)\ra \widetilde K_1(\mcalD G).
        \]
         If furthermore $G$ lies in Linnell's class $\mathcal C$ then $\tilde i_G$ is a canonical isomorphism by Proposition \ref{Proposition Natural homo from weak K1 to K1}, and the definitions of $\tautwo_u$ and $\rho_u^{(2)}$ are equivalent. In particular, this includes all $3$-manifold groups by Theorem \ref{Theorem 3-manifold group in class C}.
        
        \begin{remark}\label{Remark Convention of torsion}
            In the literature, the torsion of a chain complex is defined under two major conventions. Consider the based exact chain complex \[
                C_*=(0\ra C_1\stackrel{A}\ra C_0\ra 0).
            \] The first convention $\rho$, commonly used for analytic torsions, typically defines $\rho(C_*)$ as $\det A$, viewing the torsion as an element of an \emph{additive} group \cite{milnor1966whitehead,ray1971R-torsion,cohen1973course,lueck2002l2,friedl2017universal,Lueck2018twisting}. The second convention $\tau$, common in topological contexts such as the theory of Alexander polynomials, defines $\tau(C_*)$ to be $(\det A)\inv$ and treats the torsion as an element of a \emph{multiplicative} group \cite{milnor1962duality, kitano1996twisted, Turaev2001IntroductionToComb,friedl2011survey,Dubois2016L2AlexanderTorsion,liu2017degree}. Our paper follows the latter convention.
        \end{remark}
    
\subsection{Universal $L^2$-torsion of CW-complexes}
Let $X$ be a connected finite CW-complex with torsion-free fundamental group $G$ satisfying the Atiyah Conjecture and let $Y\subset X$ be a subcomplex. Let $p\colon \widehat X\ra X$ be the universal covering of $X$ and $\widehat Y:= p\inv (Y)$ be the preimage. Then $\widehat X$ admits the induced CW-structure and $\widehat Y$ is a subcomplex of $\widehat X$.

The natural left $G$-action on $\widehat X$ gives rise to the left $\Z G$-module structure on the cellular chain complex $C_*(\widehat X,\widehat Y)$. By choosing a lift $\widehat \sigma$ for each cell $\sigma$ in $X\setminus Y$,  $C_*(\widehat X,\widehat Y)$ becomes a finite based free $\Z G$-chain complex. The following definition is independent of the choice of lifts.
\begin{definition}[Universal $L^2$-torsion of CW-complexes]
    Let $X$ be a finite connected CW-complex with fundamental group $G$ and let $Y$ be a subcomplex of $X$. The pair $(X,Y)$ is called \emph{$L^2$-acyclic} if the finite based free $\Z G$-chain complex $C_*(\widehat X,\widehat Y)$ is $L^2$-acyclic (c.f.\, Lemma \ref{Lemma algebraic L2 acyclic}). The \emph{universal $L^2$-torsion} $$\tautwo_u(X,Y)\in\operatorname{Wh}(\mcalD G)\sqcup \{0\}$$ is defined as follows: 
    if $(X,Y)$ is $L^2$-acyclic, define $\tautwo_u(X,Y)$
    to be the image of $\tautwo_u(C_*(\widehat X,\widehat Y))$ under the quotient map $\widetilde K_1(\mcalD G)\ra \operatorname{Wh}(\mcalD G)$. Otherwise, define $\tautwo_u(X,Y)$ to be zero.
\end{definition}

When $X$ is not necessarily connected, we say the pair $(X,Y)$ is \emph{$L^2$-acyclic} if for every component $X_i\in \pi_0(X)$ the pair $(X_i,X_i\cap Y)$ is $L^2$-acyclic. Moreover, if the fundamental group $\pi_1(X_i)$ of each component is torsion-free satisfying the Atiyah Conjecture, we define
\[
    \operatorname{Wh}(\mcalD{\Pi(X)}):=\bigoplus_{X_i\in \pi_0(X)} \operatorname{Wh}(\mcalD{\pi_1(X_i)}),\]
    and set \[
    \tautwo_u(X,Y):=(\tautwo_u(X_i,X_i\cap Y))_{X_i\in \pi_0(X)}\in \operatorname{Wh}(\mcalD {\Pi(X)}).\]
If $(X_i,X_i\cap Y)$ is not $L^2$-acyclic for some $i$, we define $\tautwo_u(X,Y):=0$.

Now let $(X,Y)$ and $(X',Y')$ be finite CW-pairs. A CW-mapping $f\colon (X,Y)\ra (X',Y')$ is called \emph{$\pi_1$-injective} if the restriction of $f$ to each component of $X$ induces an injection on fundamental groups. In this case, there is an induced homomorphism $$\iota_*\colon \operatorname{Wh}(\mcalD {\Pi(X)})\sqcup\{0\}\ra \operatorname{Wh}(\mcalD {\Pi(X')})\sqcup\{0\},$$ and we define the pushforward of the universal $L^2$-torsion
    \[
        f_*\tautwo_u(X,Y)\in \operatorname{Wh}(\mcalD {\Pi(X')})\sqcup\{0\}
    \]
    as the image of $\tautwo_u(X,Y)$ under $f_*$. By definition $f_*\tautwo_u(X,Y)=0$ if and only if $\tautwo_u(X,Y)=0$.

\begin{theorem}We record the fundamental properties of the universal $L^2$-torsion.\label{Theorem properties of the torsion of cw complexes}
\begin{enumerate}[\quad\rm (1)]
    \item {\rm (Simple-homotopy invariance)} Let $f\colon(X,X_0)\ra (Y,Y_0)$ be a continuous map of finite CW-pairs such that $f\colon X\ra Y$ and $f|_{X_0}\colon X_0\ra Y_0$ are both simple-homotopy equivalences. Then $$\tautwo_u(Y,Y_0)=f_*\tautwo_u(X,X_0).$$

     \item {\rm (Sum formula)} Let $(U,V)=(X,C)\cup (Y,D)$ where $(X,C)$, $(Y,D)$ and $(X\cap Y,C\cap D)$ are $L^2$-acyclic sub-pairs that embeds $\pi_1$-injectively into $(U,V)$, then 
        \[
            \tautwo_u(U,V)=(\iota_1)_*\tautwo_u(X,C)\cdot(\iota_2)_*\tautwo_u(Y,D)\cdot(\iota_3)_*\tautwo_u(X\cap Y,C\cap D)\inv
        \]
       where $\iota_1, \iota_2, \iota_3$ are the inclusions of $(X, C)$, $(Y, D)$, and $(X \cap Y, C \cap D)$ into $(U, V)$, respectively.

        \item {\rm (Induction)} Let $f\colon (X_0,Y_0)\subset (X,Y)$ be a $\pi_1$-injective inclusion. 
        Let $\widehat X$ be the universal cover of $X$ and let $\widehat  X_0,\widehat Y_0$ be the preimage of $X_0,Y_0$ in $\widehat X$. Then 
        \[
            \tautwo_u(C_*(\widehat X_0,\widehat Y_0))=f_*\tautwo_u(X_0,Y_0).
        \]
        
        \item {\rm (Restriction)} Let $X$ be a connected finite CW-complex with $\pi_1(X)=G$, and let $\overline X$ be a connected finite degree regular covering of $X$ with $\pi_1(\overline X)=H<G$. Denote by $\operatorname{res}^G_H\colon \operatorname{Wh}(\mcalD G)\sqcup\{0\}\ra \operatorname{Wh}(\mcalD H)\sqcup\{0\}$ the restriction homomorphism (to be discussed in details in Section \ref{Section of restriction map}). For a subcomplex $Y\subset X$ with preimage $\overline Y\subset\overline X$, we have
        \[
            \tautwo_u(\overline X,\overline Y)=\operatorname{res}^G_H\tautwo  _u(X,Y).
        \]
        \end{enumerate}
\end{theorem}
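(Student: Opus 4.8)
The plan is to derive all four properties from the algebraic torsion axioms of Proposition~\ref{Proposition Axiom of torsion of chain complexes} together with the functoriality of Linnell's skew field (Proposition~\ref{Proposition subgroup satisfies Atiyah conjecture}). The one technical input used repeatedly is that, for every subgroup $H<G$, the inclusion $\mcalD H\hookrightarrow\mcalD G$ exhibits $\mcalD G$ as a free — hence faithfully flat — left $\mcalD H$-module, since any module over a skew field is free. This disposes at once of the $L^2$-acyclicity bookkeeping: applying $\mcalD G\otimes_{\mcalD H}(-)$ to the $\mcalD H$-complex attached to a based $\Z H$-complex preserves and reflects exactness, so (recalling Lemma~\ref{Lemma algebraic L2 acyclic}) the acyclicity hypotheses in (2)--(4) transfer in both directions, and both sides of the claimed identities are $0$ by convention when they fail. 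Everything below is done componentwise, so we may pretend the spaces in question are connected.

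I would prove \emph{Induction} (3) first, as (2) rests on it. The $\pi_1$-injectivity of $f$ forces every component of the preimage $\widehat X_0\subset\widehat X$ to be the universal cover of $X_0$, with the components freely permuted by $G/\pi_1(X_0)$; hence $C_*(\widehat X_0,\widehat Y_0)$ is canonically the induced based $\Z G$-chain complex $\Z G\otimes_{\Z\pi_1(X_0)}C_*(\widetilde X_0,\widetilde Y_0)$. Now I would use the base-change identity $\mcalD G\otimes_{\Z G}(\Z G\otimes_{\Z H}C_*)\cong\mcalD G\otimes_{\mcalD H}(\mcalD H\otimes_{\Z H}C_*)$ and observe two facts: a $\mcalD H$-basis of an image submodule tensors up to a $\mcalD G$-basis (freeness), and the Dieudonn\'e determinant over $\mcalD G$ of a matrix with entries in $\mcalD H$ is the image of its Dieudonn\'e determinant over $\mcalD H$ under the induced map $\widetilde K_1(\mcalD H)\to\widetilde K_1(\mcalD G)$ — since Gaussian elimination of such a matrix can be performed entirely within $\mcalD H$ and stays valid over $\mcalD G$. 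Unwinding Definition~\ref{Definition of universal L2 torsion of chain complexes} then identifies $\tautwo_u(C_*(\widehat X_0,\widehat Y_0))$ with the image of $\tautwo_u(X_0,Y_0)$; since $f_*$ on Whitehead groups is by construction induced by the very inclusion $\mcalD{\pi_1(X_0)}\hookrightarrow\mcalD G$, this is (3).

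For the \emph{Sum formula} (2), let $q\colon\widehat U\to U$ be the universal cover. Because the cells of $U$ are those of $X$ together with those of $Y$, identified along $X\cap Y$, choosing lifts of the cells of $U$ and reusing them for the three subcomplexes makes the Mayer--Vietoris sequence
\[
0\to C_*\bigl(q\inv(X\cap Y),q\inv(C\cap D)\bigr)\to C_*(q\inv X,q\inv C)\oplus C_*(q\inv Y,q\inv D)\to C_*(\widehat U,\widehat V)\to 0
\]
of finite based free $\Z\pi_1(U)$-chain complexes \emph{based exact}. All three terms are $L^2$-acyclic by (3) and the flatness remark, so Proposition~\ref{Proposition Axiom of torsion of chain complexes}(2) yields multiplicativity of their universal $L^2$-torsions; applying (3) to each factor produces the displayed formula. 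For \emph{Restriction} (4), the key observation is that $\widehat X$ is also the universal cover of $\overline X$ and that the preimage of $\overline Y$ in $\widehat X$ is exactly $\widehat Y$; fixing coset representatives of $H$ in $G$ turns the preferred $\Z G$-basis of $C_*(\widehat X,\widehat Y)$ into a preferred $\Z H$-basis of the same complex, which now computes $\tautwo_u(\overline X,\overline Y)$. It remains only to check the algebraic identity $\tautwo_u(C_*|_{\Z H})=\operatorname{res}^G_H\tautwo_u(C_*)$, which is immediate once the restriction map is set up in Section~\ref{Section of restriction map}: restriction of scalars of a transition matrix over $\mcalD G$ to $\mcalD H$ is precisely the block operator matrix on $\ell^2(H)^{[G:H]}$ whose Dieudonn\'e determinant defines $\operatorname{res}^G_H$, and that construction also shows $\operatorname{res}^G_H$ descends to the Whitehead quotient.

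Finally, for \emph{Simple-homotopy invariance} (1), a homotopy equivalence is an isomorphism on $\pi_1$, under which $C_*(\widehat Y,\widehat Y_0)$ is a based $\Z G$-complex and $f_*$ is the induced isomorphism of Whitehead groups; using the classical relative simple-homotopy theory I would reduce to the case that $(Y,Y_0)$ arises from $(X,X_0)$ by a single elementary expansion. If the expansion is carried out away from the subcomplex, then for compatible lifts there is a based exact sequence $0\to C_*(\widehat X,\widehat X_0)\to C_*(\widehat Y,\widehat X_0)\to C_*(\widehat Y,\widehat X)\to 0$ whose last term is $0\to\Z G\xrightarrow{\pm g}\Z G\to 0$, of torsion $\redet(\pm g)\inv$, which dies in $\operatorname{Wh}(\mcalD G)$ by Proposition~\ref{Proposition Axiom of torsion of chain complexes}(1); Proposition~\ref{Proposition Axiom of torsion of chain complexes}(2) then gives $\tautwo_u(Y,Y_0)=\tautwo_u(X,X_0)$. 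If the expansion is carried out inside the subcomplex, excision identifies $C_*(\widehat Y,\widehat Y_0)$ with $C_*(\widehat X,\widehat X_0)$ directly. Iterating these two cases proves (1). I expect (1) to be the main obstacle: it requires importing the relative simple-homotopy theory in the style of Cohen and handling the compatibility of CW-structures and lifts, whereas (2)--(4) are essentially formal once the chain-level identifications are in hand. A secondary point demanding care throughout is that the several short exact sequences above be genuinely \emph{based} exact, which is exactly what the freedom to choose compatible cellular lifts provides.
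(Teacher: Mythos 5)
Your treatments of (2)--(4) are correct and are essentially the arguments the paper has in mind: it simply cites \cite[Theorem 3.5]{friedl2017universal} and states that the proofs carry over to pairs, and the proofs you supply (induction via the identification $C_*(\widehat X_0,\widehat Y_0)\cong\Z G\otimes_{\Z\pi_1(X_0)}C_*(\widetilde X_0,\widetilde Y_0)$ together with faithful flatness of $\mcalD G$ over $\mcalD H$, the based Mayer--Vietoris sequence combined with Proposition \ref{Proposition Axiom of torsion of chain complexes}, and the coset decomposition of the preferred basis for the restriction property) are exactly those standard arguments. One small point in (2): Proposition \ref{Proposition Axiom of torsion of chain complexes}(2) as stated only deduces $L^2$-acyclicity of the middle term of a based exact sequence from that of the outer two, whereas in your Mayer--Vietoris sequence the unknown term $C_*(\widehat U,\widehat V)$ sits in the quotient position; you need the two-out-of-three property, which does follow from the long exact homology sequence over the skew field $\mcalD{\pi_1(U)}$ but should be stated.

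The genuine gap is in (1). You invoke ``classical relative simple-homotopy theory'' to reduce to a single elementary expansion of pairs, but the hypothesis is only that $f\colon X\ra Y$ and $f|_{X_0}$ are \emph{separately} simple-homotopy equivalences, and it is not a quotable classical fact that such a map of pairs is realized by a finite sequence of elementary expansions and collapses of pairs compatible with $f$. Establishing that reduction is essentially the entire content of (1): one must first deform $M_{f|_{X_0}}$ to $X_0$ rel $X_0$ inside $M_f$ (using vanishing of the Whitehead torsion of $f|_{X_0}$), then deform the resulting ambient complex to $X$ rel $X$ (using vanishing of the Whitehead torsion of $f$), verifying at each stage that the moves are moves of pairs and that the resulting identification of Whitehead groups is $f_*$. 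The paper sidesteps this entirely: it defines $\tautwo_u$ of a mapping via mapping cylinders, proves invariance under pre- and post-composition with simple-homotopy equivalences (Proposition \ref{Proposition Properties of torsion of mappings} and Lemma \ref{Lemma simple homotopy invariance of mappings}), and then deduces (1) in one line in Remark \ref{Remark simple homotopy invariance} from the homotopy $f\circ i_X\simeq i_Y\circ f|_{X_0}$. Your two elementary-expansion cases are handled correctly, but without the reduction they do not yet prove (1); either supply the relative deformation argument or adopt the mapping-cylinder route.
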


\begin{proof}
    Property (1) will be proved in Remark \ref{Remark simple homotopy invariance} after introducing the universal $L^2$-torsion for mappings.
    Properties (2)--(4) are natural generalization of \cite[Theorem 3.5]{friedl2017universal} to CW-pairs and the proof carry over without essential changes to the relative cases. 
\end{proof}

\subsection{Universal $L^2$-torsion of mappings}
 \begin{definition}[Universal $L^2$-torsion of mappings]
     Let $f\colon Y\ra X$ be a cellular map between finite CW-complexes. The \textit{mapping cylinder} of $f$ is a finite CW-complex $$M_f:=((Y\times I)\sqcup  X)/\sim,\quad \text{where } (y,0)\sim f(y) \text{ for all $y\in Y$}.$$ We identify $Y$ with the subcomplex $Y\times\{1\}$ of $M_f$. If the fundamental group of $X$ is torsion-free satisfying the Atiyah Conjecture, then the \emph{universal $L^2$-torsion of the mapping $f$} is defined as
     \[
         \tautwo_u(f):=\iota_*\tautwo_u(M_f,Y)\in\operatorname{Wh}(\mcalD {\Pi(X)})\sqcup \{0\}
     \]
    where $\iota\colon M_f\ra X$ is the canonical deformation retraction.
\end{definition}

\begin{definition}[$L^2$-homology equivalence] A map $f\colon Y\ra X$ is called an \emph{$L^2$-homology equivalence} if $\tautwo_u(f)\not=0$, or equivalently $(M_f,Y)$ is $L^2$-acyclic.  
\end{definition}

\begin{proposition}\label{Proposition Properties of torsion of mappings}
    Suppose the spaces $X,Y,Z$ are finite CW-complexes whose fundamental groups are torsion-free satisfying the Atiyah Conjecture.
    \begin{enumerate}[\quad \rm (1)]
        \item If $(X,Y)$ is a CW-pair and $f\colon Y\ra X$ is the inclusion map. Then $$\tautwo_u(X,Y)=\tautwo_u(f).$$
        \item If $f,g\colon X\ra Z$ are homotopic cellular maps. Then $\tautwo_u(f)=\tautwo_u(g)$.
        \item If $f\colon X\ra Z$ is a simple-homotopy equivalence, then $\tautwo_u(f)=1$.
        \item If either one of $f\colon X\ra Y$ and $g\colon Y\ra Z$ is an $L^2$-homology equivalence. Suppose $g$ is $\pi_1$-injective. Then $$\tautwo_u(g\circ f)=g_*\tautwo_u(f)\cdot \tautwo_u(g).$$
    \end{enumerate}
\end{proposition}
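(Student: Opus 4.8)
The plan is to imitate the classical development of Whitehead torsion (cf.\ \cite{cohen1973course,milnor1966whitehead}), using the structural properties already available --- Proposition \ref{Proposition Axiom of torsion of chain complexes} and Theorem \ref{Theorem properties of the torsion of cw complexes} --- in place of their classical analogues, while keeping careful track of which group $\operatorname{Wh}(\mcalD{-})$ each torsion lives in and which $\pi_1$-injectivity is being used. Two elementary observations recur: for a cellular map $h$ the canonical retraction $\iota_h\colon M_h\ra(\text{codomain of }h)$ is a composition of elementary collapses, hence a simple-homotopy equivalence; and homotopic maps induce the same homomorphism on every $\operatorname{Wh}(\mcalD{-})$, since they agree on $\pi_1$ up to conjugacy (and inner automorphisms act trivially on $K_1$ of a group ring). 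With this in hand I would dispatch (1) and (3) first. For (1): when $f\colon Y\hookrightarrow X$ is an inclusion, $\iota_f\colon M_f\ra X$ restricts on $Y\times\{1\}$ to the identity of $Y\subset X$, so Theorem \ref{Theorem properties of the torsion of cw complexes}(1) applied to $\iota_f\colon(M_f,Y\times\{1\})\ra(X,Y)$ gives $\tautwo_u(X,Y)=(\iota_f)_*\tautwo_u(M_f,Y\times\{1\})=\tautwo_u(f)$. For (3): a simple-homotopy equivalence $f\colon X\ra Z$ has vanishing ordinary Whitehead torsion $\tau(M_f,X)$, so $(M_f,X)$ is carried to $(X,X)$ by a formal deformation rel $X$; each elementary expansion or collapse there multiplies $\tautwo_u$ only by a factor $\redet(\pm g)^{\pm1}=1\in\operatorname{Wh}(\mcalD{\pi_1 X})$ (Proposition \ref{Proposition Axiom of torsion of chain complexes}), so $\tautwo_u(M_f,X)=1$ and $\tautwo_u(f)=(\iota_f)_*1=1$.

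For (2), I would pick a cellular homotopy $H\colon X\times I\ra Z$ from $f$ to $g$, with homotopy coordinate $s\in I$, and form its mapping cylinder $M_H$, whose cylinder coordinate I call $t$. Collapsing $s$ to $0$ is a deformation retraction $\rho_0\colon M_H\ra M_f$ onto the subcomplex $\{s=0\}\cong M_f$; the relative Whitehead torsion $\tau(M_H,M_f)$ vanishes because the relative cell-pairs cancel along the $t$-cylinder, so $\rho_0$ is a simple-homotopy equivalence, and it restricts to the projection $X\times I\times\{1\}_t\ra X\times\{1\}_t$, which is again simple. Theorem \ref{Theorem properties of the torsion of cw complexes}(1) then gives $\tautwo_u(M_f,X)=(\rho_0)_*\tautwo_u(M_H,X\times I\times\{1\}_t)$, and since $\iota_f\circ\rho_0$ is homotopic to the canonical retraction $\iota_H\colon M_H\ra Z$ this becomes $\tautwo_u(f)=(\iota_H)_*\tautwo_u(M_H,X\times I\times\{1\}_t)$. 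Running the identical argument with the retraction $\rho_1\colon M_H\ra M_g$ onto $\{s=1\}$ produces the same expression for $\tautwo_u(g)$, whence $\tautwo_u(f)=\tautwo_u(g)$.

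The heart of the matter is (4). I would form $N:=M_f\cup_Y M_g$, gluing the codomain copy of $Y$ in $M_f$ to the domain copy $Y\times\{1\}$ in $M_g$, so that $X\subset M_f\subset N$ and $Z\subset M_g\subset N$, and $N$ retracts onto $Z$; crucially the inclusion $M_f\hookrightarrow N$ is $\pi_1$-injective because on fundamental groups it realizes $\pi_1(g)$ --- this is exactly where the hypothesis on $g$ is used. The classical ``gluing of mapping cylinders'' (see \cite{cohen1973course}) provides a collapse $N\ra M_{g\circ f}$ that is a simple-homotopy equivalence rel $X\sqcup Z$. I would then compute $\tautwo_u(N,X)$ in two ways. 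First, the based exact sequence of the triple $X\subset M_f\subset N$, combined with Proposition \ref{Proposition Axiom of torsion of chain complexes}(2) and the Induction property Theorem \ref{Theorem properties of the torsion of cw complexes}(3), gives $\tautwo_u(N,X)=j_*\tautwo_u(M_f,X)\cdot\tautwo_u(N,M_f)$ with $j\colon M_f\hookrightarrow N$; moreover excision identifies $C_*(\widehat N,\widehat{M_f})$ with $C_*(\widehat{M_g},\widehat Y)$ as $\Z\pi_1(N)$-complexes (the relevant pullback covers are universal, since $g$ is $\pi_1$-injective), so by Theorem \ref{Theorem properties of the torsion of cw complexes}(3) again $\tautwo_u(N,M_f)$ equals the image of $\tautwo_u(M_g,Y)$, i.e.\ of $\tautwo_u(g)$. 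Second, the collapse $N\ra M_{g\circ f}$ rel $X$ together with Theorem \ref{Theorem properties of the torsion of cw complexes}(1) identifies the image of $\tautwo_u(N,X)$ in $\operatorname{Wh}(\mcalD{\pi_1 Z})$ with $\tautwo_u(g\circ f)$. Pushing both computations into $\operatorname{Wh}(\mcalD{\pi_1 Z})$ and using that the composite $M_f\hookrightarrow N\ra Z$ realizes $g$ on fundamental groups, I obtain $\tautwo_u(g\circ f)=g_*\tautwo_u(f)\cdot\tautwo_u(g)$.

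The main obstacle I anticipate is the push-forward bookkeeping in (4): keeping each torsion in the correct $\operatorname{Wh}(\mcalD{-})$, verifying the $\pi_1$-injectivity of every inclusion invoked, and confirming that the pullback covers appearing in the excision step are the expected universal covers. A second delicate point is the reduction to the case where all three pairs $(M_f,X)$, $(M_g,Y)$, $(M_{g\circ f},X)$ are $L^2$-acyclic: when only one of $f,g$ is assumed to be an $L^2$-homology equivalence, a short diagram chase (the algebraic octahedral axiom, applied after $-\otimes_{\Z\pi_1 Z}\mcalD{\pi_1 Z}$) shows that either all three pairs are $L^2$-acyclic, or else one of $(M_f,X),(M_g,Y)$ fails to be, in which case both sides of the asserted identity equal $0$ by convention. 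The remaining verifications --- that the deformation retractions and collapses above are genuinely simple-homotopy equivalences --- are standard mapping-cylinder computations and should present no serious difficulty.
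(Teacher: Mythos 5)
Your proposal is correct and follows essentially the same route as the paper: mapping cylinders, Cohen's elementary expansion/collapse lemmas, excision applied to $N=M_f\cup_Y M_g$, and the exact sequence of the triple $X\subset M_f\subset N$, with the $\pi_1$-injectivity of $g$ entering exactly where you say it does. One caveat: in part (1) you invoke simple-homotopy invariance for pairs (Theorem \ref{Theorem properties of the torsion of cw complexes}(1)), but the paper proves that statement only afterwards, as a consequence of this very proposition (via Lemma \ref{Lemma simple homotopy invariance of mappings} and Remark \ref{Remark simple homotopy invariance}); to avoid the circularity you should use only the elementary fact that a formal deformation $K\curvearrowright L$ rel a fixed subcomplex $K_0$ leaves $\tautwo_u(\,\cdot\,,K_0)$ unchanged under the induced identification --- which is what the paper's proof uses and which your own chain-level argument in (3) already supplies.
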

\begin{proof}
    Following \cite{cohen1973course}, we write $K\curvearrowright L$ if two finite CW-complexes $K$ and $L$ are related by a finite sequence of elementary collapses or expansions. If there is a common subcomplex $K_0$ that remains unchanged during the process, we write $K\curvearrowright L$ rel $K_0$. In this case, we have $\tautwo_u(K,K_0)=\iota_*\tautwo_u(L,K_0)$ where $\iota\colon L\ra K$ is the natural homotopy equivalence. 

    We now prove the proposition part by part:
    
    For (1), there are elementary expansions $M_f\curvearrowright X\times I$ and elementary collapses $X\times I\curvearrowright X\times\{1\}$, both relative to $Y=Y\times\{1\}$ \cite[(5.1B)]{cohen1973course}. Let $\iota\colon M_f\ra X$ be the deformation retract, then $\tautwo_u(f)=\iota_*\tautwo_u(M_f,Y)=\tautwo_u(X,Y)$.
    
    For (2), if $f,g\colon X\ra Z$ are homotopic cellular maps, then $M_f\curvearrowright M_g$ rel $X$ \cite[(5.5)]{cohen1973course}, which implies $\tautwo_u(f)=\tautwo_u(g)$.
    
    For (3), if $f\colon X\ra Z$ is a simple-homotopy equivalence then $M_f\curvearrowright X$ rel $X$ \cite[(5.8)]{cohen1973course} and $\tautwo_u(f)=1$.
    
    For the proof of (4) we need the following ``$L^2$-excision" property:
    \begin{lemma}[Excision]\label{Lemma L2 Excision}
        Let $K,L$ be subcomplexes of the complex $K\cup L$ and let $M=K\cap L$. If the inclusion  $i\colon K\hookrightarrow K\cup L$ is  $\pi_1$-injective. Then $\tautwo_u(K\cup L,L)=i_*\tautwo_u(K,M)$.
    \end{lemma}
\begin{proof}
    As in \cite[(20.3)]{cohen1973course}, we may assume that $L$ and $K\cup L$ are connected. Let $\widehat{K\cup L}$ be the universal covering of $K\cup L$ and let $\widehat L$, $\widehat K$ and $\widehat M$ be the preimages of $L$, $K$ and $M$ under the covering, respectively. There is an isomorphism of chain complexes $C_*(\widehat{K\cup L},\widehat L)=C_*(\widehat K,\widehat M)$, so $\tautwo_u(K\cup L,L)=\tautwo_u(C_*(\widehat K,\widehat M))=i_*\tautwo_u( K, M)$ where the second equality follows from the induction property (Theorem \ref{Theorem properties of the torsion of cw complexes}).
\end{proof}

We now complete the proof of Proposition \ref{Proposition Properties of torsion of mappings}(4). Let $M$ be the union of $M_f$ and $M_g$ along the identity map on $Y$. Then $M\curvearrowright M_{g\circ f}$ rel $X\cup Z$ by \cite[(5.6)]{cohen1973course}. There is a diagram commutative up to homotopy:
\[
\begin{minipage}[c]{0.45\textwidth}
\centering
\small
    
\def\svgwidth{.5\columnwidth}
\begingroup%
  \makeatletter%
  \providecommand\color[2][]{%
    \errmessage{(Inkscape) Color is used for the text in Inkscape, but the package 'color.sty' is not loaded}%
    \renewcommand\color[2][]{}%
  }%
  \providecommand\transparent[1]{%
    \errmessage{(Inkscape) Transparency is used (non-zero) for the text in Inkscape, but the package 'transparent.sty' is not loaded}%
    \renewcommand\transparent[1]{}%
  }%
  \providecommand\rotatebox[2]{#2}%
  \newcommand*\fsize{\dimexpr\f@size pt\relax}%
  \newcommand*\lineheight[1]{\fontsize{\fsize}{#1\fsize}\selectfont}%
  \ifx\svgwidth\undefined%
    \setlength{\unitlength}{53.51119274bp}%
    \ifx\svgscale\undefined%
      \relax%
    \else%
      \setlength{\unitlength}{\unitlength * \real{\svgscale}}%
    \fi%
  \else%
    \setlength{\unitlength}{\svgwidth}%
  \fi%
  \global\let\svgwidth\undefined%
  \global\let\svgscale\undefined%
  \makeatother%
  \begin{picture}(1,0.79001317)%
    \lineheight{1}%
    \setlength\tabcolsep{0pt}%
    \put(0,0){\includegraphics[width=\unitlength,page=1]{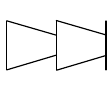}}%
    \put(-0.00400916,0.65741606){\color[rgb]{0,0,0}\makebox(0,0)[lt]{\lineheight{1.25}\smash{\begin{tabular}[t]{l}$X$\end{tabular}}}}%
    \put(0.45475657,0.65741606){\color[rgb]{0,0,0}\makebox(0,0)[lt]{\lineheight{1.25}\smash{\begin{tabular}[t]{l}$Y$\end{tabular}}}}%
    \put(0.9000291,0.65741606){\color[rgb]{0,0,0}\makebox(0,0)[lt]{\lineheight{1.25}\smash{\begin{tabular}[t]{l}$Z$\end{tabular}}}}%
    \put(0.18826754,0.36639645){\color[rgb]{0,0,0}\makebox(0,0)[lt]{\lineheight{1.25}\smash{\begin{tabular}[t]{l}$M_f$\end{tabular}}}}%
    \put(0.64560096,0.36639645){\color[rgb]{0,0,0}\makebox(0,0)[lt]{\lineheight{1.25}\smash{\begin{tabular}[t]{l}$M_g$\end{tabular}}}}%
    \put(0.45138312,0.00933752){\color[rgb]{0,0,0}\makebox(0,0)[lt]{\lineheight{1.25}\smash{\begin{tabular}[t]{l}$M$\end{tabular}}}}%
  \end{picture}%
\endgroup%

\end{minipage}
\hfill
\begin{minipage}[c]{0.45\textwidth}
\centering
\begin{tikzcd}
X \arrow[r, "f"] \arrow[d, "i_X"', hook]                                              & Y \arrow[d, "i_Y", hook] \arrow[r, "g"]                & Z                           \\
M_f \arrow[ru, "\iota_f" description] \arrow[rr, "i_1" description, hook, bend right] & M_g \arrow[ru, "\iota_g" description] \arrow[r, "i_2", hook] & M \arrow[u, "\iota"']
\end{tikzcd}
\end{minipage}\]
where $i_Y,i_1,i_2$ are $\pi_1$-injective inclusions and $\iota,\iota_f,\iota_g$ are the canonical deformation retracts.

By Excision, $\tautwo_u(M,M_f)=(i_2)_*\tautwo_u(M_g,Y)$.
Since at least one of $(M_f,X)$ and $(M_g,Y)$ is $L^2$-acyclic, the short exact sequence
\[
    0\ra C_*(\widehat {M_f},\widehat X)\ra C_*(\widehat M, \widehat X)\ra C_*(\widehat M, \widehat {M_f})\ra 0
\]
implies that $\tautwo_u(M,X)=\tautwo_u(M,M_f)\cdot (i_1)_{*}\tautwo_u(M_f,X)$. Therefore
\begin{align*}
    \tautwo_u(g\circ f)&=\iota_*\tautwo_u(M,X)\\&
    =\iota_*((i_2)_*\tautwo_u(M_g,Y)\cdot (i_1)_{*}\tautwo_u(M_f,X) )\\&
    =(\iota_g)_*\tautwo_u(M_g,Y)\cdot g_*(\iota_f)_* \tautwo_u(M_f,X)
    \\&
    =\tautwo_u(g)\cdot g_*\tautwo_u(f).
\end{align*}
This completes the proof.
\end{proof}
    
The identity $\tautwo_u(g\circ f)=g_*\tautwo_u(f)\cdot \tautwo_u(g)$ is called the \emph{multiplicativity} of the universal $L^2$-torsion. The conditions that either $f$ or $g$ is an $L^2$-homology equivalences and $g$ is $\pi_1$-injective are in general necessary. But when one of the maps is a simple homotopy equivalence, the conditions can be relaxed as follows.
\begin{lemma}\label{Lemma simple homotopy invariance of mappings}
    Let $X,Y,Z,W$ be finite CW-complexes and consider the chain of maps
        $X\xrightarrow{f}  Y\xrightarrow{g}Z\xrightarrow{h} W$.
    Suppose $g\colon Y\ra Z$ is a simple homotopy equivalence. Then
    \begin{enumerate}[\quad\rm(1)]
        \item 
        $\tautwo_u(g\circ f)=g_*\tautwo_u(f)$, and
        \item 
        $\tautwo_u(h\circ g)=\tautwo_u(h)$.
    \end{enumerate}
\end{lemma}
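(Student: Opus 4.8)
The plan is to obtain part (1) at once from the multiplicativity of $\tautwo_u$, and to prove part (2) --- the substantive assertion --- by a mapping cylinder argument in which the hypothesis on $g$ enters at a single decisive point. We may assume all maps are cellular (cellular approximation together with Proposition \ref{Proposition Properties of torsion of mappings}(2)), and we note that for the torsions occurring in the statement to be defined it suffices that the relevant \emph{codomains} have torsion-free fundamental groups satisfying the Atiyah Conjecture. For part (1): a simple homotopy equivalence $g\colon Y\ra Z$ is a $\pi_1$-isomorphism, hence $\pi_1$-injective, and satisfies $\tautwo_u(g)=1\neq 0$ by Proposition \ref{Proposition Properties of torsion of mappings}(3), so it is an $L^2$-homology equivalence; thus Proposition \ref{Proposition Properties of torsion of mappings}(4) applies to $g\circ f$ and yields $\tautwo_u(g\circ f)=g_*\tautwo_u(f)\cdot\tautwo_u(g)=g_*\tautwo_u(f)$. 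This is the easy case precisely because here the \emph{outer} map of the composition is the simple homotopy equivalence, so the $\pi_1$-injectivity hypothesis of Proposition \ref{Proposition Properties of torsion of mappings}(4) holds automatically; the Atiyah hypothesis on $\pi_1(X)$ made in Proposition \ref{Proposition Properties of torsion of mappings} is not used, since its proof only involves $\pi_1(Y)\cong\pi_1(Z)$.

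For part (2) the outer map $h\colon Z\ra W$ need not be $\pi_1$-injective, so Proposition \ref{Proposition Properties of torsion of mappings}(4) is not directly available for $h\circ g$. Set $G=\pi_1(W)$ and form the double mapping cylinder $M:=M_g\cup_Z M_h$. Since the mapping cylinder of a cellular map collapses onto its target, $M_g\searrow Z$, and carrying out this collapse inside $M$ gives $M\searrow M_h$ rel $M_h$; in particular $M_h\hookrightarrow M$ is a homotopy equivalence, $\pi_1(M)=\pi_1(M_h)=G$, and the universal cover $\widehat M$ is defined. On the other hand $M\curvearrowright M_{h\circ g}$ rel $Y\cup W$ by \cite[(5.6)]{cohen1973course}. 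Lifting these elementary expansions and collapses to $\widehat M$, and applying $L^2$-excision (Lemma \ref{Lemma L2 Excision}) with $K=M_h$, $L=M_g$ --- legitimate because $M_h\hookrightarrow M$ is $\pi_1$-injective --- one obtains in $\operatorname{Wh}(\mcalD G)\sqcup\{0\}$ the identities $\tautwo_u(h\circ g)=\tautwo_u(M,Y)$ and $\tautwo_u(h)=\tautwo_u(M_h,Z)=\tautwo_u(M,M_g)$. It therefore remains to prove $\tautwo_u(M,Y)=\tautwo_u(M,M_g)$.

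This last equality is exactly where the hypothesis on $g$ is used. Consider the based exact sequence of relative cellular $\Z G$-chain complexes of the triple $Y\subset M_g\subset M$,
\[
    0\ra C_*(\widehat{M_g},\widehat Y)\ra C_*(\widehat M,\widehat Y)\ra C_*(\widehat M,\widehat{M_g})\ra 0,
\]
where the hats denote preimages in $\widehat M$. Because $g$ is a simple homotopy equivalence we have $M_g\curvearrowright Y$ rel $Y$; pulling these elementary moves back to the covering $\widehat{M_g}\ra M_g$ gives $\widehat{M_g}\curvearrowright\widehat Y$ rel $\widehat Y$, so $C_*(\widehat{M_g},\widehat Y)$ is $L^2$-acyclic and is obtained from the zero complex by elementary expansions and collapses of based $\Z G$-chain complexes; hence its universal $L^2$-torsion is a product of classes $[\pm g']$ with $g'\in G$, which is $1$ in $\operatorname{Wh}(\mcalD G)$. (Here it is essential to work in $\operatorname{Wh}(\mcalD G)$ rather than in $\widetilde K_1(\mcalD G)$, as the classes $[\pm g']$ need not be trivial in the latter.) Feeding this into Proposition \ref{Proposition Axiom of torsion of chain complexes}(2) gives $\tautwo_u(M,Y)=\tautwo_u(M,M_g)$ whenever one side is $L^2$-acyclic, and hence $\tautwo_u(h\circ g)=\tautwo_u(h)$; if neither side is $L^2$-acyclic, the same exact sequence together with the excision isomorphism shows both symbols equal $0$.

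The step I expect to require the most care is the bookkeeping with non-universal coverings: the inclusion $M_g\hookrightarrow M$ induces $h_*\colon\pi_1(Z)\ra\pi_1(W)$ on fundamental groups, which is in general not injective, so $\widehat{M_g}$ is not the universal cover of $M_g$ and the Induction property of Theorem \ref{Theorem properties of the torsion of cw complexes} cannot be quoted for it verbatim. The remedy is to transport the combinatorial statement $M_g\curvearrowright Y$ rel $Y$ directly to $\widehat{M_g}$ by lifting the elementary expansions and collapses cell by cell, after which the resulting contributions $[\pm g']$ vanish in $\operatorname{Wh}(\mcalD G)$. One must also keep track throughout of the canonical identifications between the Whitehead groups of $\pi_1(M)$, $\pi_1(M_h)$, $\pi_1(M_{h\circ g})$ and $\pi_1(W)$ induced by the various retractions and collapses, and verify $L^2$-acyclicity at each application of Proposition \ref{Proposition Axiom of torsion of chain complexes}(2).
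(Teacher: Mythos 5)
Your proof is correct and follows essentially the same route as the paper: part (1) via the multiplicativity of Proposition \ref{Proposition Properties of torsion of mappings}, and part (2) via the double mapping cylinder $M=M_g\cup_Z M_h$, excision applied to $M_h\subset M$, and the collapse $M_g\curvearrowright Y$ furnished by $g$ being a simple homotopy equivalence. Your extra care at the step $\tautwo_u(M,Y)=\tautwo_u(M,M_g)$ (lifting the elementary expansions and collapses to $\widehat M$ rather than invoking the induction property for the non-universal covering $\widehat{M_g}$, and checking the $L^2$-acyclicity cases) only fills in details the paper leaves implicit.
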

\begin{proof}
    (1) follows directly from Proposition \ref{Proposition Properties of torsion of mappings}(3), (4).

    

    For (2), let $M$ be the union of $M_g$ and $M_h$ along the identity map on $Z$. Then $M\curvearrowright M_{h\circ g}$ rel $Y\cup W$. The Excision Lemma \ref{Lemma L2 Excision} applied to $i\colon M_h\subset M$ shows that  $\tautwo_u(M,M_g)=i_*\tautwo_u(M_h,Z)$.  Since $g$ is a simple homotopy equivalence, we have $M_g\curvearrowright Y$ rel $Z$. Let $\iota\colon M\ra W$ be the deformation retract. Then
    \[\tautwo_u(h\circ g)=\iota_*\tautwo_u(M,Y)=\iota_*\tautwo_u(M,M_g)=(\iota\circ i)_*\tautwo_u(M_h,Z)=\tautwo_u(h),
    \]
    completing the proof.
\end{proof}
\begin{remark}\label{Remark simple homotopy invariance}
    As a corollary of Lemma \ref{Lemma simple homotopy invariance of mappings}, we prove the simple-homotopy invariance stated in Theorem \ref{Theorem properties of the torsion of cw complexes}. Let $f\colon (X,X_0)\ra (Y,Y_0)$ be a mapping of CW-pairs such that $f\colon X\ra Y$ and $f|_{X_0}\colon X_0\ra Y_0$ are simple-homotopy equivalences. Then we have the following commutative diagram
    \[
    \begin{tikzcd}
X_0 \arrow[r, "f_0"] \arrow[d, "i_X"', hook] & Y_0 \arrow[d, "i_Y", hook] \\
X \arrow[r, "f"]                             & Y                         
\end{tikzcd}
    \]
    and
    \[
        f_*\tautwo_u(X,X_0)=f_*\tautwo_u(i_X)=\tautwo_u(f\circ i_X)=\tautwo_u(i_Y\circ f_0)=\tautwo_u(i_Y)=\tautwo_u(Y,Y_0).
    \]
\end{remark}

\subsection{Universal $L^2$-torsion of manifolds}
We now define the universal $L^2$-torsion for smooth manifold pairs. Recall that a \emph{smooth triangulation} of a smooth manifold $M$ is a homeomorphism from a simplicial complex to $M$ that is smooth on each simplex.
\begin{definition}[Universal $L^2$-torsion of manifold pairs]
    Let $M$ be a compact, smooth manifold, possibly with boundary, and let $N$ be a compact, smooth submanifold of $M$. Suppose $M$ admits a smooth triangulation $X$ in which $N$ is a subcomplex $Y$. Then we define $\tautwo_u(M,N):=\tautwo_u(X,Y)$.
\end{definition}
    In fact, this is a special case of the following:


\begin{definition}[Universal $L^2$-torsion of mappings between manifolds]
    Let $f\colon N\ra M$ be a continuous mapping between compact smooth manifolds (possibly with boundaries). Choose smooth triangulations of $M,N$ and let $g$ be a simplicial approximation of $f$. 
    The \emph{universal $L^2$-torsion of $f$} is defined as
\[
    \tautwo_u(f):=\tautwo_u(g)\in\operatorname{Wh}(\mcalD{\Pi(M)})\sqcup\{0\}.
\]
\end{definition}

It follows from Lemma \ref{Lemma simple homotopy invariance of mappings} that $\tautwo_u(f)$ is independent of the choice of triangulations on $M,N$, since any two such triangulations have a common subdivision and are simple homotopy equivalent as CW-complexes \cite{Whitehead1940OnC1Complexes}. The definition is also independent of the simplicial approximation $g$ by homotopy invariance. In particular, if $N\subset M$ is a smooth submanifold with $f$ the inclusion map, then $\tautwo_u(f)=\tautwo_u(M,N)$.

\subsection{Methods for computations}
        We now present the \emph{matrix chain method} for computing the universal $L^2$-torsion of a chain complex (c.f. \cite[Theorem 2.2]{Turaev2001IntroductionToComb}).

        Let $C_*=(0\ra C_n\ra\cdots\ra C_1\ra C_0\ra 0)$ be a finite based free $\Z G$-chain complex and let $\partial_i\colon C_i\ra C_{i-1}$ be the boundary operator. Suppose $d_i:= \operatorname{rank}_{\Z G}C_i$ is the rank of the free module $C_i$. Then $\partial_i$ is given by a matrix 
        \[
            A_i=(a^i_{jk})_{\begin{subarray}{l}
                j=1,\ldots,d_i\\ k=1,\ldots,d_{i-1}
            \end{subarray} },\quad a^i_{jk}\in \Z G.
        \]
        \begin{definition}
            A \emph{matrix chain} for $C_*$ is a collection of finite sets $\mathcal A=\{\mathcal I_0,\ldots,\mathcal I_n\}$ where
         $\mathcal I_i\subset\{1,\ldots, d_i\}$ and $\mathcal I_n=\emptyset$.    
     Let $B_i$ be the submatrix of $A_i$ formed by the entries $a^i_{jk}$ with $j\notin \mathcal I_i$ and $k\in \mathcal I_{i-1}$. Then $\{B_i\}$ are called the \emph{matrices associated to the matrix chain}.
     
     A matrix chain is called \emph{non-degenerate} if each associated matrix $B_i$ is a square matrix invertible over $\mcalD G$.
    \end{definition}
\begin{theorem}\label{Matrix chain for acyclic chain complex}
    A finite based free $\Z G$-chain complex $C_*$ is $L^2$-acyclic if and only if there exists a non-degenerate matrix chain $\mathcal A=\{\mathcal I_0,\ldots,\mathcal I_n\}$ for $C_*$. If this happens, then 
    \[
        \tautwo_u(C_*)=\prod_{i=1}^n \redet (B_i)^{(-1)^i}\in \widetilde K_1(\mcalD G)
    \]
    where $B_i$ are the matrices associated to the matrix chain.
\end{theorem}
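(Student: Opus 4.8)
The plan is to reduce the statement to the known axioms for $\tautwo_u$ recorded in Proposition \ref{Proposition Axiom of torsion of chain complexes}, by exhibiting a based exact subcomplex structure associated to a non-degenerate matrix chain. First I would address the \emph{existence} direction: suppose $C_*$ is $L^2$-acyclic, so that $\mcalD G\otimes_{\Z G}C_*$ is an exact complex of finite-dimensional $\mcalD G$-vector spaces (Lemma \ref{Lemma algebraic L2 acyclic}). Then $B_i := \im\partial_i$ is a $\mcalD G$-subspace of $\mcalD G\otimes C_{i-1}$, and exactness gives $\dim_{\mcalD G}(\mcalD G\otimes C_i) = \dim B_i + \dim B_{i+1}$. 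I would choose, in decreasing order of $i$, subsets $\mathcal I_{i-1}\subset\{1,\dots,d_{i-1}\}$ so that the corresponding subset of the preferred basis of $\mcalD G\otimes C_{i-1}$ projects isomorphically onto $\mcalD G\otimes C_{i-1}/B_i$; equivalently, so that the rows of $A_i$ indexed by $\{1,\dots,d_i\}\setminus\mathcal I_i$ together with the complementary basis vectors span everything. Concretely: having fixed $\mathcal I_i$, pick $\mathcal I_{i-1}$ of size $\dim B_i = d_i - |\mathcal I_i|$ such that the submatrix $B_i$ (rows $j\notin\mathcal I_i$, columns $k\in\mathcal I_{i-1}$) is invertible over $\mcalD G$ — possible because those rows of $A_i$ form a set of $\dim B_i$ linearly independent vectors spanning $B_i$, and a maximal nonsingular minor exists. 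One must check this can be done consistently so that at the bottom $\mathcal I_0$ has size $\dim B_1 = d_0$ (forcing $\mathcal I_0 = \{1,\dots,d_0\}$ as it should, since $B_1$ surjects) and at the top $\mathcal I_n = \emptyset$ is forced by $B_{n+1}=0$. This gives a non-degenerate matrix chain.

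For the converse and the torsion formula simultaneously, suppose a non-degenerate matrix chain $\mathcal A$ is given. I would build, for each $i$, a based exact short exact sequence of (truncated) chain complexes that isolates the elementary two-term complex $0\to \mcalD G^{\mathcal I_{i-1}}\xrightarrow{B_i}\mcalD G^{\{1,\dots,d_i\}\setminus\mathcal I_i}\to 0$ sitting in degrees $i-1, i$ — this is where the non-degeneracy of $B_i$ is used, guaranteeing this two-term piece is exact with torsion $\redet(B_i)^{-1}$ by Proposition \ref{Proposition Axiom of torsion of chain complexes}(1), hence contributing $\redet(B_i)^{(-1)^i}$ after the degree-shift sign. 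The technical heart is a change-of-basis / filtration argument: using the invertibility of the $B_i$ one performs based row and column operations (which do not change $\tautwo_u$, by the axioms, since they are realized by based exact sequences with elementary complexes) to put $C_*$ into a ``split'' form that is a direct sum over $i$ of these elementary two-term complexes. Then repeated application of the sum formula Proposition \ref{Proposition Axiom of torsion of chain complexes}(2) yields
\[
    \tautwo_u(C_*) = \prod_{i=1}^n \redet(B_i)^{(-1)^i}.
\]
In particular $C_*$ being $L^2$-acyclic follows since each elementary piece is exact and based exactness propagates exactness.

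I expect the main obstacle to be the bookkeeping in the filtration/splitting step — verifying that the row and column operations needed to decouple the complex into elementary two-term pieces can be chosen to be \emph{based} (i.e.\ that the intermediate bases differ from the preferred ones only by elementary transformations, so that every intermediate short exact sequence is based exact and contributes trivially to the torsion), and that one handles the three indices $\mathcal I_{i-1}$ (columns hit by $B_i$), $\{1,\dots,d_i\}\setminus\mathcal I_i$ (rows of $B_i$), and $\mathcal I_i$ (columns passed down to $B_{i+1}$) without double-counting basis elements across consecutive degrees. A clean way to organize this, which I would follow, is induction on the length $n$: split off the bottom piece $0\to \mcalD G^{\mathcal I_0}\xrightarrow{B_1} C_0\to 0$ using that $B_1$ is invertible (so $C_0$ is exhausted and $\mathcal I_0=\{1,\dots,d_0\}$), quotient it out to get a shorter $L^2$-acyclic complex with matrix chain $\{\mathcal I_1,\dots,\mathcal I_n\}$ — being careful that the induced boundary maps are represented by the expected submatrices up to based operations — and invoke the inductive hypothesis together with Proposition \ref{Proposition Axiom of torsion of chain complexes}(2). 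This reduces everything to the two-term case, which is exactly axiom (1), and the compatibility of the surviving submatrices with the preferred bases is the one point demanding genuine care.
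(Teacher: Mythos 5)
Your proposal is correct and follows essentially the same route as the paper: the existence direction by iteratively extracting invertible minors using the exactness dimension count, and the converse plus the torsion formula by induction on the length, peeling off an elementary two-term complex via a based short exact sequence and invoking Proposition \ref{Proposition Axiom of torsion of chain complexes}(1)--(2). The only (inessential) difference is that you split off the bottom piece $B_1$ and induct on $\{\mathcal I_1,\ldots,\mathcal I_n\}$, whereas the paper splits off the top piece $B_n$ and inducts on $\{\mathcal I_0,\ldots,\mathcal I_{n-1}\}$; the basedness bookkeeping you flag as the main obstacle is exactly the point the paper's sketch also glosses over.
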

The proof is a generalization of the idea of \cite[Lemma 3.1]{Dubois2016L2AlexanderTorsion} to larger chain complexes and is well-known to experts. We give a sketched proof here.
\begin{proof}[{Proof Sketch}]
    Suppose $C_*$ is $L^2$-acyclic. Then $A_n$ is injective over $\mcalD G$ and there is a $(d_n\times d_n)$-submatrix $B_n$ of $A_n$ such that $B_n$ is invertible over $\mcalD G$. Let $\mathcal I_{n-1}\subset \{1,\ldots,d_{n-1}\}$ be the set of indices of the columns of $B_n$. Write $C_{n-1}=C_{n-1}'\oplus C_{n-1}''$ where $C_{n-1}'$ corresponds to $\mathcal I_{n-1}$ and $C_{n-1}''$ corresponds to the remaining indices. The boundary map $A_{n-1}\colon C_{n-1}\ra C_{n-2}$ vanishes on $C_{n-1}'$, yielding the following commuting diagram with exact rows and columns: \[
    \begin{tikzcd}
            & 0 \arrow[d]                           & 0 \arrow[d]                                     &                                            &   \\
0 \arrow[r] & C_n \arrow[r, "="] \arrow[d, "B_n", "\cong"'] & C_n \arrow[d, "A_n"] \arrow[r]                  & 0 \arrow[d]                                &   \\
0 \arrow[r] & C_{n-1}' \arrow[r, ] \arrow[d]     & C_{n-1} \arrow[r, ] \arrow[d, "A_{n-1}"]     & C_{n-1}'' \arrow[d, "A_{n-1}''"] \arrow[r] & 0 \\
            & 0 \arrow[r]                           & C_{n-2} \arrow[r, "="] \arrow[d, "A_{n-2}"] & C_{n-2} \arrow[d, "A_{n-2}"] \arrow[r]     & 0 \\
            &                                       & \vdots                                          & \vdots                                     &  
\end{tikzcd}
    \] 
    Repeating this procedure for the vertical exact sequence on the right eventually yields matrices $B_n,\ldots, B_1$ that form a non-degenerate matrix chain for $C_*$.

    Conversely, suppose $\mathcal A=\{\mathcal I_0,\ldots,\mathcal I_n\}$ is a non-degenerate matrix chain for $C_*$. Decompose $C_{n-1}=C_{n-1}'\oplus C_{n-1}''$, yielding the the commutative diagram as above.    
    Denote the vertical chain complexes as:
    \begin{align*}
        &C_*'=(0\ra C_n\stackrel{B_n}\longrightarrow C_{n-1}'\ra 0 \ra \cdots \ra 0),\\
        &C_*''=(0\ra 0\ra C_{n-1}''\stackrel{A_{n-1}''}\longrightarrow C_{n-2}\stackrel{A_{n-2}}\longrightarrow \cdots\stackrel{}\longrightarrow C_1\stackrel{A_1}\longrightarrow C_0\stackrel{}\longrightarrow 0).
    \end{align*}
    Then $\{\mathcal I_0,\ldots,\mathcal I_{n-1}\}$ is a non-degenerate matrix chain for $C_*''$ and
    we have the short exact sequence of based $\Z G$-chain complex
    \[
        0\ra C_*'\ra C_*\ra C_*''\ra0.
    \]
    By induction on $n$ we may assume that $C_*''$ is $L^2$-acyclic with $$\tautwo_u(C_*'')=\prod_{i=1}^{n-1} \redet (B_i)^{(-1)^i}\in \widetilde K_1(\mcalD G).$$ Then by Proposition \ref{Proposition Axiom of torsion of chain complexes},
    \[
        \tautwo_u(C_*)=\tautwo_u(C_*')\cdot \tautwo_u(C_*'')=\prod_{i=1}^n \redet (B_i)^{(-1)^i}\in \widetilde K_1(\mcalD G).
    \]
\end{proof}

\begin{proposition}\label{Proposition computation example}
    We compute the universal $L^2$-torsion for the following manifold pairs.
    \begin{enumerate}[\rm\quad (1)]
        \item Let $N$ be a compact smooth manifold whose fundamental group is torsion-free satisfying the Atiyah Conjecture. Then for any $s\in [0,1]$,
        \[
            \tautwo_u(N\times I,N\times\{s\})=1\in \operatorname{Wh}(\mcalD {\Pi(N)}).
        \]
       \item Let $S^1$ be the circle with fundamental group $\pi_1(S^1)=\langle t\rangle$. Then $$\tautwo_u(S^1)=[t-1]\inv\in \operatorname{Wh}(\mcalD \Z).$$
       \item Let $T^2$ be the torus. Then
       \[
            \tautwo_u(T^2)=1\in \operatorname{Wh}(\mcalD {\Z^2}).
       \]
    \end{enumerate}
\end{proposition}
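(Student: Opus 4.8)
The plan is to read off each torsion from a standard CW‑model and feed it into the formal apparatus developed above; none of the three computations is deep.

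\textbf{Part (1).} I would argue that the inclusion $j\colon N\times\{s\}\hookrightarrow N\times I$ is a \emph{simple} homotopy equivalence: after fixing a smooth triangulation of $N$, the product $N\times I$ collapses onto the slice $N\times\{s\}$ through a finite sequence of elementary collapses, so $j$ is (up to subdivision) an expansion. Then Proposition \ref{Proposition Properties of torsion of mappings}(1) identifies $\tautwo_u(N\times I,N\times\{s\})$ with $\tautwo_u(j)$, and Proposition \ref{Proposition Properties of torsion of mappings}(3) gives $\tautwo_u(j)=1$ since $j$ is a simple homotopy equivalence. Alternatively one may invoke the simple‑homotopy invariance of Theorem \ref{Theorem properties of the torsion of cw complexes}(1), comparing $(N\times I,N\times\{s\})$ with the pair $(N\times\{s\},N\times\{s\})$, whose torsion is trivially $1$.

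\textbf{Part (2).} Equip $S^1$ with its standard CW‑structure (one $0$‑cell, one $1$‑cell) and write $\pi_1(S^1)=\langle t\rangle$. Since $\Z$ is torsion‑free and amenable it satisfies the Atiyah Conjecture, and $\mcalD\Z$ is the fraction field of $\Z[t^{\pm1}]$. The cellular chain complex of the universal cover $\R$ is $0\to\Z[t^{\pm1}]\xrightarrow{\,t-1\,}\Z[t^{\pm1}]\to0$; as $t-1\neq0$ it is invertible in $\mcalD\Z$, so this complex is $L^2$‑acyclic by Lemma \ref{Lemma algebraic L2 acyclic}. Proposition \ref{Proposition Axiom of torsion of chain complexes}(1) then yields $\tautwo_u(C_*)=\redet(t-1)\inv$, and projecting to $\operatorname{Wh}(\mcalD\Z)$ gives $\tautwo_u(S^1)=[t-1]\inv$, as claimed.

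\textbf{Part (3).} Give $T^2$ the standard CW‑structure with one $0$‑cell, two $1$‑cells $a,b$, and one $2$‑cell attached along $[a,b]$, so $\pi_1(T^2)=\Z^2=\langle a,b\rangle$; writing $a\mapsto s$, $b\mapsto t$ in $\Z[\Z^2]=\Z[s^{\pm1},t^{\pm1}]$, the skew field $\mcalD{\Z^2}$ is the rational function field $\Q(s,t)$. A Fox‑calculus computation gives the cellular chain complex of the universal cover as
\[
    0\longrightarrow\Z[\Z^2]\xrightarrow{\,\partial_2\,}\Z[\Z^2]^2\xrightarrow{\,\partial_1\,}\Z[\Z^2]\longrightarrow0,
\]
with $\partial_1$ represented by the column $(s-1,\ t-1)^{T}$ (boundaries of the $1$‑cells) and $\partial_2$ by the row $(1-t,\ s-1)$ (Fox derivatives of $[a,b]$); note $\partial_1\partial_2=0$ by commutativity. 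The pair is $L^2$‑acyclic — because $\Z^2$ is infinite amenable, or directly from the explicit complex over $\Q(s,t)$ — and I would compute the torsion by the matrix‑chain method (Theorem \ref{Matrix chain for acyclic chain complex}): the matrix chain $\mathcal I_2=\emptyset$, $\mathcal I_1=\{2\}$, $\mathcal I_0=\{1\}$ is non‑degenerate, with associated matrices $B_2=(s-1)$ and $B_1=(s-1)$, so
\[
    \tautwo_u(T^2)=\redet(B_1)\inv\cdot\redet(B_2)=[s-1]\inv[s-1]=1\in\widetilde K_1(\mcalD{\Z^2}),
\]
hence also $1$ in $\operatorname{Wh}(\mcalD{\Z^2})$; any other valid matrix chain (e.g.\ $\mathcal I_1=\{1\}$, giving $[t-1]\inv[1-t]=[-1]=1$) produces the same answer, so the result is insensitive to the sign conventions in the Fox calculus.

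I do not expect a genuine obstacle here; the only points demanding attention are the normalization conventions — the inverse in Proposition \ref{Proposition Axiom of torsion of chain complexes}(1) and the row/column indexing convention defining the matrices attached to a matrix chain — together with the (classical) fact used in Part (1) that $N\times I$ collapses onto a slice by elementary collapses, so the relevant equivalence is simple rather than merely a homotopy equivalence.
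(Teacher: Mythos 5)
Your proposal is correct and follows essentially the same route as the paper: part (1) via the simple-homotopy equivalence $N\times\{s\}\hookrightarrow N\times I$ together with Proposition \ref{Proposition Properties of torsion of mappings}, and parts (2)--(3) via the standard CW-structures and the matrix chain method, differing from the paper only in the (immaterial) choice of matrix chain for $T^2$.
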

\begin{proof}
    For (1), let $f\colon N\times \{s\}\ra N\times I$ be the inclusion map, then $f$ is a simple-homotopy equivalence and $\tautwo_u(N\times I,N\times\{s\})=\tautwo_u(f)=1$ by Proposition \ref{Proposition Properties of torsion of mappings}.

    For (2), consider the standard CW-structure on $S^1$ with one 0-cell $p$ and one 1-cell $e$. After choosing appropriate lifts $\hat p$ and $\hat e$, the cellular chain complex of the universal cover is 
    \[
        C_*(\widehat S^1)=(0\ra \Z[t^\pm]\cdot \langle\hat  e\rangle\xrightarrow{(t-1)}\Z[t^\pm]\cdot \langle \hat p\rangle\ra0)
    \]
    and hence $\tautwo_u(S^1)=[t-1]\inv$.

    For (3), consider the standard CW-structure for $T^2$ given by identifying opposite sides of a square. Let $p$ be the 0-cell, $e_1,e_2$ the 1-cells and $\sigma$ the 2-cell. The boundary of $\sigma$ is the loop $e_1e_2e_1\inv e_2\inv$. If $e_1,e_2$ represents generators $t_1,t_2\in\pi_1(T^2)$ respectively, then with appropriate lifts we obtain the chain complex $C_*(\widehat T^2)$: 
    \[
        0\ra\Z[t_1^\pm,t_2^\pm]\cdot \langle \hat \sigma\rangle\xrightarrow{\begin{pmatrix}
            1-t_2 & t_1-1
        \end{pmatrix}} \Z[t_1^\pm,t_2^\pm]\cdot \langle \hat e_1,\hat e_2\rangle\xrightarrow{\begin{pmatrix}
            t_1-1\\t_2-1
        \end{pmatrix}}\Z[t_1^\pm,t_2^\pm]\cdot \langle \hat p\rangle\ra0.
    \]
    A matrix chain is given by $B_2=(1-t_2)$ and $B_1=(t_2-1)$, hence $\tautwo_u(T^2)=[1-t_2]\cdot [t_2-1]\inv=1.$
\end{proof}

\section{Leading term map, restriction map and polytope map}\label{Section Leading term map restriction map and polytope map}
Let $G$ be a torsion-free group satisfying the Atiyah Conjecture. We will define the leading term map $L_\phi:\mcalD G\ra \mcalD G$ associated to a character $\phi\in H^1(G;\R)$. The main objectives of this section are to establish two key results: Theorem \ref{Theorem Leading term of matrices}, which relates the leading term map to the Dieudonn\'e determinant, and Theorem \ref{Theorem leading term map commutes with restrictions}, which shows that the leading term map commutes with the restriction map. We conclude the section by proving the ``if" direction of Theorem \ref{Main Theorem fibered iff face map zero} using the polytope maps (stated as Theorem \ref{Theorem if part}).

\subsection{Ore localization}
\begin{definition}[Ore localization]
    Let $R$ be a ring and let $S\subset R$ be a multiplicatively closed subset. The pair $(R,S)$ satisfies the \emph{(right) Ore condition} if the following two conditions hold:
    \begin{enumerate}
        \item for any $(r,s)\in R\times S$ there exists $(r',s')\in R\times S$ such that $rs'=sr'$, and
        \item for any $(r,s)\in R\times S$ with $sr=0$, there is $t\in S$ with $rt=0$.
    \end{enumerate}
    If $(R,S)$ satisfies the Ore condition, define an equivalence relation on $R\times S$ by
    \[
        (r,s)\sim(rx,sx) \text{\quad whenever\quad} x\in R
        \text{ and } sx\in S.
    \]
    The quotient set $R\times S/\sim$ is denoted by $RS\inv$. Define a ring structure on $RS\inv$ as follows. Given two representatives $(r,s),(r',s')\in RS\inv$, choose $c\in R,\ d\in S$ such that $sc=s'd\in S$ and define addition by
\[
    (r,s)+(r',s')=(rc+r'd,sc).
\]
Similarly, choose $e\in R,\ f\in S$ with $se=r'f$ and define
\[
    (r,s)\cdot (r',s')=(re,s'f).
\]
The resulting ring $RS\inv$ is called the \emph{Ore localization} of $R$ at $S$.
\end{definition}
Intuitively, a pair $(r,s)\in RS\inv$ is viewed as a formal fraction $rs\inv$. The first Ore condition can be understood as that any ``left fraction" $s\inv r$ can be rewritten as a ``right fraction" $r'(s')\inv$ such that $rs'=sr'$. The second condition is automatically satisfied if $S$ contains no zero divisors. In this paper, we will only need Ore localizations of this simple type:
\begin{lemma}[{\cite[Corollary 1.3.3]{cohn1995skew}}]\label{Lemma Ore domain definition}
    Let $R$ be an integral domain and $R^\times$ the set of nonzero elements. Then $(R,R^\times)$ satisfies the Ore condition if and only if $aR\cap bR\not=\{0\}$ for all nonzero $a,b\in R$. In this case, the Ore localization of $R$ at $R^\times$ is a skew field $K$ and the natural homomorphism $\lambda\colon R\ra K$ is an embedding.
\end{lemma}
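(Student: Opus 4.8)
The plan is to first reduce the Ore condition for the pair $(R,R^\times)$ to the displayed intersection condition, using that $R$ is an integral domain, and then to verify by hand that the resulting localization is a skew field into which $R$ embeds.

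First I would dispose of the second Ore condition: since $R$ has no zero divisors, $sr=0$ with $s\in R^\times$ forces $r=0$, and then $rt=0$ for every $t\in R^\times$, so the condition holds trivially. For the first Ore condition, fix $(r,s)\in R\times R^\times$; the case $r=0$ is settled by $(r',s')=(0,1)$, and for $r\neq 0$ I would observe that producing $r'\in R$ and $s'\in R^\times$ with $rs'=sr'$ is the same as producing a \emph{nonzero} common value $x=rs'=sr'$ in $rR\cap sR$, the translation in both directions relying only on the fact that $R$ has no zero divisors (so $x\neq 0$ forces $s',r'\neq 0$, and conversely). This identifies the first Ore condition for $(R,R^\times)$ with the requirement that $aR\cap bR\neq\{0\}$ for all nonzero $a,b\in R$, which is the first assertion of the lemma.

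Assuming this condition, I would form $K=R(R^\times)\inv$ with the ring structure described above, so that each element of $K$ has the form $rs\inv$ with $r\in R$ and $s\in R^\times$. Two things then remain. First, that the canonical map $\lambda\colon R\ra K$ is injective: an element of its kernel is annihilated by some nonzero element of $R$, hence must be $0$ because $R$ is a domain, and in particular $1\neq 0$ in $K$. Second, that every nonzero element of $K$ is a unit: if $rs\inv\neq 0$ then $r\neq 0$, so $r\in R^\times$ and $r\inv$ already makes sense in $K$, and a direct computation with the Ore multiplication rule --- taking the auxiliary elements to be $1$ --- shows $rs\inv\cdot sr\inv=1=sr\inv\cdot rs\inv$. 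Together these give that $K$ is a skew field and that $\lambda$ is the claimed embedding.

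The only mildly subtle point is this last step: by construction the localization was designed only to invert the elements of $R^\times$, so it is not formally obvious that a general fraction $rs\inv$ is invertible. This is resolved purely by the explicit multiplication formula for Ore localizations, and the computation is immediate once one notices that the natural choice of auxiliary elements (both equal to $1$) already satisfies the relation it is required to satisfy; the rest of the argument is routine bookkeeping with the integral-domain hypothesis.
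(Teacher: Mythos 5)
The paper offers no proof of this lemma — it is quoted directly from Cohn — so there is nothing to compare against line by line; your argument is the standard one and is correct. The reduction of the two Ore conditions to the intersection condition $aR\cap bR\neq\{0\}$ uses the domain hypothesis exactly as it should, and the verification that every nonzero fraction $rs^{-1}$ is invertible via the explicit multiplication rule (with both auxiliary elements equal to $1$) is the right way to see that the localization is a skew field. The only step you pass over quickly is the characterization of when a pair $(r,s)$ represents $0$ in $R(R^\times)^{-1}$ (equivalently, when $\lambda(r)=0$): from the generating relation one must unwind that $(r,1)\sim(0,s')$ forces $rc=0$ for some $c\in R^\times$, whence $r=0$ in a domain. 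This is routine and standard, so the proof stands.
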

\begin{definition}
    An integral domain $R$ satisfying the equivalent conditions in Lemma \ref{Lemma Ore domain definition} is called an \emph{Ore domain}. The skew field $K$ is called the \emph{field of fractions} of $R$.
\end{definition}

\subsection{Crossed products}
Assume that $G$ is a torsion-free group which satisfies the Atiyah conjecture. Consider a short exact sequence of groups
\[
    1\ra K\ra G\xrightarrow{\nu} H\ra 1.
\]
Then $K$ is also torsion-free and satisfies the Atiyah Conjecture by Proposition \ref{Proposition subgroup satisfies Atiyah conjecture}. Denote by $\mcalD K$ and $\mcalD G $ the Linnell's skew fields of $K$ and $G$, respectively.

Choose a set-theoretic section $s\colon H\ra G$ such that $\nu\circ s=\operatorname{id}_H$. Consider the following subset of $\mcalD G$:
\[
    \mcalD K*_sH:=\bigg\{\sum_{h\in H} x_h\cdot s(h)\in \mcalD G\ \bigg|\  x_h\in \mcalD K,\ x_h=0 \text{ for all but finitely many }h\in H\bigg\}.
\]
This set contains the zero element $0$, the identity element $1=s(1_H)\inv\cdot s(1_H)$, and is closed under addition. Moreover, it is closed under multiplication, since
\begin{align*}
    &\bigg(\sum_{h\in H} x_h\cdot s(h)\bigg)\cdot\bigg( \sum_{h\in H} y_h\cdot s(h)\bigg)\\&\quad=\sum_{h_1,h_2\in H} x_{h_1}\cdot s(h_1)\cdot y_{h_2}\cdot s(h_2)\\
    &\quad=\sum_{h_1,h_2\in H}  \underbrace{x_{h_1} s(h_1)y_{h_2}s(h_1)\inv}_{\in \mcalD K}\cdot \underbrace{s(h_1)s(h_2)s(h_1h_2)\inv}_{\in K} \cdot s(h_1h_2).
\end{align*}
Recall that the group automorphism of $K$ given by conjugation by $s(h_1)$ extends to an automorphism of $\mcalD K$ by Proposition \ref{Proposition subgroup satisfies Atiyah conjecture}. It follows that $\mcalD K*_ sH$ is a subring of the skew field $\mcalD G$. 
\begin{proposition}\label{Proposition change another section}
    With notations as above, we have the following properties.
    \begin{enumerate}[\rm\quad(1)]
        \item An element $\sum_{h\in H}x_h\cdot s(h)$ of $\mcalD K*_s H$ is zero if and only if $x_h=0$ for all $h\in H$.
        \item Given another section $s'\colon H\ra G$, then 
        \[
            \sum_{h\in H}x_h\cdot s(h)=\sum_{h\in H}y_h\cdot s'(h)
        \]
        if and only if $y_h=x_hs(h)s'(h)\inv$ for all $h\in H$.
    \end{enumerate}
\end{proposition}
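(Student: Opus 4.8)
The plan is to reduce (2) to (1), and to prove (1) by showing that the family $\{s(h)\}_{h\in H}$ is left-linearly independent over $\mcalD K$ inside the skew field $\mcalD G$. For the reduction of (2): since $\nu(s'(h))=\nu(s(h))=h$, the element $k_h:=s'(h)s(h)\inv$ lies in $K$, hence is a unit of $\mcalD K$, and $s'(h)=k_h\,s(h)$. Substituting gives $\sum_{h}y_h s'(h)=\sum_{h}(y_hk_h)s(h)$, so the identity $\sum_h x_h s(h)=\sum_h y_h s'(h)$ amounts, after subtracting and applying (1) to $\sum_h(x_h-y_hk_h)s(h)=0$, to $x_h=y_hk_h$ for every $h$; that is, $y_h=x_hk_h\inv=x_h s(h)s'(h)\inv$.

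For (1) the ``if'' direction is immediate from the description of $\mcalD K*_sH$, so suppose $a:=\sum_{h\in F}x_h s(h)=0$ in $\mcalD G$ with $F\subset H$ finite and $x_h\in\mcalD K$; the goal is to show that every $x_h$ vanishes. First I would pass to bounded coefficients. Since $\mathcal UK$ is the Ore localization of $\mathcal NK$ at its non-zero-divisors (see \cite{lueck2002l2}) and $\mcalD K\subset\mathcal UK$, the finitely many elements $x_h$ ($h\in F$) have a common left denominator: there is a non-zero-divisor $d\in\mathcal NK$ with $b_h:=dx_h\in\mathcal NK$ for all $h\in F$. Multiplying $a=0$ on the left by $d$ yields $\sum_{h\in F}b_h s(h)=0$. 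As $d$ is invertible in $\mathcal UK\subset\mathcal UG$, it suffices to prove the statement with the bounded coefficients $b_h\in\mathcal NK$: then $b_h=0$ forces $x_h=d\inv b_h=0$.

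To settle this bounded case I would use the orthogonal decomposition $\ell^2(G)=\bigoplus_{h\in H}\ell^2(Ks(h))$ by the cosets of the normal subgroup $K$. Because $K$ is normal, $Ks(h)\cdot k=Ks(h)$ for every $k\in K$, so right multiplication by elements of $K$, and hence by weak closure by elements of $\mathcal NK$, preserves each summand; in particular each $b_h\in\mathcal NK$ acts block-diagonally with respect to this decomposition, and the canonical unitary $\ell^2(K)\to\ell^2(Ks(h'))$, $\delta_k\mapsto\delta_{ks(h')}$, conjugates its native action on $\ell^2(K)$ to the action of an automorphic image of $b_h$ (conjugation by $s(h')$ being an automorphism of $\mathcal NK$), so $b_h$ can vanish on a single summand $\ell^2(Ks(h'))$ only if $b_h=0$. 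On the other hand right multiplication by $s(h)$ carries $\ell^2(Ks(h'))$ onto $\ell^2(Ks(h'h))$. Now evaluate the relation $\sum_{h\in F}b_h s(h)=0$ on a basis vector $\delta_k$ with $k\in K$ (recall $\Z G$ acts on $\ell^2(G)$ by right multiplication, so the $h$-term sends $\delta_k$ to $b_h(\delta_{ks(h)})$): the $h$-th term lies in $\ell^2(Ks(h))$, and these subspaces are pairwise orthogonal, so each $b_h(\delta_{ks(h)})=0$. Letting $k$ range over $K$, the vectors $\delta_{ks(h)}$ form an orthonormal basis of $\ell^2(Ks(h))$, hence $b_h$ vanishes on $\ell^2(Ks(h))$ and therefore $b_h=0$. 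This proves the bounded case, hence (1), hence (2).

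The only genuinely non-formal input is the passage to bounded coefficients, which relies on the identification of $\mathcal UK$ with the Ore localization of $\mathcal NK$ and on the compatibility of the embeddings $\Z K\hookrightarrow\Z G$, $\mathcal NK\hookrightarrow\mathcal NG$ and $\mcalD K\hookrightarrow\mcalD G$ with the coset decomposition of $\ell^2(G)$; this compatibility is visible on $\Z K$ (by normality of $K$) and is preserved under weak closure and under the division-closure construction. Everything else is bookkeeping with cosets and orthogonality.
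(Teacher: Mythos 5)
Your proof is correct. For part (2) you do exactly what the paper does: reduce to (1) via $s'(h)=k_h s(h)$ with $k_h=s'(h)s(h)^{-1}\in K$. For part (1) the paper gives no argument of its own --- it simply cites \cite[Lemma 10.57]{lueck2002l2} --- whereas you supply a self-contained proof of that cited fact: clear common left denominators using the Ore description of $\mathcal UK$ over $\mathcal NK$ to reduce to coefficients $b_h\in\mathcal NK$, then use that the induced copy of $\mathcal NK$ in $\mathcal NG$ acts block-diagonally on $\ell^2(G)=\bigoplus_{h}\ell^2(Ks(h))$ while right multiplication by $s(h)$ permutes the blocks, so the summands of $\bigl(\sum_h b_h s(h)\bigr)\delta_k$ land in pairwise orthogonal subspaces and must vanish individually. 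All the ingredients you invoke (left Ore condition for $\mathcal NK$ via the involution, compatibility of $\mathcal NK\hookrightarrow\mathcal NG$ with the coset decomposition, normality of $K$ making left and right cosets agree) are standard and correctly deployed; the only cosmetic caveat is the order in which the two factors of $b_hs(h)$ act on $\delta_k$ under the right-regular-representation convention, but the argument goes through with either ordering since in both cases the $h$-th term lies in $\ell^2(Ks(h))$.
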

\begin{proof}
    The first statement is a consequence of \cite[Lemma 10.57]{lueck2002l2}. The second statement follows from the previous one.
\end{proof}

As a corollary, the subring $\mcalD K*_sH\subset\mcalD G$ is independent of the choice of the section $s$. We call this subring the \emph{crossed product} of $\mcalD K$ and $H$, denoted by $\mcalD K*H$. Clearly $\mcalD K*H$ is an integral domain since it embeds into the skew field $\mcalD G$.

In some cases, the relation between $\mcalD G$ and its subring $\mcalD K*H$ is particularly simple.

\begin{proposition}\label{Proposition Linnell field of group extension} Let $1\ra K\ra G \ra H\ra 1$ be a group extension.
    \begin{enumerate}[\quad\rm(1)]
        \item If $H$ is finite, then $\mcalD G=\mcalD K* H$.
        \item If $H$ is virtually a finitely generated abelian group, then the integral domain $\mcalD K*H$ is an Ore domain whose field of fractions is $\mcalD G$.
    \end{enumerate}
\end{proposition}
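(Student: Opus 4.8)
The plan is to prove the two statements by producing explicit inverses inside $\mcalD G$ and then invoking the division-closure characterization of $\mcalD G$ together with the Ore-localization criterion of Lemma \ref{Lemma Ore domain definition}.

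For part (1), suppose $H$ is finite. I would first observe that $\mcalD K * H$ is a finite-dimensional algebra over the skew field $\mcalD K$: indeed, as a left $\mcalD K$-module it is free with basis $\{s(h)\}_{h\in H}$ (this is precisely Proposition \ref{Proposition change another section}(1)), so it has dimension $|H|$. A finite-dimensional algebra over a skew field that is also an integral domain must itself be a skew field — given a nonzero element $a$, left multiplication $\ell_a$ by $a$ is an injective $\mcalD K$-linear endomorphism of a finite-dimensional space, hence surjective, so $1$ is in its image and $a$ has a left inverse; a symmetric argument (using right $\mcalD K$-module structure, or the integral domain property) gives a two-sided inverse. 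Therefore $\mcalD K * H$ is a skew field containing $\Z G$ and closed under taking inverses in $\mcalD G$, so it contains the division closure $\mcalD G$. The reverse inclusion $\mcalD K * H \subseteq \mcalD G$ is by construction, giving equality.

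For part (2), first reduce to the case where $H$ is finitely generated abelian: if $H_0 < H$ has finite index and is finitely generated abelian, then letting $G_0 = \nu^{-1}(H_0)$ we have $1 \to K \to G_0 \to H_0 \to 1$, and one checks $\mcalD K * H$ is a finite module over $\mcalD K * H_0$, so it suffices to Ore-localize the latter and then apply part (1) to the finite extension $G_0 < G$ (i.e.\ $\mcalD G = \mcalD{G_0} * (H/H_0)$ combined with the identification $\mcalD{G_0} = \operatorname{Frac}(\mcalD K * H_0)$ and a transitivity-of-localization argument). So assume $H \cong \Z^r$. Now $\mcalD K * H$ is an iterated twisted Laurent polynomial ring $\mcalD K[t_1^{\pm}; \alpha_1][t_2^{\pm};\alpha_2]\cdots[t_r^{\pm};\alpha_r]$ over the skew field $\mcalD K$ (the twists being the conjugation automorphisms from Proposition \ref{Proposition subgroup satisfies Atiyah conjecture}); such a ring is a Noetherian domain, and a Noetherian domain satisfies the Ore condition by the standard argument (for nonzero $a,b$, the left ideals $aR, a^2R, \dots$ or rather the right ideals $aR+bR, aR$, intersect nontrivially because $R$ has no infinite direct sums of nonzero ideals). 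Hence $\mcalD K * H$ is an Ore domain; let $K' = \operatorname{Frac}(\mcalD K * H)$ be its field of fractions, which by Lemma \ref{Lemma Ore domain definition} embeds in $\mcalD G$. Then $K'$ is a subring of $\mcalD G$ containing $\Z G$ and closed under inverses in $\mcalD G$ (every element is $a b^{-1}$ with $a, b \in \mcalD K * H$, and $b$ being invertible in $\mcalD G$ forces $b^{-1} \in K'$ by the Ore property), so $\mcalD G \subseteq K'$; the reverse inclusion is automatic, so $K' = \mcalD G$.

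The main obstacle I anticipate is the reduction step in part (2) from the virtually-abelian case to the genuinely abelian case, specifically verifying that $\mcalD G = \operatorname{Frac}(\mcalD{G_0} * (H/H_0))$ is obtained from $\mcalD K * H$ by a \emph{single} Ore localization at its nonzero elements — one must be careful that localizing $\mcalD K * H_0$ first and then adjoining the finite group $H/H_0$ yields the same skew field as localizing $\mcalD K * H$ all at once. This is a transitivity statement for Ore localization through a finite extension, and while true, it requires checking that the finite-index subring $\mcalD K * H_0 \hookrightarrow \mcalD K * H$ makes the latter a finitely generated module so that the two localization orders commute; I would isolate this as a small lemma. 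Everything else (twisted Laurent rings are Noetherian domains, Noetherian domains are Ore, finite-dimensional domains over a skew field are skew fields) is standard and I would cite \cite{cohn1995skew} rather than reprove it.
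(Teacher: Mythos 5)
Your argument is correct in substance, but be aware that the paper does not actually prove this proposition: its ``proof'' is a one-line citation to Lemmas 10.59 and 10.69 of \cite{lueck2002l2}, and what you have written is essentially a reconstruction of those arguments. A few refinements. In part (1), left multiplication $\ell_a$ is \emph{not} left $\mcalD K$-linear, since $a\lambda\neq\lambda a$ in general; the map that is left $\mcalD K$-linear is \emph{right} multiplication $r_a\colon x\mapsto xa$, whose injectivity (domain) on the finite-dimensional left $\mcalD K$-vector space $\mcalD K*H$ yields surjectivity and hence some $b$ with $ba=1$; then $(ab-1)a=a(ba)-a=0$ forces $ab=1$ in a domain, so no separate symmetric argument is needed. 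In part (2), the detour through $H_0$ and the transitivity-of-localization lemma you flag as the main obstacle can be avoided entirely: choose $H_0$ \emph{normal} of finite index (pass to the normal core, which is still finitely generated abelian), note that $\mcalD K*H=(\mcalD K*H_0)*(H/H_0)$ is a crossed product of a Noetherian ring with a finite group and is therefore itself Noetherian, hence an Ore domain by the Goldie/direct-sum argument you sketch; the single Ore localization $\operatorname{Frac}(\mcalD K*H)$ then embeds into $\mcalD G$ by the universal property of localization (every nonzero element of $\mcalD K*H$ is already invertible in the skew field $\mcalD G$), is division-closed because it is a skew field, and contains $\Z G$, so it equals $\mcalD G$ with no commuting-of-localizations issue to check. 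With these adjustments your proof is complete and is the standard one.
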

\begin{proof}
    These statements are Lemmas 10.59 and 10.69 of \cite{lueck2002l2}, respectively.
\end{proof}

\subsection{The leading term map}\label{Section leading term map} Assume further that $G$ is finitely generated and consider the short exact sequence
\[
    1\ra K\ra G\xrightarrow{\nu} H_1(G)_f\ra 1.
\] where $\nu\colon G\ra H_1(G)_f$ is the natural quotient map to the free abelian part of the first homology group $H_1(G)_f$. Fix a set-theoretic section $s\colon H_1(G)_f\ra G$ and let $\phi\in H^1(G;\R)$ be a real cohomology class. 
\begin{definition}\label{Definition support of a element in DG}
   For any nonzero element $u\in(\mcalD K*H_1(G)_f)^\times$, write
\[
    u=\sum_{h\in H_1(G)_f}x_h\cdot s(h)\in \mcalD K* H_1(G)_f.
\]
The \emph{support of $u$} is the finite set $\operatorname{supp}(u):=\{h\in H_1(G)_f\mid x_h\not=0\}$, which is independent of the choice of section by Proposition \ref{Proposition change another section}.
Define $\delta_\phi(u)$ to be the minimal value of $\phi(h)$ for all $h\in \operatorname{supp}(u)$. Define
\[
    L_\phi(u):=\sum_{\substack{h\in \operatorname{supp}(u),\\ \phi(h)=\delta_\phi(z)}}x_h\cdot s(h).
\]
This element is nonzero and belongs to $(\mcalD K*H_1(G)_f)^\times$.
\end{definition}

    \begin{lemma}
    The definitions of $\delta_\phi(u)$ and $L_\phi(u)$ are independent of the choice of section. Moreover, for all $u_1,u_2\in (\mcalD K*H_1(G)_f)^\times$,
    \begin{align*}
        &\delta_\phi(u_1u_2)=\delta_\phi(u_1)+\delta_\phi(u_2),\\
        &L_\phi(u_1u_2)=L_\phi(u_1)\cdot L_\phi(u_2).
    \end{align*}
    Hence,
    \[\aligned
        &\delta_\phi\colon (\mcalD K*H_1(G)_f)^\times\ra \R,\\
        &L_\phi\colon (\mcalD K*H_1(G)_f)^\times\ra (\mcalD K*H_1(G)_f)^\times\endaligned
    \]
    are well-defined group homomorphisms.  
\end{lemma}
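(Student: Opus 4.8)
The plan is to dispose of the section-independence first, then to derive the two multiplicative identities by a direct bookkeeping computation in the crossed product $\mcalD K * H_1(G)_f$, and finally to read off the homomorphism assertions. For section-independence, write $u = \sum_{h} x_h s(h)$ and let $s'$ be another section. By Proposition~\ref{Proposition change another section}(2) we have $u = \sum_h y_h s'(h)$ with $y_h = x_h\, s(h)s'(h)^{-1}$, and since $\nu(s(h)) = \nu(s'(h))$ the factor $s(h)s'(h)^{-1}$ lies in $K$, hence is a unit of the skew field $\mcalD K$, so $x_h \neq 0 \iff y_h \neq 0$ and $\operatorname{supp}(u)$ is intrinsic to $u$. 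Because $\phi$ kills torsion it factors through $H_1(G)_f$ and is additive there, so $\delta_\phi(u) = \min\{\phi(h):h\in\operatorname{supp}(u)\}$ and its set of minimizers depend only on this support; and on each minimizer $h$ the summand $x_h s(h) = y_h s'(h)$ is literally the same element of $\mcalD G$, so $L_\phi(u)$ is well-defined.

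For the two identities I would fix $u_1 = \sum_{h_1} x_{h_1} s(h_1)$, $u_2 = \sum_{h_2} y_{h_2} s(h_2)$, put $d_i := \delta_\phi(u_i)$, and expand the product using the crossed-product rule $s(h_1) y_{h_2} s(h_2) = \big({}^{s(h_1)}y_{h_2}\big)\,\mu(h_1,h_2)\,s(h_1h_2)$, where ${}^{s(h_1)}(-) := s(h_1)(-)s(h_1)^{-1}$ is the conjugation automorphism of $\mcalD K$ and $\mu(h_1,h_2) := s(h_1)s(h_2)s(h_1h_2)^{-1} \in K$ is the cocycle appearing in the crossed-product multiplication. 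This yields
\[
    u_1u_2 = \sum_{m}\Big(\sum_{h_1h_2=m} z_{h_1,h_2}\Big)s(m), \qquad z_{h_1,h_2} := x_{h_1}\cdot{}^{s(h_1)}y_{h_2}\cdot\mu(h_1,h_2).
\]
Since $\mcalD K$ is a skew field, $z_{h_1,h_2} \neq 0$ exactly when $x_{h_1}\neq 0$ and $y_{h_2}\neq 0$, and then $\phi(h_1h_2) = \phi(h_1)+\phi(h_2) \geq d_1+d_2$, with equality precisely when $h_1,h_2$ realize the minima $d_1,d_2$. Hence the part of $u_1u_2$ supported on the level set $\{m:\phi(m)=d_1+d_2\}$ equals
\[
    \Big(\sum_{\phi(h_1)=d_1} x_{h_1}s(h_1)\Big)\Big(\sum_{\phi(h_2)=d_2} y_{h_2}s(h_2)\Big) = L_\phi(u_1)\,L_\phi(u_2),
\]
while every other component sits on a strictly higher level.

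The only step that is not purely formal is to see that this lowest level is genuinely attained, i.e. that $L_\phi(u_1)L_\phi(u_2)\neq 0$; this is immediate because $\mcalD K * H_1(G)_f$ embeds into the skew field $\mcalD G$ and hence has no zero divisors. It then follows that $\delta_\phi(u_1u_2) = d_1+d_2$ and $L_\phi(u_1u_2) = L_\phi(u_1)L_\phi(u_2)$, which together with $\delta_\phi(1)=0$, $L_\phi(1)=1$ and the fact that the nonzero elements of $\mcalD K * H_1(G)_f$ form a cancellative monoid inside $\mcalD G^\times$ yields the claimed homomorphism properties. The main thing to get right is the crossed-product bookkeeping — tracking the conjugation automorphisms ${}^{s(h)}(-)$ and the $K$-valued cocycle $\mu$ — but this is routine; the actual content is the additivity of $\phi$ on $H_1(G)_f$ together with the absence of zero divisors in $\mcalD G$.
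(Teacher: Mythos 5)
Your proposal is correct and follows essentially the same route as the paper's (much terser) proof: section-independence via Proposition \ref{Proposition change another section}, and multiplicativity by observing that the terms of $u_1u_2$ on the minimal $\phi$-level come exactly from products of the minimal terms of $u_1$ and $u_2$. Your explicit remark that the lowest level cannot cancel because $\mcalD G$ has no zero divisors is the one genuinely non-formal point, and it is handled correctly.
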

\begin{proof}
    Let $u=\sum_h x_h\cdot s(h)$ and let $s'$ be another section. By Proposition \ref{Proposition change another section}, $u=\sum_h y_h\cdot s'(h)$ where $y_h=x_hs(h)s'(h)\inv$. It follows that $\delta_\phi(u)$ and $L_\phi(u)$ do not depend on the choice of section. The terms of $u_1u_2$ with minimal $\phi$-value are exactly from the products of that of $u_1$ and $u_2$. This implies the homomorphism properties.
\end{proof}
Recall that $G$ is finitely generated and by Proposition \ref{Proposition Linnell field of group extension} the Linnell's skew field $\mcalD G$ is the field of fractions of the subring $\mcalD K*H_1(G)_f$.
\begin{definition}[Leading term map $L_\phi$]\label{Definition of leading term map}
    The group homomorphisms $\delta_\phi$ and $L_\phi$ extend to group homomorphisms 
    \[
    \begin{aligned}
        &\delta_\phi\colon \mcalD G^\times\ra \R, && \delta_\phi(uv\inv):=\delta_\phi(u)-\delta_\phi(v),\\&
    L_\phi\colon \mcalD G^\times \ra \mcalD G^\times,\ \ && L_\phi(uv^{-1}):=L_\phi(u)L_\phi(v)^{-1}
    \end{aligned}\]
for all $u,v\in (\mcalD K*H_1(G)_f)^\times$.

Furthermore, we set $\delta_\phi(0)=+\infty$ and $L_\phi(0)=0$. The extended maps satisfy
\[\begin{aligned}
    &\delta_\phi\colon \mcalD G\ra \R\cup\{+\infty\},\ \ && \delta_\phi(z_1z_2)=\delta_\phi(z_1)+\delta_\phi(z_2),\\&L_\phi\colon \mcalD G \ra \mcalD G,&& L_\phi(z_1z_2)=L_\phi(z_1)\cdot L_\phi(z_2)
\end{aligned}\]
 for all $z_1,z_2\in \mcalD G$.
\end{definition}
\begin{proof}
    We first verify well-definedness. Suppose $z\in\mcalD G^\times$ admits two representations 
    $z=u_1v_1^{-1}=u_2v_2^{-1}$, then there exists $w_1,w_2\in (\mcalD K*H_1(G)_f)^\times$ such that $u_1w_1=u_2w_2$ and $v_1w_1=v_2w_2$. Hence
    \[\aligned
        L_\phi(u_1)L_\phi(v_1)^{-1}&=L_\phi(u_1)L_\phi(w_1)L_\phi(w_1)^{-1}L_\phi(v_1)\inv\\&
        =L_\phi(u_1w_1)L_\phi(v_1w_1)^{-1}\\&
        =L_\phi(u_2w_2)L_\phi(v_2w_2)^{-1}\\&
        =L_\phi(u_2)L_\phi(v_2)^{-1}.\endaligned
    \]
    To show that $L_\phi$ is a homomorphism, let $z_1,z_2\in \mcalD G^\times$. By the Ore condition, we may write $z_1=u_1w\inv$ and $z_2=wu_2\inv$ for some $u_1,u_2,w\in (\mcalD K*H_1(G)_f)^\times$. Then
    \[
        L_\phi(z_1)L_\phi(z_2)=L_\phi(u_1)L_\phi(w)\inv\cdot L_\phi(w) L_\phi(u_2)\inv=L_\phi(u_1)L_\phi(u_2)\inv=L_\phi(z_1z_2).
    \]
    The corresponding properties for $\delta_\phi$ can be proved similarly. The properties of the extended maps directly follows.
\end{proof}

Here are some basic facts about the mappings $\delta_\phi,L_\phi$, particularly about their properties under addition in $\mcalD G$. Most properties clearly hold in the subring $\mcalD K*H_1(G)_f$ and it is routine to verify them in its field of fractions $\mcalD G$.

\begin{proposition}\label{Proposition properties of the leading term map}
    Let $G$ be a finitely generated torsion-free group which satisfies the Atiyah Conjecture. Let $\phi\in H^1(G;\R)$ be a real cohomology class and $\delta_\phi\colon \mcalD G\ra \R$, $L_\phi\colon \mcalD G\ra \mcalD G$ be as in Definition \ref{Definition of leading term map}. For any $z,z_1,\ldots,z_n\in \mcalD G$, the following hold:
    \begin{enumerate}[\rm\quad(1)]
        \item $\delta_{r\phi}(z)=r\cdot \delta_\phi(z)$ and $L_{r\phi}(z)=L_\phi(z)$ for all $r\in \R_+$.

        \item $\delta_\phi(cz)=\delta_\phi(z)$ and  $L_\phi(cz)=c\cdot L_\phi(z)$ for all $c\in\Q\setminus\{0\}$.
        
        \item $\delta_\phi(L_\phi(z))=\delta_\phi(z)$ and $L_\phi(L_\phi(z))=L_\phi(z)$.

        \item If $L_\phi(z_1)=L_\phi(z_2)\not=0$, then $\delta_\phi(z_1)=\delta_\phi(z_2)<\delta_\phi(z_1-z_2)$.
        
        \item  $\delta_\phi(z_1+z_2)\geqslant\min\{\delta_\phi(z_1), \delta_\phi(z_2)\}$. If $\delta_\phi(z_1)<\delta_\phi(z_2)$, then $$\delta_\phi(z_1+z_2)=\delta_\phi(z_1),\quad L_\phi(z_1+z_2)= L_\phi(z_1).$$

        \item If $\delta_\phi(z_1)=\cdots=\delta_\phi(z_n)=\colon \delta$ and $\sum_{k=1}^nL_\phi(z_k)\not=0$, then
        \[
            \delta_\phi\bigg(\sum_{k=1}^nz_k\bigg)=\delta,\quad L_\phi\bigg(\sum_{k=1}^nz_k\bigg)=\sum_{k=1}^nL_\phi(z_k).
        \]

        \item For any open neighborhood $U\subset H^1(G;\R)$ of $\phi$, there is a rational cohomology class $\psi\in U$ such that $L_\psi(z)=L_\phi(z)$.

        \item Let $L\subset G$ be a finitely generated subgroup with induced inclusion $\mcalD L\subset \mcalD G$, and let $\phi|_L\colon L\ra \R$ be the restriction of $\phi$ to $L$. Then the mappings
        \[\delta_{\phi|_L}\colon \mcalD L\ra \R\cup\{+\infty\},\quad L_{\phi|_L}\colon \mcalD L \ra\mcalD L\] are precisely the restrictions of $\delta_\phi$ and $L_\phi$ to $\mcalD L$.
    \end{enumerate}
\end{proposition}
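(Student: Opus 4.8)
The plan is to verify Proposition \ref{Proposition properties of the leading term map} statement by statement, reducing everything to the subring $\mcalD K*H_1(G)_f$ where all claims are essentially bookkeeping about finite $H_1(G)_f$-supported sums, and then extending to the field of fractions $\mcalD G$ via the Ore condition. First I would set up notation: fix a section $s$, write elements as $u=\sum_h x_h s(h)$, and record the elementary observation that for nonzero $u,v$ the support of $uv$ is contained in $\operatorname{supp}(u)+\operatorname{supp}(v)$, with equality on the ``$\phi$-minimal face'' provided the leading products do not cancel --- which is automatic because $\mcalD K*H_1(G)_f$ is a domain, so $L_\phi(u)L_\phi(v)\neq 0$. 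This is exactly the content of the lemma preceding Definition \ref{Definition of leading term map}, and I would lean on it repeatedly.

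Then I would dispatch the items in order. For (1): $\delta_{r\phi}$ scales by $r$ directly from the definition of $\delta_\phi$ as a minimum of $\phi$-values, and since $r>0$ the minimizing set is unchanged, so $L_{r\phi}=L_\phi$; extend to $\mcalD G$ using $\delta_\phi(uv^{-1})=\delta_\phi(u)-\delta_\phi(v)$. For (2): multiplication by a nonzero rational $c$ scales every coefficient $x_h$ without changing supports, hence $\delta_\phi$ is unchanged and $L_\phi(cz)=cL_\phi(z)$. For (3): $L_\phi(z)$ has support inside the $\phi$-minimal face of $\operatorname{supp}(z)$, on which $\phi$ is constant equal to $\delta_\phi(z)$, so applying $\delta_\phi$ or $L_\phi$ again is idempotent. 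Items (4),(5),(6) are the genuinely additive statements: here I would argue in $\mcalD K*H_1(G)_f$ first, where $\delta_\phi(z_1+z_2)\geqslant\min$ is immediate from support containment, strict inequality of $\phi$-degrees forces the lower-degree leading term to survive untouched, and for (6) equal $\phi$-degrees with non-cancelling leading sum give exactly $L_\phi(\sum z_k)=\sum L_\phi(z_k)$. To pass to $\mcalD G$, given $z_1,\dots,z_n\in\mcalD G$, use the Ore condition to find a common right denominator $w\in(\mcalD K*H_1(G)_f)^\times$ with $z_iw\in\mcalD K*H_1(G)_f$ for all $i$; then $\delta_\phi(z_i)=\delta_\phi(z_iw)-\delta_\phi(w)$ and $L_\phi(z_i)=L_\phi(z_iw)L_\phi(w)^{-1}$, and since $\sum z_i=(\sum z_iw)w^{-1}$, the subring statements transfer verbatim after subtracting (resp. multiplying by the inverse of) the $w$-contribution. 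Item (4) then follows from (6): $L_\phi(z_1)=L_\phi(z_2)$ means $L_\phi(z_1)-L_\phi(z_2)=0$, so the equal-degree sum $z_1+(-z_2)$ has vanishing leading sum, forcing $\delta_\phi(z_1-z_2)>\delta$.

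For (7): if $\psi$ ranges over a small neighborhood $U$ of $\phi$, then for the finitely many $h$ in $\operatorname{supp}(z)$ (more precisely in $\operatorname{supp}(u)\cup\operatorname{supp}(v)$ for a chosen representation $z=uv^{-1}$), the function $\psi\mapsto$ (the $\psi$-minimal face of that finite set) is locally constant at $\phi$ along the relatively-interior directions and only refines otherwise; since $\Q$-points are dense in $H^1(G;\R)$ and the condition ``$\psi$ has the same minimal face as $\phi$ on a given finite set'' is defined by finitely many strict/non-strict linear inequalities, I can pick a rational $\psi\in U$ satisfying all the non-strict equalities that $\phi$ satisfies --- concretely, choose $\psi$ rational and close enough to $\phi$ that $\operatorname{sign}(\psi(h)-\psi(h'))$ agrees with $\operatorname{sign}(\phi(h)-\phi(h'))$ whenever the latter is nonzero, and $\psi(h)=\psi(h')$ whenever $\phi(h)=\phi(h')$ (the latter is a rational linear subspace, so it contains rational points arbitrarily close to $\phi$ within it, then perturb within that subspace). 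This guarantees $L_\psi(u)=L_\phi(u)$, $L_\psi(v)=L_\phi(v)$, hence $L_\psi(z)=L_\phi(z)$. For (8): if $L\leqslant G$ is finitely generated, the inclusion $\mcalD L\hookrightarrow\mcalD G$ from Proposition \ref{Proposition subgroup satisfies Atiyah conjecture} is compatible with the crossed-product pictures --- a section for $1\to K\cap L\to L\to H_1(L)_f\to 1$ can be chosen inside a section for $G$, up to the image of $H_1(L)_f\to H_1(G)_f$ --- so an element of $\mcalD L$ expanded over $H_1(L)_f$ and then over $H_1(G)_f$ has the same coefficients, and the $\phi$-values are computed by $\phi|_L$; hence $\delta_\phi|_{\mcalD L}=\delta_{\phi|_L}$ and $L_\phi|_{\mcalD L}=L_{\phi|_L}$, with the $\mcalD G$-extension following by taking fractions inside $\mcalD L$.

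I expect the main obstacle to be item (8), specifically making the compatibility of the two crossed-product structures precise: the subgroup $K\cap L$ of $L$ need not map onto the kernel $K$ of $G\to H_1(G)_f$, and $H_1(L)_f\to H_1(G)_f$ need not be injective, so one must be careful that the ``expand over the smaller abelianization, then sit inside the bigger one'' procedure genuinely matches coefficients. The cleanest route is probably to avoid comparing abelianizations directly and instead observe that both $\delta_\phi$ and $L_\phi$ are determined by the $\Z G$-level data --- for $a\in\Z L\subset\Z G$ the claim is transparent, since $\delta_\phi(a)=\min\{\phi(g):g\in\operatorname{supp}_G(a)\}=\min\{\phi|_L(\ell):\ell\in\operatorname{supp}_L(a)\}$ and similarly for leading terms --- and then both sides extend uniquely and multiplicatively to the respective fields of fractions, with $\mcalD L$ sitting compatibly inside $\mcalD G$; uniqueness of the multiplicative extension forces the two restrictions to agree. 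The additive items (5),(6) and the Ore-denominator reduction are routine but should be written carefully to avoid circularity, since (4) is deduced from (6) and (7) uses the finiteness of supports crucially.
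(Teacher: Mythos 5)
Your treatment of items (1)--(7) follows the paper's proof essentially verbatim: reduce to the crossed product $\mcalD K*H_1(G)_f$, pass to $\mcalD G$ via common right Ore denominators, and for (7) observe that the set of $\psi$ inducing the same ordering on the finite support set is cut out by finitely many integral linear equalities and strict inequalities, hence contains rational points near $\phi$. One small wrinkle: deriving (4) from (6) is not immediate, since (6) is silent precisely when the leading sum vanishes; you need either the direct subring computation (as in the paper) or a short extra step such as writing $z_1=z_2+(z_1-z_2)$ and combining (5) and (6). This is cosmetic.

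The genuine gap is in item (8). Your ``cleanest route'' --- verify agreement on $\Z L$ and invoke uniqueness of the multiplicative extension to ``the respective fields of fractions'' --- does not work, because $\mcalD L$ is \emph{not} the field of fractions of $\Z L$ (which is not an Ore domain when $L$ is free nonabelian); it is the Ore localization of $\mcalD{K'}*H_1(L)_f$, where $K'=\ker(L\to H_1(L)_f)$. The group $\mcalD L^\times$ is not generated by $\Z L\setminus\{0\}$: the coefficient ring $\mcalD{K'}$ is a division closure, built by iterated sums \emph{and} inverses, so it contains elements such as $a^{-1}+b^{-1}$ with $a,b\in\Z K'$ that are not products of elements of $\Z L$ and their inverses. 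Consequently two multiplicative maps on $\mcalD L^\times$ agreeing on $\Z L\setminus\{0\}$ need not agree everywhere, and what must actually be shown is agreement on all of $(\mcalD{K'}*H_1(L)_f)^\times$. This is where the paper does real work: for $u=\sum_h x_h\cdot s(h)$ with $x_h\in\mcalD{K'}$, each nonzero $x_h$ is $\phi$-pure of $\delta_\phi$-degree $0$ (the $\phi$-pure degree-zero elements together with $0$ form a division-closed subring of $\mcalD G$ containing $\Z K'$, by Lemma \ref{Lemma Properties of pure elements}), hence $\delta_\phi(x_h\cdot s(h))=\phi(h)$; one then splits $u$ into its $\phi|_L$-minimal part and the remainder and applies the already-established additive properties (5) and (6) in $\mcalD G$ to reassemble. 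Your first route (matching sections for the two crossed products) founders on exactly the incompatibilities you list; the correct fix is not to compare crossed-product coordinates at all, but to compute $\delta_\phi$ and $L_\phi$ of the individual summands $x_h\cdot s(h)$ inside $\mcalD G$ and use (5)--(6), which your proposal does not do.
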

\begin{proof}

    For (1)--(3), the statements hold in $\mcalD K*H_1(G)_f$ and directly extends to $\mcalD G$ by definition.

    For (4), write $z_1=u_1w\inv,\ z_2=u_2w\inv$ with $u_1,u_2,w\in \mcalD K*H_1(G)_f$, then $L_\phi(z_1)=L_\phi(z_2)$ implies that $L_\phi(u_1)=L_\phi(u_2)$. It follows that $\delta_\phi(u_1)=\delta_\phi(u_2)<\delta_\phi(u_1-u_2)$. Therefore $\delta_\phi(z_1)=\delta_\phi(z_2)<\delta_\phi(z_1-z_2)$.

    For (5), write $z_1=u_1w\inv,\ z_2=u_2w\inv$ with $u_1,u_2,w\in \mcalD K*H_1(G)_f$. Since $\delta_\phi(u_1+u_2)\geqslant\min\{\delta_\phi(u_1), \delta_\phi(u_2)\}$, we have 
    \begin{align*}
        \delta_\phi(z_1+z_2)&=\delta_\phi(u_1+u_2)-\delta_\phi(w)\\&\geqslant\min\{\delta_\phi(u_1), \delta_\phi(u_2)\}-\delta_\phi(w)=\min\{\delta_\phi(z_1), \delta_\phi(z_2)\}.
    \end{align*} If $\delta_\phi(z_1)<\delta_\phi(z_2)$, then $\delta_\phi(u_1)<\delta_\phi(u_2)$ and $L_\phi(u_1+u_2)=L_\phi(u_1)$. Hence $\delta_\phi(z_1+z_2)=\delta_\phi(z_1)$ and $L_\phi(z_1+z_2)=L_\phi(z_1)$.

    For (6), write $z_i=u_iw\inv$ with $u_i,w\in \mcalD K*H_1(G)_f$ for $i=1,\ldots,n$. By assumption we have $\delta_\phi(u_i)=\delta+\delta_\phi(w)$ and $\sum_{i=1}^kL_\phi(u_i)\not=0$. It follows that $\delta_\phi( \sum_{i=1}^ku_i)=\delta+\delta_\phi(w)$ and $L_\phi(\sum_{i=1}^ku_i)=\sum_{i=1}^kL_\phi(u_i)$. Hence $\delta_\phi( \sum_{i=1}^kz_i)=\delta$ and $L_\phi(\sum_{i=1}^kz_i)=\sum_{i=1}^kL_\phi(z_i)$.

    For (7), write $z=uv\inv$ with $u,v\in \mcalD K*H_1(G)_f$. 
    For another cohomology class $\psi\in H^1(G;\R)$, we have $L_\phi(z)=L_\psi(z)$ if $\phi$ and $\psi$ induce the same ordering on $\operatorname{supp}(u)\cup \operatorname{supp}(v)$, that is:
    \begin{itemize}
        \item for all $h,h'\in\operatorname{supp}(u)\cup \operatorname{supp}(v)$, $\psi(h-h')<0$ whenever $\phi(h-h')<0$;
        \item for all $h,h'\in\operatorname{supp}(u)\cup \operatorname{supp}(v)$, $\psi(h-h')=0$ whenever $\phi(h-h')=0$.
    \end{itemize}
    The domain $\Omega$ of such $\psi$ is the intersection of finitely many closed hyperplanes and open half-spaces of $H^1(G;\R)$, each defined by an integral linear equation. Since $\phi\in\Omega$, any open neighborhood $U\ni \phi$ contains rational classes in $U\cap \Omega$.

     For (8), consider the short exact sequence $1\ra K'\ra L\ra H_1(L)_f\ra 1$. 
    Recall that $\mcalD L$ is the field of fractions of $\mcalD {K'}*H_1(L)_f$ by Proposition \ref{Proposition Linnell field of group extension}. For any nonzero $u\in \mcalD {K'}*H_1(L)_f$, write $u=\sum_{h\in H_1(L)_f}x_h\cdot s(h)$ with $x_h\in \mcalD {K'}$ for a section $s\colon H_1(L)_f\ra L$.
    Let $\delta:=\delta_{\phi|_L}(u)$ and decompose
    \begin{align*}
        u=\sum_{\substack{h\in H_1(L)_f,\\\phi|_L(h)=\delta}}  x_h\cdot s(h)+\sum_{\substack{h\in H_1(L)_f,\\ \phi|_L(h)>\delta}} x_h\cdot s(h)=\colon u_1+u_2.
    \end{align*}
    Then $L_{\phi|_L}(u)=u_1\not=0$. Now identify $\mcalD L$ with its image in $\mcalD G$, we want to show 
    \[
        \delta_\phi(u)=\delta,\quad L_\phi(u)=u_1.
    \]
    Since $\delta_\phi(x_h\cdot s(h))=\phi(h)$ for all $h\in L$, applying (6) to $u_1$ we have    
        $\delta_\phi(u_1)=\delta$ and $L_\phi(u_1)=u_1$.
     By (5) applied to $u_2$ we know that
    \[
        \delta_\phi(u_2)\geqslant \min\{\delta_\phi(x_h\cdot s(h))\mid h\in H_1(L)_f,\ \phi|_L(h)>\delta\}>\delta,
    \]
    Again by (5) applied to $u=u_1+u_2$ we have $\delta_\phi(u)=\delta_\phi(u_1)=\delta$ and $L_\phi(u)=L_\phi(u_1)=u_1$. Hence 
    \[\delta_\phi(u)=\delta_{\phi|_L}(u)\text{\quad and \quad}L_\phi(u)=L_{\phi|_L}(u)
    \] 
    for all nonzero $u\in \mcalD{K'}*H_1(L)_f$. Passing to the field of fractions we conclude that $\delta_\phi(z)=\delta_{\phi|_L}(z)$ and $L_\phi(z)=L_{\phi|_L}(z)$ for all $z\in\mcalD L$.
\end{proof}

\begin{definition}
    An element $z\in \mcalD G$ is called \emph{$\phi$-pure} if $L_\phi(z)=z$.
\end{definition}
\begin{lemma}\label{Lemma Properties of pure elements}
    Here are some properties of the $\phi$-pure elements.
    \begin{enumerate}[\quad\rm (1)]
        \item Elements of $\Z[\ker \phi]\subset \mcalD G$ are $\phi$-pure; elements of $G\subset \mcalD G$ are $\phi$-pure.
        \item The product of two $\phi$-pure elements is $\phi$-pure. If $z_1,\ldots,z_n$ are $\phi$-pure elements with $\delta_\phi(z_1)=\cdots=\delta_\phi(z_n)$, then $\sum_{i=1}^n z_i$ is $\phi$-pure.
        \item For any nonzero $z\in \mcalD G$, the element $L_\phi(z)$ is the unique $\phi$-pure element $w\in \mcalD G$ satisfying $\delta_\phi(z-w)>\delta_\phi(z)$.
    \end{enumerate}
\end{lemma}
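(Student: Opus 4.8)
The plan is to deduce all three assertions formally from the bookkeeping properties of $\delta_\phi$ and $L_\phi$ gathered in Proposition \ref{Proposition properties of the leading term map}, together with the fact that $\phi\colon G\to\R$ factors through $H_1(G)_f$, so that for $g\in G$ one has $\delta_\phi(g)=\phi(\nu(g))=\phi(g)$. For (1), I would argue directly from Definition \ref{Definition support of a element in DG}. A group element is written as $g=\big(g\,s(\nu(g))\inv\big)\cdot s(\nu(g))$ with $g\,s(\nu(g))\inv\in K\subset\mcalD K$ (since $\nu$ kills it), so $\operatorname{supp}(g)=\{\nu(g)\}$ is a singleton and hence $L_\phi(g)=g$. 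For $z=\sum_g n_g\,g\in\Z[\ker\phi]$ (finite sum over $g\in\ker(\phi\colon G\to\R)$), each such $g$ also lies in the kernel of $G\to H_1(G)_f\xrightarrow{\phi}\R$, so $\operatorname{supp}(z)\subset\{h\in H_1(G)_f\mid \phi(h)=0\}$; therefore $\delta_\phi(z)=0$ and $L_\phi(z)=z$ when $z\neq 0$, while $z=0$ is trivial by the convention $L_\phi(0)=0$.

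For (2), the statement about products is immediate from multiplicativity of $L_\phi$ (Definition \ref{Definition of leading term map}): $L_\phi(z_1z_2)=L_\phi(z_1)L_\phi(z_2)=z_1z_2$. For the sum, one may discard zero terms and assume all $z_i\neq 0$ and $\phi$-pure with common value $\delta=\delta_\phi(z_i)$; then $\sum_k L_\phi(z_k)=\sum_k z_k$, so if $\sum_k z_k\neq 0$ Proposition \ref{Proposition properties of the leading term map}(6) gives $L_\phi(\sum_k z_k)=\sum_k L_\phi(z_k)=\sum_k z_k$, and if $\sum_k z_k=0$ there is nothing to prove.

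For (3), existence is obtained by taking $w=L_\phi(z)$: it is $\phi$-pure because $L_\phi(L_\phi(z))=L_\phi(z)$ (Proposition \ref{Proposition properties of the leading term map}(3)), and applying Proposition \ref{Proposition properties of the leading term map}(4) to the pair $z_1=z$, $z_2=L_\phi(z)$, which share the nonzero leading term $L_\phi(z)$, yields $\delta_\phi(z-L_\phi(z))>\delta_\phi(z)$. For uniqueness, let $w$ be $\phi$-pure with $\delta_\phi(z-w)>\delta_\phi(z)$. First $w\neq 0$, since $w=0$ would give $\delta_\phi(z)>\delta_\phi(z)$. Next I would show $\delta_\phi(w)<\delta_\phi(z-w)$: otherwise $\min\{\delta_\phi(w),\delta_\phi(z-w)\}=\delta_\phi(z-w)$, and the inequality in Proposition \ref{Proposition properties of the leading term map}(5) applied to $z=w+(z-w)$ would give $\delta_\phi(z)\geqslant\delta_\phi(z-w)>\delta_\phi(z)$, a contradiction. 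With $\delta_\phi(w)<\delta_\phi(z-w)$, the second half of Proposition \ref{Proposition properties of the leading term map}(5) gives $L_\phi(z)=L_\phi\big(w+(z-w)\big)=L_\phi(w)=w$, the last equality because $w$ is $\phi$-pure.

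None of the steps is genuinely hard; the only point that requires a moment of care is the auxiliary inequality $\delta_\phi(w)<\delta_\phi(z-w)$ in the uniqueness part of (3), where one has to rule out the reverse inequality by contradiction, and throughout one must keep the conventions $\delta_\phi(0)=+\infty$, $L_\phi(0)=0$ consistent when edge cases (vanishing elements or vanishing sums) arise.
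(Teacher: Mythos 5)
Your proof is correct and follows essentially the same route as the paper: all three parts are deduced from the definition of $L_\phi$ and Proposition \ref{Proposition properties of the leading term map}. The only cosmetic difference is in the uniqueness step of (3), where the paper applies Proposition \ref{Proposition properties of the leading term map}(5) directly to the decomposition $w=z+(-(z-w))$ (so the hypothesis $\delta_\phi(z-w)>\delta_\phi(z)$ is exactly what is needed), whereas you decompose $z=w+(z-w)$ and therefore need the small auxiliary contradiction argument to establish $\delta_\phi(w)<\delta_\phi(z-w)$ first; both are valid.
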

\begin{proof}
The properties (1) and the first half of (2) follow from the definition of $L_\phi$. The other half of (2) follows from Proposition \ref{Proposition properties of the leading term map}(6).

For (3), Proposition \ref{Proposition properties of the leading term map}(3)--(4) implies that $L_\phi(z)$ is $\phi$-pure and $\delta_\phi(z-L_\phi(z))>\delta_\phi(z)$. To prove uniqueness, if a $\phi$-pure element $w$ satisfies $\delta_\phi(z-w)>\delta_\phi(z)$, then by Proposition \ref{Proposition properties of the leading term map}(4)--(5),
\[
    w=L_\phi(w)=L_\phi(z-(z-w))=L_\phi(z).
\]

\end{proof}

\begin{remark}
    For any $\phi\in H^1(G;\R)$, the homomorphism $\delta_\phi\colon \mcalD G^\times \ra \R$ induces well-defined homomorphisms $\delta_\phi\colon \Lambda\ra \R$ for $\Lambda=K_1(\mcalD G)$ and $\widetilde K_1(\mcalD G)$. Similarly, the homomorphism $L_\phi\colon \mcalD G^\times \ra \mcalD G^\times$ induces well-defined homomorphisms $L_\phi\colon \Lambda\ra \Lambda$ for $\Lambda=K_1(\mcalD G)$, $\widetilde K_1(\mcalD G)$ and $\operatorname{Wh}(\mcalD G)$.

    We use the same symbols $\delta_\phi,L_\phi$ for their induced maps on $K_1(\mcalD G)$, $\widetilde K_1(\mcalD G)$ and $\operatorname{Wh}(\mcalD G)$; the domain of definition will be clear from the context. The following commutative diagram illustrates the relevant mappings (compare Definition \ref{Definition det, redet, whdet}).
\[\begin{tikzcd}&& \operatorname{GL}(\mcalD G) \arrow[ld, "\det"'] \arrow[d, "\redet"] \arrow[rd, "\whdet"]&\\
{\mcalD G^\times/[\mcalD G^\times,\mcalD G^\times]} \arrow[r, equals, shorten <=3pt, shorten >=3pt] & K_1(\mcalD G) \arrow[r] \arrow["L_\phi"', loop, distance=2em, in=125, out=55] \arrow[d, "\delta_\phi"'] & \widetilde K_1(\mcalD G) \arrow["L_\phi"', loop, distance=2em, in=305, out=235] \arrow[r] \arrow[ld, "\delta_\phi"'{xshift=-2pt, yshift=-4pt}] & \operatorname{Wh}(\mcalD G) \arrow["L_\phi"', loop, distance=2em, in=305, out=235] \\& \mathbb R&&
\end{tikzcd}
\]\end{remark}

The following Theorem \ref{Theorem Leading term of matrices} is of central importance in this paper, relating the leading term map and the Dieudonn\'e determinant.

\begin{theorem}
\label{Theorem Leading term of matrices}
    Let $\phi\in H^1(G;\R)$ be a real cohomology class. Suppose $P$ and $Q$ are $n\times n$ matrices over $\mcalD G$ satisfying the following conditions:
    \begin{itemize}
        \item[(i)] $P$ is invertible over $\mcalD G$.
        \item[(ii)] There exist real numbers $d_1,\ldots,d_n$ and $d_1',\ldots,d_n'$ such that each entry $P_{ij}$ is either zero, or is $\phi$-pure with $\delta_\phi(P_{ij})=d_i-d_j'$.
        \item[(iii)]  $\delta_\phi(Q_{ij})>d_i-d_j'$ for all $i,j$.
    \end{itemize}
    Then $P+Q$ is invertible over $\mcalD G$ and 
        $$L_\phi(\det(P+Q))=\det P\in K_1(\mcalD G).$$
\end{theorem}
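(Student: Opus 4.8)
The plan is to induct on $n$, peeling off one row and column at a time by Gaussian elimination while controlling $\phi$-degrees through the multiplicativity of $\delta_\phi$ and $L_\phi$ (Proposition \ref{Proposition properties of the leading term map}). Before starting, note a harmless reduction: after permuting rows one may assume $P_{11}\neq 0$, since a row transposition multiplies both $\det P$ and $\det(P+Q)$ by the same class $[\pm 1]\in K_1(\mcalD G)$, which $L_\phi$ fixes (as $\pm 1$ is $\phi$-pure); the data $d_\bullet,d_\bullet'$ are permuted accordingly. The base case $n=1$ is immediate: since $P_{11}$ is $\phi$-pure with $\delta_\phi(P_{11})=d_1-d_1'<\delta_\phi(Q_{11})$, Proposition \ref{Proposition properties of the leading term map}(5) gives $L_\phi(P_{11}+Q_{11})=P_{11}\neq 0$, so $P+Q$ is invertible and $L_\phi(\det(P+Q))=[P_{11}]=\det P$.

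For the inductive step I would take as pivot $a:=(P+Q)_{11}=P_{11}+Q_{11}$. By Proposition \ref{Proposition properties of the leading term map}(5) we have $\delta_\phi(a)=d_1-d_1'$ and $L_\phi(a)=P_{11}$, so $a$ is invertible over $\mcalD G$; moreover the identity $a\inv-P_{11}\inv=-a\inv Q_{11}P_{11}\inv$ together with the degree hypothesis on $Q_{11}$ gives $\delta_\phi(a\inv-P_{11}\inv)>-(d_1-d_1')$. Clearing the first column below the pivot by left elementary row operations (which preserve the Dieudonn\'e determinant) yields $\det(P+Q)=[a]\cdot\det S$, where $S_{ij}=(P+Q)_{ij}-(P+Q)_{i1}a\inv(P+Q)_{1j}$ ($2\le i,j\le n$) is the Schur complement; performing the same operation on $P$ gives $\det P=[P_{11}]\cdot\det P^{\mathrm S}$, with $P^{\mathrm S}_{ij}=P_{ij}-P_{i1}P_{11}\inv P_{1j}$.

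Next, substituting $(P+Q)_{i1}=P_{i1}+Q_{i1}$, $a\inv=P_{11}\inv+(a\inv-P_{11}\inv)$, $(P+Q)_{1j}=P_{1j}+Q_{1j}$ and expanding, $S_{ij}$ equals $P^{\mathrm S}_{ij}$ plus a bounded number of further terms, each carrying a factor $Q_{i1}$, $Q_{1j}$, $Q_{ij}$, or $a\inv-P_{11}\inv$; by the degree bounds above and Proposition \ref{Proposition properties of the leading term map}(5) each such term has $\delta_\phi$ strictly larger than $d_i-d_j'$. Thus $S=P^{\mathrm S}+Q^{\mathrm S}$ with $\delta_\phi(Q^{\mathrm S}_{ij})>d_i-d_j'$ for all $i,j$, and $P^{\mathrm S}$ is invertible, being a Schur complement of an invertible matrix at an invertible pivot. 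Granting (see below) that $P^{\mathrm S}$ again satisfies hypothesis (ii) with the truncated data $(d_2,\dots,d_n)$, $(d_2',\dots,d_n')$, the induction hypothesis applied to $(P^{\mathrm S},Q^{\mathrm S})$ gives that $S$ is invertible and $L_\phi(\det S)=\det P^{\mathrm S}$. Combining everything, $P+Q$ is invertible and, since $L_\phi$ is a homomorphism with $L_\phi([a])=[L_\phi(a)]=[P_{11}]$, we get $L_\phi(\det(P+Q))=L_\phi([a])\cdot L_\phi(\det S)=[P_{11}]\cdot\det P^{\mathrm S}=\det P$.

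The step I expect to be the main obstacle is exactly the claim that $P^{\mathrm S}$ still satisfies (ii): a priori a cancellation among the entries $P_{ij}-P_{i1}P_{11}\inv P_{1j}$ could leave a nonzero entry of strictly larger $\phi$-degree that is no longer $\phi$-pure, which would break the induction. The resolution I would use is that this cannot occur. Each $P^{\mathrm S}_{ij}$ is a difference of two elements that are each either $0$ or $\phi$-pure with $\delta_\phi=d_i-d_j'$ — the product $P_{i1}P_{11}\inv P_{1j}$ because $P_{11}\inv$ is $\phi$-pure ($L_\phi$ being multiplicative) and products of $\phi$-pure elements are $\phi$-pure with additive $\delta_\phi$ (Lemma \ref{Lemma Properties of pure elements}(2)). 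Now the set of $\phi$-pure elements of any fixed $\delta_\phi$-value, together with $0$, is an additive subgroup of $\mcalD G$: closure under sums of equal $\delta_\phi$-degree is Lemma \ref{Lemma Properties of pure elements}(2) via Proposition \ref{Proposition properties of the leading term map}(6), and closure under negation is Proposition \ref{Proposition properties of the leading term map}(2). Hence $P^{\mathrm S}_{ij}$ lies in that set, i.e. is again either $0$ or $\phi$-pure with $\delta_\phi=d_i-d_j'$, so (ii) is inherited and the induction closes.
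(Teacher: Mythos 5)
Your proposal is correct and follows essentially the same argument as the paper: induction on $n$ via Gaussian elimination at a nonzero pivot of $P$, writing the Schur complement of $P+Q$ as the Schur complement of $P$ plus a remainder of strictly higher $\phi$-degree, and verifying that hypotheses (i)--(iii) are inherited (the paper pivots at the last diagonal entry with column-clearing, you pivot at $(1,1)$ with row-clearing, and your direct expansion of $a^{-1}=P_{11}^{-1}+(a^{-1}-P_{11}^{-1})$ replaces the paper's case analysis via Proposition \ref{Proposition properties of the leading term map}(4), but these are cosmetic differences). Your justification that the Schur complement of $P$ still satisfies (ii) — that the $\phi$-pure elements of a fixed $\delta_\phi$-value together with $0$ form an additive subgroup — is exactly the point the paper settles with Lemma \ref{Lemma Properties of pure elements}(2).
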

\begin{proof}
    We prove by induction on $n$. For the case $n=1$ we have $P,Q\in \mcalD G$. By assumption $P$ is a nonzero $\phi$-pure element, and $\delta_\phi(Q)>\delta_\phi(P)$. Then $L_\phi(\det(P+Q))=\det P$ by Proposition \ref{Proposition properties of the leading term map}(5).

    Now assume the theorem holds for matrices of size $n$. Let $P,Q$ be $(n+1)\times (n+1)$ matrices
    \[
        P=\begin{pmatrix}
            A & U\\
            X & p
        \end{pmatrix},\quad
        Q=\begin{pmatrix}
            B & V\\
            Y & q
        \end{pmatrix}
    \]
    where $P$ is invertible, $p,q\in\mcalD G$, $A,B$ are $n\times n$ matrices over $\mcalD G$ and 
    \[
        \aligned 
        &X=(x_1,\ldots,x_n),\quad Y=(y_1,\ldots,y_n),\\&
        U=(u_1,\ldots,u_n)^T,\quad V=(v_1,\ldots,v_n)^T.\endaligned
    \]
    Without loss of generality, assume $p\not=0$; then $\delta_\phi(q)>\delta_\phi(p)$ and  $p+q\not=0$ by condition (iii). Note that
    \[
        \begin{pmatrix}
            I& -(U+V)(p+q)\inv\\
            0 & 1
        \end{pmatrix}\cdot (P+Q)
        =\begin{pmatrix}
            W & 0\\
            X+Y& p+q
        \end{pmatrix}
    \]
    where 
    $W=A+B-(U+V)(p+q)\inv (X+Y)$ is an $n\times n$ matrix. Examining the expression of $W$, we observe that the terms of lowest $\delta_\phi$-value form the matrix $A-Up\inv X$. We verify this rigorously.

    \begin{claim*}
        Define
        $$W':=A-Up\inv X,\quad W'':=W-W'.$$ Then $W'$ and $W''$ satisfy the three conditions of Theorem \ref{Theorem Leading term of matrices} for size $n$. In particular by the induction hypothesis, $W'$ is invertible over $\mcalD G$ with $\det P=\det W'\cdot \det p$, and $L_\phi\det(W)=\det (W')$.
    \end{claim*}
    Admitting this Claim. We conclude that $W$ is invertible and
    \begin{align*}
        \det(P+Q)&=\det W\cdot\det (p+q),\\
        L_\phi\det (P+Q)&=L_\phi(\det W)\cdot\det p=\det W'\cdot\det p=\det P,
    \end{align*}
    completing the induction. It remains to prove the Claim.
    
    {\noindent\emph{Proof of Claim}.}
    To verify condition (i), note that
    \[
        \begin{pmatrix}
            I& -Up\inv\\
            0 & 1
        \end{pmatrix}\cdot
        P=
        \begin{pmatrix}
            W' & 0\\
            X & p
        \end{pmatrix},
    \]
    so $W'$ is invertible over $\mcalD G$ and $\det P=\det W'\cdot\det p$.

    For condition (ii), note that
    \begin{align*}
        &W_{ij}=A_{ij}+B_{ij}-(u_i+v_i)(p+q)\inv (x_j+y_j),\\&
        W_{ij}'=A_{ij}-u_ip\inv x_j,
        \\&W_{ij}''=B_{ij}+u_ip\inv x_j-(u_i+v_i)(p+q)\inv (x_j+y_j).
    \end{align*}
    If $A_{ij}\not=0$, then $A_{ij}$ is $\phi$-pure with $\delta_\phi(A_{ij})=d_i-d_j'$. If $u_ip\inv x_j\not=0$, then $u_ip\inv x_j$ is also $\phi$-pure with \[
    \delta_\phi(u_ip\inv x_j)=(d_i-d_{n+1}')-(d_{n+1}-d_{n+1}')+(d_{n+1}-d_j')=d_i-d_j'.
    \] Hence, if $W'_{ij}\not=0$ then $W'_{ij}$ is $\phi$-pure with $\delta_\phi(W_{ij}')=d_i-d_j'$ by Lemma \ref{Lemma Properties of pure elements}(2), proving condition (ii).
    
    For condition (iii), we show $$\delta_\phi(u_ip\inv x_j-(u_i+v_i)(p+q)\inv (x_j+y_j))>d_i-d_j'.$$ If $u_ip\inv x_j\not=0$, then $u_i$ and $x_j$ are both nonzero, and
    \begin{align*}
        L_\phi((u_i+v_i)(p+q)\inv (x_j+y_j))=u_ip\inv x_j,\quad
        \delta_\phi(u_ip\inv x_j)=d_i-d_j'
    \end{align*}
    and the inequality follows from Proposition \ref{Proposition properties of the leading term map}(4). If $u_ip\inv x_j=0$ then $u_i=0$ or $x_j=0$, and
    \begin{align*}
        \delta_\phi((u_i+v_i)(p+q)\inv (x_j+y_j))&=\delta_\phi(u_i+v_i)-\delta_\phi(p+q)+\delta_\phi (x_j+y_j)\\&>(d_i-d_{n+1}')-\delta_\phi(p+q)+(d_{n+1}-d_j')\\&=d_i-d_j'.
    \end{align*}
    In both cases, we obtained the desired inequality. Since by assumption $\delta_\phi(B_{ij})>d_i-d_j'$,
    we conclude that $\delta_\phi(W_{ij}'')>d_i-d_j'$ by Proposition \ref{Proposition properties of the leading term map}(5), proving condition (iii).
    \end{proof}

\subsection{The restriction map}\label{Section of restriction map}
Suppose $G$ is a finitely generated torsion-free group satisfying the Atiyah Conjecture, and let $L\triangleleft G$ be a normal subgroup of finite index $d$. In this section we define the restriction map $\operatorname{res}^G_L\colon K_1(\mcalD G)\ra K_1(\mcalD L).$ Recall that $\mcalD G$ is naturally isomorphic to the crossed product $\mcalD L*(G/L)$ by Proposition \ref{Proposition Linnell field of group extension}.

\begin{definition}\label{RestrictionMatrixRepresentation}
    Fix a section $s\colon G/L\ra G$ and let its image be $s(G/L)=\{g_1,\ldots,g_d\}$. Then $G=Lg_1\sqcup\ldots\sqcup Lg_d$. For any element $z\in\mcalD G$ and any $k\in\{1,\ldots,d\}$, there is a unique way to express $ g_k\cdot z$ as
    \[
        g_k\cdot z=\sum_{j=1}^dl_{kj}\cdot g_j, \quad l_{kj}\in \mcalD L.
    \]
    Define $\Lambda_s(z)$ to be the $d\times d$ matrix over $\mcalD L$ whose $(k,j)$-entry is $l_{kj}$. Equivalently, $\Lambda_s(z)$ is the unique matrix over $\mcalD L$ such that
    \[
        \begin{pmatrix}
            g_1\\ \vdots\\ g_d
        \end{pmatrix}\cdot z=\Lambda_s(z)\cdot \begin{pmatrix}
            g_1\\ \vdots\\ g_d
        \end{pmatrix}.
    \]
\end{definition}
   \begin{lemma}\label{Lemma restriction map definition} With the notation of Definition \ref{RestrictionMatrixRepresentation}, the following hold:
    \begin{enumerate}[\rm\quad (1)]
        \item  For any $z_1, z_2\in \mcalD G$, we have $\Lambda_s(z_1z_2)=\Lambda_s(z_1)\cdot \Lambda_s(z_2)$.
        
        \item  If $z\not=0$, then $\Lambda_s(z)$ is invertible over $\mcalD L$.

        \item  If $s'$ is another section, then $\Lambda_{s'}(z)=\Omega\Lambda_s(z)\Omega\inv$ for some invertible matrix $\Omega$ over $\Z L$ depending only on $s$ and $s'$.

        \item Fix $k$ and $\phi\in H^1(G;\R)$, let $\mathcal J$ be the set of indices $j$ such that $\delta_\phi(l_{kj}g_j)$ attains minimum among $j=1,\ldots,d$. Then we have \begin{align*}
            &L_\phi(g_k\cdot z)=\sum_{j\in\mathcal J} L_\phi(l_{kj}\cdot g_j),\\
            &\delta_\phi(g_k\cdot z)=\min_{1\leqslant j\leqslant d}\{\delta_\phi(l_{kj}\cdot g_j)\}.
        \end{align*}
        If in addition $z$ is $\phi$-pure then each $l_{kj}$ is $\phi$-pure. Moreover either $l_{kj}=0$ or $\delta_\phi(g_k\cdot z)=\delta_\phi(l_{kj}\cdot g_j)$.
    \end{enumerate}
\end{lemma}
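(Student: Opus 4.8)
The plan is to handle the four assertions in turn; parts (1)--(3) are short formal consequences of the crossed-product identification $\mcalD G \cong \mcalD L * (G/L)$ (Proposition \ref{Proposition Linnell field of group extension}(1)) together with the uniqueness of crossed-product expansions (Proposition \ref{Proposition change another section}(1)), whereas part (4) carries the real content. For (1) I would write $g_k z_1 = \sum_j l_{kj} g_j$ and $g_j z_2 = \sum_m l'_{jm} g_m$ with all coefficients in $\mcalD L$, substitute to get $g_k(z_1 z_2) = \sum_m \big( \sum_j l_{kj} l'_{jm} \big) g_m$, note that the inner sums lie in $\mcalD L$ since $\mcalD L$ is a ring, and conclude by uniqueness that the coefficient matrix equals $\Lambda_s(z_1)\Lambda_s(z_2)$. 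Since $g_k \cdot 1 = g_k$ gives $\Lambda_s(1) = I$, part (2) is immediate: for $z \neq 0$, which is invertible in the skew field $\mcalD G$, part (1) yields $\Lambda_s(z)\Lambda_s(z^{-1}) = I = \Lambda_s(z^{-1})\Lambda_s(z)$. For (3), each $g'_i$ in the image of $s'$ lies in a coset $L g_{\sigma(i)}$, so $g'_i = \lambda_i g_{\sigma(i)}$ with $\lambda_i = g'_i g_{\sigma(i)}^{-1} \in L$; the monomial matrix $\Omega$ over $\Z L$ with $\Omega_{i,\sigma(i)} = \lambda_i$ satisfies $(g'_1,\dots,g'_d)^T = \Omega(g_1,\dots,g_d)^T$, and substituting into the defining relation for $\Lambda_{s'}(z)$ and using uniqueness once more gives $\Lambda_{s'}(z) = \Omega\Lambda_s(z)\Omega^{-1}$, with $\Omega$ depending only on $s$ and $s'$.

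For (4), I would set $z_j := l_{kj} g_j$, so that $g_k z = \sum_{j=1}^d z_j$, and dispose of the trivial case $z = 0$. Using multiplicativity of $\delta_\phi$ and $L_\phi$ (Proposition \ref{Proposition properties of the leading term map}) and $\phi$-purity of $g_j$ (Lemma \ref{Lemma Properties of pure elements}(1)) we have $L_\phi(z_j) = L_\phi(l_{kj}) \cdot g_j$ and $\delta_\phi(z_j) = \delta_\phi(l_{kj}) + \delta_\phi(g_j)$, and by Proposition \ref{Proposition properties of the leading term map}(8) (valid since $L$, being finite-index in the finitely generated $G$, is finitely generated) one has $L_\phi(l_{kj}) \in \mcalD L$. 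Put $\delta := \min_j \delta_\phi(z_j)$ and $\mathcal J := \{j : \delta_\phi(z_j) = \delta\}$; for $j \in \mathcal J$ necessarily $l_{kj} \neq 0$, so $L_\phi(l_{kj}) \neq 0$. The decisive point is that for distinct $j$ the elements $L_\phi(z_j) = L_\phi(l_{kj}) g_j$ occupy distinct components $\mcalD L \cdot g_j$ of the crossed product $\mcalD L * (G/L)$, so uniqueness forbids cancellation and $\sum_{j \in \mathcal J} L_\phi(z_j) \neq 0$. Proposition \ref{Proposition properties of the leading term map}(6) applied to $\{z_j\}_{j \in \mathcal J}$ then yields $\delta_\phi\big(\sum_{j \in \mathcal J} z_j\big) = \delta$ and $L_\phi\big(\sum_{j \in \mathcal J} z_j\big) = \sum_{j \in \mathcal J} L_\phi(z_j)$, while every remaining term has $\delta_\phi(z_j) > \delta$ so that the complementary sum has $\delta_\phi > \delta$ by Proposition \ref{Proposition properties of the leading term map}(5); a final application of part (5) to these two groups of terms gives $\delta_\phi(g_k z) = \delta = \min_j \delta_\phi(z_j)$ and $L_\phi(g_k z) = \sum_{j \in \mathcal J} L_\phi(l_{kj} g_j)$. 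If moreover $z$ is $\phi$-pure, then so is $g_k z$ (Lemma \ref{Lemma Properties of pure elements}(2)), whence $\sum_j l_{kj} g_j = g_k z = L_\phi(g_k z) = \sum_{j \in \mathcal J} L_\phi(l_{kj}) g_j$; comparing components forces $l_{kj} = 0$ for $j \notin \mathcal J$ and $l_{kj} = L_\phi(l_{kj})$ for $j \in \mathcal J$, i.e.\ every $l_{kj}$ is $\phi$-pure, and for the nonzero ones $\delta_\phi(l_{kj} g_j) = \delta = \delta_\phi(g_k z)$.

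The step I expect to be the main obstacle is precisely the no-cancellation assertion in (4): a priori the leading terms $L_\phi(l_{kj} g_j)$ attached to different cosets could interfere, because distinct cosets $L g_j$ need not have distinct images in $H_1(G;\R)$, so the supports of the $z_j$ in the free abelianization may overlap. The remedy is to argue not inside $\mcalD K * H_1(G)_f$ but in the finer crossed product $\mcalD L * (G/L) = \mcalD G$, where the $z_j$ lie in genuinely independent components and uniqueness of the expansion (Proposition \ref{Proposition change another section}(1)) rules out cancellation; the rest of the proof is bookkeeping with the multiplicativity and ultrametric-type estimates for $\delta_\phi$ and $L_\phi$ collected in Proposition \ref{Proposition properties of the leading term map}.
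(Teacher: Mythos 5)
Your proposal is correct and follows essentially the same route as the paper: parts (1)--(3) by uniqueness of crossed-product expansions, and part (4) by splitting $g_k z$ into the leading-index terms and the rest, with the direct sum decomposition $\mcalD G=\oplus_j \mcalD L\cdot g_j$ ruling out cancellation among the $L_\phi(l_{kj})g_j$. The only cosmetic difference is that the paper identifies the leading term via the uniqueness characterization of $\phi$-pure elements (Lemma \ref{Lemma Properties of pure elements}(3)) where you invoke Proposition \ref{Proposition properties of the leading term map}(5)--(6); these are interchangeable.
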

\begin{proof}
    For (1), observe that 
    \[
        \begin{pmatrix}
            g_1\\ \vdots\\ g_d
        \end{pmatrix}\cdot z_1z_2=\Lambda_s(z_1)\cdot \begin{pmatrix}
            g_1\\ \vdots\\ g_d
        \end{pmatrix}\cdot z_2=\Lambda_s(z_1)\cdot\Lambda_s(z_2)\cdot \begin{pmatrix}
            g_1\\ \vdots\\ g_d
        \end{pmatrix}
    \]
    which implies $\Lambda_s(z_1z_2)=\Lambda_s(z_1)\cdot \Lambda_s(z_2)$. Note that (2) follows directly from (1). 

    For (3), let $s'$ be another section with $s'(L)=\{g_1',\ldots,g_d'\}$ and let $\Omega$ be the $d\times d$ matrix over $\Z L$ such that
    \[
        \begin{pmatrix}
            g_1'\\ \vdots\\ g_d'
        \end{pmatrix}=\Omega\cdot \begin{pmatrix}
            g_1\\ \vdots\\ g_d
        \end{pmatrix}.
    \]
    Then 
    \[
        \begin{pmatrix}
            g_1'\\ \vdots\\ g_d'
        \end{pmatrix}\cdot z=\Omega \cdot \begin{pmatrix}
            g_1\\ \vdots\\ g_d
        \end{pmatrix}\cdot z=\Omega\Lambda_s(z)\cdot \begin{pmatrix}
            g_1\\ \vdots\\ g_d
        \end{pmatrix}=\Omega\Lambda_s(z)\Omega\inv \begin{pmatrix}
            g_1'\\ \vdots\\ g_d'
        \end{pmatrix},
    \]
    so $\Lambda_{s'}(z)=\Omega\Lambda_s(z)\Omega\inv$.

    For (4), we may assume that $z\not=0$, so $l_{kj}\not=0$ for any $j\in \mathcal J$.
     Let $w_1=\sum_{j\in \mathcal J}L_\phi(l_{kj})g_j$. Then $w_1\not=0$ since $\mcalD G=\oplus_{j}\mcalD L\cdot g_j$, and $w_1$ is $\phi$-pure by Lemma \ref{Lemma Properties of pure elements}(2). Now write
    \begin{align*}
        g_k\cdot z&=\sum_{j\in \mathcal J}L_\phi(l_{kj})g_j +\sum_{j\in \mathcal J}(l_{kj}-L_\phi(l_{kj}))g_j +\sum_{j\notin \mathcal J}l_{kj}g_j\\
        &=:w_1+w_2+w_3.
    \end{align*}
    Clearly $\delta_\phi(w_2)>\delta_\phi(w_1)$ by Proposition \ref{Proposition properties of the leading term map}(4), and $\delta_\phi(w_3)>\delta_\phi(w_1)$ by the choice of $\mathcal J$. We conclude that $L_\phi(g_k\cdot z)=w_1$ by Lemma \ref{Lemma Properties of pure elements}(3). In particular, $$\delta_\phi(g_k\cdot z)=\min\{\delta_\phi(l_{kj}\cdot g_j)\mid j=1,\ldots,n\}.$$
    Now suppose additionally that $z$ is $\phi$-pure, then $L_\phi(g_k\cdot z)=g_k\cdot z$, that is 
    \[
        \sum_{j\in \mathcal J}L_\phi(l_{kj})g_j=\sum_{j=1}^nl_{kj}g_j.
    \]
    By the direct sum decomposition $\mcalD G=\oplus_j\mcalD L\cdot g_j$, it follows that $l_{kj}$ is pure with $\delta_\phi(l_{kj}g_j)=\delta_\phi(g_k\cdot z)$ for any $j\in \mathcal J$, and $l_{kj}=0$ for any $j\notin \mathcal J$.
\end{proof}

\begin{definition}[Restriction map]
    Let $L\triangleleft G$ be a normal subgroup of finite index. Choose a section $s\colon G/L\ra G$. Define the \emph{restriction map}
    \[
        \operatorname{res}^G_L\colon \mcalD G^\times\ra K_1(\mcalD L),\quad  z\mapsto\det (\Lambda_s(z))\in K_1(\mcalD L).
    \]
    By Lemma \ref{Lemma restriction map definition}, this is a group homomorphism independent of the choice of section $s$.
    We use the same notation for the induced homomorphism on the $K_1$-group $$\operatorname{res}^G_L\colon K_1(\mcalD G)\ra K_1(\mcalD L).$$
\end{definition}

\begin{remark}

    For $g\in G\subset \mcalD G^\times$, the matrix $\Lambda_s(g)$ is a permutation matrix whose nonzero entries are elements $l_i\in L$. It follows that $$\operatorname{res}^G_L(g)=\det (\Lambda_s(g))=\pm\prod_i [l_i]\in K_1(\mcalD L)$$ is represented by a element of $\pm L$. Therefore the restriction map naturally induces a homomorphism on Whitehead groups
    \[
        \operatorname{res}^G_L\colon \operatorname{Wh}(\mcalD G)\ra \operatorname{Wh}(\mcalD L).
    \]
\end{remark}

The following important result establishes that the restriction map commutes with the leading term map.

\begin{theorem}\label{Theorem leading term map commutes with restrictions}
    Let $G$ be a finitely generated torsion-free group satisfying the Atiyah Conjecture, and let $L\triangleleft G$ be a normal subgroup of finite index. Let $\phi\in H^1(G;\R)$ and denote by $\phi|_L\in H^1(L;\R)$ its restriction to $L$. Then for any $z\in \mcalD G^\times$, we have
    \[
        L_{\phi|_L}(\operatorname{res}^G_L(z))=\operatorname{res}^G_L(L_\phi(z))\in K_1(\mcalD L).
    \]
\end{theorem}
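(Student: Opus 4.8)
The plan is to deduce the identity from a single application of Theorem \ref{Theorem Leading term of matrices}, used over the Linnell skew field $\mcalD L$ with the character $\phi|_L$. First observe that $L$ is finitely generated, being of finite index in the finitely generated group $G$, and is torsion-free satisfying the Atiyah Conjecture by Proposition \ref{Proposition subgroup satisfies Atiyah conjecture}, so $\delta_{\phi|_L}$ and $L_{\phi|_L}$ are defined; by Proposition \ref{Proposition properties of the leading term map}(8) they are the restrictions of $\delta_\phi$ and $L_\phi$ to $\mcalD L\subset\mcalD G$, and I will suppress the subscript $L$ accordingly. Fix a section $s\colon G/L\to G$ with $s(G/L)=\{g_1,\dots,g_d\}$, put $w:=L_\phi(z)\in\mcalD G^\times$, and write $z=w+r$ with $r:=z-w$. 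By Lemma \ref{Lemma Properties of pure elements}(3) the element $w$ is $\phi$-pure, and by Proposition \ref{Proposition properties of the leading term map}(3)--(4) one has $\delta_\phi(w)=\delta_\phi(z)$ while $\delta_\phi(r)>\delta_\phi(z)$ (trivially so when $r=0$, as $\delta_\phi(0)=+\infty$). Since $z\mapsto\Lambda_s(z)$ is additive, $\Lambda_s(z)=P+Q$ with $P:=\Lambda_s(w)$ and $Q:=\Lambda_s(r)$.

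Next I would check that $P,Q$ satisfy the hypotheses of Theorem \ref{Theorem Leading term of matrices} over $\mcalD L$ with the weights $d_k:=\delta_\phi(g_k)+\delta_\phi(w)$ and $d_j':=\delta_\phi(g_j)$. Hypothesis (i) is immediate from Lemma \ref{Lemma restriction map definition}(2), since $w\neq 0$. For hypothesis (ii), write $P_{kj}=l_{kj}$; Lemma \ref{Lemma restriction map definition}(4) applied to the $\phi$-pure element $w$ shows each $l_{kj}$ is $\phi$-pure and that $l_{kj}\neq 0$ forces $\delta_\phi(g_k)+\delta_\phi(w)=\delta_\phi(g_k w)=\delta_\phi(l_{kj}g_j)=\delta_\phi(l_{kj})+\delta_\phi(g_j)$, i.e. $\delta_\phi(l_{kj})=d_k-d_j'$. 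For hypothesis (iii), write $Q_{kj}=q_{kj}$; Lemma \ref{Lemma restriction map definition}(4) applied to $r$ gives $\delta_\phi(q_{kj}g_j)\geq\delta_\phi(g_k r)=\delta_\phi(g_k)+\delta_\phi(r)>\delta_\phi(g_k)+\delta_\phi(w)=d_k$, hence $\delta_\phi(q_{kj})>d_k-d_j'$. Theorem \ref{Theorem Leading term of matrices} then yields $L_{\phi|_L}(\det(P+Q))=\det P$ in $K_1(\mcalD L)$, that is $L_{\phi|_L}(\det\Lambda_s(z))=\det\Lambda_s(w)$, which is exactly $L_{\phi|_L}(\operatorname{res}^G_L(z))=\operatorname{res}^G_L(L_\phi(z))$.

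I do not expect a deep obstacle here: the argument is essentially a matter of assembling the right inputs. The one point needing care is that hypotheses (ii) and (iii) demand the degree estimates to hold uniformly over all index pairs relative to a single pair of weight vectors; this is exactly what the crossed-product description in Lemma \ref{Lemma restriction map definition}(4) provides, once one notices that the weights split as $d_k=\delta_\phi(g_k)+\delta_\phi(w)$ and $d_j'=\delta_\phi(g_j)$. The other bookkeeping issue to keep straight throughout is the identification of the leading-term data on $\mcalD L$ with that on $\mcalD G$ via Proposition \ref{Proposition properties of the leading term map}(8), which is what allows the final three displayed quantities to be matched up.
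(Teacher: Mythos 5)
Your proposal is correct and follows essentially the same route as the paper: decompose $z=L_\phi(z)+(z-L_\phi(z))$, apply $\Lambda_s$, verify the hypotheses of Theorem \ref{Theorem Leading term of matrices} over $\mcalD L$ via Lemma \ref{Lemma restriction map definition}(4) and Proposition \ref{Proposition properties of the leading term map}(8), using exactly the same weights $d_k=\delta_\phi(z)+\delta_\phi(g_k)$, $d_j'=\delta_\phi(g_j)$. Your explicit handling of the degenerate case $z=L_\phi(z)$ is a small tidy addition the paper leaves implicit.
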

\begin{proof}
    Write $z=L_\phi(z)+z'$. Fix section $s:G/L\ra G$ and write $G=Lg_1\sqcup\ldots\sqcup Lg_d$. Consider $d\times d$ matrices $P:=\Lambda_s(L_\phi(z))$ and $Q:=\Lambda_s(z')$ over $\mcalD L$. Explicitly,
    \[
        g_k\cdot L_\phi(z)=\sum_{j=1}^dP_{kj}\cdot g_j,\quad g_k\cdot z'=\sum_{j=1}^dQ_{kj}\cdot g_j.
    \]
    Then $\operatorname{res}^G_L(z)=\det (P+Q)$ and $\operatorname{res}^G_L(L_\phi(z))=\det P$. Since $L_\phi(z)$ is $\phi$-pure, Lemma \ref{Lemma restriction map definition}(4) implies that $P_{kj}$ is $\phi$-pure, and that 
    \begin{align*}
        &\delta_\phi(P_{kj})=\delta_\phi(z)+\delta_\phi(g_k)-\delta_\phi(g_j)\quad \text{for }P_{kj}\not=0,\\& 
        \delta_\phi(Q_{kj})\geqslant\delta_\phi(z')+\delta_\phi(g_k)-\delta_\phi(g_j) >  \delta_\phi(z)+\delta_\phi(g_k)-\delta_\phi(g_j).
    \end{align*}
    By Proposition \ref{Proposition properties of the leading term map}(8) the maps $\delta_\phi$ and $L_{\phi}$ restrict to $\delta_{\phi|_L}$ and $L_{\phi|_L}$ on $\mcalD L$. We now apply Theorem \ref{Theorem Leading term of matrices} to $P$ and $Q$ over $\mcalD L$, taking 
    \[
    d_i=\delta_\phi(z)+\delta_\phi(g_i) \text{\quad and\quad }d_i'=\delta_\phi(g_i)\text{\quad for }i=1,\ldots,d.\]
    The three conditions of the Theorem are satisfied, and we conclude
    \[
        L_{\phi|_L}(\det (P+Q))=\det P\in K_1(\mcalD L),
    \]
    that is $L_{\phi|_L}(\operatorname{res}^G_L(z))=\operatorname{res}^G_L(L_\phi(z))$, completing the proof.
\end{proof}

\subsection{The polytope map}
Let $H$ be a finitely generated free abelian group. Note that $H_1(H;\R)=\R\otimes_\Z H$ is a finite-dimensional real vector space.
    A \emph{polytope} in $H_1(H;\R)$ is a compact set obtained as the convex hull of a finite subset of points. We allow the empty set $\emptyset$ to be a polytope.

    \begin{definition}[Faces of polytopes]
        Given a polytope $P$ and a character $\phi\in H^1(H;\R)$. Define $\delta_\phi(P):=\inf_{x\in P}\phi(x)$ and the \emph{face associated to $\phi$} by
    \begin{align*}
        F_\phi(P):=\{x\in P\mid \phi(x)=\delta_\phi(P)\}.
    \end{align*}
    Clearly $F_\phi(P)$ is a polytope contained in $P$, and the collection $\{F_\phi(P)\mid \phi\in H^1(H;\R)\}$ consists of all faces of $P$. A face is called a \emph{vertex} if it is a single point. Any polytope is the convex hull of its vertices. A polytope is called \emph{integral} if all its vertices lie in the integral lattice $H\subset H_1(H;\R)$.
    \end{definition}

    Given any two non-empty polytopes $P_1,P_2$ in $H_1(H;\R)$, their \emph{Minkowski sum} is defined to be the polytope
    \[
        P_1+P_2:=\{p_1+p_2\mid p_1\in P_1,\ p_2\in P_2\}.
    \]
    This is the convex hull of the set $\{v_1+v_2\mid v_i \text{ is a vertex of }P_i,\ i=1,2\}$. The operator $\delta_\phi$ and the face map $F_\phi$ are additive under the Minkowski sum:
    \[
        \delta_\phi(P_1+P_2)=\delta_\phi(P_1)+\delta_\phi(P_2),\quad F_\phi(P_1+P_2)=F_\phi(P_1)+F_\phi(P_2)
    \]
    for every character $\phi$ and all polytopes $P_1,P_2$.

    \begin{example}
        Let $M$ be an admissible $3$-manifold. 
        The \emph{Thurston norm ball} $$B_x(M):=\{\phi \in H^1(M;\R)\mid x_M(\phi)\leqslant 1\}$$ is a (possibly non-compact) polyhedron in $H^1(M;\R)$; the \emph{dual Thurston norm ball} is defined as $$B_x^*(M):=\{z\in H_1(M;\R)\mid \phi(z)\leqslant 1\text{ for all }\phi \in B_x(M)\}.$$ Thurston showed that $B_x^*(M)$ is an integral polytope in $H_1(M;\R)$ with vertices $\pm v_1,\ldots,\pm v_k$, and that the Thurston norm ball is determined by these vertices: 
        \[
            B_x(M)=\{\phi\in H^1(M;\R)\mid |\phi(v_i)|\leqslant 1,\ i=1,\ldots,k\}.
        \]
        A \emph{Thurston cone} in $H^1(M;\R)$ is either an open cone formed by the origin and a face of $B_x(M)$, or a maximal connected component of $H^1(M;\R)\setminus\{0\}$ on which the Thurston norm $x_M$ vanishes. It follows that $H^1(M;\R)\setminus\{0\}$ is the disjoint union of all  Thurston cones of various dimensions. A Thurston cone is called \emph{top-dimensional} if its dimension equals $\dim H^1(M;\R)$. The following Lemma \ref{Lemma Top dimensional cone iff face map vertex} is a reformulation of Thurston's theorem.
        \begin{lemma}\label{Lemma Top dimensional cone iff face map vertex}
            A nonzero character $\phi\in H^1(M;\R)$ lies in a top-dimensional Thurston cone if and only if $F_\phi B_x^*(M)$ is a vertex.
        \end{lemma}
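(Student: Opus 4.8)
The strategy is to recast everything in terms of polar duality of polytopes, using the two structural facts recorded just before the statement: that $B_x(M)$ is the polar dual of the integral polytope $B_x^*(M)$ (the origin lies in the interior of $B_x(M)$, since $x_M$ is a finite seminorm), and that $x_M$ is the support function of $B_x^*(M)$, i.e.\ $x_M(\phi)=\max_i|\phi(v_i)|=\sup_{z\in B_x^*(M)}\phi(z)=-\delta_\phi(B_x^*(M))$, where the last equalities use the central symmetry of $B_x^*(M)$. Because of this symmetry $F_\phi(B_x^*(M))=-F_{-\phi}(B_x^*(M))$, so $F_\phi(B_x^*(M))$ is a vertex if and only if the maximizing face $F_{-\phi}(B_x^*(M))$ is; I would work with the latter.

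I would organize the proof into four steps. Step one: dispose of the degenerate case $x_M\equiv 0$, where $B_x^*(M)=\{0\}$ is a single vertex, $H^1(M;\R)\setminus\{0\}$ is a single top-dimensional Thurston cone, and the equivalence holds for every $\phi$; conversely, if $x_M(\phi)=0$ and $F_\phi(B_x^*(M))$ is a vertex, then since the minimum and maximum of $\phi$ over the centrally symmetric polytope $B_x^*(M)$ are negatives of one another and both vanish, $\phi$ is identically $0$ on $B_x^*(M)$, so $F_\phi(B_x^*(M))=B_x^*(M)$, and a centrally symmetric polytope consisting of a single point must be $\{0\}$, whence $x_M\equiv 0$. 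In particular, when $x_M\not\equiv 0$ the zero set of $x_M$ has empty interior, so no Thurston cone contained in it is top-dimensional, and for $\phi$ with $x_M(\phi)=0$ neither side of the equivalence holds. Step two: assume $x_M(\phi)>0$ and put $\phi_0:=\phi/x_M(\phi)\in\partial B_x(M)$; then $\phi_0$ lies in the relative interior of a unique proper face $F'$ of $B_x(M)$, the Thurston cone containing $\phi$ is the open cone over $\operatorname{relint}(F')$, and since $0\notin\operatorname{aff}(F')$ this cone has dimension $\dim F'+1$, hence is top-dimensional precisely when $F'$ is a facet of $B_x(M)$. Step three: identify the polar-dual face $(F')^\diamond:=\{z\in B_x^*(M):\psi(z)=1\text{ for all }\psi\in F'\}$ of $B_x^*(M)$; since $\phi_0\in\partial B_x(M)$ the maximum of $\phi_0$ over $B_x^*(M)$ equals $1$, and a short convex-combination argument shows that any $z\in B_x^*(M)$ with $\phi_0(z)=1$ satisfies $\psi(z)=1$ for all $\psi\in F'$, so $(F')^\diamond$ is exactly the set of maximizers of $\phi_0$ on $B_x^*(M)$, namely $F_{-\phi_0}(B_x^*(M))=F_{-\phi}(B_x^*(M))$. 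Step four: invoke the dual-face dimension formula $\dim F'+\dim (F')^\diamond=\dim H^1(M;\R)-1$ for polar polytopes; combining this with Step two gives $F'\text{ a facet}\iff (F')^\diamond=F_{-\phi}(B_x^*(M))\text{ a vertex}\iff F_\phi(B_x^*(M))\text{ a vertex}$, which together with Step one is the assertion.

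The one point that requires genuine care is the case where $B_x^*(M)$ fails to be full-dimensional — equivalently, where $x_M$ is a non-degenerate seminorm and $B_x(M)$ is unbounded. Then polarity must be performed inside $V_0:=\operatorname{span}(B_x^*(M))$ and its dual $H^1(M;\R)/V_0^{\perp}$, and one must verify that the lineality subspace $V_0^{\perp}$ contributes its dimension equally to $\dim F'$ and to $\dim H^1(M;\R)$, so that the formula $\dim F'+\dim(F')^\diamond=\dim H^1(M;\R)-1$ survives verbatim. The cleanest route is to note that $B_x(M)=\pi^{-1}(\bar B)$ for the quotient map $\pi\colon H^1(M;\R)\to H^1(M;\R)/V_0^{\perp}$ and a compact polytope $\bar B$ (this is just the description $B_x(M)=\{\phi:|\phi(v_i)|\le 1\}$, which depends only on $\phi|_{V_0}$), that the faces of $B_x(M)$ are exactly the $\pi$-preimages of faces of $\bar B$, and then to run the polar-duality argument for the honest full-dimensional pair $(\bar B, B_x^*(M))$. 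Apart from this bookkeeping, the argument is routine convex geometry layered on top of Thurston's structure theorem.
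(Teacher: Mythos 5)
Your argument is correct. The paper itself offers no proof of this lemma — it is stated inside an example and dismissed as ``a reformulation of Thurston's theorem'' — so there is nothing to compare against except the standard convex-geometry folklore, which is exactly what you have written out. Your four steps are all sound: the degenerate case $x_M\equiv 0$ and the case $x_M(\phi)=0<x_M$ are handled correctly (central symmetry forces $F_\phi B_x^*(M)=B_x^*(M)$ there); the identification of the Thurston cone through $\phi$ with the open cone over $\operatorname{relint}(F')$ and the dimension count $\dim F'+1$ (using $0\notin\operatorname{aff}(F')$) is right; the convex-combination argument showing that the maximizing face $F_{-\phi}(B_x^*(M))$ equals the polar-dual face $(F')^\diamond$ is the correct way to pass between the two polytopes; and you correctly flag and resolve the only genuinely delicate point, namely that when $x_M$ is a degenerate seminorm one must quotient by the lineality space $V_0^\perp$ before invoking the formula $\dim F'+\dim(F')^\diamond=\dim H^1(M;\R)-1$. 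The write-up is more detailed than anything the paper supplies, and I see no gap.
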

    \end{example}
    \begin{definition}[$\mathcal P_\Z(H)$ and $\poly^{\rm Wh}(H)$]
        The \emph{integral polytope group} $\mathcal P_\Z(H)$ is defined as the Grothendieck group of integral polytopes in $H_1(H;\R)$ under the Minkowski sum. More precisely, $\mathcal P_\Z$ is the abelian group generated by symbols $[P]$ where $P$ is a non-empty integral polytope in $H_1(H;\R)$, subject to the relation $[P]+[Q]=[P+Q]$ for each pair of non-empty integral polytopes $P,Q$.
        
        Every element $h\in H$ determines an one-point polytope $[h]$ in $\poly(H)$, defining an embedding of $H$ into $\poly(H)$. The \emph{Whitehead polytope group} is defined as the quotient \[\whpoly(H)=\poly(H)/H.\] In other words, two polytopes are identified in $\whpoly(H)$ if and only if they differ by translation by an element in the lattice $H$.
    \end{definition}

    We make the following remarks:

    \begin{enumerate}[\quad(1)]
        \item  Any element of $\poly(H)$ can be written as a formal difference $[P]-[Q]$ for non-empty integral polytopes $P,Q\subset H_1(H;\R)$. Two such expressions $[P_1]-[Q_1]$ and $[P_2]-[Q_2]$ represents the same element if and only if 
    $P_1+Q_2=P_2+Q_1$. Equivalently, two elements $x,y\in\poly (H)$ are distinct if and only if there exists $\phi\colon H\ra \Z$ such that their images under the induced map $\poly(H)\ra \poly(\Z)$ is different. This observation can be used to show that both $\mathcal P_\Z(H)$ and $\poly^{\rm Wh}(H)$ are free abelian groups \cite[Lemma 4.8]{friedl2017universal}.

    \item The face map $F_\phi$ extends naturally to a homomorphism on the polytope group:
    \begin{align*}
        F_\phi\colon \poly(H)\ra \poly(H),\quad F_\phi([P]-[Q])=[F_\phi(P)]-[F_\phi(Q)].
    \end{align*}
    Since $F_\phi$ preserves the subgroup $H$, it descends to a homomorphism (denoted by the same symbol) $$F_\phi\colon \whpoly(H)\ra \whpoly(H).$$
    \end{enumerate}

    \subsubsection{Polytope homomorphism $\mathbb P$}
    Let $G$ be a finitely generated torsion-free group satisfying the Atiyah Conjecture. Let $H$ be the free abelianization of $G$, then we have the short exact sequence
    \[
        1\ra K\ra G\ra H\ra 1
    \]
    and by Proposition \ref{Proposition Linnell field of group extension} $\mcalD G$ is the field of fractions of the subring $\mcalD K*H$. Recall from Definition \ref{Definition support of a element in DG} that for any section $s\colon H\ra G$, every $u\in \mcalD K*H$ has a unique expression
    \[
        u=\sum_{h\in H} x_h\cdot s(h),\quad x_h\in \mcalD K,
    \]
    and its support $\operatorname{supp}(u):=\{h\in H\mid x_h\not=0\}$ is independent of the choice of $s$.
    \begin{definition}[Polytope map]
        For any $u\in (\mcalD K*H)^\times$, define $\mathbb P(u)\in \poly(H)$ to be the element represented by the convex hull of $\operatorname{supp}(u)$ in $\R\otimes_\Z H$. The \emph{polytope homomorphism} is defined as
    \[
        \mathbb P\colon \mcalD G^\times\ra \poly(H),\quad \mathbb  P(uv\inv):=\mathbb P(u)-\mathbb P(v)
    \]
    for all $u,v\in(\mcalD K*H)^\times$.
    \end{definition}
    
    It is shown in \cite[Lemma 6.4]{Friedl2019l2} that $\mathbb P$ is a well-defined homomorphism and 
    \[
        \mathbb P(z_1z_2)=\mathbb P(z_1)+\mathbb P(z_2)
    \]
    for all $z_1,z_2\in \mcalD G^\times$.
    
    The following commutative diagram is immediate from the definition.
    \[
    \begin{tikzcd}
\mcalD G^\times \arrow[r, "L_\phi"] \arrow[d, "\mathbb P"] & \mcalD G^\times \arrow[d, "\mathbb P"] \\
\poly(H) \arrow[r, "F_\phi"]                & \poly(H)  .             
\end{tikzcd}
    \]
    Recall that $K_1(\mcalD G)$ is the abelianization of $\mcalD G^\times$ and $\operatorname{Wh}(\mcalD G)=K_1(\mcalD G)/[\pm G]$. The polytope homomorphism naturally induces (by abuse of notation)
    \[
         \mathbb P\colon \operatorname{Wh(\mcalD G)}\ra \poly^{\rm Wh} (H).
    \]We then obtain the commutative diagram for the induced homomorphisms
    \[
    \begin{tikzcd}
\operatorname{Wh}(\mcalD G) \arrow[r, "L_\phi"] \arrow[d, "\mathbb P"] & \operatorname{Wh}(\mcalD G) \arrow[d, "\mathbb P"] \\
\whpoly(H) \arrow[r, "F_\phi"]                & \whpoly(H)  .             
\end{tikzcd}
    \]

We now state a key result relating the universal $L^2$-torsion of an admissible 3-manifold to its dual Thurston norm ball. 
    \begin{theorem}[{\cite[Theorem 4.37]{friedl2017universal}}]\label{Theorem universal L2 torsion gives dual Thurston norm ball}
        Let $M$ be an admissible 3-manifold which is not homeomorphic to $S^1\times D^2$. Then $\tautwo_u(M)\not=0$, and
        \[
            [B_x^*(M)]=2\cdot  \mathbb P(\tautwo_u(M))\in \poly^{\rm Wh}(H_1(M)_f),
        \]
        where $B_x^*(M)\subset H_1(M;\R)$ is the dual Thurston norm ball, and $\tautwo_u(M)\in \operatorname{Wh}(\mcalD {\pi_1(M)})$ is the universal $L^2$-torsion of $M$.
    \end{theorem}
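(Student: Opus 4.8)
The plan is to reduce the asserted polytope equality to a one-variable ``degree identity'' for the universal $L^2$-torsion, and then to prove that identity along the usual fibered / JSJ / virtual-fibering route; this is \cite[Theorem 4.37]{friedl2017universal} and I outline how one proves it.

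\textbf{Reduction.} Since $M$ is admissible it is aspherical, and $G=\pi_1(M)$ is torsion-free and lies in Linnell's class $\mathcal C$ by Theorems \ref{Theorem Linnell's class C} and \ref{Theorem 3-manifold group in class C}, so the Atiyah Conjecture holds and $\tautwo_u(M)$ is defined; the theorem of Lott--L\"uck that a compact aspherical $3$-manifold with infinite fundamental group has vanishing $L^2$-Betti numbers shows $(M,\emptyset)$ is $L^2$-acyclic, so $\tautwo_u(M)\in\operatorname{Wh}(\mcalD G)$ is well-defined and nonzero. Write $H=H_1(M)_f$. The key observation is that a \emph{centrally symmetric} element of $\whpoly(H)$ (one fixed by the reflection $[P]\mapsto[-P]$) has support function congruent, modulo linear functions, to half of its width function $\phi\mapsto\operatorname{width}_\phi$, so two such classes agree in $\whpoly(H)$ as soon as their width functions coincide. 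Now $[B_x^*(M)]$ is centrally symmetric because Thurston's dual norm ball is, and $\mathbb P(\tautwo_u(M))$ is centrally symmetric because Poincar\'e--Lefschetz duality makes the cellular chain complex of $M$ self-dual, whence $\tautwo_u(M)^*=\tautwo_u(M)$ in $\operatorname{Wh}(\mcalD G)$ by Proposition \ref{Proposition torsion of dual chain complex} (using also $\tautwo_u(T^2)=1$ and the sum formula for the boundary tori), and $\mathbb P$ intertwines the involution with the reflection. Using $\delta_\phi=\delta_\phi\circ\mathbb P$ together with the commuting square of $\mathbb P$, $F_\phi$, $\delta_\phi$, one gets $\operatorname{width}_\phi(\mathbb P(\tautwo_u(M)))=-\delta_\phi(\tautwo_u(M))-\delta_{-\phi}(\tautwo_u(M))$, while $\operatorname{width}_\phi(B_x^*(M))=2x_M(\phi)$ since $x_M$ is the support function of $B_x^*(M)$. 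Hence the theorem is equivalent to the degree identity
\[
    x_M(\phi)=-\delta_\phi(\tautwo_u(M))-\delta_{-\phi}(\tautwo_u(M))
\]
for every nonzero integral class $\phi\in H^1(M;\Z)$, the general real case following by homogeneity and continuity of both sides.

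\textbf{The degree identity.} I would prove this in three steps. \emph{(i) Fibered classes.} If $\phi$ is dual to a fiber $F$ with $\chi(F)<0$, then $M$ is a mapping torus; with the mapping-torus CW-structure the matrix-chain method (Theorem \ref{Matrix chain for acyclic chain complex}) gives $\tautwo_u(M)=[\det(tI-J_\varphi)]\cdot[t-1]^{-1}$, where $J_\varphi$ is a Fox Jacobian of the monodromy with $\phi$-invariant entries, and one reads off $-\delta_\phi(\tautwo_u(M))=0$ and $-\delta_{-\phi}(\tautwo_u(M))=-\chi(F)=x_M(\phi)$ (a surface-group CW-structure handles closed $F$). \emph{(ii) JSJ additivity.} By the sum formula (Theorem \ref{Theorem properties of the torsion of cw complexes}(2)), the homomorphism property of $\mathbb P$ and $\delta_\phi$, and $\tautwo_u(T^2)=1$ (Proposition \ref{Proposition computation example}), the right-hand side is additive over the JSJ decomposition of $M$; the Thurston norm is likewise additive over the JSJ pieces (Gabai's taut-surface theory, \cite{Gabai1983Foliations}), so it suffices to treat each geometric piece: Seifert fibered pieces are handled by the explicit universal $L^2$-torsion and the explicit Thurston norm of Seifert manifolds, and hyperbolic pieces remain. \emph{(iii) Hyperbolic pieces.} By Agol's Virtual Fibering Theorem \cite{agol2013virtual} there is a finite regular cover $\widetilde M\to M$ in which the class lifts to a quasi-fibered class; by continuity and piecewise-linearity of both sides on the closure of a fibered cone, (i) extends to quasi-fibered classes, so the identity holds upstairs, and I would descend it using $\tautwo_u(\widetilde M)=\operatorname{res}^G_L\tautwo_u(M)$ (Theorem \ref{Theorem properties of the torsion of cw complexes}(4)), the scaling law $\delta_{\phi|_L}\circ\operatorname{res}^G_L=[G:L]\cdot\delta_\phi$ (obtained by taking Dieudonn\'e determinants of the $[G:L]\times[G:L]$ coset matrix $\Lambda_s$ and tracking $\phi$-degrees, exactly as in the proof of Theorem \ref{Theorem leading term map commutes with restrictions}), and Gabai's covering-multiplicativity $x_{\widetilde M}(\phi|_L)=[G:L]\cdot x_M(\phi)$.

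\textbf{Main obstacle.} The real work is the degree identity, and inside it the hyperbolic case, which rests on Agol's theorem; in particular closed graph manifolds with no NPC metric --- which need not virtually fiber --- must be treated strictly through the JSJ reduction to Seifert pieces in step (ii), and this is precisely why the present theorem holds for \emph{all} admissible $M\not\cong S^1\times D^2$ even though Theorem \ref{Main Theorem fibered iff face map zero} excludes such graph manifolds. The other delicate ingredient is the covering-compatibility of the leading-term and restriction machinery, $\delta_{\phi|_L}\circ\operatorname{res}^G_L=[G:L]\,\delta_\phi$ and the matching statement for $L_\phi$, which has to be verified carefully. Finally, the factor $2$ in the statement is forced by symmetry: $B_x^*(M)$ is centrally symmetric, so its width in direction $\phi$ is $2x_M(\phi)$, whereas $\mathbb P(\tautwo_u(M))$ has width exactly $x_M(\phi)$; the excluded case $M\cong S^1\times D^2$ is the one genuinely degenerate situation, where $x_M\equiv0$ (so $[B_x^*(S^1\times D^2)]=0$) yet $\tautwo_u(S^1\times D^2)=[t-1]^{-1}$ has a nonzero one-dimensional $\mathbb P$-image, so that $2\mathbb P(\tautwo_u(S^1\times D^2))\neq0$.
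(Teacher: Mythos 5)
This statement is not proved in the paper at all: it is imported verbatim as \cite[Theorem 4.37]{friedl2017universal}, so there is no in-paper argument to compare against. Your outline is, as far as I can tell, a faithful reconstruction of the strategy of the original proof: reduce, via the central symmetry of both sides (Thurston's symmetry of $B_x^*(M)$ on one hand, Poincar\'e--Lefschetz duality and Proposition \ref{Proposition torsion of dual chain complex} giving $\tautwo_u(M)^*=\tautwo_u(M)$ on the other) and the fact that a symmetric class in $\whpoly(H)$ is determined by its width function, to the degree identity $x_M(\phi)=-\delta_\phi(\tautwo_u(M))-\delta_{-\phi}(\tautwo_u(M))$; then establish that identity on fibered classes, propagate it by JSJ additivity and by Agol's virtual fibering together with the covering laws $\delta_{\phi|_L}\circ\operatorname{res}^G_L=[G:L]\,\delta_\phi$ and $x_{\widetilde M}=[G:L]\,x_M$. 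You also correctly identify why $S^1\times D^2$ must be excluded and why no NPC hypothesis is needed here. The one caveat worth recording is that the two heaviest inputs in your sketch are themselves substantial theorems that you invoke rather than prove: the degree identity is essentially the main theorem of Friedl--L\"uck's companion work on the $L^2$-torsion function and the Thurston norm (building on Liu's degree results), and both the JSJ additivity of $x_M$ and the Seifert-fibered/graph-manifold computations (Herrmann) require genuine separate arguments. As a blind reconstruction of the cited proof's architecture, however, the proposal is sound.
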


    \subsection{Half of Theorem \ref{Main Theorem fibered iff face map zero}}
    We now prove the ``if" part of Theorem \ref{Main Theorem fibered iff face map zero}. A stronger version of the ``only if" part will be stated and proved in Theorem \ref{Theorem only if part} after establishing Theorem \ref{Main Theorem taut decomposition and face map}.
    \begin{theorem}[The ``if" part of Theorem \ref{Main Theorem fibered iff face map zero}]\label{Theorem if part}
        Suppose $M$ is an admissible 3-manifold such that $M$ is not a closed graph manifold without an NPC metric. If $\phi\in H^1(M;\R)$ is a nonzero class such that $L_\phi\tautwo_u(M)=1\in \operatorname{Wh}(\mcalD{\pi_1(M)})$, then $\phi$ is fibered.
    \end{theorem}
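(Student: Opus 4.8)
The plan is to bootstrap the purely topological consequence of the hypothesis --- that $\phi$ lies in a top-dimensional Thurston cone --- into genuine fiberedness by passing to a finite cover supplied by Agol's virtual fibering theorem. The three inputs are: (a) the polytope map $\mathbb P$ together with Theorem~\ref{Theorem universal L2 torsion gives dual Thurston norm ball}, which converts $L_\phi\tautwo_u(M)=1$ into a statement about the faces of the dual Thurston norm ball; (b) the compatibility of $L_\phi$ with the restriction map (Theorem~\ref{Theorem leading term map commutes with restrictions}) and of $\tautwo_u$ with passage to regular covers (Theorem~\ref{Theorem properties of the torsion of cw complexes}(4)); and (c) Agol's virtual fibering theorem, available precisely because $M$ is not a closed graph manifold without an NPC metric.

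First I would dispose of the case $M\cong S^1\times D^2$, in which every nonzero class is fibered; thereafter no finite cover of $M$ is a solid torus, so Theorem~\ref{Theorem universal L2 torsion gives dual Thurston norm ball} applies to $M$ and to all its finite covers. Writing $G=\pi_1(M)$ and choosing $a\in\mcalD G^\times$ with $\tautwo_u(M)=[a]$, the commutative square relating $\mathbb P$, $L_\phi$ and $F_\phi$ gives $F_\phi\mathbb P(\tautwo_u(M))=\mathbb P(L_\phi\tautwo_u(M))=0$; together with $\mathbb P(\tautwo_u(M))=\tfrac12[B_x^*(M)]$ this forces $F_\phi B_x^*(M)$ to be a single point, hence a vertex, so by Lemma~\ref{Lemma Top dimensional cone iff face map vertex} the class $\phi$ lies in a top-dimensional, hence open, Thurston cone $C$. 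Applying Proposition~\ref{Proposition properties of the leading term map}(7) to the element $a$, I may pick a primitive integral class $\psi\in C$ with $L_\psi(a)=L_\phi(a)$, so that $L_\psi\tautwo_u(M)=1$ as well; since fibered classes form a union of open Thurston cones (Thurston), it suffices to prove $\psi$ fibered, and I may therefore assume $\phi\in H^1(M;\Z)$ is primitive.

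Next, for an arbitrary connected finite regular cover $p\colon\overline M\to M$ with $L=\pi_1(\overline M)\triangleleft G$ --- note that $\overline M$ is again admissible and not a solid torus --- Theorem~\ref{Theorem properties of the torsion of cw complexes}(4) gives $\tautwo_u(\overline M)=\operatorname{res}^G_L\tautwo_u(M)$, and Theorem~\ref{Theorem leading term map commutes with restrictions} gives
\[
L_{p^*\phi}\tautwo_u(\overline M)=L_{\phi|_L}\bigl(\operatorname{res}^G_L\tautwo_u(M)\bigr)=\operatorname{res}^G_L\bigl(L_\phi\tautwo_u(M)\bigr)=\operatorname{res}^G_L(1)=1 .
\]
Since $p^*\phi\neq0$, the argument of the previous paragraph applied to $\overline M$ shows $p^*\phi$ lies in a top-dimensional Thurston cone of $\overline M$. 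As $\pi_1(M)$ is virtually RFRS (Agol--Wise; cf.~\cite{Aschenbrenner20153ManifoldGroups}), Agol's Virtual Fibering Theorem provides such a cover $\overline M$ in which $p^*\phi$ is quasi-fibered, i.e.\ lies in the closure of a fibered cone $C'$ of $\overline M$ (a possibly non-regular cover may be replaced by its normal core, and both properties are inherited by further pullbacks). Since the fibered cones are precisely certain of the open top-dimensional Thurston cones, and distinct open top-dimensional Thurston cones $C_1,C_2$ satisfy $\overline{C_1}\cap C_2=\emptyset$, the closed fibered cone containing $p^*\phi$ must coincide with the open top-dimensional Thurston cone containing it; hence $p^*\phi\in C'$ is fibered. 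Finally $\ker(p^*\phi)=\ker\phi\cap L$ is a fiber surface group, hence finitely generated, and it has finite index in $\ker\phi$, so $\ker\phi$ is finitely generated; Stallings' fibration theorem, applied to the irreducible manifold $M$, then shows $\phi$ is fibered.

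The main obstacle is that $L_\phi\tautwo_u(M)=1$ in isolation detects only membership in a top-dimensional Thurston cone, which is strictly weaker than fiberedness; the substantive content is the covering-space bootstrap, and the exclusion of closed graph manifolds without an NPC metric is exactly the hypothesis making Agol's machinery (virtual RFRS-ness) available. A secondary point to check is that the restriction, leading-term and polytope maps all descend compatibly to $\operatorname{Wh}(\mcalD G)$, and that the real class $\phi$ can be replaced by a rational one without leaving its Thurston cone --- which is guaranteed by the openness of top-dimensional cones and Proposition~\ref{Proposition properties of the leading term map}(7).
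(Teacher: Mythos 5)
Your proposal is correct and follows essentially the same route as the paper's proof: convert $L_\phi\tau^{(2)}_u=1$ into the vertex condition on the dual Thurston ball via the polytope map and Theorem \ref{Theorem universal L2 torsion gives dual Thurston norm ball}, propagate the hypothesis to a finite regular cover using Theorem \ref{Theorem properties of the torsion of cw complexes}(4) together with Theorem \ref{Theorem leading term map commutes with restrictions}, and then invoke Agol's virtual fibering theorem plus the openness of top-dimensional Thurston cones to upgrade quasi-fibered to fibered. The extra reductions you include (passing to a primitive integral class and the Stallings-type descent) are harmless elaborations of the paper's final sentence.
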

    
    \begin{proof}

        If $M$ is homeomorphic to a solid torus then any nonzero class is fibered. Assume therefore that $M$ is not a solid torus. The assumption on $M$ implies that $\pi_1(M)$ is virtually special by a combination of work (c.f. \cite[Section 4.7]{Aschenbrenner20153ManifoldGroups} and references therein). In particular, by Agol's criterion for virtual fibering \cite{agol2008criteria} there exists a regular finite covering $\overline M\ra M$ such that the pull back class $\bar \phi$ lies in the closure of a fibered cone. Let $G$ be the fundamental group of $M$ and $\pi_1(\overline M)=\colon L<G$. By the restriction property of Theorem \ref{Theorem properties of the torsion of cw complexes}(4),
        \[
            \tautwo_u(\overline M)=\operatorname{res} ^G_L\tautwo_u(M)\in \operatorname{Wh}(\mcalD L).
        \]
        Applying $L_{\bar\phi}$ to both sides and using Theorem \ref{Theorem leading term map commutes with restrictions}, we obtain 
        \begin{align*}
            L_{\bar \phi}(\tautwo_u(\overline M))&=L_{\bar \phi}(\operatorname{res} ^G_L\tautwo_u(M))\\&=\operatorname{res} ^G_L(L_\phi\tautwo_u(M)) =1 \in \operatorname{Wh}(\mcalD L).
        \end{align*}
        Now apply the polytope map $\mathbb P:\operatorname{Wh}(\mcalD L)\ra \whpoly (H_1(L)_f)$,
        \begin{align*}
            0&=\mathbb P(L_{\bar \phi}(\tautwo_u(\overline M)))\\&=F_{\bar \phi}\mathbb P(\tautwo_u(\overline M))\text{\quad by commutativity}\\&=2\cdot F_{\bar \phi}[B_x^*(\overline M)]\text{\quad by Theorem \ref{Theorem universal L2 torsion gives dual Thurston norm ball}}.
        \end{align*}
        Since $\whpoly(H_1(\overline M;\R))$ is torsion-free by \cite[Lemma 4.8]{friedl2017universal}, it follows that \[
            F_{\bar \phi}[B_x^*(\overline M)]=0,
        \]
        therefore $F_{\bar\phi}B_x^*(\overline M)$ is a vertex. By Lemma \ref{Lemma Top dimensional cone iff face map vertex} $\bar \phi$ lies in a top dimensional Thurston cone of $H^1(\overline M;\R)$. Since $\bar \phi$ was chose to lie in the closure of a fibered cone, and the boundary of a fibered cone consists of Thurston cones of strictly lower dimensions, $\bar\phi$ must lie in a fibered cone itself. Therefore $\bar\phi$ is a fibered class for $\overline M$ and consequently $\phi$ is a fibered class for $M$.
    \end{proof}

\section{Universal $L^2$-torsion for taut sutured manifolds}\label{Section taut sutured manifolds}
In this section, we first briefly recall the terminologies of sutured manifold theory, then discuss the universal $L^2$-torsion of a taut sutured manifold and prove the decomposition formula Theorem \ref{Main Theorem taut decomposition and face map}, which is then used to complete the proof of Theorem \ref{Main Theorem fibered iff face map zero}. After that, we use a doubling trick to prove Theorem \ref{Main Theorem universal L2 torsion detect product sutured manifold}.

\subsection{Background on sutured manifold theory} Throughout this section $N$ will be an arbitrary compact, oriented 3-manifold.

\begin{definition}[Taut surfaces]
    Given a compact orientable surface $\Sigma$ with path-components $\Sigma_1,\ldots,\Sigma_k$ we define its \emph{complexity} as 
\[
    \chi_-(\Sigma):=\sum_{i=1}^k \max\{0,-\chi(\Sigma_i)\}.
\]
A properly embedded oriented surface $\Sigma$ in $N$ is \emph{taut} if $\Sigma$ is incompressible, and has minimal complexity among all properly embedded oriented surfaces representing the homology class $[\Sigma,\partial \Sigma]\in H_2(N,\nu (\partial \Sigma);\Z)$, where $\nu (\partial \Sigma)$ is a regular neighborhood of $\partial\Sigma$ in $\partial N$.
\end{definition}

\begin{definition}[Sutured manifold]
    A \emph{sutured manifold} $(N,R_+,R_-,\gamma)$ is a compact oriented 3-manifold $N$ with a decomposition of its boundary into two subsurfaces $R_+$ and $R_-$ along their common boundary $\gamma$ which we call the \emph{suture}.
The orientation on $R_\pm$ is defined in the way that the normal vector of $R_+$ points out of $N$ and the normal vector of $R_-$ points inward of $N$. The boundary orientations of $R_\pm$ on $\gamma$ coincide and induce an orientation of the suture $\gamma$. We often abbreviate $(N,R_+,R_-,\gamma)$ as $(N,\gamma)$ where no confusions arises.

A sutured manifold $(N,R_+,R_-,\gamma)$ is called \emph{taut} if $N$ is irreducible and $R_\pm$ are both taut surfaces (viewed as properly embedded surfaces after pushing slightly into $N$). 
\end{definition}

Before introducing sutured manifold decompositions, we establish some notations. Let $S$ be a (possibly disconnected) properly embedded oriented surface in $N$. Choose a product neighborhood $S\times(-1,1)$ of $S$ in $N$ and denote by $N\bb S:= N\setminus S\times (-1,1)$ the complement. Let $S_+$ (resp. $S_-$) be the components of $S\times \{-1\}\cup S\times\{1\}$ in $N\bb S$ whose normal vector points out of (resp. into) $N'$. This notation ensures that $(N\bb S,S_+,S_-,\emptyset)$ forms a sutured manifold when $N$ is closed. 

\begin{definition}[Decomposition surface]
     Let $(N,R_+,R_-,\gamma)$ be a sutured manifold. A properly embedded surface $S$ in $N$ is called a \emph{decomposition surface} if $\partial S$ is transverse to $\gamma$, no component of $\partial S$ bounds a disk in $R_\pm$ and no component of $S$ is a disk $D$ with $\partial D\subset R_\pm$.
\end{definition}

\begin{definition}[Sutured manifold decomposition]
    Given an oriented decomposition surface $S$ for $(N,R_+,R_-,\gamma)$, the \emph{sutured manifold decomposition}
\[
    (N,R_+,R_-,\gamma)\stackrel{S}\rightsquigarrow (N', R_+',R_-',\gamma')
\]
is defined as:
\begin{align*}
    N'&=N\bb S,\\
    R_+'&=(R_+ \cup S_+)\cap N',\\
    R_-'&=(R_- \cup S_-)\cap N',\\
    \gamma'&=\partial R_+'=\partial R_-'.
\end{align*}
See Figure \ref{fig:Separating decomposition}--\ref{fig:Non-separating sutured decomposition} for illustrations. A sutured manifold decomposition $(N,\gamma)\stackrel{S}\rightsquigarrow (N',\gamma')$ is called \emph{taut} if $(N',\gamma')$ is taut. Gabai \cite[Lemma 0.4]{gabai1987foliationsII} proved that if $(N,\gamma)\stackrel{S}\rightsquigarrow (N',\gamma')$ is a taut sutured decomposition then $(N,\gamma)$ is also taut.
\end{definition}

We make the following observations:
\begin{enumerate}[\quad (1)]
    \item Following \cite{AgolDunfield2019Certifying}, we consider sutures as simple closed curves and exclude torus sutures, unlike many other sources where the suture are disjoint union of annuli and tori. In fact, if a sutured manifold $(N,\gamma)$ contains toral sutures, these can be absorbed into $R_\pm$ without affecting the universal $L^2$-torsion: 
    \begin{lemma}
        If $(N,\gamma)$ is a taut sutured manifold and $T\subset \gamma$ is a torus component. Then $\tautwo_u(N,R_+)=\tautwo_u(N,R_+\cup T)$.
    \end{lemma}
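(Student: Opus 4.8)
The plan is to realize $C_*(\widehat N,\widehat{R_+})$ and $C_*(\widehat N,\widehat{R_+\cup T})$ as the two ends of a based exact sequence of $\Z G$-chain complexes ($G=\pi_1(N)$) whose remaining term is the chain complex of the torus $T$, and then to check that the universal $L^2$-torsion of this torus complex is trivial. Concretely, I would fix a smooth triangulation of $N$ in which $R_+$, $R_-$, $\gamma$ and the component $T$ are all subcomplexes; since $T$ is a suture component it is disjoint from $R_+$, so in the universal cover $\widehat N$ the subcomplexes $\widehat{R_+}$ and $\widehat T:=p\inv(T)$ are disjoint, and the triple $\widehat{R_+}\subset\widehat{R_+}\cup\widehat T\subset\widehat N$ produces a based exact sequence
\[
    0\ra C_*(\widehat T)\ra C_*(\widehat N,\widehat{R_+})\ra C_*(\widehat N,\widehat{R_+\cup T})\ra 0,
\]
using $C_*(\widehat{R_+}\cup\widehat T,\widehat{R_+})=C_*(\widehat T)$. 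The middle complex is $L^2$-acyclic since $(N,\gamma)$ is taut (Theorem \ref{Theorem Taut iff L2acyclic}). So once I know $C_*(\widehat T)$ is $L^2$-acyclic with $\tautwo_u(C_*(\widehat T))=1\in\widetilde K_1(\mcalD G)$, the long exact $\mcalD G$-homology sequence forces $C_*(\widehat N,\widehat{R_+\cup T})$ to be $L^2$-acyclic as well, Proposition \ref{Proposition Axiom of torsion of chain complexes}(2) gives $\tautwo_u(C_*(\widehat N,\widehat{R_+}))=\tautwo_u(C_*(\widehat T))\cdot\tautwo_u(C_*(\widehat N,\widehat{R_+\cup T}))$, and projecting to $\operatorname{Wh}(\mcalD G)$ yields the claimed equality $\tautwo_u(N,R_+)=\tautwo_u(N,R_+\cup T)$.

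The substantive step is the computation for $C_*(\widehat T)$. By simple-homotopy invariance the torsion is independent of the chosen cell structure on $T$, so I would use the standard two-cell CW structure on $T\cong T^2$ (one $0$-cell, $1$-cells $e_1,e_2$ carrying commuting generators $t_1,t_2$, one $2$-cell with attaching word $[t_1,t_2]$); writing $a,b\in G$ for the images of $t_1,t_2$, the same Fox-calculus computation as in Proposition \ref{Proposition computation example}(3) identifies $C_*(\widehat T)$ with
\[
    0\ra\Z G\xrightarrow{\begin{pmatrix}1-b & a-1\end{pmatrix}}\Z G^2\xrightarrow{\begin{pmatrix}a-1\\ b-1\end{pmatrix}}\Z G\ra 0 .
\]
If $a\neq 1$ then $a-1$ is invertible in $\mcalD G$ (it is a unit of the subfield $\mcalD{\langle a\rangle}\cong\Q(t)$, which embeds into $\mcalD G$ by Proposition \ref{Proposition subgroup satisfies Atiyah conjecture}), and the matrix chain $\mathcal I_2=\emptyset$, $\mathcal I_1=\{e_2\}$, $\mathcal I_0=\{\text{the }0\text{-cell}\}$ is non-degenerate with associated matrices $B_2=(a-1)$, $B_1=(a-1)$; Theorem \ref{Matrix chain for acyclic chain complex} then gives $L^2$-acyclicity together with $\tautwo_u(C_*(\widehat T))=\redet(B_1)\inv\redet(B_2)=[a-1]\inv[a-1]=1$. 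If instead $b\neq 1$, the symmetric choice $\mathcal I_1=\{e_1\}$ works and gives $[b-1]\inv[1-b]=[-1]=1$ in $\widetilde K_1(\mcalD G)$.

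The one ingredient that is not a formal manipulation — and the step I expect to be the real, if modest, obstacle — is ruling out $a=b=1$, i.e. ensuring $T$ is not nullhomotopic in $N$; this is precisely what makes $C_*(\widehat T)$ (and hence $C_*(\widehat N,\widehat{R_+\cup T})$) $L^2$-acyclic, so the identity genuinely fails without it. I would argue this by $3$-manifold topology: if $\pi_1(T)\ra\pi_1(N)$ were trivial then $T$ would be compressible by the loop theorem, compressing $T$ along a disk produces a $2$-sphere, which bounds a ball since $N$ is irreducible, and reassembling forces $N\cong S^1\times D^2$, whose boundary torus is visibly not nullhomotopic — a contradiction. (When $T\hookrightarrow N$ is $\pi_1$-injective one could alternatively deduce $\tautwo_u(C_*(\widehat T))=1$ from the Induction property, Theorem \ref{Theorem properties of the torsion of cw complexes}(3), combined with $\tautwo_u(T^2)=1$ from Proposition \ref{Proposition computation example}(3); the matrix-chain route is preferable because it also covers the degenerate solid-torus case uniformly.)
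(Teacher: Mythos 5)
Your proof is correct and is essentially the paper's argument: the paper disposes of the lemma in one line by invoking the Sum Formula (Theorem \ref{Theorem properties of the torsion of cw complexes}(2)) together with $\tautwo_u(T^2)=1$, which is exactly your based exact sequence of the triple $\widehat{R_+}\subset\widehat{R_+}\cup\widehat T\subset\widehat N$ plus the triviality of the torus contribution. The one refinement you add is worthwhile: the Sum Formula as stated requires $\pi_1$-injective sub-pairs, whereas your direct matrix-chain computation of $\tautwo_u(C_*(\widehat T))$ (needing only that $T$ is not nullhomotopic, which you correctly rule out via the loop theorem and irreducibility) covers the case of a compressible toral suture, i.e.\ the solid torus, uniformly.
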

    \begin{proof}
        This follows from the Sum Formula \ref{Theorem properties of the torsion of cw complexes} and the fact that $\tautwo_u(T)=1$.
    \end{proof}
    Accordingly, the definition of sutured manifold decomposition is slightly modified. In the classical definition (c.f. \cite{Gabai1983Foliations,gabai1987foliationsII}), a decomposition surface may include boundary components that are cores of a sutured annulus. Such boundary components can be isotoped to entirely contained in $R_+\cup R_-$, yielding isomorphic sutured manifolds after decomposition.

    \item Let $(N,R_+,R_-,\gamma)\stackrel{S}\rightsquigarrow (N',R_+',R_-',\gamma')$ be a taut sutured decomposition, then $S$ is incompressible in $N$. To see this, note that $R_+'$ is formed by gluing the surfaces $S_+$ and $R_+\cap N'$ along $S_+\cap R_+$, which consists of arcs and boundary circles of $S_+$. By assumption, no boundary circles of $S_+$ bound a disk in $R_+$ or in $S_+$, so the closed curves of $S_+\cap R_+$ are homotopically nontrivial in both $S_+$ and $R_+\cap N'$. By Van Kampen Theorem the surface $S_+$ is $\pi_1$-injective in $R_+'$, hence $\pi_1$-injective in $N'$ since $R_+'$ is incompressible in $N'$. This shows that $S$ admits no compressing disk in $N$.
    
    \item Given a taut sutured decomposition $(N,\gamma)\stackrel{S}\rightsquigarrow (N',\gamma')$. Since $S$ is incompressible in $N$, the inclusion of each component of $N'$ into $N$ induces a monomorphism on the fundamental group.
\end{enumerate}

\subsection{Universal $L^2$-torsion for taut sutured 3-manifolds}
\begin{theorem}[\cite{herrmann2023sutured}]\label{Theorem Taut iff L2acyclic}
    Let $(N,\gamma)$ be a sutured 3-manifold with infinite fundamental group. Suppose that $N$ is irreducible and $R_\pm$ are both incompressible. Then $(N,\gamma)$ is taut if and only if the pair $(N,R_+)$ is $L^2$-acyclic.
\end{theorem}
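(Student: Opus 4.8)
The plan is to reduce the statement to a computation of the first relative $L^2$-Betti number, and then to bracket that invariant between the complexity of $R_\pm$ and the minimal complexity in their homology classes. Write $G=\pi_1(N)$; it is infinite by hypothesis, so $H_0^{(2)}(N)=0$ and hence $\dim_{\mathcal NG}H_0^{(2)}(N,R_+)=0$. By $L^2$-Poincar\'e--Lefschetz duality $\dim_{\mathcal NG}H_k^{(2)}(N,R_+)=\dim_{\mathcal NG}H_{3-k}^{(2)}(N,R_-)$, so also $\dim_{\mathcal NG}H_3^{(2)}(N,R_+)=0$, and therefore $(N,R_+)$ is $L^2$-acyclic if and only if $\dim_{\mathcal NG}H_1^{(2)}(N,R_+)=\dim_{\mathcal NG}H_1^{(2)}(N,R_-)=0$ — a condition symmetric in $R_+\leftrightarrow R_-$. (Here I use that, after removing sphere and disk components, each component of $R_\pm$ is $\pi_1$-injective in $N$ with infinite image, so $\chi_-(R_\pm)=-\chi(R_\pm)$ and $H_0^{(2)}(R_\pm)=0$.) Since $\chi(N,R_+)$ equals both the $L^2$-Euler characteristic and $\chi(N)-\chi(R_+)=\tfrac12(\chi(R_-)-\chi(R_+))$, one obtains the identity
\[
\chi_-(R_+)+2\,\dim_{\mathcal NG}H_1^{(2)}(N,R_+)=\chi_-(R_-)+2\,\dim_{\mathcal NG}H_1^{(2)}(N,R_-),
\]
so the theorem becomes equivalent to the assertion that $\dim_{\mathcal NG}H_1^{(2)}(N,R_\pm)$ vanishes precisely when $R_\pm$ realise the minimal complexity in their relative homology classes.

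For the direction ``taut $\Rightarrow$ $L^2$-acyclic'' I would induct along Gabai's sutured manifold hierarchy \cite{Gabai1983Foliations}: a taut $(N,\gamma)$ with $N$ irreducible and $R_\pm$ incompressible admits a finite chain of taut sutured decompositions $(N,\gamma)=(N_0,\gamma_0)\overset{\Sigma_1}{\rightsquigarrow}\cdots\overset{\Sigma_n}{\rightsquigarrow}(N_n,\gamma_n)$ ending in a disjoint union of product sutured manifolds $R\times[0,1]$. The base case is immediate: $R\times\{1\}\hookrightarrow R\times[0,1]$ is a simple homotopy equivalence, so $(R\times[0,1],R\times\{1\})$ is $L^2$-acyclic (cf.\ Proposition \ref{Proposition computation example}). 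The inductive step is: if $(N,\gamma)\overset{S}{\rightsquigarrow}(N',\gamma')$ with $S$ incompressible (automatic in a taut decomposition) and $(N',R_+')$ is $L^2$-acyclic, then so is $(N,R_+)$. Geometrically $N=N'\cup_{S_+\sqcup S_-}(S\times[-1,1])$; setting $\partial_+S=\partial S\cap R_+$, excision identifies $H_*^{(2)}(N,R_+\cup(S\times[-1,1]))$ with $H_*^{(2)}(N',R_+'\cup S_-)$, while the long exact sequences of the triples $(N,R_+\cup(S\times[-1,1]),R_+)$ and $(N',R_+'\cup S_-,R_+')$ have relative terms equal to $H_*^{(2)}(S,\partial_+S)$ (again by excision, using $\pi_1$-injectivity of $S$). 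Feeding the acyclicity of $(N',R_+')$ through the second triple and the resulting data back through the first, the two copies of $H_*^{(2)}(S,\partial_+S)$ cancel via the connecting maps, forcing $H_*^{(2)}(N,R_+)=0$. This is the $L^2$-homological counterpart of Gabai's lemma \cite[Lemma 0.4]{gabai1987foliationsII} that tautness of $(N',\gamma')$ implies tautness of $(N,\gamma)$, and iterating up the hierarchy completes this direction.

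For the converse, ``$L^2$-acyclic $\Rightarrow$ taut'', I would argue the contrapositive. Since $N$ is irreducible and $R_\pm$ are incompressible, non-tautness forces one of them — say $R_+$ — to fail to minimise complexity, so there is a properly embedded incompressible decomposition surface $S$ homologous to $R_+$ with $\chi_-(S)<\chi_-(R_+)$. The key estimate to prove is
\[
2\,\dim_{\mathcal NG}H_1^{(2)}(N,R_+)\ \ge\ \chi_-(R_+)-\chi_-(S)\ >\ 0,
\]
so that $(N,R_+)$ is not $L^2$-acyclic. I would prove this by decomposing $(N,\gamma)$ along $S$ and comparing $\dim_{\mathcal NG}H_1^{(2)}(N,R_+)$ with $\dim_{\mathcal NG}H_1^{(2)}(N',R_+')$ using the Mayer--Vietoris/triple machinery of the previous paragraph, combined with the Euler-characteristic identity — the complexity gap $\chi_-(R_+)-\chi_-(S)$ being precisely what survives in $H_1^{(2)}(N,R_+)$. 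Conceptually this is the relative, sutured incarnation of the familiar fact that $L^2$-Betti numbers furnish lower bounds for the Thurston norm.

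The genuinely hard part is the one-step homological bookkeeping that underlies both directions: one must handle a decomposition surface $S$ whose boundary is an arbitrary mixture of arcs and circles meeting $\gamma$, check the $\pi_1$-injectivity needed to excise, and — most delicately — keep track of the connecting homomorphisms in the two triple sequences so that the two copies of $H_*^{(2)}(S,\partial_+S)$ genuinely cancel (equivalently, so that $L^2$-acyclicity is transported correctly across a single decomposition and yields the sharp bound in the converse). Everything else — $L^2$-Poincar\'e duality, $\chi^{(2)}=\chi$, induction along $\pi_1$-injective maps, and Gabai's hierarchy — is standard input, and once the one-step analysis is secured both implications follow by induction over the hierarchy together with the $\chi$-count.
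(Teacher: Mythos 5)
The first thing to note is that the paper does not prove this statement from scratch: its entire proof consists of verifying the hypothesis $\chi(R_+)=\chi(R_-)$ in each direction (via tautness on one side, via $\chi(N)=\tfrac12\chi(\partial N)$ and the vanishing of the $L^2$-Euler characteristic on the other) and then citing Herrmann's Theorem~1.1 twice. Your proposal instead attempts a complete proof of Herrmann's theorem. Your reduction is sound -- the duality $\dim_{\mathcal N G}H_k^{(2)}(N,R_+)=\dim_{\mathcal N G}H_{3-k}^{(2)}(N,R_-)$, the vanishing of the extreme Betti numbers, and the identity $\chi_-(R_+)+2\dim_{\mathcal NG}H_1^{(2)}(N,R_+)=\chi_-(R_-)+2\dim_{\mathcal NG}H_1^{(2)}(N,R_-)$ are all correct -- but both substantive steps you then rely on are left unproved, and the mechanism you propose for the first one does not work as stated.

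Concretely, in the inductive step the two long exact sequences give you a connecting map $\delta_A\colon H_k^{(2)}(N',R_+'\cup S_-)\to H_{k-1}^{(2)}(S\times I,\partial_+S\times I)$, and acyclicity of $(N,R_+)$ is equivalent to $\delta_A$ being a weak isomorphism in each degree. At the chain level $\delta_A[c]=[(\partial c)|_{S_-}]+[(\partial c)|_{S_+}]$: the $S_-$-part is exactly $\iota_{S_-}\circ\delta_B$, which is an isomorphism when $(N',R_+')$ is $L^2$-acyclic, but there is an additional $S_+$-contribution (classes on $S_+$ die in the triple for $(N',R_+'\cup S_-,R_+')$ because $S_+\subset R_+'$, yet survive in the triple for $(N,(S\times I)\cup R_+,R_+)$ because $S_+\not\subset R_+$). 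So the map you need to invert is a weak isomorphism \emph{plus} another operator, and weak isomorphisms of Hilbert modules are not stable under such perturbations. Making this work is precisely the role of the $\phi$-grading: the $S_+$-term carries group elements with $\phi>0$, and the statement ``invertible plus strictly higher $\phi$-order is invertible'' is the paper's Theorem~\ref{Theorem Leading term of matrices} together with Lemma~\ref{Lemma Leading Coefficient of Chain Complexes}, a genuine theorem about Linnell's skew field proved via the Ore localization $\mcalD K*H_1(G)_f\subset\mcalD G$. Your phrase ``the two copies cancel via the connecting maps'' hides exactly this point, which you yourself flag as the hard part but do not supply.

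The converse has the same character: the inequality $2\dim_{\mathcal NG}H_1^{(2)}(N,R_+)\ge\chi_-(R_+)-\chi_-(S)$ is asserted, not derived, and it is not a formal consequence of Mayer--Vietoris bookkeeping -- it is essentially the statement that relative $L^2$-Betti numbers detect minimal complexity in a homology class, which is the deep content of Herrmann's theorem (and in Herrmann's own proof rests on substantial external input rather than on the one-step decomposition analysis alone). As it stands, the proposal is a correct reformulation plus a plausible strategy, but both implications bottom out in unproved claims whose proofs require machinery (the leading-term theory over $\mcalD G$, or an equivalent Novikov-ring argument) that the sketch does not contain.
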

\begin{proof}
    Assume first that $(N,\gamma)$ is taut. Since $N$ has infinite fundamental group and is irreducible, no components of $R_\pm$ are disks or spheres (otherwise $N$ would be a 3-ball, contradicting $\pi_1(N)$ infinite). Therefore the complexity of $R_\pm$ equals $-\chi(R_\pm)$, and tautness implies $\chi(R_+)=\chi(R_-)$. We apply \cite[Theorem 1.1]{herrmann2023sutured} to conclude that $(N,R_+)$ is $L^2$-acyclic.

    Conversely, if $(N,R_+)$ is $L^2$-acyclic, then the Euler characteristic $\chi(N,R_+)=\chi(N)-\chi(R_+)$ is zero. Since $\chi(N)=\frac12\chi(\partial N)=\frac12(\chi(R_+)+\chi(R_-))$, it follows that $\chi(R_+)=\chi(R_-)$. Another application of \cite[Theorem 1.1]{herrmann2023sutured} shows that $(N,\gamma)$ is taut.
\end{proof}

The reader might wonder why we prefer $(N,R_+)$ than $(N,R_-)$. In fact, these two pairs are dual to each other by the following Proposition \ref{Proposition Rpm dual formula}. We prefer the pair $(N,R_+)$ since it better suits our convention of orientations.
\begin{proposition}\label{Proposition Rpm dual formula}
    Let $(N,R_+,R_-,\gamma)$ be a sutured manifold. Suppose the pair $(N,R_+)$ is $L^2$-acyclic. Then $(N,R_-)$ is also $L^2$-acyclic. Moreover 
    \[
        \tautwo_u(N,R_+)=(\tautwo_u(N,R_-))^*.
    \]
\end{proposition}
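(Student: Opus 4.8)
The plan is to deduce this from Poincar\'e--Lefschetz duality together with Proposition \ref{Proposition torsion of dual chain complex}, mimicking Milnor's duality theorem for Reidemeister torsion \cite{milnor1962duality}. I would assume $N$ connected (otherwise argue componentwise) and write $G=\pi_1(N)$, which is torsion-free and satisfies the Atiyah Conjecture (so that $\tautwo_u$ is defined and $\mcalD G$ is a skew field). Let $\widehat N$ be the universal cover and $\widehat{R_\pm}$ the preimages of $R_\pm$. Since $N$ is a compact oriented $3$-manifold with $\partial N=R_+\cup_\gamma R_-$, capping with the fundamental class $[N,\partial N]$ yields a chain homotopy equivalence of finite based free $\Z G$-chain complexes
\[
    C_*(\widehat N,\widehat{R_+})\;\simeq\;\big(C_*(\widehat N,\widehat{R_-})\big)^{*},
\]
the dual chain complex on the right being taken with the involution-twisted $\Z G$-module structure of the excerpt, so that $\big(C_*(\widehat N,\widehat{R_-})^{*}\big)_k=\big(C_{3-k}(\widehat N,\widehat{R_-})\big)^{*}$. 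The first thing I would verify is that, realized through a smooth triangulation of $N$ and its dual cell decomposition, this equivalence is a \emph{simple} chain homotopy equivalence; this is precisely the classical argument of \cite{milnor1962duality}, which carries over verbatim with $\Z G$-coefficients (compare the remark preceding Proposition \ref{Proposition torsion of dual chain complex}).

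Granting this, the $L^2$-acyclicity of $(N,R_-)$ is formal. By Lemma \ref{Lemma algebraic L2 acyclic}, the hypothesis says $\mcalD G\otimes_{\Z G}C_*(\widehat N,\widehat{R_+})$ is exact; applying the additive functor $\mcalD G\otimes_{\Z G}-$ to the equivalence above, $\mcalD G\otimes_{\Z G}\big(C_*(\widehat N,\widehat{R_-})\big)^{*}$ is exact. Since each $C_k(\widehat N,\widehat{R_-})$ is finitely generated free over $\Z G$, this complex is naturally identified with the $\mcalD G$-linear dual of $\mcalD G\otimes_{\Z G}C_*(\widehat N,\widehat{R_-})$; and over the skew field $\mcalD G$ a bounded complex of finitely generated free modules is exact if and only if its dual is. Hence $\mcalD G\otimes_{\Z G}C_*(\widehat N,\widehat{R_-})$ is exact, i.e.\ $(N,R_-)$ is $L^2$-acyclic, and in particular Proposition \ref{Proposition torsion of dual chain complex} applies to $D_*:=C_*(\widehat N,\widehat{R_-})$.

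For the torsion identity, work in $\operatorname{Wh}(\mcalD G)$. Simple-homotopy invariance of the universal $L^2$-torsion of chain complexes (deducible from Proposition \ref{Proposition Axiom of torsion of chain complexes} applied to the mapping cone of the duality equivalence) together with the definition gives $\tautwo_u(N,R_+)=\tautwo_u\big(C_*(\widehat N,\widehat{R_+})\big)=\tautwo_u(D^{*})$. Now $D_*$ has length $n=3$, so $(-1)^{n+1}=1$, and Proposition \ref{Proposition torsion of dual chain complex} gives $\big(\tautwo_u(D^{*})\big)^{*}=\tautwo_u(D_*)$; applying the involution, $\tautwo_u(D^{*})=\big(\tautwo_u(D_*)\big)^{*}=\big(\tautwo_u(N,R_-)\big)^{*}$. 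Chaining the equalities yields $\tautwo_u(N,R_+)=\big(\tautwo_u(N,R_-)\big)^{*}$.

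The hard part is the first step: pinning down Poincar\'e--Lefschetz duality as an honest \emph{simple} chain homotopy equivalence of $\Z G$-complexes, with the correct bookkeeping of the $R_+/R_-$ partition of $\partial N$, the $\Z G$-involution, and the orientation of $N$ entering the cap product. Everything after that is a routine consequence of Proposition \ref{Proposition torsion of dual chain complex} and the exactness of duality over a skew field.
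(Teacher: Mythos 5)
Your proposal is correct and follows essentially the same route as the paper: the paper also adapts Milnor's duality argument, realizing the duality concretely as an isomorphism between $C_*(\widehat N,\widehat{R_+})$ and the dual cochain complex of a subcomplex of the dual cell decomposition that deformation retracts onto $(N,R_-)$, and then invokes Proposition \ref{Proposition torsion of dual chain complex} with $n=3$ exactly as you do. The step you flag as "the hard part" is precisely the intersection-pairing/dual-cell construction the paper carries out, so there is no gap in substance.
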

\begin{proof}
    The proof adapts the argument in \cite[Page 139]{milnor1962duality} and does not make essential use of $L^2$-theory. Choose a smooth triangulation of $N$ such that $R_+$ and $R_-$ are subcomplexes. Let $\widehat N$ be the universal cover and let $\widehat R_\pm$ be the preimages of $R_\pm$. Consider the dual cellular complex $\widehat N'$ of $\widehat N$. Each cell of $\widehat N$ is canonically dual to a cell of $\widehat N'\setminus \partial \widehat N'$, as illustrated in Figure \ref{fig:Triangulation} below.
    \begin{figure}[htbp]
        \centering
        
\def\svgwidth{.95\columnwidth}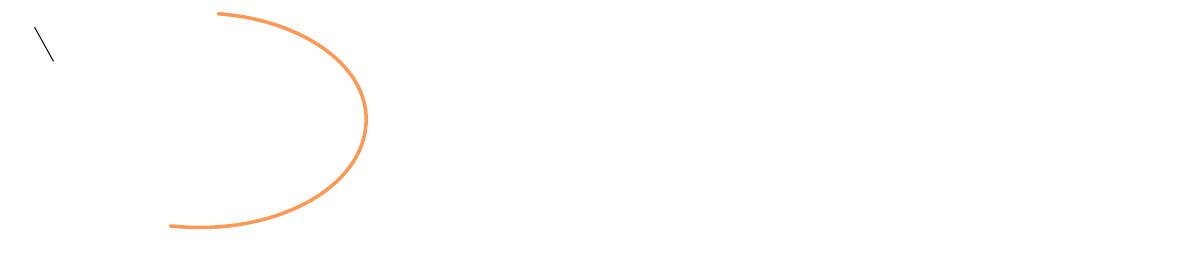

        \caption{An illustration in one lower dimension. Left: A simplicial complex $N$ whose boundary is a union of a subcomplex $R_-$ (the arc $ \wideparen{ABC}$) and a subcomplex $R_+$ (the arc $\wideparen{ADC}$). Middle: the dual cellular complex $N'$ (red). Right: the subcomplex $K\subset N'$ consisting of cells disjoint from $R_+$. The complex $N'$ deformation retracts to $K$ along a product neighborhood of $R_+$.
        }
        \label{fig:Triangulation}
    \end{figure}
    
    More explicitly, define the intersection pairing 
    \[
        p\colon C_*(\widehat N)\times C_{3-*}(\widehat N',\partial\widehat N')\ra \Z G,\quad p(\sigma,\sigma')= \sum_{g\in G}\langle\sigma,g\sigma'\rangle\cdot g
    \]where $\langle\sigma,g\sigma'\rangle$ is the intersection number of $\sigma$ and $g\sigma'$.
    One can verify the identities:
    \begin{align*}
         &p(g\sigma,\sigma')=g\cdot p(\sigma,\sigma'),\\&
         p(\sigma,g\sigma')= p(\sigma,\sigma')\cdot g\inv,\\&
         p(\partial \sigma,\sigma')=\pm p(\sigma,\partial\sigma')
    \end{align*}
    for all $g\in G$. The pairing is non-degenerate in the sense that $\sigma\mapsto p(\sigma,*)$
    gives an isomorphism of $\Z G$-chain complexes $C_*(\widehat N)\cong C^{3-*}(\widehat N',\partial N')$. A cell of $\widehat R_+$ is canonically dual to a cell of $\widehat N'\setminus \partial\widehat  N'$ whose closure intersects $\widehat R_+$ non-trivially, and vise versa. Let $K$ be subcomplex of $N'$ consisting of cells whose closure are disjoint from $R_+$, and let $\widehat K$ be its preimage in the universal cover. Note that $R_-'=K\cap \partial N'$, and we have the following induced non-degenerate pairing:
    \[
        C_*(\widehat N,\widehat R_+)\times C_{3-*}(\widehat K,\widehat {R_-'})\ra\Z G,
    \]
    and hence an isomorphism of $\Z G$-chain complexes $C_*(\widehat N,\widehat R_+)\cong C^{3-*}(\widehat K,\widehat {R_-'})$. By Proposition \ref{Proposition torsion of dual chain complex} the finite based free $\Z G$-chain complex $C_*(\widehat K,\widehat {R_-'})$ is $L^2$-acyclic and $$\tautwo_u(C_*(\widehat N,\widehat R_+))=(\tautwo_u(C_{*}(\widehat K,\widehat {R_-'})))^*.$$
    The pair $(K,R_-')$ is a deformation retract of the pair $(N',R_-')$ (after barycentric-subdividing $N$ if necessary). Therefore $$\tautwo_u(N,R_+)=(\tautwo_u(K,R_-'))^*=(\tautwo_u(N',R_-'))^*.$$ This shows that the pair $(N,R_-)$ is $L^2$-acyclic with $\tautwo_u(N,R_+)=(\tautwo_u(N,R_-))^*$, as the universal $L^2$-torsion of $(N,R_-)$ is independent of its cellular structure.
\end{proof}

\subsection{Turaev's algorithm}
Let $(N,\gamma)\stackrel{S}\rightsquigarrow (N',\gamma')$ be a taut sutured decomposition. To simplify notation, we abbreviate the pair $(N',R_+')=(N\bb S,(S_+\cup R_+)\cap (N\bb S))$ as $(N\bb S,S_+\cup R_+)$, with the understanding that a CW-pair $(X,Y)$ always means $(X,X\cap Y)$.

Turaev's algorithm \cite{Turaev2002Homological} is a procedure that modifies $S$ into a non-separating surface $S'$ without changing the push-forward of the universal $L^2$-torsion $i_*\tautwo_u(N',R_+')$.

\begin{lemma}\label{Lemma L2 torsion invariant under separating decomposition}
    Let $(N,\gamma)$ be a taut sutured manifold. Suppose there is a decomposition surface $S$ such that:
    \begin{enumerate}[\rm \quad (1)]
        \item The sutured manifold decomposition $(N,R_+,R_-,\gamma)\stackrel{S}\rightsquigarrow (N',R_+',R_-',\gamma')$ is taut.
        \item $S$ is separating and $N'$ is a disjoint union of two (possibly disconnected) manifolds $N_0$ and $N_1$.
        \item $S_-\subset N_0$ and $S_+\subset N_1$.
    \end{enumerate}
    Then $\tautwo_u(N,R_+)=i_*\tautwo_u(N',R_+')$ where $i\colon N'\hookrightarrow N$ is the inclusion.
\end{lemma}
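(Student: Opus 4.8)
\emph{Proof proposal.} The plan is to reduce the identity to the multiplicativity of the universal $L^2$-torsion over a single based exact sequence of $\Z\pi_1(N)$-chain complexes. First I would set up the geometry: regard the decomposition surface $S$ as a properly embedded surface in $N$, let $N_0,N_1$ be the closures of the two components of $N\setminus S$ so that $N=N_0\cup_S N_1$, and write $P_j:=R_+\cap N_j$. Identifying the cut pieces of $N\bb S$ with $N_0,N_1\subset N$ (so that the positive copy $S_+$ of the cut surface is identified with $S\subset\partial N_1$ and the negative copy $S_-$ with $S\subset\partial N_0$), hypothesis (3), that $S_-\subset N_0$ and $S_+\subset N_1$, translates into the combinatorial fact that governs the whole proof:
\[
    R_+'\cap N_0=P_0\qquad\text{and}\qquad R_+'\cap N_1=P_1\cup S ,
\]
i.e.\ on the $N_0$–side the cut surface is absorbed into $R_-'$, while on the $N_1$–side it is absorbed into $R_+'$. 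Since the decomposition is taut, $S$ is incompressible and the components of $N_0$ and $N_1$ include $\pi_1$–injectively into $N$ (as observed above), so by the Induction property of Theorem~\ref{Theorem properties of the torsion of cw complexes} it suffices, after choosing a smooth triangulation of $N$ in which $S,N_0,N_1,R_+$ are subcomplexes, to prove
\[
    \tautwo_u(C_*(\widehat N,\widehat{R_+}))=\tautwo_u(C_*(\widehat{N_0},\widehat{P_0}))\cdot\tautwo_u(C_*(\widehat{N_1},\widehat{P_1\cup S}))\in\widetilde K_1(\mcalD{\pi_1(N)}),
\]
where $\widehat N$ is the universal cover and $\widehat X$ denotes the preimage of $X\subseteq N$ in $\widehat N$; projecting to $\operatorname{Wh}(\mcalD{\pi_1(N)})$ then yields the lemma.

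The heart of the argument is the short exact sequence of finite based free $\Z\pi_1(N)$-chain complexes
\[
    0\longrightarrow C_*(\widehat{N_0},\widehat{P_0})\stackrel{\iota}{\longrightarrow} C_*(\widehat N,\widehat{R_+})\stackrel{\rho}{\longrightarrow} C_*(\widehat{N_1},\widehat{P_1\cup S})\longrightarrow 0 .
\]
I would build $\iota$ by including the cells of $N_0\setminus P_0$ among the cells of $N\setminus R_+$; this is a chain map respecting the preferred bases because a face of an $N_0$–cell again lies in $N_0$, and such a face lies in $R_+$ exactly when it lies in $P_0$ — including its $\widehat S$–faces, since $S\cap R_+=S\cap P_0$ — so $C_*(\widehat{N_0},\widehat{P_0})$ is literally a based subcomplex. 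The quotient $\rho$ then has preferred basis the cells of $N_1\setminus(P_1\cup S)$, and the key point is that the $\widehat S$–faces of an $N_1$–cell are killed in the quotient precisely because they lie in $\widehat{N_0}$, which is exactly the boundary behaviour in $C_*(\widehat{N_1},\widehat{P_1\cup S})$, where $S$ has been collapsed into the relative part. The degreewise cell decomposition $N\setminus R_+=(N_0\setminus P_0)\sqcup(N_1\setminus(P_1\cup S))$ shows that this is a \emph{based} exact sequence.

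To conclude, I would apply $L^2$–acyclicity. By Herrmann's criterion (Theorem~\ref{Theorem Taut iff L2acyclic}) applied to the taut sutured manifold $(N',R_+')$, each of the pairs $(N_0,R_+'\cap N_0)=(N_0,P_0)$ and $(N_1,R_+'\cap N_1)=(N_1,P_1\cup S)$ is $L^2$–acyclic, a property passed to $C_*(\widehat{N_j},\cdot)$ by the Induction property. Hence Proposition~\ref{Proposition Axiom of torsion of chain complexes}(2) applies to the displayed sequence: $C_*(\widehat N,\widehat{R_+})$ is $L^2$–acyclic (consistent with $(N,\gamma)$ being taut by Gabai's theorem) and its torsion equals the product of the torsions of the two outer complexes. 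Translating back through the Induction property gives
\[
    \tautwo_u(N,R_+)=i_*\tautwo_u(N_0,P_0)\cdot i_*\tautwo_u(N_1,P_1\cup S)=i_*\tautwo_u(N',R_+') .
\]

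The main obstacle I anticipate is the careful verification that the displayed sequence is genuinely a \emph{based} short exact sequence — that $\iota$ sends the preferred basis into the preferred basis and $\rho$ carries the complementary basis bijectively onto the preferred basis of the quotient, all while intertwining the boundary operators. This is exactly the step where hypothesis (3) is indispensable: it forces $R_+'\cap N_0=P_0$ (no contribution of $S$) and $R_+'\cap N_1=P_1\cup S$ (all of $S$), so that $C_*(\widehat{N_0},\widehat{P_0})$ already contains every $\widehat S$–cell as a subcomplex while the quotient collapses all of them to match $C_*(\widehat{N_1},\widehat{P_1\cup S})$. Without condition (3) the surface $S$ would be split between $R_+'\cap N_0$ and $R_+'\cap N_1$, and one would instead have to cope with the pair $(S,S\cap R_+)$, which is in general \emph{not} $L^2$–acyclic (for instance when $S$ has positive genus but $S\cap R_+$ is only a few arcs), so the clean multiplicativity of Proposition~\ref{Proposition Axiom of torsion of chain complexes}(2) would no longer be available. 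The remaining points — the choice of triangulation, the collapse of the product neighbourhood of $S$, and the standing torsion-free/Atiyah hypotheses for $\pi_1(N_0),\pi_1(N_1)\leqslant\pi_1(N)$ (automatic since $\pi_1(N)$ is a $3$–manifold group by Theorem~\ref{Theorem 3-manifold group in class C}, and subgroups inherit these properties) — are routine.
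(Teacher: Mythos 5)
Your proposal is correct and follows essentially the same route as the paper: the paper's proof uses the based short exact sequence $0\ra C_*(\widehat{N_0},\widehat{R_+})\ra C_*(\widehat N,\widehat{R_+})\ra C_*(\widehat N,\widehat{N_0\cup R_+})\ra 0$ together with the excision-type identification $C_*(\widehat N,\widehat{N_0\cup R_+})\cong C_*(\widehat{N_1},\widehat{S_+\cup R_+})$, which is exactly your sequence in slightly different notation, and then invokes tautness of $N_0,N_1$ via Theorem \ref{Theorem Taut iff L2acyclic} and the multiplicativity of Proposition \ref{Proposition Axiom of torsion of chain complexes}(2). Your additional remarks on basedness and on where hypothesis (3) enters are consistent with, and slightly more explicit than, the paper's argument.
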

\begin{figure}[htbp]
    \centering
    
\def\svgwidth{.9\columnwidth}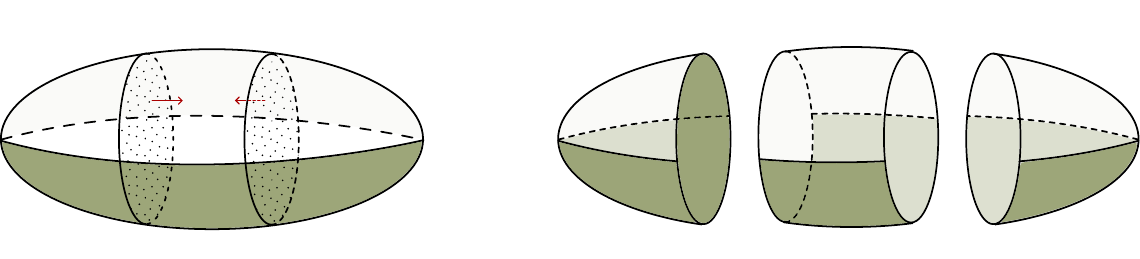

    \caption{Decomposing $N$ along a separating decomposition surface $S$ (dotted region in left figure, with normal direction indicated by red arrows). The $R_+$-regions of the sutured manifolds are shown in green.}
    \label{fig:Separating decomposition}
\end{figure}
\begin{proof}
    Consider the short exact sequence of chain complexes
    \[
        0\ra C_*(\widehat{N_0}, \widehat{R_+})\ra C_*(\widehat N,\widehat{R_+})\ra C_*(\widehat N,\widehat{N_0\cup R_+})\ra 0
    \]
    and a natural isomorphism $C_*(\widehat N,\widehat{N_0\cup R_+})=C_*(\widehat{N_1},\widehat{S_+\cup R_+})$. Note that $N_0\cap R_+$ is the $R_+$-region of $N_0$ and $(S_+\cup R_+)\cap N_1$ is the $R_+$-region of $N_1$. Since both $N_0$ and $N_1$ are taut sutured manifolds by assumption,  Theorem \ref{Theorem Taut iff L2acyclic} implies that all three chain complexes are $L^2$-acyclic, and by the Sum Formula (Proposition \ref{Proposition Axiom of torsion of chain complexes}),
    \[
        \tautwo_u(N,R_+)=(i_0)_*\tautwo_u(N_0,R_+)\cdot (i_1)_*\tautwo_u(N_1,S_+\cup R_+)
    \]
    where $i_0:N_0\hookrightarrow N$ and $i_1:N_1\hookrightarrow N$ are the inclusions. The right-hand side equals $i_*\tautwo_u(N',R_+')$, completing the proof.    
\end{proof}

\begin{definition}[Weighted surface]
    A \emph{weighted surface} $\widehat S$ in a compact oriented 3-manifold $N$ is a finite collection of pairs $(S_i,w_i)$, $i=1,\ldots,n$, where each $S_i$ is a connected properly embedded oriented surface, each $w_i$ is a positive integer, and the surfaces $S_i$ are pairwise disjoint. The \emph{realization} of $\widehat S$ is the properly embedded oriented surface
\[
    \bar S:=\bigcup_{i=1}^n w_i\cdot S_i
\]
where $w_i\cdot S_i$ denotes the union of $w_i$ parallel copies of $S_i$. The \emph{reduction} of $\widehat S$ is the  surface
\[
    S:=\bigcup_{i=1}^n S_i.
\]
A weighted surface is called a \emph{weighted decomposition surface} if its realization is a decomposition surface.
\end{definition}

If $\widehat S$ is a weighted decomposition surface, then $N\bb \bar S$ is a disjoint union of $N\bb S$ and some copies of $S_i\times I$. Consequently, the sutured decomposition along $\bar S$ is taut if and only if the sutured decomposition along $S$ is taut.

The following Proposition \ref{Proposition Turaev method for multisurfaces} is a generalization of \cite[Proposition 3.3]{ben2022leading}.

\begin{proposition}\label{Proposition Turaev method for multisurfaces}
    Let $(N,R_+,R_-,\gamma)$ be a taut sutured manifold and let $(N,\gamma)\stackrel{\Sigma} \leadsto (N',\gamma')$ be a taut sutured decomposition. Then there exists a weighted decomposition surface $\widehat S$ in $N$ (with realization $\bar S$ and reduction $S$) such that 
    \begin{enumerate}[\rm \quad (1)]
        \item $N\bb S$ is connected,
        \item $[\bar S]=[\Sigma]\in H_2(N,\partial N;\Z)$,
        \item the sutured decomposition of $N$ along $S$ is taut, 
        \item $i_*\tautwo_u(N\bb S,S_+\cup R_+)=j_*\tautwo_u(N\bb \Sigma,\Sigma_+\cup R_+)$ where $i,j$ are the natural inclusions of $N\bb S$ and $N\bb \Sigma$ into $N$.
    \end{enumerate}
\end{proposition}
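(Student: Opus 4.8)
The plan is to argue by induction on the number $c$ of connected components of $N\bb\Sigma$, running the weighted form of Turaev's algorithm \cite{Turaev2002Homological} in the style of \cite[Proposition 3.3]{ben2022leading}. It is convenient to prove the slightly more general statement in which the given surface is allowed to be a weighted decomposition surface $\widehat\Sigma$ — the assertion of the proposition being the case in which all weights equal $1$ — and in which condition (4) is replaced by the equality of the ``total torsions'' $\mathcal T(\widehat\Sigma):=(\text{incl})_*\tautwo_u(N\bb\Sigma^{\mathrm{red}},\Sigma^{\mathrm{red}}_+\cup R_+)$; because cutting along the realization adds to the complement only product pieces $\Sigma_i\times I$, which carry trivial universal $L^2$-torsion by Proposition \ref{Proposition computation example}(1), this quantity is insensitive to the weights, and for weight $1$ it is exactly $j_*\tautwo_u(N\bb\Sigma,\Sigma_+\cup R_+)$. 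When $c=1$ there is nothing to do: take $\widehat S=\widehat\Sigma$, and conditions (1)--(4) are immediate.

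Suppose $c\geqslant 2$. The dual graph $\Gamma$ — one vertex per component of $N\bb\Sigma$, one edge per component of the reduction $\Sigma^{\mathrm{red}}$, the two sides $\Sigma_k^{\pm}$ supplying the endpoints of the $k$-th edge — is connected (since $N$ is) and has at least two vertices, hence it contains an edge $\Sigma_1$ joining two distinct vertices. Put $\Sigma':=\Sigma^{\mathrm{red}}\setminus\Sigma_1$. Regluing $N\bb\Sigma^{\mathrm{red}}$ along $\Sigma_1$ shows that $N\bb\Sigma'$ has exactly $c-1$ components, and that inside the component of $N\bb\Sigma'$ containing it the copy $\Sigma_1'$ of $\Sigma_1$ is a connected \emph{separating} surface. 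Factor the taut decomposition of $N$ along $\Sigma^{\mathrm{red}}$ as $(N,\gamma)\leadsto_{\Sigma'}(N_{-1},\gamma_{-1})\leadsto_{\Sigma_1'}(N',\gamma')$; applying Gabai's result \cite[Lemma 0.4]{gabai1987foliationsII} (a decomposition with taut target has taut source) to the second arrow, whose target is taut, we learn that $(N_{-1},\gamma_{-1})$ is taut, and hence so is the decomposition $(N,\gamma)\leadsto_{\Sigma'}(N_{-1},\gamma_{-1})$ together with its further decomposition along $\Sigma_1'$. Now Lemma \ref{Lemma L2 torsion invariant under separating decomposition}, applied to the separating surface $\Sigma_1'$ inside the taut sutured manifold $(N_{-1},\gamma_{-1})$ — writing the decomposed manifold as $N_0\sqcup N_1$ with $N_1$ the side containing $(\Sigma_1')^{+}$ and $N_0$ absorbing everything else — followed by a pushforward along $N_{-1}\hookrightarrow N$, gives $\mathcal T(\widehat\Sigma_{-1})=\mathcal T(\widehat\Sigma)$ for \emph{any} weighted decomposition surface $\widehat\Sigma_{-1}$ whose reduction is $\Sigma'$.

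It therefore remains only to choose the weights on $\widehat\Sigma_{-1}$ so that its realization still represents $[\bar\Sigma]\in H_2(N,\partial N;\Z)$; since the realization of $\widehat\Sigma$ represents $\sum_i w_i[\Sigma_i]$, this reduces to writing $w_1[\Sigma_1]=\sum_{i\geqslant 2}n_i[\Sigma_i]$ with integers $n_i\geqslant 0$ and then giving $\Sigma_i$ the weight $w_i+n_i$. The existence of such a non-negative expression — a consequence of the facts that $\Sigma_1'$ bounds one of the original complementary pieces inside $N_{-1}$ and that a careful count of orientations on the boundary of that piece makes all the coefficients non-negative — is the combinatorial heart of Turaev's algorithm, and is the step I expect to demand the most care. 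Granting it, $\widehat\Sigma_{-1}$ has reduction-complement with $c-1$ components, taut decomposition along its reduction, realization class $[\bar\Sigma]$, and $\mathcal T(\widehat\Sigma_{-1})=\mathcal T(\widehat\Sigma)$, so the inductive hypothesis applied to $\widehat\Sigma_{-1}$ delivers the desired $\widehat S$ with properties (1)--(4). A secondary, purely bookkeeping, task is to keep the identifications of the various $R_+$-regions and $\pi_1$-injective inclusions straight through the two-step factorization, so that the pushforward homomorphisms in Lemma \ref{Lemma L2 torsion invariant under separating decomposition} and in the inductive hypothesis compose as asserted.
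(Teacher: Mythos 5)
Your overall strategy coincides with the paper's: iterate a weight--transfer step that reduces the number of components of $N\bb S$, while Lemma \ref{Lemma L2 torsion invariant under separating decomposition} (applied after factoring the decomposition and invoking Gabai's lemma for tautness of the intermediate stage) keeps the pushed-forward torsion fixed. That part of your argument, including the choice $N_1=$ the side containing $(\Sigma_1')^{+}$ and $N_0=$ everything else, is correct. The gap is precisely in the step you defer: the claim that $w_1[\Sigma_1]=\sum_{i\geqslant 2}n_i[\Sigma_i]$ with all $n_i\geqslant 0$ is false in general, and your proposed justification does not give it. The relation supplied by the complementary piece $M_1$ adjacent to $\Sigma_1$ is
\[
[\Sigma_1]+\sum_i [C_i]=\sum_j [D_j],
\]
where the $C_i$ are the \emph{other} components whose normal points out of $M_1$ and the $D_j$ those whose normal points into $M_1$; solving for $[\Sigma_1]$ yields \emph{negative} coefficients on the $C_i$ whenever $M_1$ has more than one outward-pointing boundary component. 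If the $[\Sigma_i]$ satisfy no further relations (e.g.\ $c=2$ with three components and $[\Sigma_1]+[\Sigma_2]=[\Sigma_3]$ the only relation), then every expression of $w_1[\Sigma_1]$ in terms of $[\Sigma_2],[\Sigma_3]$ has $n_2=-w_1<0$, so a scheme that only \emph{increases} the weights of the surviving components cannot preserve the homology class.

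The paper's fix is to transfer weight in both directions: choose $C$ of \emph{minimal} weight $w$ among all components whose two sides lie in different pieces of $N\bb S$, then subtract $w$ from every outward-pointing component of the piece $M_1$ containing $C_+$ and add $w$ to every inward-pointing one. Minimality keeps the decreased weights non-negative (an outward component that is not also inward has its two sides in different pieces, hence weight $\geqslant w$), and components whose weight reaches $0$ are discarded --- this is what decreases the component count. Since several components may be discarded simultaneously, Lemma \ref{Lemma L2 torsion invariant under separating decomposition} must then be applied to the entire discarded subcollection $S_0=S\setminus T$, which separates $N\bb T$ with $(S_0)_+\subset M_1$, rather than to a single $\Sigma_1$. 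With these modifications your induction closes; as written, the combinatorial reduction you propose is not merely unproved but incorrect.
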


\begin{proof}
    For any weighted surface $\widehat S$ in $N$, define $c(\widehat S):=\#\pi_0(N\bb S)$. Begin by taking $\widehat S$ to be the surface $\Sigma$ with weight $1$ assigned to each component, then $\bar S=S=\Sigma$, and $\widehat S$ satisfies (2)--(4). It suffices to show that given any weighted surface $\widehat S$ with $c(\widehat S)>1$ satisfying (2)--(4), then there exists a weighted surface $\widehat T$ satisfying (2)--(4) with $c(\widehat T)<c(\widehat S)$.

    Given such $\widehat S$. Since $c(\widehat S)>1$ there is a component $C\subset S$ such that $C_\pm$ lie in different components of $N\bb S$. Choose $C$ to be such a component with minimal weight $w$. Let $M_0$ (resp. $M_1$) be the component of $N\bb S$ containing $C_-$ (resp. $C_+$). Let $C_1=C,C_2, \ldots,C_k$ be the components of $S$ whose normal direction point out of $M_1$ and let $D_1,\ldots,D_l$ be the components of $S$ whose normal direction point into $M_1$ (it is possible that $C_i=D_j$ for some $i,j$, meaning both sides of $C_i$ both belong to $M_1$). Then
    \[
        [C_1]+\cdots+[C_k]=[D_1]+\cdots+[D_l]\in H_2(N,\partial N;\Z).
    \]
    We change the weights of $\widehat S$ by increasing the weights of $D_1,\ldots,D_l$ by $w$ and decreasing the weights of $C_1,\ldots,C_k$ by $w$. If a component's weight becomes zero, we discard this component. Denote the new weighted surface by $\widehat T$. Clearly, $[\bar S]=[\bar T]\in H_2(N,\partial N;\Z)$. Since $T$ is a subcollection of $S$ and the decomposition along $S$ is taut, the decomposition along $T$ is also taut. Thus, (2) and (3) hold for $\widehat T$. Moreover, $c(\widehat T)<c(\widehat S)$ because the component $C$ is discarded and $M_0$ and $M_1$ lie in the same component of $N\bb T$.

    It remains to check (4). Let $S_0=S\setminus T$. Then $N\bb S$ is obtained from the sutured manifold decomposition of $N\bb T$ along $S_0$. By construction, $S_0$ separates $N\bb T$; in particular, it separates $M_0$ from $M_1$ and $(S_0)_+\subset M_1$. Apply Lemma \ref{Lemma L2 torsion invariant under separating decomposition} to $N_0:=M_0$ and $N_1:=M_1$ with ambient space $N\bb T$, we have $$\tautwo_u(N\bb T,T_+\cup R_+)=i'_*\tautwo_u(N\bb S,S_+\cup R_+)$$ where $i':N\bb S\hookrightarrow N\bb T$ is the inclusion. It follows that
    \[
    (i'')_*\tautwo_u(N\bb T,T_+\cup R_+)=i_*\tautwo_u(N\bb S,S_+\cup R_+)=j_*\tautwo_u(N\bb \Sigma,\Sigma_+\cup R_+)
    \]
    where $i'',i=i''\circ i',j$ are the inclusions of $N\bb T$, $N\bb S$ and $N\bb \Sigma$ into $N$, respectively. This verifies (4) for $\widehat T$ and completes the proof.
\end{proof}

\subsection{Decomposition formula for universal $L^2$-torsion}

In this section we prove Theorem \ref{Main Theorem taut decomposition and face map}.

\begin{definition}[Leading term of matrices]
    Let $G$ be a finitely generated torsion-free group satisfying Atiyah Conjecture. Let $\phi\in H^1(G;\R)$ be an 1-cohomology class. There are natural multiplicative maps
    \[
        \delta_\phi\colon (\Z G)^\times \ra \R,\quad L_\phi\colon (\Z G)^\times \ra (\Z G)^\times
    \]
    defined by considering the terms with minimal $\phi$-value (see Section \ref{Section leading term map}). For a nonzero matrix $A$ over $\Z G$, let $\delta_\phi(A)$ be the minimum of $\delta_\phi(A_{ij})$ over all nonzero entries $A_{ij}$. Then $A$ admits a unique decomposition
    \[
        A=L_\phi(A)+(A-L_\phi(A))
    \]
    where every group element $g$ appearing in $L_\phi(A)$ satisfies $\phi(g)=\delta_\phi(A)$, and any group element $h$ in $(A-L_\phi(A))$ satisfies $\phi(h)> \delta_\phi(A)$.

    We define $L_\phi(A)=0$ if $A$ is a zero matrix. Otherwise $L_\phi(A)$ is always nonzero.    \end{definition}
    \begin{remark}\label{Remark special case of leading term}
        By Theorem \ref{Theorem Leading term of matrices} with $d_i=\delta_\phi(A)$ and $d_i'=0$, if $L_\phi(A)$ is invertible over $\mcalD G$, then $A$ is also invertible over $\mcalD G$ and $L_\phi(\det A)=\det (L_\phi(A))\in K_1(\mcalD G)$.
    \end{remark}
     
    \begin{definition}[Leading term of chain complexes]
    For any finite based free $\Z G$-chain complex
    \[
        C_*=(0\stackrel{}\longrightarrow C_n\stackrel{A_n}\longrightarrow \cdots\stackrel{A_2}\longrightarrow C_1\stackrel{A_1}\longrightarrow C_0\stackrel{}\longrightarrow 0),
    \]
    define 
    \[
        L_\phi(C_*)=(0\xrightarrow{} C_n\xrightarrow{L_\phi(A_n)}\cdots\xrightarrow{L_\phi(A_2)} C_1\xrightarrow{L_\phi(A_1)} C_0\stackrel{}\longrightarrow 0).
    \]
    One can verify that if $AB=0$, then  $L_\phi(A)L_\phi(B)=0$. In particular $L_\phi(C_*)$ is a well-defined finite based free $\Z G$-chain complex.
\end{definition}

\begin{lemma}\label{Lemma Leading Coefficient of Chain Complexes}
    Suppose $G$ is a finitely generated torsion-free group satisfying the Atiyah Conjecture. Let $\phi\in H^1(G;\R)$ be an 1-cohomology class and let
    \[
        C_*=(0\stackrel{}\longrightarrow C_n\stackrel{A_n}\longrightarrow \cdots\stackrel{A_2}\longrightarrow C_1\stackrel{A_1}\longrightarrow C_0\stackrel{}\longrightarrow 0)
    \] be a finite based free $\Z G$-chain complex.
    If $L_\phi (C_*)$ is $L^2$-acyclic, then $C_*$ is also $L^2$-acyclic and 
    \[
        \tautwo_u(L_\phi(C_*))=L_\phi(\tautwo_u(C_*))\in\widetilde{K_1}(\mcalD G).
    \]
\end{lemma}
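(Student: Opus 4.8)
The plan is to deduce everything from the matrix chain method (Theorem~\ref{Matrix chain for acyclic chain complex}) together with the one-matrix leading-term identity recorded in Remark~\ref{Remark special case of leading term}, which is precisely the instance of Theorem~\ref{Theorem Leading term of matrices} needed here. The guiding observation is that a non-degenerate matrix chain for $L_\phi(C_*)$ should also be a non-degenerate matrix chain for $C_*$, and that the corresponding families of invertible submatrices of the two complexes should be related entrywise by $L_\phi$; granting this, the torsion identity is a formal manipulation with the induced homomorphism $L_\phi$ on $\widetilde{K_1}(\mcalD G)$.

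Concretely, first I would apply Theorem~\ref{Matrix chain for acyclic chain complex} to the $L^2$-acyclic complex $L_\phi(C_*)$ to obtain a non-degenerate matrix chain $\mathcal{A}=\{\mathcal{I}_0,\dots,\mathcal{I}_n\}$; let $B_i^\phi$ be the associated matrices, i.e. the submatrices of $L_\phi(A_i)$ with rows outside $\mathcal{I}_i$ and columns in $\mathcal{I}_{i-1}$. By hypothesis each $B_i^\phi$ is a square matrix invertible over $\mcalD G$, and $\tautwo_u(L_\phi(C_*))=\prod_{i=1}^n\redet(B_i^\phi)^{(-1)^i}\in\widetilde{K_1}(\mcalD G)$. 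Next, view the same index data $\mathcal{A}$ as a matrix chain for $C_*$ and let $B_i$ be the corresponding submatrices of $A_i$; they have the same shape as $B_i^\phi$, hence are square. The heart of the argument is the identity $L_\phi(B_i)=B_i^\phi$ as matrices over $\Z G$: by definition $B_i^\phi$ is obtained from $B_i$ by discarding, in each entry, all monomials of $\phi$-value strictly greater than $\delta_\phi(A_i)$; since $B_i^\phi$ is invertible it is nonzero, so some entry of $B_i$ does contain a monomial of $\phi$-value $\delta_\phi(A_i)$, and as every entry of $B_i$ has $\phi$-value at least $\delta_\phi(A_i)$ we conclude $\delta_\phi(B_i)=\delta_\phi(A_i)$, which is exactly the condition making $L_\phi(B_i)=B_i^\phi$.

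Once this is in place, Remark~\ref{Remark special case of leading term} applied to each $B_i$ — whose leading term $L_\phi(B_i)=B_i^\phi$ is invertible over $\mcalD G$ — shows that $B_i$ is itself invertible over $\mcalD G$ and that $L_\phi(\det B_i)=\det(B_i^\phi)$ in $K_1(\mcalD G)$, hence $L_\phi(\redet B_i)=\redet(B_i^\phi)$ in $\widetilde{K_1}(\mcalD G)$. In particular $\mathcal{A}$ is a non-degenerate matrix chain for $C_*$, so Theorem~\ref{Matrix chain for acyclic chain complex} yields that $C_*$ is $L^2$-acyclic with $\tautwo_u(C_*)=\prod_{i=1}^n\redet(B_i)^{(-1)^i}$. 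Applying the group endomorphism $L_\phi$ of $\widetilde{K_1}(\mcalD G)$ (the one induced on $\widetilde{K_1}$ as in the discussion following Lemma~\ref{Lemma Properties of pure elements}) and substituting the relations above gives $L_\phi(\tautwo_u(C_*))=\prod_{i=1}^n\redet(B_i^\phi)^{(-1)^i}=\tautwo_u(L_\phi(C_*))$, as desired.

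The only step requiring genuine care is the identity $L_\phi(B_i)=B_i^\phi$, namely checking that passing to the submatrix selected by the matrix chain does not raise the minimal $\phi$-degree; this is exactly where non-vanishing (invertibility) of $B_i^\phi$ is used, and it is the reason the argument cannot proceed purely formally. Everything else is an assembly of results already established in Sections~\ref{Section universal L2 torsion} and~\ref{Section Leading term map restriction map and polytope map}, so I do not anticipate further difficulty; one should simply keep $L_\phi$ interpreted consistently as the induced map on $\widetilde{K_1}(\mcalD G)$ throughout.
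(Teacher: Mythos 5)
Your proposal is correct and follows essentially the same route as the paper's proof: extract a non-degenerate matrix chain for $L_\phi(C_*)$, observe that the associated submatrices of $A_i$ have the submatrices of $L_\phi(A_i)$ as their $\phi$-leading terms (using invertibility, hence non-vanishing, of the latter to see that the minimal $\phi$-degree is not raised by passing to the submatrix), and then invoke Theorem~\ref{Theorem Leading term of matrices} via Remark~\ref{Remark special case of leading term} together with Theorem~\ref{Matrix chain for acyclic chain complex}. Your treatment of the identity $L_\phi(B_i)=B_i^\phi$ is in fact slightly more explicit than the paper's one-line justification.
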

\begin{proof}
    Since $L_\phi(C_*)$ is $L^2$-acyclic, there exists a non-degenerate matrix chain $\mathcal A$ of $L_\phi(C_*)$ by Theorem \ref{Matrix chain for acyclic chain complex}. Let $B_1,\ldots,B_n$ be the associated submatrices of $L_\phi(A_1),\ldots, L_\phi(A_n)$, then each $B_i$ is a weak isomorphism, and
    \[
        \tautwo_u(L_\phi(C_*))=\prod_{i=1}^n\redet(B_i)^{(-1)^n}\in\widetilde K_1(\mcalD G)
    \]
    Let $C_1,\ldots,C_n$ be the submatrices of $A_1,\ldots,A_n$ associated  to the same matrix chain $\mathcal A$. Since $B_i\not=0$, we have $L_\phi(C_i)=B_i$. By Theorem \ref{Theorem Leading term of matrices} or Remark \ref{Remark special case of leading term} above, each $C_i$ is a weak isomorphism and
    \[
        L_\phi\redet(C_i)=\redet (B_i)\in\widetilde K_1(\mcalD G).
    \]
    Therefore $\mathcal A$ is a non-degenerate matrix chain of $C_*$. Applying Theorem \ref{Matrix chain for acyclic chain complex} again,
    \[
        \tautwo_u(C_*)=\prod_{i=1}^n\redet (C_i)^{(-1)^n}\in\widetilde K_1(\mcalD G)
    \]
    and consequently $\tautwo_u(L_\phi(C_*))=L_\phi(\tautwo_u(C_*))$.
\end{proof}

\begin{theorem}[Theorem \ref{Main Theorem taut decomposition and face map}]\label{Theorem main theorem decomposition formula}
    Let $(N,R_+,R_-,\gamma)\stackrel{\Sigma}\leadsto (N',R_+',R_-',\gamma')$ be a taut sutured decomposition and let $\phi\in H^1(N;\Z)$ be the Poincar\'e dual of the decomposition surface $\Sigma$, then
    \[
        j_*\tautwo_u(N',R_+')=L_\phi\tautwo_u(N,R_+)
    \]
    where $j\colon N'\hookrightarrow N$ is the natural inclusion.
\end{theorem}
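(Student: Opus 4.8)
The plan is to reduce, via Turaev's algorithm, to the case of a decomposition surface whose complement is connected; then to equip $N$ with a CW-structure adapted to cutting along that surface so that the cellular chain complex of the decomposed manifold is literally the $\phi$-leading term (in the sense of Lemma~\ref{Lemma Leading Coefficient of Chain Complexes}) of the cellular chain complex of $(N,R_+)$; the decomposition formula then drops out of Lemma~\ref{Lemma Leading Coefficient of Chain Complexes} together with the Induction property.

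First I would apply Proposition~\ref{Proposition Turaev method for multisurfaces} to replace $\Sigma$ by a weighted decomposition surface $\widehat S$ with reduction $S$ and realization $\bar S$ such that $N\bb S$ is connected, $[\bar S]=[\Sigma]$ in $H_2(N,\partial N;\Z)$ (so $\phi=\mathrm{PD}[\bar S]$), the sutured decomposition of $N$ along $S$ is taut, and $i_*\tautwo_u(N\bb S,\,S_+\cup R_+)=j_*\tautwo_u(N\bb\Sigma,\,\Sigma_+\cup R_+)$, where $i,j$ denote the inclusions into $N$. It therefore suffices to prove $i_*\tautwo_u(N\bb S,\,S_+\cup R_+)=L_\phi\tautwo_u(N,R_+)$. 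Put $G=\pi_1(N)$ and $H=\pi_1(N\bb S)$. Since $S$ is incompressible (tautness of the decomposition), $H\hookrightarrow G$ is an embedding and $G$ is an iterated HNN extension of $H$ with one stable letter $t_m$ per component $S_m$ of $S$; because $\phi=\sum_m w_m\,\mathrm{PD}[S_m]$ and a loop in $N\bb S$ is disjoint from $\bar S$, we get $\phi|_H=0$ and $\phi(t_m)=w_m\geqslant 1$ (choosing each $t_m$ to cross $S_m$ positively).

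Next I would choose a finite CW-structure on $N\bb S$ in which $S_+$, $S_-$ and $R_+\cap(N\bb S)$ are subcomplexes and the re-gluing homeomorphisms $S_+\to S_-$ are cellular (subdividing if necessary), and assemble from it a CW-structure on $N$ together with compatible lifts of cells to the universal covers. Then the cells of $N$ off $R_+$ are in natural bijection with the cells of $N\bb S$ off $S_+\cup R_+$, so $C_*(\widehat N,\widehat{R_+})$ and $\Z G\otimes_{\Z H}C_*(\widehat{N\bb S},\widehat{S_+\cup R_+})$ share the same preferred basis. The heart of the matter is that, with the lifts chosen correctly, every boundary incidence of a lifted cell is of exactly one of two kinds: an incidence internal to the chosen copy of $N\bb S$, which has coefficient in $\Z H$ (hence $\phi$-value $0$) and contributes precisely the relative boundary of $C_*(\widehat{N\bb S},\widehat{S_+\cup R_+})$; or an incidence that crosses some $S_m$ positively, which has coefficient divisible by $t_m$ (hence strictly positive $\phi$-value)—indeed an incidence with an $S_+$-cell, which is killed in the relative complex, reappears in $C_*(\widehat N,\widehat{R_+})$ rewritten through an $S_-$-cell and a stable letter. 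Consequently, as based $\Z G$-chain complexes,
\[
    L_\phi\big(C_*(\widehat N,\widehat{R_+})\big)=\Z G\otimes_{\Z H}C_*(\widehat{N\bb S},\widehat{S_+\cup R_+}).
\]
Tautness of the decomposition along $S$ and Theorem~\ref{Theorem Taut iff L2acyclic} imply that $(N\bb S,\,S_+\cup R_+)$ is $L^2$-acyclic, hence $C_*(\widehat{N\bb S},\widehat{S_+\cup R_+})$ is $L^2$-acyclic and, by the Induction property (Theorem~\ref{Theorem properties of the torsion of cw complexes}(3)) applied to $N\bb S\hookrightarrow N$, so is its $\Z G$-induction, whose universal $L^2$-torsion is $i_*\tautwo_u(N\bb S,\,S_+\cup R_+)$. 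Therefore $L_\phi(C_*(\widehat N,\widehat{R_+}))$ is $L^2$-acyclic, and Lemma~\ref{Lemma Leading Coefficient of Chain Complexes} gives that $C_*(\widehat N,\widehat{R_+})$ itself is $L^2$-acyclic with $\tautwo_u(L_\phi(C_*(\widehat N,\widehat{R_+})))=L_\phi(\tautwo_u(C_*(\widehat N,\widehat{R_+})))$ in $\widetilde K_1(\mcalD G)$; pushing this equality forward to $\operatorname{Wh}(\mcalD G)$ and reading off both sides via the displayed identity yields $i_*\tautwo_u(N\bb S,\,S_+\cup R_+)=L_\phi\tautwo_u(N,R_+)$, which combined with the first reduction proves the theorem.

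The main obstacle is the CW-construction of the third paragraph: one must arrange the cell structures, the re-gluings, and—above all—the lifts of the cells so that \emph{every} boundary term crossing $S$ acquires a strictly positive (never negative) power of the stable letters, so that the uncrossed terms assemble into exactly the induced relative boundary, and so that the displayed identity holds verbatim at the level of \emph{based} complexes; this may force one to subdivide enough that no relative cell has its entire boundary contained in $S_+\cup R_+$. Doing this correctly is a careful bookkeeping of normal directions, of the convention that it is $S_+$ (not $S_-$) that is absorbed into the positive region, and of the orientation of $\Sigma$ that singles out $\phi$ rather than $-\phi$—reversing any of these would replace $L_\phi$ by $L_{-\phi}$ or interchange the roles of $R_+$ and $R_-$.
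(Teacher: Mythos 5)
Your proposal is correct and follows essentially the same route as the paper: reduce via Proposition \ref{Proposition Turaev method for multisurfaces} to a decomposition surface with connected complement, choose a CW-structure and lifts of cells so that every boundary incidence crossing $S$ in the positive direction acquires a strictly positive $\phi$-value while all internal incidences lie in $\ker\phi$, and conclude with Lemma \ref{Lemma Leading Coefficient of Chain Complexes} together with the induction property. The only (harmless) difference is that the paper keeps the collar $S\times I$ as a subcomplex, so that in place of your literal identification of $L_\phi(C_*(\widehat N,\widehat{R_+}))$ with the induced complex $\Z G\otimes_{\Z H}C_*(\widehat{N\bb S},\widehat{S_+\cup R_+})$ it obtains a based short exact sequence whose extra factor $C_*(\widehat{S\times I},\widehat{S_-\cup R_+})$ has trivial torsion.
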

\begin{proof}
    By Proposition \ref{Proposition Turaev method for multisurfaces}, there is a weighted decomposition surface $\widehat S$ in $N$ such that $N\bb S$ is connected, $\phi=PD([\bar S,\partial \bar S])\in H^1(N;\Z)$, and $$j_*\tautwo_u(N',R_+')=i_*\tautwo_u(N\bb S,S_+\cup R_+)$$ where $i\colon N\bb S\ra N$ is the inclusion. We are left to show that 
    \[
        L_\phi(\tautwo_u(N,R_+) )=i_*\tautwo_u(N\bb S,S_+\cup R_+).
    \]
    
    Chose a CW-structure for $N$ such that $S\times I$ and $R_\pm$ are subcomplexes. Fix a base point $p\in N\setminus (S\times I)$. For each cell $\sigma$ in the CW-structure of $N$, choose a path $\gamma_\sigma$ (shown in red in Figure \ref{fig:Non-separating sutured decomposition}) connecting $p$ and $\sigma$ such that
    \begin{itemize}
        \item $\gamma_\sigma$ is disjoint with $S_-$ if $\sigma \subset N\setminus S_-$,
        \item $\gamma_\sigma$ is disjoint with $S_+$ if $\sigma\subset S_-$.
    \end{itemize}
    Lift the base point $p$ to $\hat p$ in the universal cover $\widehat N$ and lift each cell $\sigma$ to $\hat \sigma$ using the path $\gamma_\sigma$. The cells $\hat \sigma$ form a basis for the finite based free $\Z[\pi_1(N)]$-chain complex $C_*(\widehat N)$.
    \begin{figure}[htbp]
        \centering
        
\def\svgwidth{.95\columnwidth}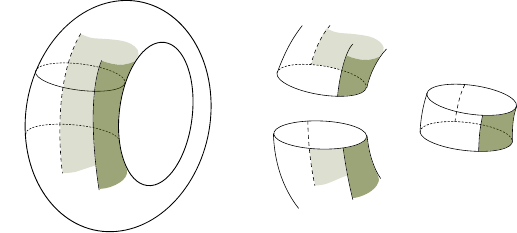

        \caption{Consider $N$ as the union of $N\bb S$ and $S\times I$.}
        \label{fig:Non-separating sutured decomposition}
    \end{figure}
    Now consider the finite based chain complex $C_*(\widehat N,\widehat{R_+})$, for each $k=0,1,2,3$ the chain module admits the following direct sum decomposition:
    \[
        C_k(\widehat N,\widehat {R_+})=C_k(\widehat {N\bb S}, \widehat {S_+\cup R_+})\oplus C_k(\widehat {S\times I},\widehat {S_-\cup R_+}).
    \]
    Accordingly, the boundary homomorphism $\partial _k\colon C_k(\widehat N,\widehat {R_+})\ra C_{k-1}(\widehat N,\widehat {R_+})$ admits the following decomposition
    \begin{align*}
        &\partial _k=\begin{pmatrix}
            \partial_k^1 &\partial ^2_k\\
            \partial _k^3 & \partial _k^4
        \end{pmatrix},\\
        &\partial _k^1\colon C_k(\widehat {N\bb S}, \widehat {S_+\cup R_+})\ra C_{k-1}(\widehat {N\bb S}, \widehat {S_+\cup R_+}),\\
        &\partial _k^2\colon  C_k(\widehat {N\bb S}, \widehat {S_+\cup R_+})\ra C_{k-1}(\widehat {S\times I}, \widehat {S_-\cup R_+}),\\&
        \partial _k^3\colon C_k(\widehat {S\times I}, \widehat {S_-\cup R_+})\ra C_{k-1}(\widehat {N\bb S}, \widehat {S_+\cup R_+}),\\&
        \partial_k^4\colon C_k(\widehat {S\times I}, \widehat {S_-\cup R_+})\ra C_{k-1}(\widehat {S\times I}, \widehat {S_-\cup R_+}).
    \end{align*}
    In particular, if $\hat \sigma$ is a basis element of $C_k(\widehat {S\times I}, \widehat {S_-\cup R_+})$ then there are basis elements $\tau_i$ (allowing repetitions) of $C_{k-1}(\widehat {N\bb S}, \widehat {S_+\cup R_+})$ and $g_i\in \pi_1(N)$ such that
    \[
        \partial^3_k(\hat\sigma)=\sum_i g_i \hat \tau_i.
    \]
    In fact, these cells $\hat\tau_i$ must lie in $\widehat S_-$. By our choice of the liftings, each $g_i$ satisfies $\phi(g_i)>0$. A similar argument shows that any group element $h$ appearing in $\partial_k^1,\partial_k^2$ or $\partial_k^4$ satisfies $\phi(h)=0$. It follows that $$L_\phi(\partial_*)=\begin{pmatrix}
        \partial_*^1 & \partial_*^2\\
        0 & \partial_*^4
    \end{pmatrix}$$
    and we obtain a short exact sequence of chain complexes
    \[
        0\ra C_*(\widehat {N\bb S},\widehat {S_+\cup R_+})\ra L_\phi(C_*(\widehat N,\widehat {R_+}))\ra  C_*(\widehat {S\times I},\widehat {S_-\cup R_+})\ra 0.
    \]
    By the product formula \ref{Proposition Axiom of torsion of chain complexes} and the induction property Theorem \ref{Theorem properties of the torsion of cw complexes}, we have 
    \[
        \tautwo_u(L_\phi(C_*(\widehat N,\widehat {R_+})))=i_*\tautwo_u(N\bb S,S_-\cup R_+)\cdot i_*'\tautwo_u(S\times I,S_-\cup R_+),
    \]
    where $i:N\bb S\hookrightarrow N$ and $i':S\times I\hookrightarrow N$ are the inclusions. The left-hand side equals $L_\phi (\tautwo_u(N,R_+))$ by Lemma \ref{Lemma Leading Coefficient of Chain Complexes}. On the right-hand side, since $(S_-\cup R_+)\cap (S\times I)$ deformation retracts onto $S_-$, we have $\tautwo_u(S\times I,S_-\cup R_+)=\tautwo_u(S\times I,S_-)=1$. 
    Therefore, $L_\phi (\tautwo_u(N,R_+))=i_*\tautwo_u(N\bb S,S_+\cup R_+)$, completing the proof.
\end{proof}

The (stronger) ``only if" part of Theorem \ref{Main Theorem fibered iff face map zero} follows easily. 
\begin{theorem}[The ``only if" part of Theorem \ref{Main Theorem fibered iff face map zero}]\label{Theorem only if part}
        Suppose $M$ is an admissible 3-manifold and $\phi\in H^1(M;\R)$ is a fibered class. Then $L_\phi\tautwo_u(M)=1\in \operatorname{Wh}(\mcalD{G})$.
    \end{theorem}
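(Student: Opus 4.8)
The plan is to deduce the statement directly from the decomposition formula (Theorem \ref{Theorem main theorem decomposition formula}), by decomposing $M$ along a fiber. Since $M$ is admissible we have $\tautwo_u(M)\neq 0$, so we may fix a lift $z\in\mcalD G^\times$ of $\tautwo_u(M)\in\operatorname{Wh}(\mcalD G)$. As $\phi$ is fibered it lies in an open fibered cone $\mathcal C\subset H^1(M;\R)$, which is a scale‑invariant, rational open cone. By Proposition \ref{Proposition properties of the leading term map}(7), applied with the open neighborhood $\mathcal C$ of $\phi$, there is a rational class $\psi\in\mathcal C$ with $L_\psi(z)=L_\phi(z)$, hence $L_\psi\tautwo_u(M)=L_\phi\tautwo_u(M)$ in $\operatorname{Wh}(\mcalD G)$. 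Rescaling by a positive real and using $L_{r\psi}=L_\psi$ (Proposition \ref{Proposition properties of the leading term map}(1)) we may take $\psi$ primitive integral, still lying in $\mathcal C$; being a primitive integral class in a fibered cone, $\psi$ is induced by a fibration $f\colon M\to S^1$ (Thurston). Thus it suffices to prove the statement when $\phi$ itself is primitive integral and induced by a fibration; rename $\psi$ as $\phi$.

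Next I would set up the sutured decomposition. Let $\Sigma=f^{-1}(\mathrm{pt})$ be the fiber, a compact connected oriented surface with $\phi=\operatorname{PD}[\Sigma]\in H^1(M;\Z)$ (orient $\Sigma$ by the co‑orientation matching $\phi$); note $\Sigma$ is not a sphere since $M$ is irreducible. Cutting along $\Sigma$ yields $M\bb\Sigma\cong\Sigma\times[0,1]$, so the sutured decomposition reads
\[
    (M,\emptyset,\emptyset,\emptyset)\stackrel{\Sigma}{\leadsto}(\Sigma\times I,\,S_+,\,S_-,\,\gamma'),
\]
where $R_+'=S_+$ and $R_-'=S_-$ are the two parallel copies of $\Sigma$ in $\partial(\Sigma\times I)$ (say $S_+=\Sigma\times\{1\}$), since $R_\pm=\emptyset$. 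I would check: $\Sigma$ is a valid decomposition surface (the defining conditions are vacuous as $R_\pm=\emptyset$); $\Sigma$ is incompressible in $M$, a fiber always being $\pi_1$‑injective by the fibration long exact sequence, so $j\colon\Sigma\times I\hookrightarrow M$ is $\pi_1$‑injective; and, crucially, the product sutured manifold $(\Sigma\times I,S_+,S_-,\gamma')$ is taut ($\Sigma\times I$ is irreducible and $R_\pm'\cong\Sigma$ is incompressible and of minimal complexity in its homology class). Hence this is a taut sutured decomposition with $\phi=\operatorname{PD}[\Sigma]$, and Theorem \ref{Theorem main theorem decomposition formula} applies.

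Now I would run the two inputs together. Theorem \ref{Theorem main theorem decomposition formula} gives
\[
    j_*\tautwo_u(\Sigma\times I,\Sigma\times\{1\})=L_\phi\tautwo_u(M,\emptyset)=L_\phi\tautwo_u(M),
\]
while Proposition \ref{Proposition computation example}(1), applied with $N=\Sigma$ (whose fundamental group is torsion‑free and satisfies the Atiyah Conjecture), gives $\tautwo_u(\Sigma\times I,\Sigma\times\{1\})=1$. Its pushforward under the $\pi_1$‑injective inclusion $j$ is therefore $1\in\operatorname{Wh}(\mcalD G)$, and we conclude $L_\phi\tautwo_u(M)=1$.

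I expect no genuinely hard step here — as the excerpt says, this direction "follows easily" once the decomposition formula is available. The only points requiring care are the bookkeeping in the first paragraph (transporting Proposition \ref{Proposition properties of the leading term map}(7), which is stated for elements of $\mcalD G$, to the Whitehead group via the fixed lift $z$) and a few degenerate cases, most notably $M\cong S^1\times D^2$: there the fiber is a disk and $M\bb\Sigma$ is a $3$‑ball, so one should either confirm that Theorem \ref{Theorem main theorem decomposition formula} still applies verbatim, or simply observe directly that $\tautwo_u(S^1\times D^2)=[t-1]\inv$ and $L_\phi[t-1]\inv=[L_\phi(t-1)]\inv=[-1]\inv=1$. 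All the substance lives in the decomposition formula.
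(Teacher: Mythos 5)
Your proposal is correct and follows essentially the same route as the paper: reduce to a rational (hence, after scaling, integral) fibered class via Proposition \ref{Proposition properties of the leading term map}(7) and (1), cut along a fiber surface, apply the decomposition formula of Theorem \ref{Theorem main theorem decomposition formula}, and use that the resulting product $\Sigma\times I$ has trivial universal $L^2$-torsion. The only cosmetic difference is that you spell out the sutured bookkeeping (and the $S^1\times D^2$ case) that the paper leaves implicit; note only that when $\partial M\neq\emptyset$ the sutured structure is not $(M,\emptyset,\emptyset,\emptyset)$ but has the boundary tori absorbed into $R_\pm$, which does not affect the argument.
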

    \begin{proof}
        By Proposition \ref{Proposition properties of the leading term map}(7), there exists a rational fibered class $\psi\in H^1(M;\Q)$ such that $L_\phi(\tautwo_u(M))=L_\psi(\tautwo_u(M))$. Choose a positive integer $n$ such that $n\psi$ is an integral fibered class, and let $S$ be a fiber surface dual to $n\psi$, then $M\bb S=S\times [0,1]$ is a product. By Theorem \ref{Main Theorem taut decomposition and face map} we have
        \[
            L_{n\psi}\tautwo_u(M)=j_*\tautwo_u(M\bb S,S_+)=1,
        \]
        where $j\colon M\bb S\hookrightarrow M$ is the inclusion. It follows that $L_\phi \tautwo_u(M)=L_{n\psi}\tautwo_u(M)=1$.
    \end{proof}
    Combined with Theorem \ref{Theorem if part}, this completes the proof of Theorem \ref{Main Theorem fibered iff face map zero}.

    \subsection{Proof of Theorem \ref{Main Theorem universal L2 torsion detect product sutured manifold}}

 We use a refined doubling trick to show that universal $L^2$-torsion detects product sutured manifolds.

    \begin{definition}[Double of taut sutured manifolds]
        Let $(N,R_+,R_-,\gamma)$ be a taut sutured manifold and let $f\colon R_+\ra R_+$ be an orientation-preserving homeomorphism. Let $(N,\overline R_+,\overline R_-,\bar\gamma)$ be the sutured manifold with the same underlying oriented manifold as $N$, but with $R_+$ and $R_-$ interchanged (see Figure \ref{fig:Double of sutured manifold} below). That is, 
        \[
            \bar\gamma=-\gamma,\quad \overline R_+=-R_-,\quad  \overline R_-=-R_+.
        \]
        The \emph{double of $N$ with monodromy $f$} is defined to be the admissible 3-manifold $$DN_f=(N,\gamma)\cup (N,\bar\gamma)/\sim$$ formed by identifying $R_-$ and $\overline R_+$ via the identity map and identifying $R_+$ and $\overline R_-$ via $f$.
    \end{definition}
    \begin{figure}[htbp]
        \centering
        
\def\svgwidth{.8\columnwidth}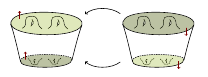

        \caption{Double of a sutured manifold $N$ with monodromy $f$}
        \label{fig:Double of sutured manifold}
    \end{figure}
    \begin{lemma}\label{Lemma double is not closed graph}
        If $(N,R_+, R_-,\gamma)$ is a taut sutured manifold with $R_+$ and $R_-$ both non-empty. If not every component of $R_\pm$ is a torus, then there exists $f\colon R_+\ra R_+$ such that $DN_f$ is not a closed graph manifold.
    \end{lemma}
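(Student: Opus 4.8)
The plan is to realise $DN_f$ as two copies of $N$ glued along their common boundary and then to choose $f$ so that the geometric decomposition of $DN_f$ contains a piece no closed graph manifold can have. \emph{Setup.} First I would write $DN_f=N_1\cup_\Psi N_2$, where $N_1,N_2$ are two copies of $N$ and $\Psi\colon\partial N_1\to\partial N_2$ is the orientation-reversing homeomorphism that is the identity on $R_-$ and equals $f$ on $R_+$; put $F=\partial N\subset DN_f$. Since $R_\pm$ are taut, $F=R_+\cup_\gamma R_-$ is incompressible in $N$, so (Gabai's doubling) $DN_f$ is irreducible, $\pi_1(N_i)\hookrightarrow\pi_1(DN_f)$, and $F$ is incompressible in $DN_f$; as $F$ separates $DN_f$ into the two copies of $N$ and a copy of $N$ is not a solid torus, no component of $F$ bounds a solid torus on either side. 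Consequently the JSJ tori of $DN_f$ can be isotoped into standard position with respect to $F$.

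\emph{Choice of $f$.} Because $R_+$ and $R_-$ are non-empty with $\chi(R_+)=\chi(R_-)$, are taut, and not every component is a torus, there is a component $R_0$ of $R_+$ with $\chi(R_0)<0$, unless every component of $R_\pm$ is an annulus or a torus, which I treat at the end. Picking a pseudo-Anosov homeomorphism $\varphi$ of $R_0$ that fixes $\partial R_0$ pointwise (such maps exist whenever $\chi(R_0)<0$, e.g.\ as products of Dehn twists along filling curves), let $f$ be the identity on $R_+\setminus R_0$ and $\varphi$ on $R_0$. Suppose for contradiction that $DN_f$ is a closed graph manifold. Cutting along $F$ recovers $N_1\sqcup N_2$; by the normal-form theory for incompressible surfaces in graph manifolds, each copy inherits a graph structure refining the ambient JSJ, the canonical combinatorial pattern $F$ receives from the $N_1$-side agrees, under the identification $N_1=N=N_2$, with the one it receives from the $N_2$-side, and $\Psi$ must carry one to the other up to isotopy. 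Restricting to $R_0$, the map $\Psi|_{R_0}=\varphi$ would then be isotopic rel $\partial R_0$ either to an automorphism of a nonempty finite family of essential simple closed curves and arcs on $R_0$ (if $R_0$ meets a JSJ torus) or to an iterate of the periodic monodromy of a Seifert fibration of which $R_0$ is a horizontal fibre (if it does not). A pseudo-Anosov map fixes no essential simple closed curve and has infinite order, so it can be isotopic to neither; this contradiction shows $DN_f$ is not a closed graph manifold.

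\emph{The annulus/torus case.} If every component of $R_\pm$ is an annulus or a torus, then $\partial N$ is a union of tori and $N$ is an irreducible $3$-manifold with incompressible toral boundary; since (by the standing hypotheses, which I would make explicit here) $N$ is not itself a graph manifold, its JSJ decomposition contains a finite-volume hyperbolic piece $H$. A JSJ torus of $N$ on $\partial H$ remains incompressible in $DN_f$, and by Thurston's hyperbolic Dehn surgery theorem all but finitely many choices of $f$ — equivalently, of the slopes the gluing induces on the cusps of $H$ — leave $H$ intact as a hyperbolic JSJ piece of $DN_f$, and choosing such an $f$ finishes this case.

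\emph{Main obstacle.} The crux I expect to take the most care is the assertion used in the pseudo-Anosov case: that a graph-manifold structure on $DN_f$ forces $\Psi$, hence $f$ on each non-torus component of $F$, to respect a fixed finite curve/arc pattern or a horizontal fibration on $F$. Proving this requires the precise Waldhausen-style normal-form theory for incompressible, and possibly separating, surfaces in graph manifolds, together with careful bookkeeping of the intersection pattern $F\cap(\text{JSJ tori})$ on the two sides. Pinning down exactly which small manifolds $N$ (solid tori, and Seifert pieces that happen to be graph manifolds with toral sutures) must be excluded, and the hyperbolic Dehn surgery input in the annulus/torus case, are the remaining points needing attention.
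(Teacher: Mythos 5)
There are two genuine gaps.

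First, and most importantly, the pivotal step of your main (pseudo-Anosov) case does not hold as stated. A graph-manifold structure on $DN_f$ does \emph{not} force $f|_{R_0}$ to preserve, up to isotopy, the curve system $\mathcal C=R_0\cap(\text{JSJ tori})$, nor to be periodic. What the characteristic submanifold theory actually gives is weaker: the annuli $T\cap N_1$ and $T\cap N_2$ of an essential torus $T$ meeting $R_0$ can be homotoped into the characteristic submanifold $X$ of $N$, so the only constraint is that \emph{both} $\mathcal C$ and its image $f(\mathcal C)$ are homotopic into the fixed proper subsurface $X\cap R_0$. An arbitrary pseudo-Anosov $\varphi$ does not preclude the existence of a curve $C$ with both $C$ and $\varphi(C)$ homotopic into a given proper subsurface (the curve-complex displacement of a particular curve under a particular pseudo-Anosov can be $2$, i.e.\ non-filling). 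This is exactly why the paper does not take "any pseudo-Anosov": it chooses $f=\psi^n\circ\phi_0$ with $n$ large so that $f(\partial(X\cap\Sigma_1))$ and $\partial(X\cap\Sigma_2)$ \emph{fill}, using Masur--Minsky to make the curve-complex distance large; then no curve can be disjoint from both, so every essential torus misses $\Sigma$, and the piece containing $\Sigma$ is shown to be hyperbolic (a Seifert piece would carry vertical tori meeting the horizontal surface $\Sigma$). Your flagged "crux" is thus not merely a normal-form bookkeeping issue: with your choice of $f$ the intended contradiction is unavailable, and the filling condition is the actual content of the argument. You also do not treat the case $N\cong R_+\times I$, where $X=N$ and the "canonical pattern" is empty; there one needs Thurston hyperbolization of the mapping torus instead.

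Second, your annulus/torus case is both unnecessary and broken. If some component of $R_\pm$ is an annulus then $\gamma\neq\emptyset$, and then $DN_f$ has nonempty (torus) boundary for every $f$, so it is trivially not a \emph{closed} graph manifold — this one-line observation, which the paper uses to dispose of all cases with $\gamma\neq\emptyset$, is missing from your proof. Your substitute argument assumes "$N$ is not itself a graph manifold," which is not among the hypotheses of the lemma (e.g.\ $N$ could be a Seifert-fibered or handlebody-type piece with annular sutures), so the hyperbolic Dehn surgery step has no hyperbolic piece to apply to. Once $\gamma\neq\emptyset$ is handled this way, the only remaining case is $R_\pm$ closed with a genus $\geqslant 2$ component, which is where the filling argument above is needed.
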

    \begin{proof}
        If the suture $\gamma$ is non-empty, then $DN_f$ has non-empty torus boundary for any $f$. If $\pi_1(N)$ is finite then $N$ is a product 3-ball and has non-empty sutured annuli.

        Now assume $N$ has infinite fundamental group and $\gamma=\emptyset$. Then each component of $R_\pm$ is a closed surfaces with positive genus. By hypothesis, $R_\pm$ contains a component of genus at least 2. Since $\chi(R_+)=\chi(R_-)$, the surface $R_+$ must contain such a component and we call it $\Sigma$. Define $$f=(\phi,\operatorname{id}),\quad \phi\colon \Sigma\ra \Sigma,\quad \operatorname{id}\colon R_+\setminus \Sigma\ra R_+\setminus \Sigma$$
        where $\phi$ is to be specified later. 
        Let $M$ be the manifold obtained by gluing together $(N,\gamma)$ and $(N,\bar \gamma)$ via the identity maps on $R_-\ra \overline R_+$ and on $R_+\setminus \Sigma\ra \overline R_-\setminus (-\Sigma)$. Then $\partial M$ consists of two copies of $\Sigma$ (denoted $\Sigma_1,\Sigma_2$) that remain unglued.  The double $DN_f$ is obtained by identifying $\Sigma_1$ and $\Sigma_2$ via $\phi$.
        
        It remains to prove the following Claim: 
        \begin{claim*}
            Let $M$ be a compact, orientable, irreducible 3-manifold with $\partial M=\Sigma_1\sqcup \Sigma_2$, where $\Sigma_1,\Sigma_2$ are connected incompressible surfaces of genus $g\geqslant2$. Then there exists a homeomorphism $\phi\colon \Sigma_1\ra \Sigma_2$ such that the closed manifold $M_\phi$, obtained by gluing $\Sigma_1$ to $\Sigma_2$ via $\phi$, is not a graph manifold.
        \end{claim*}
        
        \noindent\emph{Proof of Claim.}        
        If $M=\Sigma_1\times I$, then a pseudo-Anosov homeomorphism $\phi$ suffices by Thurston's Hyperbolization Theorem. Now suppose $M$ is not a product.
        
        By the Characteristic Submanifold Theorem \cite[Chapter V]{Jaco1979SeifertFibered}, there exists a submanifold $X\subset M$ that is a disjoint union of Seifert fibered spaces and $I$-bundles over surfaces, such that any incompressible torus and annulus of $M$ can be homotoped into $X$. The intersections $X\cap \Sigma_1$ and $X\cap \Sigma_2$ are incompressible \emph{proper} subsurfaces of $\Sigma_1$ and $\Sigma_2$ respectively; otherwise, if (say) $\Sigma_1\subset X$, then $\Sigma_1$ is contained in an $I$-bundle component $P$ of $X$. Since $\partial P\subset\partial M$, we must have $P=M$, contradicting the assumption that $M$ is not a product.
        
        Choose an arbitrary component of $\partial (X\cap \Sigma_i)$ and denote it by $C_i$. Then $C_i$ is an essential simple closed curve on $\Sigma_i$, $i=1,2$. Choose a homeomorphism $\phi\colon \Sigma_1\ra \Sigma_2$ such that the curves $\phi(C_1)$ and $C_2$ fill $\Sigma_2$. This can be achieved by taking $\phi=\psi^n\circ\phi_0$, where $\phi_0\colon \Sigma_1\ra \Sigma_2$ is any homeomorphism, $\psi\colon \Sigma_2\ra \Sigma_2$ is pseudo-Anosov and $n$ sufficiently large. Then the distance between $[\phi(C_1)]$ and $[C_2]$ in the curve complex of $\Sigma_2$ becomes arbitrarily large \cite[Proposition 4.6]{masur1999geometry}, implying that $\phi(C_1)$ and $C_2$ fill $\Sigma_2$.

        We now show that $M_\phi$ is not a graph manifold. Suppose $T\subset M_\phi$ is an essential torus. After isotopy we assume that $T$ is transverse to $\Sigma=\Sigma_1=\Sigma_2$ in $M_\phi$, and any component of $T\cap \Sigma$ (if nonempty) is an essential simple closed curves on $T$. If $T\cap \Sigma\not=\emptyset$ and let $C$ be such a intersection curve, then $T\setminus C$ is an essential annulus in $M$, so the image of $C$ on $\Sigma_2\subset \partial M$ can be homotoped into both $X\cap \Sigma_2$ and $\phi(X\cap \Sigma_1)$. Hence the geometric intersection numbers $i(C,\phi(C_1))$ and $i(C,C_2)$ are both zero, contradicting the fact that $\phi(C_1)$ and $C_2$ fill $\Sigma_2$.
        
        Thus, any essential torus $T$ in $M_\phi$ can be isotoped to be disjoint with $\Sigma$. Let $Y$ be the JSJ-piece of $M_\phi$ containing $\Sigma$. If $Y$ were Seifert fibered, then $\Sigma$ would be a horizontal surface in $Y$ since it is essential with genus $\geqslant 2$. The circle fiber over any essential simple closed curves of $\Sigma$ would yield essential tori in $Y$ intersecting $\Sigma$, a contradiction. Therefore $Y$ is hyperbolic and $M_\phi$ is not a closed graph manifold.
    \end{proof}

    \begin{theorem}[Theorem \ref{Main Theorem universal L2 torsion detect product sutured manifold}]
        Let $(N,R_+,R_-,\gamma)$ be a taut sutured manifold with $R_+$ and $R_-$ both non-empty. Then $(N,\gamma)$ is a product sutured manifold if and only if $\tautwo_u(N,R_+)=1\in \operatorname{Wh}(\mcalD{\pi_1(N)})$.
    \end{theorem}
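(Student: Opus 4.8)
The plan is to prove the two implications separately; the forward direction is immediate, while the converse is where Lemma \ref{Lemma double is not closed graph}, the fiberedness criterion, and the decomposition formula combine. If $(N,\gamma)$ is the product sutured manifold $R_+\times[0,1]$, then the inclusion $R_+\hookrightarrow R_+\times[0,1]$ is a simple-homotopy equivalence, so $\tautwo_u(N,R_+)=1$ by Proposition \ref{Proposition Properties of torsion of mappings}(3) together with Proposition \ref{Proposition Properties of torsion of mappings}(1) (equivalently Proposition \ref{Proposition computation example}(1)); note $\pi_1(R_+)$ is a surface group, hence torsion-free and in Linnell's class $\mathcal C$. For the converse, assume $\tautwo_u(N,R_+)=1$. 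If $\pi_1(N)$ is finite then $N$ is a $3$-ball and tautness forces it to be the product sutured manifold $D^2\times[0,1]$. If every component of $R_\pm$ is a torus, a separate argument is needed, which I expect to reduce to showing directly that $N\cong T^2\times[0,1]$; this is the first delicate point, since the doubling construction below is available only once some component of $R_\pm$ is not a torus.

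So assume $\pi_1(N)$ is infinite and some component of $R_\pm$ is not a torus. By Lemma \ref{Lemma double is not closed graph} choose an orientation-preserving $f\colon R_+\to R_+$ so that $M:=DN_f$ is not a closed graph manifold; standard checks ($N$ irreducible, $R_\pm$ incompressible and non-empty, $\pi_1(N)$ infinite) show $M$ is an admissible $3$-manifold, so Theorem \ref{Main Theorem fibered iff face map zero} applies to it. Write $M=N^{(1)}\cup N^{(2)}$ for the two copies of $N$ glued along $R_-$ by the identity and along $R_+$ by $f$, let $\Sigma\subset M$ be the copy of $R_+$ along which $f$ glues, and set $\phi:=PD[\Sigma]\in H^1(M;\Z)$. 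Cutting $M$ along $\Sigma$ undoes only the $f$-gluing, so $M\bb\Sigma=N^{(1)}\cup_{R_-}N^{(2)}=:P$, the double of $N$ along $R_-$; in particular $\Sigma$ is non-separating and $\phi\neq 0$.

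The crucial claim is $\tautwo_u(P,R_+')=1$, where $R_+'$ is the $R_+$-region of the decomposed sutured manifold (one of the two copies of $R_+$ that were cut). Peeling one copy of $N$ off $P$ produces a based exact sequence of $\Z[\pi_1(P)]$-chain complexes whose sub- and quotient complexes are (up to the orientation of $\Sigma$, which only exchanges the roles of $R_+$ and $R_-$) $C_*(\widehat{N^{(1)}},\widehat{R_+^{(1)}})$ and $C_*(\widehat{N^{(2)}},\widehat{R_-^{(2)}})$. Both are $L^2$-acyclic: the first because $N$ is taut, so $(N,R_+)$ is $L^2$-acyclic by Theorem \ref{Theorem Taut iff L2acyclic}; the second by Proposition \ref{Proposition Rpm dual formula}. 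Their torsions are $\tautwo_u(N,R_+)=1$ and $\tautwo_u(N,R_-)=(\tautwo_u(N,R_+))^*=1$, and the inclusions $N^{(i)}\hookrightarrow P$ are $\pi_1$-injective since $R_-$ is incompressible. Hence the Sum Formula (Proposition \ref{Proposition Axiom of torsion of chain complexes}(2)) and the Induction property of Theorem \ref{Theorem properties of the torsion of cw complexes} give that $(P,R_+')$ is $L^2$-acyclic — so the decomposition $M\stackrel{\Sigma}{\rightsquigarrow}P$ is taut by Theorem \ref{Theorem Taut iff L2acyclic} — and $\tautwo_u(P,R_+')=1$.

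Now the decomposition formula (Theorem \ref{Main Theorem taut decomposition and face map}) applied to $M\stackrel{\Sigma}{\rightsquigarrow}P$ gives $L_\phi\tautwo_u(M)=j_*\tautwo_u(P,R_+')=1\in\operatorname{Wh}(\mcalD{\pi_1(M)})$, and since $M$ is admissible and not a closed graph manifold, Theorem \ref{Main Theorem fibered iff face map zero} shows $\phi$ is a fibered class of $M$. To conclude I would argue topologically: $\Sigma\cong R_+$ is incompressible in $M$ and represents a class in the fibered cone, hence is isotopic to a union of fibers of the fibration; as $M\bb\Sigma=P$ is connected, $\Sigma$ is isotopic to a single fiber and $P\cong R_+\times[0,1]$. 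The surface $R_-$ then sits incompressibly in $P\cong R_+\times[0,1]$, separating it into two copies of $N$, so by Waldhausen's classification of incompressible surfaces in $\Sigma\times[0,1]$ it is isotopic to a horizontal level, forcing $N\cong R_+\times[0,1]$ as a sutured manifold. The two steps I expect to require the most care are this final rigidity step — upgrading ``$\phi$ fibered'' to ``$N$ is a product'' via uniqueness of the fiber and Waldhausen's theorem — and the all-torus case excluded from the doubling lemma.
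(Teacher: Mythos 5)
Your forward direction and the overall strategy for the converse (double via Lemma \ref{Lemma double is not closed graph}, apply the decomposition formula, invoke Theorem \ref{Main Theorem fibered iff face map zero}) match the paper, but there is one genuine gap and one place where you take a more roundabout route than necessary.

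The gap is the all-torus case, which you explicitly leave open (``a separate argument is needed\dots this is the first delicate point''). Flagging it is not closing it, and the case is not vacuous. The paper handles it as follows: when every component of $R_\pm$ is a torus, $N$ is itself an admissible $3$-manifold with non-empty toral boundary and $\tautwo_u(N)=\tautwo_u(N,R_+)=1$ (absorbing the tori costs nothing since $\tautwo_u(T^2)=1$). Then Theorem \ref{Main Theorem fibered iff face map zero} applies directly to $N$ (it has boundary, so it is not a closed graph manifold) and shows every nonzero class is fibered, while Theorem \ref{Theorem universal L2 torsion gives dual Thurston norm ball} forces the Thurston norm to vanish. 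Hence $N$ is a disk or annulus bundle over $S^1$, i.e.\ $D^2\times S^1$, $T^2\times I$, or $K\widetilde\times I$, and the requirement that both $R_+$ and $R_-$ be non-empty singles out $T^2\times I$ as a product sutured manifold. Without some argument of this kind your proof is incomplete.

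In the main case you decompose $DN_f$ along a single copy of $R_+$, obtaining the double $P$ of $N$ along $R_-$, and then must upgrade ``$\phi$ is fibered'' to ``$N$ is a product'' via uniqueness of the fiber plus Waldhausen's classification of incompressible surfaces in $R_+\times I$; you also need to verify the hypotheses of Theorem \ref{Theorem Taut iff L2acyclic} for $P$ ($P$ irreducible, $R_\pm'$ incompressible) before you may convert $L^2$-acyclicity into tautness of the decomposition. The paper instead decomposes $DN_f$ along $\Sigma=R_+\cup R_-$ simultaneously, so that $DN_f\bb\Sigma=N\sqcup N$; the decomposition formula then yields $L_\phi\tautwo_u(DN_f)=(j_1)_*\tautwo_u(N,R_+)\cdot(j_2)_*\tautwo_u(N,\overline R_+)=1$ by Proposition \ref{Proposition Rpm dual formula}, and once $\phi$ is fibered the norm-minimizing surface $R_+\cup R_-$ is a fiber surface, so each complementary component --- which is a copy of $N$ --- is literally a product. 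This choice of decomposition surface makes your entire final rigidity step (and the torsion computation for $P$ via excision and the sum formula) unnecessary. Your route is salvageable, but you should either carry out the Waldhausen argument carefully (including the case $\gamma\neq\emptyset$, where $R_-$ is properly embedded with boundary) or switch to the disconnected decomposition surface.
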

    \begin{proof}
        If $(N,\gamma)$ is a product sutured manifold then $\tautwo_u(N,R_+)=1$ by Proposition \ref{Proposition computation example}.

        Now suppose $\tautwo_u(N,R_+)=1$ and we show that $N$ is a product.

        \noindent\textbf{Case 1:} Each component of $R_\pm$ is a torus.    
        Then $N$ is an admissible 3-manifold and  $\tautwo_u(N)=\tautwo_u(N,R_+)=1$. By Theorem \ref{Main Theorem fibered iff face map zero}, every nonzero cohomology class is fibered, and its Thurston norm vanishes by Theorem \ref{Theorem universal L2 torsion gives dual Thurston norm ball}. Hence $N$ a disk or annulus bundle over the circle and is homeomorphic to one of the following: the solid torus $D^2\times S^1$, the thickened torus $T^2\times I$, or the twisted $I$-bundle over Klein bottle $K\widetilde\times I$. Since $R_+$ and $R_-$ are both non-empty, it follows that $$(N,R_+,R_-,\gamma)=(T^2\times I,T^2\times\{1\},T^2\times \{0\},\emptyset)$$ which is a product sutured manifold.
        
        \noindent\textbf{Case 2:} Not every component of $R_\pm$ is a torus. By Lemma \ref{Lemma double is not closed graph} there is a homeomorphism $f\colon R_+\ra R_+$ such that the double $DN_f$ is not a closed graph manifold. Then $R_+\cup R_-$ is a Thurston norm minimizing surface dual to a cohomology class $\phi$. By Theorem \ref{Main Theorem taut decomposition and face map},
        \[
            L_\phi\tautwo_u(DN_f)=(j_1)_*\tautwo_u(N,R_+)\cdot(j_2)_*\tautwo_u(N,\overline R_+)
        \]
        where $j_1,j_2$ are the natural inclusions. By Proposition \ref{Proposition Rpm dual formula}, $$\tautwo_u(N,R_+)=\tautwo_u(N,R_-)=\tautwo_u(N,\overline R_+)=1,$$ so $L_\phi\tautwo_u(DN_f)=1$. By Theorem \ref{Main Theorem fibered iff face map zero}, $\phi$ is a fibered class. Therefore, $R_+\cup R_-$ is a fiber surface and $(N,\gamma)$ is a product sutured manifold.
    \end{proof}

\section{Applications of the universal $L^2$-torsions}\label{Section Applications}
\subsection{Universal $L^2$-torsion for group homomorphisms}
Let $G_1,G_2$ be torsion-free groups satisfying the Atiyah Conjecture. Assume that they have finite classifying spaces $X_i$ and their Whitehead groups $\operatorname{Wh}(G_i)$ vanish for $i=1,2$. Then any homomorphism $\varphi\colon G_1\ra G_2$ determines a homotopy class of mapping $\Phi\colon X_1\ra X_2$. We call $\Phi$ a \emph{realization of $\varphi$}.

\begin{definition}[Universal $L^2$-torsion for group homomorphisms]\label{Definition of universal l2 torsion for free group endomorphism}
Let $\varphi\colon G_1\ra G_2$ be a homomorphism and let $\Phi\colon X_1\ra X_2$ be its realization. The \emph{universal $L^2$-torsion of $\varphi$} is defined to be $\tautwo_u(\varphi):=\tautwo_u(\Phi)\in\operatorname{Wh}(\mcalD {G_2})\sqcup \{0\}$. We say $\varphi$ is an \emph{$L^2$-homology equivalence} if $\tautwo_u(\varphi)\not=0$.
\end{definition}
If $X_1', X_2'$ are different choices of classifying spaces then we have the following commutative diagram
\[
    \begin{tikzcd}
X_1 \arrow[r, "\Phi"] \arrow[d, "\psi_1"'] & X_2 \arrow[d, "\psi_2"] \\
X_1' \arrow[r, "\Phi'"]   & X_2'             
\end{tikzcd}
    \]
where $\psi_i\colon X_i\ra X_i'$ are homotopy equivalences (hence simple-homotopy equivalences since the Whitehead groups vanish) and $\Phi'$ is the realization of $\varphi$ with respect to the classifying spaces $X_1',X_2'$. By Lemma \ref{Lemma simple homotopy invariance of mappings} we know that
\[
    \tautwo_u(\Phi')=(\psi_2)_*\tautwo_u(\Phi),
\]
hence the definition of $\tautwo_u(\varphi)$ does not depend on the choice of classifying spaces $X_i$.

\subsubsection{Homomorphism between free groups}
Let $F_1$ and $F_2$ be finitely generated free groups. They satisfy the Atiyah Conjecture and have trivial Whitehead groups \cite{stallings1965whitehead}.
We explicitly calculate $\tautwo_u(\varphi)$ for a homomorphism $\varphi\colon F_1\ra F_2$ under given free basis $F_1=\langle x_1,\ldots,x_n\rangle$ and $F_2=\langle y_1,\ldots,y_m\rangle$. 

Let $X_1=\vee_{i=1}^n S^1$ and $X_2=\vee_{i=1}^m S^1$ be the wedge of circles. Endow $X_1$ with the usual CW-structure with one 0-cell $p$ and $n$ 1-cells $e_1,\ldots,e_n$. Identify the fundamental group $\pi_1(X_1,p)$ with $F_1$ in such a way that $x_i=[e_i]$. Similarly, $X_2$ is given the CW-structure with one 0-cell $q$ and $m$ 1-cells $f_1,\ldots,f_m$ and $y_i=[f_i]$.

Let $\Phi\colon X_1\ra X_2$ be a cellular realization of $\varphi$. Form the wedge space $X_1\vee X_2$ by identifying $p\in X_1$ and $q\in X_2$. Attach a 2-cell $\sigma_i$ whose boundary is the concatenation of $\Phi(e_i)$ and $e_i\inv$ for each $i=1,\ldots,n$. The resulting CW-complex $M_\Phi$ is simple-homotopy equivalent to the mapping cylinder of $\Phi$. Let $\widehat M_\Phi$ be the universal cover of $M_\Phi$ and let $\widehat X_1$ be the preimage of $X_1\subset M_\Phi$. Fix a lifting $\hat p\in \widehat M_\Phi$ and lift the other cells with respect to the base point $\hat p$. Then we have the following $\Z F_2$-chain complex
\begin{equation}\label{Equation chain complex of mapping cylinder}
    C_*(\widehat M_\Phi,\widehat {X_1})=(0\ra \Z F_2\langle \hat \sigma_1,\ldots,\hat \sigma_n\rangle\xrightarrow{J_\varphi} \Z F_2\langle \hat f_1,\ldots,\hat f_m\rangle \ra  0 \ra 0).\tag{$\dagger$}
\end{equation}
The square matrix $J_\varphi$ is called the \emph{Fox Jacobian} of $\varphi$ with respect to the basis $\langle x_1,\ldots,x_n\rangle $ and $\langle y_1,\ldots,y_m\rangle $.
Recall that the \emph{Fox derivative} $\frac\partial{\partial y_i}\colon \Z F_2\ra \Z F_2$, $i=1,\ldots,m$ are $\Z$-linear maps characterized by the following two properties:
\begin{itemize}
    \item $\frac\partial{\partial y_i}1=0$ and $\frac\partial{\partial y_i}y_j=\delta_{ij}$.
    \item $\frac\partial{\partial y_i}(uv)=\frac\partial{\partial y_i}u+u\cdot \frac\partial{\partial y_i}v$ for all $u,v\in F_2$.
\end{itemize}
 The entries of $J_\varphi$ are then given by the Fox derivatives
\[
    (J_\varphi)_{ij}=\frac{\partial \varphi(x_i)}{\partial y_j}\in\Z F_2,\quad 1\leqslant i\leqslant n,\ 1\leqslant j\leqslant m.
\]

\begin{theorem}\label{Theorem Properties of universal torsion for free group homomorphism}
Let $\varphi\colon F_1\ra F_2$ be a homomorphism between finitely generated free groups. Then:
\begin{enumerate}[\rm\quad (1)]
    \item $\tautwo_u(\varphi)=\whdet(J_\varphi)$ where $J_\varphi$ is the Fox Jacobian of $\varphi$ with respect to any choice of basis of 
    $F_1$ and $F_2$. In particular $\varphi$ is an $L^2$-homology equivalence if and only if $J_\varphi$ is a weak isomorphism.
    \item $\tautwo_u(\varphi)=1$ if $\varphi$ is an isomorphism; $\tautwo_u(\varphi)=0$ if $m\not=n$ or $\varphi$ is not injective.
    \item Suppose $\psi\colon F_2\ra F_3$ is an injective homomorphism to a finitely generated free group $F_3$, and either  $\varphi$ or $\psi$ is an $L^2$-homology equivalence. Then $$\tautwo_u(\psi\circ \varphi)=\psi_*\tautwo_u(\varphi)\cdot \tautwo_u(\psi).$$
\end{enumerate}
    
\end{theorem}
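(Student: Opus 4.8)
The plan is to read (1) off from the chain complex \eqref{Equation chain complex of mapping cylinder}, to settle (2) by a short case analysis, and to obtain (3) from multiplicativity already established. For (1): the relative cellular chain complex $C_*(\widehat M_\Phi,\widehat{X_1})$ of \eqref{Equation chain complex of mapping cylinder} is concentrated in degrees $2$ and $1$, with single boundary matrix $J_\varphi$. By Lemma \ref{Lemma algebraic L2 acyclic} it is $L^2$-acyclic precisely when $J_\varphi$ is a weak isomorphism, equivalently invertible over $\mcalD{F_2}$; otherwise $\tautwo_u(\varphi)=0$. In the $L^2$-acyclic case this complex is a suspension of $(0\to C_1\xrightarrow{J_\varphi}C_0\to 0)$, and since suspension inverts the torsion (immediate from the defining formula in Definition \ref{Definition of universal L2 torsion of chain complexes}), Proposition \ref{Proposition Axiom of torsion of chain complexes}(1) gives $\tautwo_u(C_*(\widehat M_\Phi,\widehat{X_1}))=\redet(J_\varphi)$; pushing through the canonical homotopy equivalence $\iota\colon M_\Phi\to X_2$, which realizes $\pi_1(M_\Phi)=F_2$, yields $\tautwo_u(\varphi)=\whdet(J_\varphi)$. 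Basis independence is automatic because $\tautwo_u(\varphi)$ is defined intrinsically in Definition \ref{Definition of universal l2 torsion for free group endomorphism} (equivalently, different bases give simple-homotopy-equivalent models, as $\operatorname{Wh}(F_i)=0$), and the last clause of (1) is just the definition of an $L^2$-homology equivalence.

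For (2): if $\varphi$ is an isomorphism, any realization $\Phi$ is a homotopy equivalence, hence a simple-homotopy equivalence because $\operatorname{Wh}(F_1)=\operatorname{Wh}(F_2)=0$, so $\tautwo_u(\varphi)=\tautwo_u(\Phi)=1$ by Proposition \ref{Proposition Properties of torsion of mappings}(3). If $n\neq m$, then $J_\varphi$ is not square, so $\mcalD{F_2}\otimes_{\Z F_2}C_*(\widehat M_\Phi,\widehat{X_1})$ cannot be exact for dimension reasons and $\tautwo_u(\varphi)=0$. The remaining case is a non-injective $\varphi$ with $n=m$, where I would show $J_\varphi$ has nonzero kernel already over $\Z F_m$ — which forbids invertibility over $\mcalD{F_m}$. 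Using $M_\Phi\simeq X_2=K(F_m,1)$ and the long exact sequence of the pair $(M_\Phi,X_1)$ with $\Z F_m$-coefficients, together with $H_1(F_m;\Z F_m)=0$ (the augmentation ideal of $\Z F_m$ being free on $y_1-1,\dots,y_m-1$), one obtains a surjection $H_2(M_\Phi,X_1;\Z F_m)\twoheadrightarrow H_1(X_1;\Z F_m)$; and Shapiro's lemma identifies $H_1(X_1;\Z F_m)=H_1(F_n;\Z F_m)\cong\bigoplus_{\varphi(F_n)\backslash F_m}(\ker\varphi)^{\mathrm{ab}}\neq 0$ since $\ker\varphi\neq 1$. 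As $H_2(M_\Phi,X_1;\Z F_m)=\ker J_\varphi$ over $\Z F_m$ by \eqref{Equation chain complex of mapping cylinder}, this kernel is nonzero and $\tautwo_u(\varphi)=0$. (The Fox chain rule gives an explicit kernel vector $\bigl(\bar\varphi(\partial g/\partial x_i)\bigr)_i$ for $g\in\ker\varphi$, but verifying its non-vanishing after cancellation under $\bar\varphi$ is delicate, so I prefer the homological route.)

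For (3): apply Proposition \ref{Proposition Properties of torsion of mappings}(4) to the composite $X_1\xrightarrow{\Phi}X_2\xrightarrow{\Psi}X_3$ of cellular realizations of $\varphi$ and $\psi$. Injectivity of $\psi$ makes $\Psi$ $\pi_1$-injective, and by hypothesis one of $\Phi,\Psi$ is an $L^2$-homology equivalence, so $\tautwo_u(\Psi\circ\Phi)=\Psi_*\tautwo_u(\Phi)\cdot\tautwo_u(\Psi)$; since $\Psi\circ\Phi$ realizes $\psi\circ\varphi$ and homotopy invariance (Proposition \ref{Proposition Properties of torsion of mappings}(2)) removes dependence on the realization, this is the asserted formula. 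The genuine difficulty in the whole argument is the non-injective case of (2) — tying non-injectivity of $\varphi$ to non-invertibility of its Fox Jacobian over Linnell's skew field; the rest is a routine assembly of results from Sections \ref{Section universal L2 torsion} and \ref{Section Leading term map restriction map and polytope map}.
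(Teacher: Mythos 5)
Your proposal is correct. Parts (1), (3), and the first two cases of (2) run exactly as in the paper: (1) is read off from the two-term complex $C_*(\widehat M_\Phi,\widehat{X_1})$ (the degree shift indeed makes the torsion $\redet(J_\varphi)$ rather than its inverse, as the matrix chain formula of Theorem \ref{Matrix chain for acyclic chain complex} confirms), the isomorphism case of (2) uses vanishing of $\operatorname{Wh}(F_i)$ plus Proposition \ref{Proposition Properties of torsion of mappings}(3), and (3) is Proposition \ref{Proposition Properties of torsion of mappings}(4). The one place you genuinely diverge is the non-injective case of (2), which is also the only nontrivial point. The paper's Lemma \ref{Lemma weak isomorphism implies injective} works combinatorially: it takes a shortest reduced word $w\in\ker\varphi$, applies the Fox chain rule to get a row relation $(U_1,\dots,U_n)\cdot J_\varphi=0$, and shows $U_1\neq 0$ by arguing that the relevant prefixes $\varphi(w_{s'_j})$ are pairwise distinct (else a shorter kernel element would exist). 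You instead compute $H_2(M_\Phi,X_1;\Z F_m)=\ker J_\varphi$ via the long exact sequence of the pair, using contractibility of the universal cover of $M_\Phi\simeq K(F_m,1)$ and Shapiro's lemma to identify $H_1(X_1;\Z F_m)$ with a nonzero sum of copies of $(\ker\varphi)^{\mathrm{ab}}$; since $\Z F_m$ embeds in $\mcalD{F_m}$, a nonzero kernel vector over $\Z F_m$ already obstructs invertibility over the skew field. Both arguments are sound; your homological route is cleaner and sidesteps the delicate cancellation verification (which, as you note, is exactly the nontrivial content of the paper's lemma), while the paper's route has the merit of producing an explicit annihilating vector for $J_\varphi$.
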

\begin{proof}
    Firstly, (1) is immediate from Definition \ref{Definition of universal l2 torsion for free group endomorphism} and Equation (\ref{Equation chain complex of mapping cylinder}).

    For (2), if $\varphi$ is an isomorphism, then $\Phi\colon X_1\ra X_2$ is a homotopy equivalence. Since the Whitehead group of a free group is trivial, $\Phi$ is a simple homotopy equivalence and $\tautwo_u(\varphi)=\tautwo_u(\Phi)=1$ by Proposition \ref{Proposition Properties of torsion of mappings}(3). If $m\not=n$, then $J_\varphi$ is not a square matrix and clearly $\tautwo_u(\varphi)=0$. The proof of (2) is finished once we establish the following Lemma \ref{Lemma weak isomorphism implies injective}. 

    \begin{lemma}\label{Lemma weak isomorphism implies injective}
    If $n=m$ and $\varphi$ is not injective, then the Fox Jacobian $J_\varphi$ is not a weak isomorphism.
\end{lemma}
\begin{proof}[{Proof of Lemma \ref{Lemma weak isomorphism implies injective}}]
    Since $\varphi$ is not injective, there exists a reduced word $w\in F_1$ such that $\varphi(w)=1$. Let $w=x_{i_1}^{\epsilon_1}\cdots x_{i_k}^{\epsilon_k}$ be a such word with shortest length $k\geqslant1$, where $i_1,\ldots,i_k\in\{1,\ldots,n\}$ and $\epsilon_1,\ldots,\epsilon _k\in\{\pm1\}$. We may assume that $x_{i_1}^{\epsilon_1}=x_1$ and $x_{i_k}^{\epsilon_k}\not=x_1\inv$.
    Denote by $w_s:=x_{i_1}^{\epsilon_1}\cdots x_{i_s}^{\epsilon_s}$, $s=1,\ldots,k$ to be the prefix of $w$ of length $s$ and set $w_0=1$. For any $j=1,\ldots,n$, apply $\frac\partial{\partial y_j}$ to both sides of the identity $\varphi(x_{i_1})^{\epsilon_1}\cdots \varphi(x_{i_k})^{\epsilon_k}=1,$ we have
    \[
        \sum_{s=1}^k u_s\cdot \frac{\partial \varphi(x_{i_s})}{\partial y_j}=0,\quad \text{where\quad}u_s=\begin{cases}
            \varphi(w_{s-1}), & \epsilon_s=1,\\
            -\varphi(w_s),& \epsilon_s=-1.
        \end{cases}
    \]
    Note that $u_s$ is independent of $j$. Rearranging the identities, we have 
    \[
        \sum_{i=1}^nU_i\cdot \frac{\partial \varphi(x_{i})}{\partial y_j}=0,\quad j=1,\ldots,n
    \]
    where $U_i$ is the sum of all $u_s$ such that $i_s=i$. Therefore
    \[
        (U_1,U_2,\ldots,U_n)\cdot J_\varphi =0.
    \]    
    We prove that $U_1\not=0$ and this would imply that $J_\varphi$ is not a weak isomorphism. Let $1\leqslant s_1<s_2<\cdots<s_r\leqslant k$ be the indices such that $i_{s_1}=\cdots=i_{s_r}=1$. By assumption $s_1=1$. Then 
    \begin{align*}
        U_1&=1+u_{s_2}+\cdots+u_{s_r}
        \\&=1\pm \varphi(w_{s_2'})\pm\cdots\pm \varphi(w_{s_r'})
    \end{align*}where each $w_{s_j'}$ equals either $w_{s_j}$ or $w_{s_{j}-1}$, depending on the sign of $\epsilon_{s_j}$. Since $w$ is reduced and does not end in $x_1\inv$, we have $0<s_2'<\cdots <s_r'<k$. We claim that the elements $1,\varphi(w_{s_{2}'}),\ldots, \varphi(w_{s_{r}'})$ are pairwise distinct. Indeed, if $\varphi(w_{s_{p}'})= \varphi(w_{s_{q}'})$ for some $p<q$ then the reduced word $w_{s_{p}'}\inv w_{s_{q}'}$ would be a non-trivial element of $\ker\varphi$ of length shorter than $k$, contradicting the minimality of $k$. Therefore $U_1$ is a nonzero element of $\Z F_2$, completing the proof.
\end{proof}
    
    Finally, Theorem \ref{Theorem Properties of universal torsion for free group homomorphism}(3) follows from Proposition \ref{Proposition Properties of torsion of mappings}(4).
\end{proof}

In particular, Theorem \ref{Theorem Properties of universal torsion for free group homomorphism} implies Theorem \ref{Main Theorem formula for universal L2 torsion of free groups}.

It is natural to ask what group theoretic information are reflected in the universal $L^2$-torsion of a group homomorphism. A finitely generated subgroup $H$ of a free group $F$ is called \emph{compressed} if for any subgroup $L$ of $F$ containing $H$ we have $\rank H\leqslant \rank L$. Suppose $\varphi\colon F_1\ra F_2$ is injective with $\rank F_1=\rank F_2$. Jaikin-Zapirain \cite{jaikin2024free} proved that $\varphi$ is an $L^2$-homology equivalence if and only if $\im \varphi\subset F_2$ is a compressed subgroup.

We propose the following conjecture:

\begin{conjecture}\label{Conjecture free goup isomorphism}
    A homomorphism $\varphi$ between finitely generated free groups is an isomorphism if and only if $\tautwo_u(\varphi)=1$.
\end{conjecture}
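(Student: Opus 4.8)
The plan is to prove Conjecture \ref{Conjecture free goup isomorphism} under the natural assumption that the (strengthened) decomposition/fiberedness machinery of the paper applies, using the following reformulation. By Theorem \ref{Theorem Properties of universal torsion for free group homomorphism}(2), if $\varphi$ is an isomorphism then $\tautwo_u(\varphi)=1$, so only the converse needs work. By Theorem \ref{Theorem Properties of universal torsion for free group homomorphism}(1)--(2) we may assume $\rank F_1=\rank F_2=:n$ and $\varphi$ injective (otherwise $\tautwo_u(\varphi)=0\neq 1$), and by Jaikin-Zapirain's criterion cited in the excerpt, $\tautwo_u(\varphi)=1$ in particular forces $\im\varphi$ to be a compressed subgroup of $F_2$ of the same rank. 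So the statement reduces to: \emph{if $\varphi\colon F_n\hookrightarrow F_n$ is injective and $\whdet(J_\varphi)=1\in\operatorname{Wh}(\mcalD{F_n})$, then $\varphi$ is surjective.}

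First I would translate triviality of $\tautwo_u(\varphi)$ into a statement about the polytope map $\mathbb P\colon\operatorname{Wh}(\mcalD{F_n})\ra\whpoly(H_1(F_n))$ and about the Fuglede--Kadison determinant. Applying $\mathbb P$ to $\tautwo_u(\varphi)=[\det J_\varphi]$, the Newton polytope of $\det J_\varphi$ (in $\R^n=H_1(F_n;\R)$) must be trivial in $\whpoly$, i.e. a single lattice point. On the abelianization $H_1(F_n)=\Z^n$, $J_\varphi$ becomes the ordinary integer matrix $A=(a_{ij})$ recording the exponent-sums of $\varphi(x_i)$ in $y_j$, and $\mathbb P([\det J_\varphi])$ controls the multivariable Alexander polynomial $\det(t^A - I)$-type data; the point is that a single-point Newton polytope together with the Fuglede--Kadison determinant being $1$ pins down $|\det A|=1$. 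Indeed, pushing $\tautwo_u(\varphi)$ through $\Z F_n\to \Z[\Z^n]=\Z[t_1^{\pm},\dots,t_n^{\pm}]$ gives the classical Reidemeister torsion of $\Phi$, whose value is $\det J_\varphi^{\mathrm{ab}}$; triviality in the relevant Whitehead group forces this to be a monomial unit, hence $\det A=\pm1$, so $\varphi_*$ is an isomorphism on $H_1$. This already rules out the possibility that $\varphi$ has nontrivial infinite-index image in a way detected abelianly.

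The remaining and main obstacle is to upgrade ``$\varphi$ is injective, induces an isomorphism on $H_1$, and has compressed image'' to ``$\varphi$ is surjective''. I would attack this with a Hopfian/covering-space argument: set $K=\im\varphi\le F_n$, a subgroup of rank $n$. By the Nielsen--Schreier formula a finite-index subgroup of $F_n$ of rank $n$ has index $1$; so it suffices to show $[F_n:K]<\infty$. Suppose for contradiction $[F_n:K]=\infty$. Then the covering space $\overline X\to X_2=\vee_n S^1$ corresponding to $K$ is an infinite graph that is homotopy equivalent to $\vee_n S^1$; equivalently, $K$ is an infinite-index subgroup of $F_n$ that happens to be finitely generated of rank $n$. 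Here I would bring in the $L^2$-torsion itself rather than its abelian shadow: by multiplicativity (Theorem \ref{Theorem Properties of universal torsion for free group homomorphism}(3)) and the isomorphism $\varphi\colon F_1\xrightarrow{\cong}K$, $\tautwo_u(\iota_K\colon K\hookrightarrow F_n)=\tautwo_u(\varphi)^{-1}\cdot(\text{stuff})$ is trivial, so the inclusion of the infinite cover $\overline X\hookrightarrow$ (cylinder) has trivial universal $L^2$-torsion; but the mapping cylinder pair $(M_{\iota_K}, \overline X)$ deformation retracts onto $X_2$ and its cellular chain complex over $\mcalD{F_n}$ has $\tautwo_u$ computable from the edges of $\overline X$ not in a spanning tree of $\overline X$ mapped across. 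The hard technical point is to show this torsion \emph{cannot} be trivial when $K$ has infinite index: concretely, one expects $\tautwo_u(\iota_K)$ to contain a factor of the form $[\,1+g+g^2+\cdots+g^{k}\,]$ or more generally a nonmonomial ``geometric-series'' element coming from a nontrivial coset structure, exactly as in the solid-torus and $n$-chain-link examples computed in the introduction, and such elements are nontrivial in $\operatorname{Wh}(\mcalD{F_n})$ (detectable by the polytope map, since a genuine line segment survives in $\whpoly$). Making this precise — i.e. proving that an infinite-index finitely generated subgroup of maximal rank forces a nonmonomial contribution to the universal $L^2$-torsion of its inclusion — is where the real content lies; I would try to extract it from a careful analysis of a Stallings-graph presentation of $K$ together with Theorem \ref{Theorem Leading term of matrices} and the polytope map, reducing at the end to the observation that a finite connected graph $\overline X$ homotopy equivalent to $X_2$ but with more than one vertex yields a Fox-Jacobian whose Dieudonn\'e determinant is not a monomial.
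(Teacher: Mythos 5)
The statement you are proving is stated in the paper as a \emph{conjecture}, not a theorem: the paper offers no general proof, and establishes it only in the special case of homomorphisms realizable as $\pi_1(R_+)\ra\pi_1(H_g)$ for a sutured handlebody (Proposition \ref{Proposition universal L2torsion of handlebody}), where the whole weight is carried by the $3$-manifold result Theorem \ref{Main Theorem universal L2 torsion detect product sutured manifold}. Your proposal does not close the gap either, and you say so yourself: the step ``an infinite-index finitely generated subgroup of maximal rank forces a nonmonomial contribution to $\tautwo_u(\iota_K)$'' is exactly the content of the conjecture (after the correct reductions, which you do carry out: the forward direction, the reduction to $\rank F_1=\rank F_2$ and $\varphi$ injective via Theorem \ref{Theorem Properties of universal torsion for free group homomorphism}, and the identity $\tautwo_u(\iota_K)=\tautwo_u(\varphi)$ from multiplicativity). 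Announcing that you would ``extract it from a careful analysis of a Stallings graph'' is a plan, not an argument.

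Two of your intermediate steps also have concrete problems. First, the polytope map cannot be the detecting tool in general: $\mathbb P$ factors through $H_1(F_n)$, so any factor of $\det J_{\iota_K}$ of the form $[1+c]$ with $c$ in the commutator subgroup has one-point Newton polytope and is invisible to $\mathbb P$; deciding whether such classes are nontrivial in $\operatorname{Wh}(\mcalD{F_n})$ is precisely the kind of $K_1$-question the conjecture hides, and nothing in the paper (Theorem \ref{Theorem Leading term of matrices} included) resolves it. Second, the passage to the abelianization is not justified: there is no induced homomorphism $\operatorname{Wh}(\mcalD{F_n})\ra\operatorname{Wh}(\mcalD{\Z^n})$ coming from the quotient $F_n\ra\Z^n$ (the paper only constructs restriction maps to finite-index \emph{subgroups}, Section \ref{Section of restriction map}), and an element invertible over $\mcalD{F_n}$ can abelianize to something singular, so ``$\tautwo_u(\varphi)=1$ forces $\det A=\pm1$'' does not follow from the argument you sketch. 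If you want an unconditional result, the honest route available from the paper is the one it takes: realize $\varphi$ geometrically as the inclusion $R_+\hookrightarrow H_g$ of a sutured handlebody and invoke Theorem \ref{Main Theorem universal L2 torsion detect product sutured manifold}; this covers only those $\varphi$ admitting such a realization, and the general case remains open.
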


\subsection{$3$-dimensional handlebodies}
In the remaining part of this paper, a \textit{handlebody} refers to a compact connected orientable $3$-manifold obtained from attaching 1-handles to a 3-ball. The boundary of a handlebody is a connected closed orientable surface, whose genus determines the homeomorphism type of the handlebody. A \emph{genus-$g$ handlebody} $H_g$ refers to a handlebody whose boundary is a surface of genus $g\geqslant 0$. Note that $H_g$ deformation retracts to the one-point union of $g$ circles. 

Fix $g\geqslant1$. Consider the sutured manifold $(H_g,R_+,R_-,\gamma)$ such that $R_+$ and $R_-$ are both connected and $\chi(R_+)=\chi(R_-)$. It is clear that $\chi(R_+)=\chi(R_-)=1-g$, and the fundamental group of $R_\pm$ are both isomorphic to a free group of rank $g$. 
\begin{proposition}\label{Proposition universal L2torsion of handlebody}
    Suppose $(H_g,R_+,R_-,\gamma)$ is a sutured manifold such that $R_+,R_-$ are both connected and $\chi(R_+)=\chi(R_-)$ (we do not assume that $R_\pm$ are incompressible). Then:
    \begin{enumerate}[\rm\quad (1)]
    \item $\tautwo_u(H_g,R_+)=\tautwo_u(\varphi)$ where $\varphi\colon \pi_1(R_+)\ra \pi_1(H_g)$ is the homomorphism induced by the inclusion. 
        \item $(H_g,R_+,R_-,\gamma)$ is a taut sutured manifold if and only if $\tautwo_u(H_g,R_+)\not=0$.
        \item $(H_g,R_+,R_-,\gamma)$ is a product sutured manifold if and only if $\tautwo_u(H_g,R_+)=1$.
    \end{enumerate}
    In particular, Conjecture \ref{Conjecture free goup isomorphism} holds true for any homomorphism which can be realized as $\varphi\colon \pi_1(R_+)\ra \pi_1(H_g)$.
\end{proposition}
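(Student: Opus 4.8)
The plan is to reduce each assertion to the machinery already developed for the universal $L^2$-torsion of free-group homomorphisms and for taut and product sutured manifolds. For Part (1), I would first record that $H_g$ deformation retracts onto the one-point union of $g$ circles, so it is aspherical and is a finite classifying space of the free group $\pi_1(H_g)$; likewise the compact surface $R_+$, whose fundamental group is free of rank $g$, is a finite classifying space of $\pi_1(R_+)$. As free groups satisfy the Atiyah Conjecture and have vanishing Whitehead group \cite{stallings1965whitehead}, the invariant $\tautwo_u(\varphi)$ of Definition \ref{Definition of universal l2 torsion for free group endomorphism} is defined. The inclusion $i\colon R_+\hookrightarrow H_g$ is a realization of $\varphi$, so combining Proposition \ref{Proposition Properties of torsion of mappings}(1) (identifying the torsion of a pair with the torsion of the inclusion map) with the well-definedness of $\tautwo_u(\varphi)$ yields $\tautwo_u(H_g,R_+)=\tautwo_u(i)=\tautwo_u(\varphi)$.

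For Part (2), the forward implication is immediate: if $(H_g,\gamma)$ is taut then $R_\pm$ are taut surfaces, hence incompressible, and since $H_g$ is irreducible with infinite fundamental group, Herrmann's Theorem \ref{Theorem Taut iff L2acyclic} gives that $(H_g,R_+)$ is $L^2$-acyclic, i.e. $\tautwo_u(H_g,R_+)\ne0$. For the converse the key point is to recover incompressibility of $R_\pm$ from $L^2$-acyclicity. Assuming $\tautwo_u(H_g,R_+)\ne0$, Part (1) together with Theorem \ref{Theorem Properties of universal torsion for free group homomorphism}(1) shows that the Fox Jacobian $J_\varphi$ of $\varphi\colon\pi_1(R_+)\to\pi_1(H_g)$ is a weak isomorphism; since both groups have rank $g$, the contrapositive of Lemma \ref{Lemma weak isomorphism implies injective} forces $\varphi$ to be injective, so $R_+$ is incompressible in $H_g$ by the Loop Theorem. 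Proposition \ref{Proposition Rpm dual formula} then gives that $(H_g,R_-)$ is also $L^2$-acyclic, and the same argument applied to $R_-$ (using Part (1) for $R_-$) shows $R_-$ is incompressible. With $R_\pm$ incompressible, $H_g$ irreducible, and $\pi_1(H_g)$ infinite, Theorem \ref{Theorem Taut iff L2acyclic} applies once more and $(H_g,\gamma)$ is taut.

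For Part (3): if $(H_g,\gamma)\cong R_+\times[0,1]$ then $\tautwo_u(H_g,R_+)=1$ by Proposition \ref{Proposition computation example}(1); conversely, if $\tautwo_u(H_g,R_+)=1$ then it is in particular nonzero, so Part (2) makes $(H_g,\gamma)$ taut, and as $R_+$ and $R_-$ are connected and hence non-empty, Theorem \ref{Main Theorem universal L2 torsion detect product sutured manifold} gives $(H_g,\gamma)\cong R_+\times[0,1]$. For the final sentence concerning Conjecture \ref{Conjecture free goup isomorphism}: if $\varphi$ is an isomorphism then $\tautwo_u(\varphi)=1$ by Theorem \ref{Theorem Properties of universal torsion for free group homomorphism}(2); conversely, if $\varphi$ is realized as an inclusion-induced map $\pi_1(R_+)\to\pi_1(H_g)$ and $\tautwo_u(\varphi)=1$, then $\tautwo_u(H_g,R_+)=1$ by Part (1), hence $H_g\cong R_+\times[0,1]$ by Part (3), the inclusion $R_+\hookrightarrow H_g$ is a homotopy equivalence, and $\varphi$ is an isomorphism.

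The step I expect to be the main obstacle is the converse of Part (2): Herrmann's criterion requires $R_\pm$ incompressible, which is deliberately not assumed here, and the only input available is $\tautwo_u(H_g,R_+)\ne0$, so the argument must pass through the explicit Fox-Jacobian description of $\tautwo_u(\varphi)$ and the combinatorial Lemma \ref{Lemma weak isomorphism implies injective} rather than through any soft homological reasoning. A secondary, routine point is checking in Part (1) that $H_g$ and $R_\pm$ are genuine finite classifying spaces, so that $\tautwo_u(\varphi)$ is defined and the inclusion is a bona fide realization of $\varphi$.
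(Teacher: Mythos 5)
Your proposal is correct and follows essentially the same route as the paper: part (1) via the classifying-space realization of the inclusion, part (2) by using the nonvanishing of the torsion together with Theorem \ref{Theorem Properties of universal torsion for free group homomorphism} and Proposition \ref{Proposition Rpm dual formula} to recover injectivity of $\varphi$ (hence incompressibility of $R_\pm$) before invoking Theorem \ref{Theorem Taut iff L2acyclic}, and part (3) as a corollary of part (2) and Theorem \ref{Main Theorem universal L2 torsion detect product sutured manifold}. Your extra unpacking of the incompressibility step through Lemma \ref{Lemma weak isomorphism implies injective} and the Loop Theorem is just a more explicit version of what the paper cites in one line.
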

\begin{proof}
 For (1), note that $R_+$ and $H_g$ are classifying spaces for $\pi_1(R_+)$ and $\pi_1(H_g)$, respectively. Therefore the inclusion map $\Phi\colon R_+\ra H_g$ is a realization of $\varphi$, hence
 \[
    \tautwo_u(H_g,R_+)=\tautwo_u(\Phi)=\tautwo_u(\varphi).
 \]
 
    For (2), the forward direction follows from Theorem \ref{Theorem Taut iff L2acyclic}. Now suppose $\tautwo_u(H_g,R_+)$ is nonzero. By Proposition \ref{Proposition Rpm dual formula} we know that $\tautwo_u(H_g,R_-)$ is also nonzero. Then it follows from (1) and Theorem \ref{Theorem Properties of universal torsion for free group homomorphism}(2) that $R_\pm\subset H_g$ are both incompressible surfaces. Applying Theorem \ref{Theorem Taut iff L2acyclic} again implies that $(H_g,\gamma)$ is taut. 

    Finally, (3) is a direct corollary of (2) and Theorem \ref{Main Theorem universal L2 torsion detect product sutured manifold}.
\end{proof}

\begin{figure}[h]
        \centering
        
\def\svgwidth{.7\columnwidth}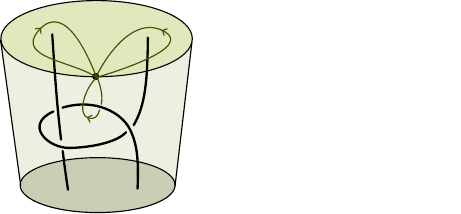

        \caption{A sutured manifold $M$ and a representative $P$ of its $L^2$-polytope in $H_1(M;\R)$}
        \label{fig:genus-2 handlebody}
    \end{figure}
\begin{example}
    Consider a sutured manifold $M$ as depicted on the left of Figure \ref{fig:genus-2 handlebody}. It is a 3-ball with two arcs removed, and three sutures separate $\partial M$ into two pairs of pants $R_+$ and $R_-$. The manifold $M$ is homeomorphic to a genus-2 handlebody whose fundamental group is generated by the loops $x$ and $y$. The fundamental group of $R_+$ is generated by the loops $x$ and $u$ where $u=yxyx\inv y\inv$ in $\pi_1(M)$. Let $\varphi\colon \pi_1(R_+)\ra \pi_1(M)$ be the homomorphism induced by inclusion, then under the basis $\pi_1(R_+)=\langle x,u\rangle$, $\pi_1(M)=\langle x,y\rangle$, we have
    \[
        J_\varphi=\begin{pmatrix}
            \frac {\partial x}{\partial x} &\frac {\partial x}{\partial y}\\
            \frac {\partial u}{\partial x} & \frac {\partial u}{\partial y}
        \end{pmatrix}=\begin{pmatrix}
            1 & 0\\
            y-yxyx\inv & 1+yx-u
        \end{pmatrix}.
    \]
    Therefore by Proposition \ref{Proposition universal L2torsion of handlebody}, 
    \[
        \tautwo_u(M,R_+)=\whdet(J_\varphi)=[1+yx-u].
    \]
    The polytope map $\mathbb P\colon \operatorname{Wh}(\mcalD{\pi_1(M)})\ra \mathcal \whpoly (H_1(M;\Z))$ sends $\tautwo_u(M,R_+)$ to a polytope $[P]$, where $P$ is the convex hull of $\{0,[u]=[y],[x+y]\}$. Proposition \ref{Proposition universal L2torsion of handlebody} implies that $M$ is a taut sutured manifold but not a product sutured manifold.
\end{example}

\begin{figure}[htbp]
    \centering
    
\def\svgwidth{0.85\columnwidth}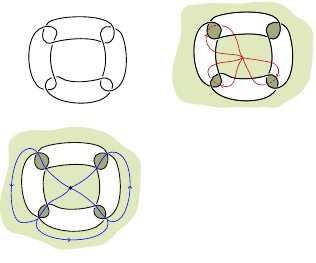

    \caption{The $n$-chain link example, where $n=4$}
    \label{fig:4-chain link}
\end{figure}

\begin{example}[$n$-chain link]\label{Example n-chain link}

For each $n\geqslant 3$, the $n$-chain link $L_n$ is an alternating link obtained by linking $n$ unknots in a cyclic way. A diagram for the case $n=4$ is shown in Figure \ref{fig:4-chain link}(a). Consider the natural Seifert surface $\Sigma$ as illustrated in Figure \ref{fig:4-chain link}(b), where the positive-side of $\Sigma$ is in light green and the negative-side of $\Sigma$ is in dark green. The surface $\Sigma$ is obtained from two disks by attaching $n$ twisted-bands, which deformation retracts to a wedge of $(n-1)$ circles.

The complement $S^3\bb \Sigma$ is a handle body of genus $(n-1)$, whose boundary is a union of two copies of $\Sigma$, namely $\Sigma_+$ and $\Sigma_-$. Choose a free basis for the fundamental group $\pi_1(S^3\bb \Sigma)=\langle x_1,\ldots,x_{n-1}\rangle$ where each $x_i$ is represented by a red loop in Figure \ref{fig:4-chain link}(b). A free basis for $\pi_1(\Sigma_+)$ is represented by the blue loops $u_1,\ldots,u_{n-1}$ in Figure \ref{fig:4-chain link}(c). By pushing $u_i$ slightly into the positive direction, we obtain its image $u_i^+$ under the inclusion $\Sigma_+\hookrightarrow S^3\bb\Sigma$, which is represented by $x_ix_{i+1}\inv$ (we assume that $x_n:=x_1\inv\cdots x_{n-1}\inv$, see Figure \ref{fig:4-chain link}(b)). Let $\varphi\colon \pi_1(\Sigma_+)\ra \pi_1(S^3\bb\Sigma)$ be the homomorphism induced by inclusion, then 
\[
    \im \varphi=\langle x_1x_2\inv,x_2x_3\inv,\ldots,x_{n-2}x_{n-1}\inv, x_{n-1}^2x_{n-2}\cdots x_2x_1\rangle \subset \langle x_1,\ldots,x_{n-1}\rangle
\]
and the Fox Jacobian of $\varphi$ is 
\begin{align*}
    J_\varphi=\begin{pmatrix}
        1 & -x_1x_2\inv & & &\\
         & 1 & -x_2x_3\inv & & \\
          &  &    \ddots & &\\
           & & & 1 & -x_{n-2}x_{n-1}\inv\\
           x_{n-1}^2x_{n-2}\cdots x_2 & x_{n-1}^2x_{n-2}\cdots x_3 & \cdots & x_{n-1}^2 & 1+x_{n-1}
    \end{pmatrix}.
\end{align*}
\begin{lemma}\label{Lemma computation of a determinant}
    For $n\geqslant 3$, let \begin{align*}
    B_n=\begin{pmatrix}
        1 & -s_1 & & &\\
         & 1 & -s_2 & & \\
          &  &    \ddots & &\\
           & & & 1 & -s_{n-2}\\
           f_1 & f_2 & \cdots & f_{n-2} & f_{n-1}
    \end{pmatrix}
\end{align*}
be a matrix over a skew field. Then its Dieudonn\'e determinant is given by $$\det(B_n)= [f_1s_1s_2\cdots s_{n-2}+f_2s_2s_3\cdots s_{n-2}+\cdots f_{n-2}s_{n-2}+f_{n-1}].$$
\end{lemma}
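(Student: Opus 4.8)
The plan is to compute $\det(B_n)$ by induction on $n$ using Dieudonn\'e's left elementary row operations. The base case $n=3$ is a direct $2\times 2$ computation: $B_3=\begin{pmatrix} 1 & -s_1\\ f_1 & f_2\end{pmatrix}$, and subtracting $f_1$ times the first row from the second row (a left elementary operation, which preserves the determinant by property (a)) yields $\begin{pmatrix} 1 & -s_1\\ 0 & f_2+f_1 s_1\end{pmatrix}$, whose determinant is $[1\cdot(f_2+f_1 s_1)]=[f_1 s_1+f_2]$, as claimed.

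For the inductive step, I would use the first row to clear the entry $f_1$ in the bottom-left corner: subtract $f_1$ times row $1$ from row $n-1$. This changes the bottom row from $(f_1,f_2,\ldots,f_{n-1})$ to $(0,\,f_2+f_1 s_1,\,f_3,\ldots,f_{n-1})$, leaving everything else unchanged. The resulting matrix has a first column equal to $e_1$, so its Dieudonn\'e determinant equals that of the $(n-1)\times(n-1)$ lower-right block
\[
    B_{n-1}'=\begin{pmatrix}
        1 & -s_2 & & &\\
         & 1 & -s_3 & & \\
          &  &    \ddots & &\\
           & & & 1 & -s_{n-2}\\
           f_2+f_1 s_1 & f_3 & \cdots & f_{n-2} & f_{n-1}
    \end{pmatrix},
\]
which is of exactly the same shape as $B_{n-1}$ but with the shifted data $s_i\mapsto s_{i+1}$ and $f_1\mapsto f_2+f_1 s_1$, $f_i\mapsto f_{i+1}$ for $i\geqslant 2$. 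By the induction hypothesis,
\[
    \det(B_{n-1}')=[(f_2+f_1 s_1)s_2 s_3\cdots s_{n-2}+f_3 s_3\cdots s_{n-2}+\cdots+f_{n-2}s_{n-2}+f_{n-1}],
\]
and expanding $(f_2+f_1 s_1)s_2\cdots s_{n-2}=f_1 s_1 s_2\cdots s_{n-2}+f_2 s_2\cdots s_{n-2}$ gives precisely the asserted formula for $\det(B_n)$.

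The one point requiring a little care — and the only possible obstacle — is that we are working over a noncommutative skew field, so I must consistently use \emph{left} elementary row operations (property (a) of the Dieudonn\'e determinant) rather than column operations or any commutation of scalars; in particular the product $f_1 s_1 s_2\cdots s_{n-2}$ must be read in that fixed left-to-right order throughout, and the induction hypothesis must be applied with the substituted entries placed on the correct (left) side. Since every manipulation above is a left row operation and the block-triangular reduction is valid over any skew field, no genuine difficulty arises; the argument is purely formal once the bookkeeping of the order of factors is respected.
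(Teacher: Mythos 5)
Your proposal is correct and follows essentially the same route as the paper: the base case is the same direct $2\times 2$ computation, and the inductive step is exactly the paper's left row operation (adding $-f_1$ times the first row to the last row) followed by reduction to the lower-right $(n-1)\times(n-1)$ block with the substitution $f_1\mapsto f_1s_1+f_2$. Your attention to keeping the multiplications on the left is the right care to take, and the bookkeeping works out as you describe.
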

\begin{proof}
    For the base case $n=3$, we have $$\det(B_3)=\det\begin{pmatrix}
        1 & -s_1\\
        f_1 & f_2
    \end{pmatrix}=[f_1s_1+f_2].$$
     For general $n$, we 
        left-multiply the first row by $(-f_1)$ and add it to the last row to eliminate the bottom-left entry. Thus
        \[
            \det(B_n)=
    \det\begin{pmatrix}
        
          1 & -s_2 & & \\
            &    \ddots & &\\
            & & 1 & -s_{n-2}\\
             f_1s_1+f_2 & \cdots & f_{n-2} & f_{n-1}
    \end{pmatrix}.
        \]
    The result then follows by induction on $n$.
\end{proof}

Now apply Lemma \ref{Lemma computation of a determinant} to $J_\varphi$, note that $s_is_{i+1}\cdots s_{n-2}=x_ix_{n-1}\inv$ for $i\leqslant n-2$. It follows that 
\[
    f_is_is_{i+1}\cdots s_{n-2}=x_{n-1}^2 x_{n-2}\cdots x_ix_{n-1}\inv,\quad 1\leqslant i<n-1
\] and $f_{n-1}=1+x_{n-1}.$ Define $y_i:=x_{n-1}x_{n-2}\cdots x_i$, $i=1,\ldots,n-1$. Then $\{y_1,\ldots,y_{n-1}\}$ forms another free basis of $\pi_1(S^3\bb \Sigma)$.
By Proposition \ref{Proposition universal L2torsion of handlebody},
\begin{align*}
\tautwo_u(S^3\bb\Sigma,\Sigma_+)&=\whdet (J_\varphi)\\
    &=[x_{n-1}\cdot (y_1+y_2+\cdots+y_{n-1}+1)\cdot x_{n-1}\inv]\\&
    =[y_1+y_2+\cdots+y_{n-1}+1].
\end{align*}
The polytope map $\mathbb P$ sends the universal $L^2$-torsion $\tautwo_u(S^3\bb\Sigma,\Sigma_+)$ to an $(n-1)$-simplex in $H_1(S^3\bb \Sigma;\R)$ spanned by vertices $\{0,[y_1],\ldots,[y_{n-1}]\}$.
By Proposition \ref{Proposition universal L2torsion of handlebody} $S^3\bb\Sigma$ is a taut sutured manifold and $\Sigma$ is a norm-minimizing Seifert surface for $L$. Moreover, $\tautwo_u(S^3\bb\Sigma,\Sigma_+)\not=1$ and $L$ is not fibered as an oriented link.

\begin{remark}
    Suppose $G$ is group satisfying the Determinant Conjecture (see \cite[Section 13]{lueck2002l2} for a details), then the Fuglede--Kadison determinant defines a homomorphism $\det_{\mathcal NG}\colon \operatorname{Wh}(\mcalD{G})\ra \R_+$. For any $L^2$-acyclic finite CW-complex $X$ with $\pi_1(X)=G$, applying $\det_{\mathcal NG}$ to the universal $L^2$-torsion $\tautwo_u(X)$ yields the $L^2$-torsion $\tautwo(X)$ \cite[Section 3.4]{friedl2017universal}. Let $G$ be the fundamental group $\pi_1(S^3\bb \Sigma)$. According to \cite{ben2022fuglede},
\[
    \operatorname{det}_{\mathcal NG}([1+y_1+\cdots+y_{n-1}])=\frac{(n-1)^{\frac{n-1}{2}}}{n^{\frac{n-2}{2}}}.
\]
Therefore we obtain the $L^2$-torsion $\tautwo(S^3\bb \Sigma,\Sigma_+)=(n-1)^{\frac{n-1}{2}}/{n^{\frac{n-2}{2}}}.$

For an admissible $3$-manifold $N$ and a cohomology class $\phi\in H^1(N;\Z)$, the \emph{$L^2$-Alexander torsion} is a function $\tautwo(N,\phi)\colon \R_+\ra [0,+\infty)$ \cite{Dubois2016L2AlexanderTorsion}. This function has a well-defined ``degree" which equals to the Thurston norm of $\phi$ \cite{Friedl2019l2,liu2017degree}, and a ``leading coefficient" $C(N,\phi)\geqslant 1$ \cite{liu2017degree}.
It is proved in \cite{duan2025Guts} that $C(N,\phi)$ equals the $L^2$-torsion of the pair $(N\bb\Sigma,\Sigma_+)$ where $\Sigma$ is a norm-minimizing surface dual to $\phi$. Let $X_n$ be the $n$-chain link complement and let $\phi\in H^1(X_n;\Z)$ be the Poincar\'e dual of $\Sigma$. Then
\[
    C(X_n,\phi)=\tautwo(X_n,\Sigma_+)=\frac{(n-1)^{\frac{n-1}{2}}}{n^{\frac{n-2}{2}}}\sim \sqrt{n/e},\quad \text{as $n\ra +\infty$}.
\]
Hence, the $n$-chain link complements $X_n$ form an infinite family of hyperbolic manifolds for which $C(X_n,\phi)>1$ for some nonzero class $\phi\in H^1(X_n;\Z)\setminus\{0\}$, answering a question raised in \cite[Conjecture 1.7]{ben2022leading}.
\end{remark}

\end{example}

\bibliography{ref}
\end{document}